\documentclass[a4paper,english,10pt]{article}
\usepackage[left=.9in, right=.9in, bottom=1.2in, top=1in]{geometry}
\makeatletter
\AtBeginDocument{\setlength\abovedisplayskip{5pt plus 2pt minus 1pt}
  \setlength\belowdisplayskip{5pt plus 2pt minus 1pt}}

\usepackage{eKN}

\usepackage{dsfont}
\usepackage[normalem]{ulem} 
\newcommand{\myuline}[1]{\uline{#1}}

\newcommand{\RHS}{right-hand side}
\newcommand{\Real}{\mathbb{R}}
\newcommand{\Complex}{\mathbb{C}}
\newcommand{\Natural}{\mathbb{N}}

\newcommand{\Laplace}{\bigtriangleup}
\newcommand{\ImagUnit}{i}

\newcommand{\supp}{\operatorname{supp}\,}

\newcommand{\eKN}{extremal Kerr--Newman}
\newcommand{\RNbh}{Reissner-Nordström}
\newcommand{\eRN}{extremal \RNbh}
\newcommand{\Manifold}{\mathcal{M}}

\newcommand{\ChristoffelTypeTwo}[3][]{
  \ifthenelse{\equal{#1}{}}
  {\tensor{\Gamma}{^{#2}_{#3}}} 
  {\tensor{\Gamma(#1)}{^{#2}_{#3}}}
}
\newcommand{\ChristoffelTypeOne}[3][]{
  \ifthenelse{\equal{#1}{}}
  {\tensor{\Gamma}{_{{#2#3}}}}
  {\tensor{\Gamma(#1)}{_{{#2#3}}}}
}

\newcommand{\Sphere}{\mathbb{S}}
\newcommand{\DeformationTensor}[2][]{
  \ifthenelse{\equal{#1}{}}
  {\tensor[^{({#2})}]{\pi}{}}
  {\tensor[^{({#2})}]{\pi}{#1}}
}
\newcommand{\DeformationTensorLinearized}[2][]{
  \ifthenelse{\equal{#1}{}}
  {\tensor[^{({#2})}]{\widecheck{\pi}}{}}
  {\tensor[^{({#2})}]{\widecheck{\pi}}{#1}}
}
\newcommand{\DeformationTensorErr}[2][]{
  \ifthenelse{\equal{#1}{}}
  {\tensor[^{({#2})}]{\tilde{\pi}}{}}
  {\tensor[^{({#2})}]{\tilde{\pi}}{#1}}
}

\newcommand{\CovariantDeriv}{\mathbf{D}}

\newcommand{\Metric}{\mathbf{g}}

\newcommand{\TransformScalarWaveOp}[1][]{
  \ifthenelse{\equal{#1}{}}
  {\widehat{\Box}^{(0)}}
  {\widehat{\Box}^{(0)}_{#1}}
}
\newcommand{\ScalarWaveOp}[1][]{
  \ifthenelse{\equal{#1}{}}
  {\Box^{(0)}}
  {\Box^{(0)}_{#1}}
}
\newcommand{\VectorWaveOp}[1][]{
  \ifthenelse{\equal{#1}{}}
  {\Box^{(1)}}
  {\Box^{(1)}_{#1}}
}
\newcommand{\TensorWaveOp}[1][]{
  \ifthenelse{\equal{#1}{}}
  {\Box^{(2)}}
  {\Box^{(2)}_{#1}}
}
\newcommand{\ScalarWaveConjOp}[1][]{
  \ifthenelse{\equal{#1}{}}
  {\overline{\Box}^{(0)}}
  {\overline{\Box}^{(0)}_{#1}}
}
\newcommand{\ScalarWaveLaplaceOp}[1][]{
  \ifthenelse{\equal{#1}{}}
  {\widehat{\Box}^{(0)}}
  {\widehat{\Box}^{(0)}_{#1}}
}
\newcommand{\ReducedWaveOp}[1][]{
  \ifthenelse{\equal{#1}{}}
  {\widetilde{\Box}^{(0)}}
  {\widetilde{\Box}^{(0)}_{#1}}
}

\newcommand{\JCurrent}[1]{\mathcal{P}^{#1}}
\newcommand{\KCurrent}[1]{\mathcal{Q}^{#1}}

\newcommand{\AxiSquareSumCoeff}{\mathbb{A}}

\newcommand{\rTrapping}{r_{\trap}}

\newcommand{\trap}{\operatorname{trap}}
\newcommand{\nontrap}{\cancel{\trap}}

\newcommand{\WeylQ}[1]{\operatorname{\Op}_W\left(#1\right)}

\newcommand{\TanSymClass}[1]{S_{\operatorname{tan}}^{#1}}
\newcommand{\TanOpClass}[1]{\Psi_{\operatorname{tan}}^{#1}}

\newcommand{\MixedSymClass}[2]{S^{#1}_{#2}}
\newcommand{\MixedOpClass}[2]{\Psi^{#1}_{#2}}
\newcommand{\Op}{\operatorname{Op}}
\newcommand{\Main}{\operatorname{princ}}
\newcommand{\Aux}{\bowtie}

\newcommand{\NablaAngular}{\slashed{\nabla}}

\newcommand{\LaplaceAngular}{\slashed{\Laplace}}

\newcommand{\EMTensor}{\mathbb{T}}

\newcommand{\MorNorm}{\operatorname{Mor}}
\newcommand{\MorrNorm}{\operatorname{Morr}}

\newcommand{\MorDualNorm}[1]{\MorNorm^{*}}
\newcommand{\MorrDualNorm}[1]{\MorrNorm^{*}}

\newcommand{\KN}{Kerr-Newman}

\newcommand{\Horizon}{\mathcal{H}}
\newcommand{\EventHorizon}{\mathcal{H}}

\newcommand{\EventHorizonFuture}{\mathcal{H}^+}

\newcommand{\DomainOfIntegration}{\mathcal{D}}

\newcommand{\FreqAngular}{\eta}
\newcommand{\FreqPhi}{\FreqAngular_{\varphi}}
\newcommand{\FreqTheta}{\FreqAngular_{\theta}}

\newcommand{\PrinSymb}{p}
\newcommand{\RescaledPrinSymb}{{\abs*{q}^2 \PrinSymb}}

\newcommand{\phiOEF}{\phi^*}
\newcommand{\phiIEF}{\phi_*}

\newcommand{\HawkingHorizon}{V_{\Horizon}}
\newcommand{\InfinityHawking}{V_{\NullInfinity}}

\newcommand{\MorawetzVF}{X}
\newcommand{\MorawetzSym}{\mathfrak{x}}
\newcommand{\MorawetzLagrangeCorr}{w}
\newcommand{\MorawetzLagrangeCorrSym}{\mathfrak{w}}
\newcommand{\MorawetzOneForm}{J}
\newcommand{\SquareDecomp}{\mathfrak{a}}
\newcommand{\SquareDecompOp}{\mathfrak{A}}

\newcommand{\SobolevDeg}[1]{H_{\operatorname{deg}}^{#1}}
\newcommand{\bSobolev}[1]{H_{\operatorname{b}}^{#1}}

\newcommand{\bSobolevH}[1]{H_{\operatorname{b},\Horizon}^{#1}}
\newcommand{\bSobolevI}[1]{H_{\operatorname{b},\NullInfinity}^{#1}}

\newcommand{\SobolevfinHI}[1]{H_{\operatorname{fin},\Horizon,\NullInfinity}^{#1}}

\newcommand{\bSobolevHI}[1]{H_{\operatorname{b},\Horizon,\NullInfinity}^{#1}}
\newcommand{\bSobolevTrap}[1]{H_{\operatorname{b,trap}}^{#1}}
\newcommand{\bSobolevHITrap}[1]{H_{\operatorname{b,trap},\Horizon,\NullInfinity}^{#1}}

\newcommand{\compactSobolev}[1]{H_{c}^{#1}}

\newcommand{\NullInfinity}{\mathscr{I}}

\newcommand{\bDiffH}{\operatorname{Diff}_{\operatorname{b},\EventHorizon}}
\newcommand{\bDiffI}{\operatorname{Diff}_{\operatorname{b},\NullInfinity}}
\newcommand{\bDiffHI}{\operatorname{Diff}_{\operatorname{b},\EventHorizon,\NullInfinity}}

\newcommand{\conormalSpaceH}[1]{\mathcal{O}_{\EventHorizon}^{#1}}
\newcommand{\conormalSpaceI}[1]{\mathcal{O}_{\NullInfinity}^{#1}}
\newcommand{\conormalSpaceHI}[1]{\mathcal{O}_{\EventHorizon,\NullInfinity}^{#1}}

\newcommand{\daxi}{{\varepsilon_0}}

\newcommand{\EF}{Eddington Finkelstein}
\newcommand{\iEF}{ingoing \EF}
\newcommand{\oEF}{outgoing \EF}
\newcommand{\BL}{Boyer-Lindquist}

\newcommand{\rAux}{\mathfrak{r}}
\newcommand{\KCurrentPert}[1]{\widetilde{K}^{#1}}

\newcommand{\KCurrentSym}[1]{\mathbf{k}^{#1}}
\newcommand{\KCurrentPertSym}[1]{\tilde{\mathbf{k}}^{#1}}
\newcommand{\JCurrentPert}[1]{\widetilde{J}^{#1}}
\newcommand{\renormpphi}{{\cancel{\Phi}}}
\newcommand{\diff}{{\mathrm{diff}}}

\newcommand{\Mor}{{\mathrm{Mor}}}
\newcommand{\RotationVF}{\Omega}

\newtheorem*{theorem*}{Theorem}
\newtheorem*{prop*}{Proposition}

\newtheoremstyle{named}{}{}{\itshape}{}{\bfseries}{.}{.5em}{\thmnote{#3}#1}
\theoremstyle{named}

\newcommand{\MM}{\mathcal{M}}

\def\TT{{\mathcal T}}
\def\PP{\mathcal{P}}

\def\D{{\bf D}}

\def\g{{\bf g}}

\def\RR{\mathcal{R}}

\def\a{{\alpha}}

\def\b{{\beta}}

\def\ga{\gamma}

\def\de{\delta}
\def\De{\Delta}

\def\la{\lambda}

\def\si{\sigma}
\def\th{\theta}

\def\nab{\nabla}

\def\ntrap{trap\mkern-18 mu\big/\,}

\def\pr{{\partial}}
\def\les{\lesssim}
\def\ges{\gtrsim}
\def\c{\cdot}

\def\div{{\mbox div\,}}

\def\nn{\nonumber}

\newcommand{\nabb}{\nab\mkern-13mu /\,}

\def\QQ{\mathcal{Q}}

\def\AA{\mathcal{A}}
\def\VV{\mathcal{V}}

\def\FF{\mathcal{F}}
\def\UU{\mathcal{U}}

\def\piX{\, ^{(X)}\pi}
\def\That{\widehat{T}}

\def\HH{\mathcal{H}}
\newcommand{\II}{\mathscr{I}}
\def\SSS{\mathbb{S}}

\def\gadot{\dot{\gamma}}
\def\e{{\bf e}}
\def\k{{\bf k}}

\def\g{{\bf g}}

\def\t{{\bf t}}

\def\lz{{\bf \ell_z}}

\def\rhoH{\rho_{\HH}}
\def\rhoI{\rho_{\II}}

\newcommand{\lapp}{\mbox{$\triangle \mkern-13mu /$\,}}

\newcommand{\bea}{\begin{eqnarray}}
  \newcommand{\eea}{\end{eqnarray}}
\def\beaa{\begin{eqnarray*}}
  \def\eeaa{\end{eqnarray*}}

\pgfplotsset{compat=1.18}

\usepackage[sortcites,
backend=biber,
date=year,
giveninits=true,
style=alphabetic,
doi=false,
isbn=false,
url=false,
sorting=nyt,
eprint=true]{biblatex}

\DeclareNameAlias{author}{family-given}
\AtEveryBibitem{
  \clearfield{month}
  \clearfield{url}
  \clearfield{urlyear}
  \clearfield{urlmonth} 
  \ifentrytype{online}{
      \clearfield{year}}
    {
      \clearfield{eprint}}
  }
\renewbibmacro*{volume+number+eid}{%
    \printfield{volume}%
    \setunit*{\addnbspace}
    \printfield{number}%
    \setunit{\addcomma\space}%
    \printfield{eid}}
\DeclareFieldFormat[article]{volume}{\textbf{#1}}
\DeclareFieldFormat[article]{number}{\mkbibparens{#1}}
\DeclareFieldFormat*{title}{\textit{#1}}
\DeclareFieldFormat{journaltitle}{#1\isdot}
\renewbibmacro{in:}{}
\begin{document}

\title{\LARGE \textbf{Wave decay and horizon instability on strongly charged extremal Kerr--Newman black holes}}

\author[1]{Allen Juntao Fang\footnote{allen.juntao.fang@uni-muenster.de}}
\author[2]{Elena Giorgi\footnote{elena.giorgi@columbia.edu}}
\author[3]{Jingbo Wan\footnote{jingbo.wan@sorbonne-universite.fr}}

\affil[1]{\small Mathematics M\"unster, Universit\"at M\"unster }
\affil[2]{\small Department of Mathematics, Columbia University}
\affil[3]{\small Laboratoire Jacques-Louis Lions de Sorbonne Universit\'e}

\maketitle

\begin{abstract} 
  We prove the first boundedness and pointwise decay result for the scalar wave equation on \emph{rotating extremal} black holes without any symmetry assumptions. The result applies to slowly rotating  (equivalently, strongly charged) extremal Kerr--Newman spacetimes. We establish uniform energy boundedness, integrated local energy decay, and a hierarchy of boundary-weighted estimates at the extremal horizon and at null infinity, from which inverse-polynomial pointwise decay follows in the entire exterior region. As a consequence, we also prove the expected Aretakis instability: for generic initial data, suitable transversal derivatives fail to decay along the event horizon, and higher transversal derivatives blow up asymptotically. The proof uses the $b$-structure of the wave operator near the two boundary hypersurfaces, together with a treatment of normally hyperbolic trapping on extremal Kerr--Newman.

\end{abstract}

\vspace{0.3cm}

\tableofcontents

\section{Introduction}

The study of the scalar wave equation on black hole spacetimes
occupies a central place in the mathematical theory of General
Relativity. On a fixed Lorentzian background $(\Manifold,\Metric)$,
the wave equation
\begin{equation}
  \label{eq:wave-equation-intro}
  \Box_{\Metric} \psi = F
\end{equation}
provides a basic linear model for the dynamics of fields propagating
on a black hole exterior. In the context of the black hole stability
problem, uniform boundedness and decay for solutions of
\eqref{eq:wave-equation-intro} are widely viewed as essential first
steps toward understanding the nonlinear dynamics of perturbations of
the underlying spacetime.

Over the past two decades, a robust vector-field framework has been
developed to study the dispersive properties of waves on black hole
backgrounds. The asymptotically flat, stationary black hole solutions
of the Einstein--Maxwell equations are described by the Kerr--Newman
family, a three-parameter family parametrized by the mass $M$, charge
$e$, and angular momentum parameter $a$, satisfying $a^2+e^2\leq M^2$.
Important special cases include the spherically symmetric vacuum
Schwarzschild solution $(a=e=0)$, the charged Reissner--Nordstr\"om
family $(a=0)$, and the axially symmetric vacuum Kerr family $(e=0)$.

For subextremal black holes, corresponding in the Kerr--Newman family to the strict inequality $a^2+e^2<M^2$, uniform boundedness and inverse-polynomial decay for solutions of the scalar wave equation are by now well-understood in several fundamental cases. The proofs combine energy estimates, integrated local energy decay estimates, redshift estimates near the event horizon, and $r^p$-weighted hierarchies near null infinity. Two geometric mechanisms are especially important: trapping, which forces a loss in local energy decay estimates \cite{alinhacEnergyMultipliersPerturbations2009}, and the positivity of the surface gravity, which gives rise to the redshift effect and provides robust control near the horizon \cite{dafermosRedshiftEffectRadiation2009}.


The extremal case displays fundamentally different behavior. Extremal
black holes arise when the parameters saturate the extremality
condition $a^2+e^2=M^2$.  In this case, the surface gravity of the
event horizon, now located at $\{r=M\}$, vanishes. Consequently, the redshift
effect, which plays a central role in subextremal stability proofs,
degenerates completely. This loss is not merely technical; it is tied to genuinely new dynamical phenomena at extremal
horizons.

The first manifestation of this new behavior was Aretakis' discovery
of a horizon instability for solutions of the wave equation on extremal
black hole backgrounds \cite{aretakisHorizonInstabilityExtremal2015}. Although
the solution itself may remain bounded, certain transversal derivatives
fail to decay along the event horizon and, in fact, higher-order
transversal derivatives in general grow polynomially along the
horizon. The mechanism is driven by conserved quantities along the
horizon which generate a hierarchy of instabilities. This phenomenon
shows that extremal horizons are qualitatively different from their
subextremal counterparts.

Nevertheless, exterior decay remains possible in several extremal settings,
provided the redshift estimate is replaced by $(r-M)^{-p}$-weighted estimates adapted to the
degenerate horizon. This has been carried out for waves on extremal
Reissner--Nordstr\"om and, under axisymmetry, on extremal Kerr.

In the full extremal Kerr problem without symmetry assumptions,
however, much less is known. The absence of a redshift estimate, the
presence of trapping, the coupling to superradiant effects, and the
lack of spherical symmetry make the analysis substantially more
challenging. These difficulties persist in the extremal Kerr--Newman
family, where both rotation and charge are present. The additional
charge parameter in the Kerr--Newman family makes it possible to study
extremal black holes which nevertheless are slowly rotating, in the precise sense
that $|a|/M$ is small while $a^2+e^2=M^2$.

The main purpose of this paper is to prove uniform boundedness, integrated local energy decay, and quantitative inverse-polynomial decay for scalar waves on slowly rotating extremal Kerr--Newman spacetimes, without imposing any symmetry assumptions.

\begin{theorem}[Main theorem, informal version of \zcref{thm:main-thm}]\label{main-thm-intro}
  Let $(\mathcal M,\Metric_{M,a,e})$ be an extremal Kerr--Newman spacetime, with $a^2+e^2=M^2$.
  For $\frac{|a|}{M}\ll 1$, every sufficiently regular solution\footnote{In \zcref{thm:main-thm}, the estimates allow an inhomogeneous term $F$, although the main decay result in \zcref{cor:pointwise-decay}  concerns the homogeneous equation.} of
  \[
    \Box_{\Metric_{M,a,e}}\psi=0
  \]
  arising from suitable initial data satisfies uniform boundedness of
  the energy, integrated local energy decay estimates, $(r-M)^{-p}$
  and $r^p$-weighted estimates at the event horizon and null
  infinity. 
  
  As a consequence, solutions satisfy the following
  pointwise decay:
  \begin{align*}
    \abs*{\psi(\tau, r, \theta, \phi)}\lesssim \frac{E_{\operatorname{init}}}{ r \, \tau^{\frac{1-\delta}{2}}},
  \end{align*}
  where $\tau$ is a suitable time function, $E_{\operatorname{init}}$ denotes a suitable higher-order initial energy  norm and $\delta \ll 1$ is related to the size of $\frac{|a|}{M}$.

  Moreover, for generic\footnote{More precisely, for data with non-vanishing horizon charge $H_{0}^{\text{eKN}}[\psi]$, defined in \zcref{cons-eKN}.} initial data the solutions exhibit the \emph{Aretakis instability}: suitable transversal derivatives fail to decay along the event horizon, and higher-order transversal derivatives grow asymptotically.
\end{theorem}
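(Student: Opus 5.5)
The plan follows the now-standard hierarchy for extremal black holes: energy and Morawetz estimates, then degenerate-horizon and null-infinity $p$-weighted hierarchies, then pointwise decay, and finally the horizon conservation law. Every step is perturbative in $|a|/M$ around \eRN{}, where the corresponding estimates are known. The only genuinely non-perturbative feature is trapping, which I treat separately.

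\emph{Step 1: boundedness and integrated local energy decay.} I would first compactify using the hyperboloidal time function $\tau$. In these coordinates the wave operator near $\{r=M\}$ and near $\NullInfinity$ is a $b$-operator in the boundary-defining functions $\rho_{\Horizon}=r-M$ and $\rho_{\NullInfinity}=1/r$. I would then prove a combined energy and Morawetz estimate with a degenerate norm $\MorNorm$. This norm loses a derivative at the trapped set and carries degenerate weights at both boundaries.

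\begin{itemize}
\item Away from trapping, the multiplier is the $T$-energy plus a Morawetz vector field $\MorawetzVF=f(r)\partial_r$ with Lagrangian correction $\MorawetzLagrangeCorr$. These are taken from \eRN{} and perturbed in $a$.
\item Near trapping, which for small $|a|$ is normally hyperbolic and sits near $r=2M$, a single vector field cannot work because the trapped radius depends on frequency. There I would use a pseudodifferential commutator $\MorawetzSym$ in the tangential classes $\TanOpClass{}$. Its symbol vanishes on the frequency-dependent trapped radius $\rTrapping(\FreqAngular)$, and the resulting sum of squares (the $\SquareDecomp$ decomposition) is positive modulo lower-order terms.
\item Superradiance is controlled because the ergoregion is small and is absorbed via the smallness of $|a|/M$.
\item The lower-order and boundary errors are then absorbed using the $b$-structure.
\end{itemize}

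\emph{Step 2: weighted hierarchies.} Near $\NullInfinity$ I would run the Dafermos--Rodnianski $r^p$ hierarchy for $p\in(0,2)$ on $r\psi$. Near the horizon I would run the Aretakis $(r-M)^{-p}$ hierarchy with $p<2$, applied to the transversal derivative. The small rotation terms produce $a\partial_\phi$ errors. These are absorbed into the Morawetz bulk at the cost of restricting $p\le 2-\delta$, and this is where $\delta$ enters.

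\emph{Step 3: pointwise decay.} A pigeonhole/dyadic argument on the hierarchy gives energy decay $\tau^{-(2-\delta)}$. Commuting with $T$ and $b$-vector fields and applying Sobolev together with the Hardy estimates at both boundaries then yields $|\psi|\lesssim E_{\mathrm{init}}\, r^{-1}\tau^{-(1-\delta)/2}$.

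\emph{Step 4: Aretakis instability.} I would show that $H_{0}^{\mathrm{eKN}}[\psi]$, the $\phi$-average over $\Horizon$ of a suitable combination $\partial_r\psi+c\psi$ (with $c$ chosen so that $\partial_\tau H_0=0$ on $\{r=M\}$), is conserved. This follows from the wave equation restricted to the horizon, using that the surface gravity vanishes.

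\begin{itemize}
\item Since $\psi$ and its tangential derivatives decay by Step 3, the transversal derivative $\partial_r\psi$ does not decay.
\item Differentiating the equation once more in $r$ on the horizon gives $\partial_\tau\partial_r^2\psi\approx -H_0$ plus decaying terms, which forces growth of $\partial_r^2\psi$.
\end{itemize}

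\emph{Main obstacle.} I expect the hardest step to be Step 1, and specifically closing the microlocal trapping estimate uniformly up to the degenerate extremal horizon. The $b$-degeneracy at $r=M$ and the frequency-dependent trapping interact there, and superradiant low frequencies give no redshift to help. I would first try gluing a physical-space horizon current to the microlocal trapping current with cutoffs supported away from $r=M$. Any commutator error terms would be controlled by the smallness of $|a|/M$.
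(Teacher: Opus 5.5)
\paragraph{The gap.} Your proposal assumes that, apart from trapping, everything is a perturbation of extremal Reissner--Nordstr\"om. At the extremal horizon this is false. In the stationary part of $|q|^2\mathbf g^{-1}$, written in $(v,\rhoH,\theta,\phi)$, the term $2a\,\partial_\phi\partial_{\rhoH}$ is not a small perturbation of the Reissner--Nordstr\"om $b$-structure; it dominates it as $\rhoH\to0$. So extremal Kerr--Newman is not asymptotically extremal Reissner--Nordstr\"om at $\mathcal H^+$.

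Concretely, a radial Morawetz multiplier of the type you import from $a=0$ produces, for $a\neq0$, a bulk term $-\frac{2aru}{(r^2+a^2)^2}\,\That\psi\,\partial_\phi\psi$. Positivity of the $|\partial_r\psi|^2$ coefficient forces $u/(r^2+a^2)$ to be increasing, and positivity of the angular coefficient forces $u$ to vanish at the trapped radius. Hence $u(M)<0$, and the term does not vanish at $r=M$. Meanwhile the positive Morawetz bulk degenerates there: angular derivatives carry only the weight $\rhoH$. No Cauchy--Schwarz splitting absorbs this term, however small $|a|/M$ is. The same problem hits your $T$-energy, because the ergoregion reaches the horizon and there is no redshift to pay for it.

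\paragraph{The missing idea.} Use one member of the horizon hierarchy as a \emph{degenerate redshift estimate}. The paper takes $\alpha=-\delta_1$, i.e.\ $p=1+\delta_1$, with multiplier $\rhoH^{1-\delta_1}\bigl(-\partial_{\rhoH}+Cr_+^{-2}\HawkingHorizon\bigr)$ plus a Hardy current. It shows this is coercive on a \emph{fixed} neighbourhood $\rhoH<r_+/80$, and adds a small multiple of it to the Morawetz multiplier. The resulting bulk $\rhoH^{-\delta_1}\bigl(|\partial_v\psi|^2+|\NablaAngular\psi|^2+|\rhoH\partial_{\rhoH}\psi|^2+|\psi|^2\bigr)$ absorbs the mixed term. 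It also absorbs the negative zeroth-order term created by regularizing $u$ near $r=M$. The energy is taken with $T+\omega_+\chi\Phi$, which equals the Hawking field near $\mathcal H^+$. Trapping, by contrast, sits near $r=2M$, far from the horizon, and is handled by a fairly standard Tataru--Tohaneanu correction. You have therefore located the main obstacle in the wrong place.

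\paragraph{Consequences for Steps 2--3.} In the horizon hierarchy the rotation terms are not absorbed into the Morawetz bulk, which is weaker than the hierarchy bulk at $r=M$. They appear at exactly the same weight as the positive terms. Positivity of that quadratic form (Sylvester's criterion) restricts the admissible weights at both ends, not only $p\le2-\delta$; the paper uses $p\in(4\mathbf a^2,2-4\mathbf a^2)$.

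More seriously, the integrated local energy decay now itself relies on the $p=1+\delta_1$ member, with data weighted by $\rhoH^{1-\delta_1}$ and inhomogeneity weighted by $\rhoH^{-\delta_1}$. The combined estimate therefore closes only for $p\in(1+\delta_1,2-4\mathbf a^2)$, roughly half the hierarchy. Your pigeonhole argument cannot descend to small $p$, and the claimed $\tau^{-(2-\delta)}$ energy decay is not available.

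What one actually gets is this. Interpolate between the bounded $p=2-\delta$ energy and the time-integrable flux at level $p=1-\delta$. This gives $\tau^{-(1-\delta-\delta_1)}$ decay of the $\rhoH^{1-\delta_1}$-weighted energy, which is itself uniformly bounded. A one-dimensional Hardy inequality and the fundamental theorem of calculus in $\rhoH$ (resp.\ in $\rhoI$ for $r\psi$) then give $|\psi|\lesssim r^{-1}\tau^{-(1-\delta-\delta_1)/2}$. Your final rate coincides with this, but it does not follow from the route you describe. The loss of half the horizon hierarchy is exactly why the rate is weak.

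\paragraph{Step 4.} This is essentially the paper's argument, with $H_0$ the sphere integral of $a^2\sin^2\theta\,\partial_v\psi+2(M^2+a^2)\partial_{\rhoH}\psi+2M\psi$ (not a $\phi$-average of $\partial_r\psi+c\psi$). It needs only the weak decay of $\psi$ and $\partial_v\psi$ along $\mathcal H^+$.
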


To the best of our knowledge, this is the first boundedness and decay theorem for scalar waves on a rotating extremal black hole background in the absence of symmetry.

\begin{remark}[Weak time decay]
  The decay rate obtained here, roughly
  $\tau^{-1/2+O(\frac{|a|}{M})}$, is weaker than the one obtained in
  the subextremal case with the same techniques. This loss is
  ultimately tied to the restricted range of weights available in the
  extremal horizon hierarchy in the presence of rotation. Although we do
not expect the decay rate obtained in the present work to be sharp, we
leave for future work the broader question of how
asymptotics vary along the extremal family, from
Reissner--Nordstr\"om toward Kerr.
\end{remark}

The proof of \zcref{main-thm-intro} follows the general vector-field
strategy developed for subextremal black holes: one combines energy estimates,
integrated local energy decay estimates, and weighted estimates near the
asymptotic ends. In the rotating extremal setting, however, this strategy has to
overcome three distinct difficulties:
\begin{enumerate}
\item the complete degeneration of the redshift effect at the event horizon;
\item superradiance, caused by the absence of a globally timelike stationary
  Killing field;
\item trapping, without the simplifying reduction available under
  axisymmetry.
\end{enumerate}

The slow rotation assumption allows the last two difficulties to be
separated.  For $|a|/M$ sufficiently small, superradiance can be
controlled by using a globally timelike vector field which agrees with
the stationary Killing field outside a compact region and whose
non-Killing contribution is supported away from both the ergoregion
and the trapped set. For slowly-rotating \eKN{} spacetimes, the
exact location of the trapped null geodesics becomes
frequency-dependent, but all trapped null geodesics are located within
an $O(a)$ neighborhood of $\{r=2M\}$, which is the photon sphere of \eRN. 
Trapping can then be treated by combining a
physical-space Morawetz current away from the trapped set with a
microlocal pseudodifferential correction near trapping, in the spirit
of Tataru--Tohaneanu's work \cite{tataruLocalEnergyEstimate2011}. This yields
a coercive integrated local energy decay estimate.  In these respects,
the arguments in the present work for slowly-rotating \eKN{} are closely related to those used for slowly-rotating Kerr.

The main new ingredient of the paper is the treatment of the first difficulty,
namely the absence of redshift at the extremal horizon. We develop a
$(r-M)^{-p}$-weighted hierarchy, first introduced in extremal Reissner-Nordstr\"om in \cite{angelopoulosLatetimeAsymptoticsWave2020}, and adapt it to the extremal Kerr--Newman geometry.
This hierarchy replaces the redshift estimate and provides the horizon control
needed to close the Morawetz argument without imposing axisymmetry.

In extremal Reissner--Nordstr\"om, the horizon hierarchy is closely
related, through the Couch-Torrence conformal inversion, to the usual
$r^p$-weighted hierarchy at null infinity \cite{couchConformalInvarianceSpatial1984}. In the rotating extremal
case this symmetry is lost.  In particular, while the asymptotically flat end of extremal Kerr--Newman retains the same leading structure relevant for the usual $r^p$-hierarchy near null infinity, the near-horizon geometry is no longer related to the asymptotic end. This is a manifestation of the fact that, while Kerr--Newman is asymptotically Minkowski near infinity, it is \emph{not asymptotically Reissner-Nordstr\"om} near the event horizon. The horizon hierarchy must therefore be constructed directly, and the rotational error terms restrict the range of admissible weights.
This restriction is ultimately responsible for the weaker time-decay rate in the main theorem, as the loss in the admissible horizon weights propagates through the energy-decay argument.

The exterior boundedness and decay estimates obtained in this paper
hold up to the event horizon and therefore allow one to exploit the
Aretakis conservation law directly. For generic initial data, this
conservation law implies non-decay of suitable transversal derivatives
along the horizon and, through the wave equation, a hierarchy of
polynomial growth for higher transversal derivatives. In this sense,
our decay theory recovers the expected Aretakis instability while
maintaining uniform control of the solution itself in the exterior.

\subsection{Previous results}

The analysis of waves on extremal black holes began with the work of
Aretakis on extremal Reissner--Nordstr\"om. In
\cite{aretakisStabilityInstabilityExtreme2011,
  aretakisStabilityInstabilityExtreme2011a}, Aretakis established the
first rigorous boundedness, decay, and instability results for the
linear scalar wave equation on an extremal black hole exterior. These
works showed that, although solutions remain pointwise bounded and
decay in the exterior, certain
transversal derivatives along the horizon fail to decay and higher
transversal derivatives blow up polynomially in advanced time. A key
feature of the celebrated Aretakis instability is the presence of conserved quantities
along the degenerate event horizon.

Subsequent work led to a more refined understanding of the exterior
dynamics on extremal
Reissner--Nordstr\"om. Angelopoulos--Aretakis--Gajic proved degenerate
Morawetz estimates up to and including the event horizon
\cite{angelopoulosTrappingEffectDegenerate2017}, and later obtained
sharp late-time asymptotics and Price's law tails for solutions to the
linear wave equation
\cite{angelopoulosLatetimeAsymptoticsWave2020}. Their analysis
introduced an $(r-M)^{-p}$-weighted hierarchy adapted to the
extremal horizon. Recently, Gajic extended these
results to charged scalar fields on extremal Reissner--Nordstr\"om
backgrounds \cite{gajicChargedScalarFields2026}, where the electromagnetic coupling introduces
features reminiscent of the dynamics outside spherical symmetry. 
For the
Einstein--Maxwell equations on extremal Reissner--Nordstr\"om,
Apetroaie proved stability and instability results for the coupled
gravitational and electromagnetic perturbations
\cite{apetroaieInstabilityGravitationalElectromagnetic2023}. In
the context of extremal Reissner-Nordstr\"om, we also recall results
on nonlinear wave equations
\cite{angelopoulosGlobalSphericallySymmetric2016,
  angelopoulosNonlinearScalarPerturbations2020,
  angelopoulosSemilinearWaveEquations2025}, scattering theory for the
linear wave \cite{angelopoulosNondegenerateScatteringTheory2020,
  angelopoulosMatchingConditionsScattering2026}, and on the black hole
interior \cite{gajicLinearWavesInterior2017,
  gajicInteriorDynamicalExtremal2019}.

The rotating extremal case is substantially more delicate: the analysis is complicated by the absence of spherical symmetry, the presence of superradiance and trapping, and the degeneracy of the redshift effect at the event horizon.
In the axisymmetric
setting, Aretakis proved boundedness and decay for solutions to the
scalar wave equation on extremal Kerr
\cite{aretakisDecayAxisymmetricSolutions2012}. A different physical-space proof of integrated local energy estimates
for axisymmetric waves on extremal Kerr was later obtained by the second and third author in
\cite{giorgiPhysicalspaceEstimatesAxisymmetric2024}.

Without symmetry assumptions, much less is known for extremal Kerr.
Teixeira da Costa proved mode stability for the Teukolsky equation on
extremal Kerr, ruling out exponentially growing mode solutions
\cite{teixeiradacostaModeStabilityTeukolsky2020}. More recently,
Gajic discovered stronger horizon instabilities associated to
non-axisymmetric azimuthal modes, with higher transversal derivatives displaying
polynomial growth, accompanied by oscillatory behavior along the event
horizon \cite{gajicAzimuthalInstabilitiesExtremal2023}.
These instabilities are not a direct consequence of the axisymmetric
Aretakis conservation laws and show that the non-axisymmetric dynamics
near the extremal Kerr horizon contain additional mechanisms. For conditional results on the
Maxwell equations on extremal Kerr see \cite{benomioMaxwellEquationsFull2025}, and for blow-up results for nonlinear wave equations on extremal Kerr see \cite{aretakisNonlinearInstabilityScalar2013}.

For extremal Kerr--Newman, the general horizon instability mechanism
was identified by Aretakis in \cite{aretakisHorizonInstabilityExtremal2015},
where conservation laws were shown
to hold for a broad class of extremal horizons, including
Kerr--Newman. 
However, in contrast with the extremal Reissner--Nordstr\"om case and
with the axisymmetric theory on extremal Kerr, prior to the present work, no general boundedness and decay theory
was available for the scalar wave equation on rotating extremal
Kerr--Newman backgrounds without symmetry assumptions.

This should be contrasted with the subextremal theory. For subextremal
Kerr, boundedness and decay for the scalar wave equation were developed through the works of, among others,
Dafermos--Rodnianski
\cite{dafermosBlackHoleStability2010}, Tataru--Tohaneanu
\cite{tataruLocalEnergyEstimate2011}, Andersson--Blue
\cite{anderssonHiddenSymmetriesDecay2015}, and Dafermos-Rodnianski-Shlapentokh-Rothman \cite{dafermosDecaySolutionsWave2016}. These results rely in an
essential way on the presence of a non-degenerate redshift effect at the
event horizon \cite{dafermosRedshiftEffectRadiation2009} and a robust physical-space method to deduce decay \cite{dafermosNewPhysicalSpaceApproach2010} introduced by Dafermos-Rodnianski (see also \cite{moschidis$r^p$WeightedEnergyMethod2016, angelopoulosVectorFieldApproach2018}), together with suitable treatments of trapping and
superradiance.

The preceding discussion concerns linear and nonlinear field equations on fixed extremal black hole backgrounds. Extremal horizons have also recently appeared in a more dynamical context, in connection with gravitational collapse and threshold phenomena. In spherical symmetry, Kehle--Unger \cite{kehleGravitationalCollapseExtremal2025} constructed
solutions of the Einstein--Maxwell--charged scalar field system forming
extremal Reissner--Nordstr\"om black holes in finite advanced time, and
related matter models have also been studied
\cite{kehleExtremalBlackHole2024,weissenbacherDecayNondecayMassless2024}. Further results on collapse, threshold dynamics, and moduli spaces
were obtained in
\cite{eastGravitationalCollapseVicinity2026,
  gadiouxFormationExtremalReissnernordstrom2026,
  angelopoulosNonlinearStabilityExtremal2026,
  angelopoulosModuliSpaceDynamical2026}.

\subsection{Sketch of the proof}

We give here a sketch of the proof of \zcref{main-thm-intro}. We start by explaining the role of the boundary-weighted hierarchies to deal with the degeneration of the redshift effect and how those hierarchies interact with the derivation of the energy-Morawetz estimates for the wave equation. 

\subsubsection{The role of the boundary-weighted hierarchies}

Following the work of \cite{dafermosNewPhysicalSpaceApproach2010}, it
is known that proving a hierarchy of weighted $L^2$-estimates for
solutions to the wave equation yields their (weighted) time decay. In
the case of \eKN, there are two relevant hierarchies: one capturing
weights towards null infinity, and the other capturing weights towards
the extremal horizon.

To capture the weighted hierarchy near null infinity $\NullInfinity$, we consider the natural boundary-defining function for $\NullInfinity=\{\rhoI=0\}$ given by 
\[\rhoI\vcentcolon= \frac{1}{r}.\]
Observe that
in the outgoing Eddington Finkelstein coordinates, the inverse Minkowski metric becomes
\begin{equation}\label{eq:expression-inverse-metric-minkowski-intro}
  r^2\Metric^{-1}_{\mathbb{M}^{1+3}}
  = 2\partial_u\partial_{\rhoI} + (\rhoI\partial_{\rhoI})^2 + \partial_\th^2 +\frac{1}{\sin^2\th} \partial_{\phi}^2, 
\end{equation}
where $u=t-r$ is the outgoing retarded time. In particular, the stationary part of the rescaled inverse metric, and
therefore the stationary part of the rescaled wave operator itself, is
an elliptic $b$-operator on Minkowski\footnote{In other words,
  the (non-rescaled) wave operator on Minkowski is a scattering operator.},
i.e. it is composed of vectorfields tangent to the boundary
$\{\rhoI=0\}$, a basis of which is given by
\begin{equation*}
  \curlyBrace*{\rhoI\partial_{\rhoI}=-r\partial_r, \partial_u, \NablaAngular},
\end{equation*}
where $\NablaAngular$ denotes the angular derivatives over $\Sphere^2$.  As a consequence, the stationary part of the rescaled wave
operator on Minkowski, which is the Euclidean Laplacian, enjoys
good mapping properties between weighted $b$-Sobolev spaces, as well
as weighted $L^{\infty}$ spaces (referred to as conormal
spaces). This is also connected with the classical invertibility of
the Euclidean Laplacian between weighted $L^{\infty}$ spaces.

The $b$-structure embedded in the wave operator plays an important
underlying role in the derivation of the $r^p$-weighted estimates, first
uncovered in the context of the wave equation by Dafermos-Rodnianski
\cite{dafermosNewPhysicalSpaceApproach2010} using $r^p\partial_r$
as a vectorfield multiplier\footnote{We see that
  $r^p\partial_r = r^{p-1}r\partial_r =
  -\rhoI^{1-p}\rho\partial_{\rhoI} = -\rhoI^{2-p}\partial_{\rhoI}$ is
  a weighted $b$-vectorfield.}. These estimates roughly state\footnote{In our notation, $\b=3-p$, resulting in the familiar range of $p\in(0,2)$.} that
\begin{equation*}
  \norm*{\psi}_{\bSobolevI{s, -\frac{\beta}{2}}(\Manifold_{\rhoI\le \rho_0}(\tau_1,\tau_2))}
  \lesssim \norm*{\Box_{\mathbb{M}^{1+3}}\psi}_{\bSobolevI{s, -\frac{\beta-4}{2}}(\Manifold_{\rhoI\le \rho_0}(\tau_1,\tau_2))},\qquad \beta\in (1,3),
\end{equation*}
where
$\norm*{\cdot}_{\bSobolevI{s, \gamma}(\Manifold(\tau_1,\tau_2))}$
denotes the $\rhoI^{\gamma}$ weighted $s$-regularity $b$-Sobolev norm
at $\NullInfinity$ (see already \zcref{sec:function-spaces} for the precise
definition), and the fact that the estimates hold on
$\Manifold_{\rhoI\le \rho_0}(\tau_1,\tau_2)$ reflects the fact that
the estimates are capturing behavior related to the asymptotic
geometric structure at $\rhoI = 0$. 

This $b$-structure persists in waves on
\emph{asymptotically flat} spacetimes, rather than just Minkowski
itself, leading to the $r^p$-weighted estimates holding for general,
asymptotically flat spacetimes
\cite{moschidis$r^p$WeightedEnergyMethod2016}. Kerr--Newman is also
asymptotically flat, and consequently has a similar hierarchy of
weighted estimates at null infinity.

The other hierarchy of weighted estimates in \eKN{} is located near
the event horizon. Recall that in the subextremal \KN{} family, the
positive surface gravity of the event horizon induces a redshift
effect at the event horizon which is a form of local exponential
damping for solutions to the wave equation
\cite{dafermosRedshiftEffectRadiation2009}. In the extremal limit, the
surface gravity of the event horizon vanishes, and the redshift effect
also vanishes along with it. Nonetheless, one is able to recover a
family of boundary-weighted estimates involving $b$-Sobolev spaces at
the event horizon similar to the $r^p$-estimates near null
infinity\footnote{This is reminiscent of the appearance of the
  $r^p$-weighted estimates in place of the cosmological redshift
  effect at the cosmological horizon in the vanishing-$\Lambda$ limit
  for Kerr--de Sitter black holes
  \cite{fangTeukolskySlowlyrotatingKerrde2026}.}. Moreover, the
horizon-weighted hierarchy of estimates on \eKN{} can be viewed as
degenerate form of the redshift estimates in that they capture what
remains of radial point structure of the standard redshift estimate
and its associated local exponential decay on subextremal Kerr--Newman
\cite{dafermosRedshiftEffectRadiation2009,vasyMicrolocalAnalysisAsymptotically2013}.

To capture the weighted hierarchy near the horizon $\EventHorizon$, we consider the natural boundary-defining function for $\EventHorizon=\{\rhoH=0\}$ given by 
\[\rhoH\vcentcolon= r-M.\]
Consider first the case of non-rotating extremal black holes.
For $a=0$, the inverse \eRN{} metric can be written in terms of the ingoing Eddington-Finkelstein coordinates as
\begin{equation}\label{eq:form-inverse-metric-eRN-intro}
  r^2\Metric^{-1}_{\operatorname{eRN}}
  = 2r^2\partial_v\partial_{\rhoH} + (\rhoH\partial_{\rhoH})^2 + \partial_\th^2 +\frac{1}{\sin^2\th} \partial_{\phi}^2, 
\end{equation}
where $v = t+r_{*}$ is the advanced tortoise time coordinate.
Comparing the expression for $r^2\Metric^{-1}_{\operatorname{eRN}}$ with the one in \zcref{eq:expression-inverse-metric-minkowski-intro}, one can notice that the \eRN{} inverse metric displays at the horizon the
same $b$-structure as the
asymptotically flat end at null infinity\footnote{This is
  also suggested by the Couch-Torrence transform \cite{couchConformalInvarianceSpatial1984}.}. This structure
appeared in \cite{angelopoulosLatetimeAsymptoticsWave2020}, which used
$(r-M)^{-p}\partial_r = \rhoH^{-p}\partial_{\rhoH}$ as a vectorfield
multiplier to show that a hierarchy of $(r-M)^{-p}$-weighted estimates
also holds for the wave equation on \eRN, that is, roughly\footnote{In our notation, $\a=1-p$, resulting the familiar range of $p\in(0,2)$.},
\begin{equation*}
  \norm*{\psi}_{\bSobolevH{s, -\frac{\alpha}{2}}(\Manifold_{\rhoH\le \rho_0}(\tau_1,\tau_2))}
  \lesssim \norm*{\Box_{\operatorname{eRN}}\psi}_{\bSobolev{s, -\frac{\alpha}{2}}(\Manifold_{\rhoH\le \rho_0}(\tau_1,\tau_2))},\qquad \alpha\in (-1,1),
\end{equation*}
where $\bSobolevH{s, \alpha}$ now represent the $b$-Sobolev spaces with respect to the event horizon rather
than null infinity.

 Unlike in the
asymptotically flat end, the passage from the spherically symmetric
\eRN{} spacetime (for $a=0$) to the axisymmetric \eKN{} spacetime (even for small $\frac{|a|}{M}$) has a large
effect on the $b$-structure of the wave equation at the horizon. To see
this, observe that the inverse \eKN{} metric can be written in \iEF{} coordinates as
\begin{equation*}
 (r^2+a^2\cos^2\th)\Metric^{-1}_{\mathrm{eKN}}
  ={} 2\left( (r^2+a^2)\partial_v + a\partial_{\phi} \right)\partial_{\rhoH}
  + (\rhoH\partial_{\rhoH})^2
  + \partial_{\theta}^2 + \big(a\sin\theta\partial_v + \frac{1}{\sin\theta}\partial_{\phi}\big)^2.
\end{equation*}
Comparing the expression for $\Metric^{-1}_{\operatorname{eKN}}$ with the one in \zcref{eq:form-inverse-metric-eRN-intro}, one sees that
in this case the $O(a)$ contributions to the inverse metric are no
longer lower-order, and in fact, at the horizon,
$a\partial_{\phi}\partial_{\rhoH}$ is the dominant term in the
stationary part of the inverse metric. This reflects the fact that
while \eKN{} is asymptotically flat, i.e. asymptotically Minkowski at
null infinity, \eKN{} is \emph{not} asymptotically \eRN{} at the extremal
event horizon. 

This is the main difficulty and the main novel structure in
\eKN. While the dominance of the $a\partial_{\phi}\partial_{\rhoH}$
term in the inverse metric in \eKN{} is problematic in general, for $\abs*{\frac{a}{M}}\ll1$, this can be overcome by giving
up a small amount of the hierarchy\footnote{In general, for $\frac{|a|}{M}<\frac 1 2$ the hierarchy holds for $\alpha \in (-1+4\frac{a^2}{M^2}, 1-4\frac{a^2}{M^2})$, see already \zcref{prop:horizon-weighted-estimate}.}; that is, by proving the hierarchy
of weighted estimates of the form
\begin{equation}
  \label{eq:intro:eKN-boundary-hierarchy}
  \norm*{\psi}_{\bSobolev{s, -\frac{\alpha}{2}}(\Manifold(\tau_1,\tau_2))}
  \lesssim \norm*{\Box_{\operatorname{eKN}}\psi}_{\bSobolev{s, -\frac{\alpha}{2}}(\Manifold(\tau_1,\tau_2))},\qquad \alpha\in (-1+\delta(a), 1-\delta(a)),
\end{equation}
where the range of weights is restricted by $O(a)$ compared to the
range of weights on \eRN.  Even though we do not expect the range
of weights that we prove to be sharp, it does appear that outside of symmetry assumptions, the
range of viable weights for the horizon boundary-weighted hierarchy is
restricted for \eKN{} compared to \eRN{} due to the dominance of the
rotation terms. This phenomenon of restriction in the available range of the horizon hierarchy was previously observed for azimuthal modes in extremal Kerr in \cite{gajicAzimuthalInstabilitiesExtremal2023}.

\subsubsection{Trapping and its interaction with the boundary hierarchy}

Another key geometric obstacle to decay for
waves on \eKN{} spacetimes is the family of trapped
null geodesics. Nonetheless, since the trapped set is normally
hyperbolic \cite{dyatlovSpectralGapsNormally2016}, a local pseudo-differential modification of the usual
vectorfield method can be used to prove an integrated local energy
decay, or Morawetz, estimate near trapping
\cite{tataruLocalEnergyEstimate2011}. A global estimate however, is
still required to connect the local estimates near trapping to the
boundary-weighted hierarchy of estimates discussed in the previous
section.

To this end, we prove an integrated local energy decay inequality
controlling a nondegenerate set of derivatives away from the trapped
set together with a degenerate control at the extremal horizon. We first construct, following the approach in
\cite{giorgiPhysicalspaceEstimatesAxisymmetric2024}, a Morawetz
multiplier that would be suitable for proving an integrated local
energy decay estimate for axisymmetric solutions.  This is achieved by
using as a multiplier a radial vector field
$X_{ax}=\mathcal{F}(r)\partial^{\mathrm{BL}}_r$ in Boyer-Lindquist
coordinates together with a Lagrangian correction $w_{ax}$ and a
one-form $J_{ax}$ to obtain a positive bulk term controlling, outside trapping,
\[
 \Big( r^{-3}|\That\psi|^2
  +\frac{\rho_{\HH}^4}{r^7}|\partial^{\mathrm{BL}}_r\psi|^2
  +\frac{\rho_{\HH}}{r^2}|\nabla\psi|^2
  +\frac{\rho_{\HH}^2}{r^6}|\psi|^2\Big) - O(|a|r^{-3})\That\psi\,\partial_\phi\psi,
\]
where $\That=\partial_t +\frac{a}{r^2+a^2}\partial_\phi$,
modulo the mixed term in $\That\psi\,\partial_\phi\psi$ and a
localized negative zeroth-order contribution supported very close to
the horizon.  

Away from both the horizon and from trapping, the mixed term is
perturbative for $|a|/M\ll1$. However, near the horizon, the mixed
term does not vanish at the event horizon\footnote{Equivalently, it lives in the weighted $b$-Sobolev space
$\bSobolev{1, 0}(\Manifold(\tau_1,\tau_2))$.} and therefore cannot be absorbed by all members of the horizon-weighted hierarchy. In particular, in the notation of
\zcref{eq:intro:eKN-boundary-hierarchy}, the mixed term dominates over
the boundary-weighted hierarchy for $\alpha>0$. Similarly, the localized zeroth-order term cannot be controlled by the boundary-weighted hierarchy for
$\alpha>0$. To resolve both of these issues at once, we use a
distinguished member of the horizon-weighted hierarchy, corresponding to $\alpha=-\de_1$ for $\de_1\ll 1$, as a
\emph{degenerate redshift estimate}, using the fact that
\begin{equation*}
  \int_{\Manifold_{\rhoH\le \rho_0}(\tau_1,\tau_2)}|\That\psi|\abs*{\partial_{\phi}\psi} + \abs*{\psi}^2
  \lesssim \norm*{\psi}_{\bSobolev{1, -\frac{\alpha}{2}}(\Manifold(\tau_1,\tau_2))}^2, \qquad \alpha \in (-1+\delta(a), -\de_1).
\end{equation*}
As a result, in our final estimate, we lose access to roughly
half the range of weights for the horizon boundary-weighted
hierarchy.
While our proof encounters these error terms, we do not expect that
these error terms are strictly necessary. Indeed, a more refined proof
for integrated local energy decay, which yields error terms that only
degenerate more strongly at the horizon, could avoid the need for a
separate degenerate redshift estimate.

\subsubsection{Horizon instability vs time decay}

Since the admissible range of horizon weights is effectively reduced,
the standard energy-flux and Sobolev argument yields only a weak
inverse-polynomial decay rate, of order
$\tau^{-(1-\delta(a)-\delta_1)/2}$, for the solution throughout the
exterior, up to and including the event horizon. Although we do not expect this decay rate to be sharp, even such weak decay along the horizon is sufficient, in conjunction with the Aretakis conservation law, to recover the horizon instability.
 Indeed, the conservation of a
quantity roughly of the form $\partial_r(r\psi)$, together with decay
of $\psi$ along the event horizon, implies that the ingoing
transversal derivative $\partial_r\psi$ generically fails to decay
there. Higher transversal derivatives then grow polynomially in time
by commuting with the wave equation.

\subsection{Outline of the paper}

We provide here an outline of the remainder of the paper.

\begin{itemize}
  \item In \zcref{sec:preliminaries} we introduce the necessary preliminaries on extremal Kerr--Newman spacetimes and the scalar wave equation.
  \item In \zcref{sec:main-theorem} we define the relevant energy norms and state the main theorem and pointwise decay and prove the horizon instability.
  \item In \zcref{sec:hierarchy} we prove the weighted hierarchies near the event horizon and null infinity.
  \item In \zcref{sec:Morawetz} we establish the energy and Morawetz estimates.
  \item In \zcref{sec:proof-main-theorem} we complete the proof of the main theorem.
\end{itemize}

\subsubsection*{Acknowledgments}

We are grateful to Dejan Gajic and Georgios Moschidis for helpful discussions on weighted hierarchies, and to Marios Apetroaie for useful discussions on horizon instability.

A.J.F. acknowledges  support from the Deutsche
Forschungsgemeinschaft (DFG, German Research Foundation) through
Germany’s Excellence Strategy EXC 2044/2 390685587, Mathematics
M\"{u}nster: Dynamics–Geometry–Structure, from the Alexander von
Humboldt Foundation in the framework of the Alexander von Humboldt
Professorship endowed by the Federal Ministry of Education and
Research, and from NSF award DMS-2303241. E.G. acknowledges the support of NSF Grants DMS-2306143, DMS-2336118 and of a grant of the Sloan Foundation. J.W. is supported by ERC-2023 AdG 101141855 BlaHSt.

\section{Preliminaries}\label{sec:preliminaries}

In this section we present the necessary preliminaries. \zcref{sec:eKN} reviews the main properties of the extremal Kerr–Newman spacetime, and \zcref{sec:wave-eq} summarizes basic results on the wave equation. \zcref{sec-AB} contains the definitions and computations required for the classical and pseudodifferential vector field methods, respectively.

\subsection{Extremal Kerr--Newman spacetime}
\label{sec:eKN}

The Kerr--Newman family consists of stationary, rotating, charged
black holes of mass $M$, angular momentum $Ma$ and charge $e$, solutions
to the Einstein-Maxwell equations, satisfying $a^2+e^2 \leq M^2 $. If
$a^2+e^2=M^2$, the metric describes the extremal Kerr--Newman black
hole.

The main ingredients recalled in this section are the coordinate systems regular at the event horizon and null infinity, the associated boundary-defining functions and conormal spaces, and the geometric quantities governing timelike multipliers and trapping.

\subsubsection{The metric in Boyer-Lindquist coordinates}\label{sec:metric}

For $a^2+e^2 \leq M^2 $, the Kerr--Newman metric in Boyer-Lindquist
coordinates
$(t, r, \th, \phi) \in \mathbb{R} \times (r_{+}, \infty) \times
\Sphere^2$ takes the form
\begin{equation}\label{metric-KN}
  \g_{\mathrm{BL}}
  =-\frac{\Delta}{|q|^2}\left( dt- a \sin^2\th d\phi\right)^2+\frac{|q|^2}{\Delta}dr^2+|q|^2d\th^2+\frac{\sin^2\th}{|q|^2}\left(a dt-(r^2+a^2) d\phi \right)^2,
\end{equation}
where
\begin{align*}
  \Delta \vcentcolon={}& r^2-2Mr+a^2+e^2=(r-r_{+}) (r-r_{-}), \qquad
                         |q|^2\vcentcolon=r^2+a^2\cos^2\th,
\end{align*}
and $r_{\pm}=M\pm \sqrt{M^2-a^2-e^2}$. In the case of extremal
Kerr--Newman spacetimes, we have
\begin{align*}
  \Delta=(r-M)^2,
\end{align*}
and the roots of $\Delta=0$ degenerate to $r_{+}=r_{-}=M$. The event
horizon is defined by $\HH=\{ r=M\}$ and the exterior region is given
by $\mathcal{D}\vcentcolon= \{ r> M\}$.  We denote $\MM=\{ r \geq M\}$
the exterior region together with the future event horizon.

The inverse metric in \BL{} coordinates takes the form
\begin{equation}
  \label{eq:inverse-metric-BL}
  |q|^2\g_{\mathrm{BL}}^{\a\b}
  = \Delta \partial_r^\a \partial_r^\b+\frac{1}{\Delta} \RR^{\a\b},
\end{equation}
where
\begin{align}
  \RR^{\a\b}
  ={}&  -(r^2+a^2)^2 \partial_t^\a \partial_t^\b-2a(r^2+a^2)\partial_t^{(\a} \partial_\phi^{\b)}-a^2  \partial_\phi^\a \partial_\phi^\b+ \Delta O^{\a\b}, \label{eq:RR-def}\\
  O^{\a\b}
  ={}& \partial_\th^\a  \partial_\th^\b  +\frac{1}{\sin^2\th} \partial_{\phi}^\a \partial_{\phi}^\b+2a\partial_t^{(\a} \partial_\phi^{\b)}+a^2 \sin^2\th \partial_t^\a \partial_t^\b, 
\end{align}
where $O^{\alpha \beta}$ is the positive-definite angular part of the
conformally rescaled inverse metric.  We denote
\begin{equation*}
  O^{\a\b}(\pr_\a \psi )(\pr_\b \psi )
  ={}|\partial_\theta \psi|^2+\big|\frac{1}{\sin\theta} \partial_\phi \psi + a\sin\theta \partial_t \psi\big|^2=\vcentcolon|\nabb \psi|^2,
\end{equation*}
and also $|q|^2|\nab \psi|^2 \vcentcolon=|\nabb \psi|^2$.
In what
follows, we will also use the shorthand notation
\begin{equation*}
 \renormpphi\vcentcolon=\frac{1}{\sin\th}\pr_\phi + a\sin\th \pr_t.
\end{equation*}

Boyer-Lindquist coordinates are singular at the horizon. We therefore
introduce the ingoing Eddington-Finkelstein coordinates, used near the
event horizon $\EventHorizon$, and the outgoing Eddington-Finkelstein
coordinates, used near null infinity $\NullInfinity$.  When there is
risk of ambiguity we will denote the coordinate vectorfields in \BL{}
coordinates as $\pr_t^{\mathrm{BL}}$, $\pr_r^{\mathrm{BL}}$,
$\pr_\th^{\mathrm{BL}}$, $\pr_\phi^{\mathrm{BL}}$.

\subsubsection{The metric in ingoing and outgoing Eddington-Finkelstein charts}

To remove the coordinate singularity in \zcref{metric-KN} at $\De=0$
describing the black hole event horizon, one can define the functions
\begin{align*}
  r^*=\int \frac{r^2+a^2}{\De}, \qquad \phiIEF=\phi+ \int \frac{a}{\De}, \qquad v=t+r^{*}
\end{align*}
and obtain the Kerr--Newman metric in the ingoing Eddington-Finkelstein
(iEF) coordinates
$(v, r, \th, \phiIEF)\in \mathbb{R} \times [r_{+}, \infty) \times
\Sphere^2$:
\begin{equation*}
  \begin{split}
    \g_{\mathrm{iEF}}
    ={}&- \frac{\De-a^2\sin^2\th}{\abs*{q}^2}dv^2
         + 2 dv dr -\frac{2a\sin^2\th\left(  (r^2+a^2)-\De\right)}{\abs*{q}^2} dv d\phiIEF \\
       &-2a\sin^2\th dr d\phiIEF +\abs*{q}^2 d\th^2 +\frac{\sin^2\th}{\abs*{q}^2}\left((r^2+a^2)^2-\De a^2\sin^2\th \right)(d\phiIEF)^2,
  \end{split}
\end{equation*}
which is regular at the horizon.  The inverse metric in iEF
coordinates takes the form
\begin{equation}
  \label{eq:inverse-metric:ingoing-EF}
  \abs*{q}^2\Metric^{-1}_{\mathrm{iEF}}
  ={} 2\left( (r^2+a^2)\partial_v + a\partial_{\phiIEF} \right)\partial_r + \Delta\partial_r^2 + \partial_{\theta}^2 + \left(a\sin\theta\partial_v + \frac{1}{\sin\theta}\partial_{\phiIEF}\right)^2.
\end{equation}
The coordinate vectorfields in iEF coordinates $\pr_v^{\mathrm{iEF}}$, $\pr_r^{\mathrm{iEF}}$, $\pr_\th^{\mathrm{iEF}}$, $\pr_{\phiIEF}^{\mathrm{iEF}}$ are related to the coordinate vectorfields in \BL{} coordinates:
\begin{align}\label{eq:coord-change1}
  \pr_r^{\mathrm{BL}} =\pr_r^{\mathrm{iEF}} +\frac{r^2+a^2}{\De}\,\pr_v^{\mathrm{iEF}} +\frac{a}{\De}\,\pr_{\phiIEF}^{\mathrm{iEF}}, \qquad 
  \pr_r^{\mathrm{iEF}} =\pr_r^{\mathrm{BL}} -\frac{r^2+a^2}{\De}\,\pr_t^{\mathrm{BL}} -\frac{a}{\De}\,\pr_\phi^{\mathrm{BL}} ,
\end{align}
while $\pr_t^{\mathrm{BL}}=\pr_v^{\mathrm{iEF}}$, $\pr_\phi^{\mathrm{BL}}=\pr_{\phiIEF}^{\mathrm{iEF}}$, $\pr_\th^{\mathrm{BL}}=\pr_\th^{\mathrm{iEF}}$. Henceforth we denote the coordinate vectorfields in iEF as $\pr_v$, $\pr_r^{\mathrm{iEF}}$, $\pr_\th$, $\pr_{\phiIEF}$.

Defining the outgoing Eddington-Finkelstein (oEF) coordinates $(u, r, \th, \phiOEF)\in \mathbb{R} \times [r_{+}, \infty) \times \Sphere^2$ with
\begin{equation*}
  r^{*} = \int \frac{r^2+a^2}{\Delta}, \qquad \phiOEF = \phi - \int \frac{a}{\Delta}, \qquad u = t - r^{*},
\end{equation*}
the metric and inverse metric take the form
\begin{align}
  \Metric_{\mathrm{oEF}}
  ={}& - \frac{\Delta - a^2\sin^2\theta}{\abs*{q}^2} du^2
       - 2dudr
       - \frac{2a\sin^2\theta\left((r^2+a^2)-\Delta\right)}{\abs*{q}^2}dud\phiOEF \nonumber\\
     & +2a\sin^2\th dr d\phiOEF
       +\abs*{q}^2 d\th^2
       +\frac{\sin^2\th}{\abs*{q}^2}\left((r^2+a^2)^2-\De a^2\sin^2\th \right)(d\phiOEF)^2, \nn\\
  \abs*{q}^2\Metric^{-1}_{\mathrm{oEF}}
  ={}& -2\left( (r^2+a^2)\partial_u + a\partial_{\phiOEF} \right)\partial_r
       + \Delta\partial_r^2
       + \partial_{\theta}^2
       + \left(a\sin\theta\partial_u  + \frac{1}{\sin\theta}\partial_{\phiOEF}\right)^2.\label{eq:inverse-metric-oEF}
\end{align}
This chart is adapted at future null infinity and is convenient for describing asymptotics for large $r$.
The coordinate vectorfields in oEF coordinates $\pr_u^{\mathrm{oEF}}$, $\pr_r^{\mathrm{oEF}}$, $\pr_\th^{\mathrm{oEF}}$, $\pr_{\phiOEF}^{\mathrm{oEF}}$ are related to the coordinate vectorfields in \BL{} coordinates:
\begin{align}\label{eq:relation-BL-oEF}
  \pr_r^{\mathrm{BL}}
  =\pr_r^{\mathrm{oEF}}
  -\frac{r^2+a^2}{\De}\,\pr_u^{\mathrm{oEF}}
  -\frac{a}{\De}\,\pr_{\phiOEF}^{\mathrm{oEF}}, \qquad 
  \pr_r^{\mathrm{oEF}}
  =\pr_r^{\mathrm{BL}}
  +\frac{r^2+a^2}{\De}\,\pr_t^{\mathrm{BL}}
  +\frac{a}{\De}\,\pr_\phi^{\mathrm{BL}} 
\end{align}
while $\pr_t^{\mathrm{BL}}=\pr_u^{\mathrm{oEF}}$,
$\pr_\phi^{\mathrm{BL}}=\pr_{\phiOEF}^{\mathrm{oEF}}$,
$\pr_\th^{\mathrm{BL}}=\pr_\th^{\mathrm{oEF}}$.
Henceforth we denote the coordinate vectorfields in oEF as $ \pr_u$, $\pr_r^{\mathrm{oEF}}$, $\pr_\th$, $\pr_{\phiOEF}$.

\subsubsection{Boundary-defining functions and conormal spaces}

It will be convenient for us to define the following boundary-defining
function for $\EventHorizon$:
\begin{equation*}
  \rho_{\EventHorizon}\vcentcolon= r-M.
\end{equation*}
The function $\rho_{\EventHorizon}$ can be used as an alternative radial coordinate in the iEF coordinate system $(v, \rho_{\EventHorizon}, \theta, \phiIEF)$ since $\partial_{\rho_{\EventHorizon}} = \partial_r^{\mathrm{iEF}}$. This viewpoint lets us regard the horizon as a boundary hypersurface and measure regularity using vector fields tangent to that boundary.

Similarly, it will be convenient to define the following boundary-defining function for $\II$:
\begin{equation*}
  \rho_{\II} \vcentcolon= r^{-1}. 
\end{equation*}
The function $\rho_{\II}$ can be used as an alternative radial coordinate in the oEF coordinate system $(u, \rho_{\II}, \theta, \phiOEF)$.
Observe that in particular 
\begin{equation*}
  \partial_{r}^{\mathrm{oEF}} = -\rhoI^2\partial_{\rhoI},\qquad
  r\partial_{r}^{\mathrm{oEF}} = -\rhoI\partial_{\rhoI}.
\end{equation*}

For $\Delta=(r-M)^2$, using the boundary defining function $\rho_{\EventHorizon}$, we can write the inverse metric \zcref{eq:inverse-metric:ingoing-EF} as 
\begin{equation}\label{eq:inverse-iEF-rho}
  \abs*{q}^2\Metric_{\mathrm{iEF}}^{-1} =
  2\left((r^2+a^2)\partial_v + a\partial_{\phiIEF}\right)\partial_{\rho_{\EventHorizon}} + (\rho_{\EventHorizon}\partial_{\rho_{\EventHorizon}})^2 + \partial_{\theta}^2 + \left(a\sin\theta\partial_v + \frac{1}{\sin\theta}\partial_{\phiIEF}\right)^2,
\end{equation}
and using the boundary defining function $\rhoI$, we can write the inverse
metric \zcref{eq:inverse-metric-oEF} as 
\begin{equation}\label{eq:inverse-oEF-rho}
  \abs*{q}^2\Metric^{-1}_{\mathrm{oEF}}
  ={} 2\left( \frac{r^2+a^2}{r^2}\partial_u + \frac{a}{r^2}\partial_{\phiOEF} \right)\partial_{\rho_{\II}}
  +  \Upsilon\left( \rho_{\II}\partial_{\rho_{\II}} \right)^2
  + \partial_{\theta}^2
  + \left(a\sin\theta\partial_u  + \frac{1}{\sin\theta}\partial_{\phiOEF}\right)^2, 
\end{equation}
where $\Upsilon\vcentcolon= \frac{\Delta}{r^2}$.

We observe that
\begin{itemize}
\item $\rhoH\partial_{\rhoH}, \partial_v, \partial_{\theta}, \partial_{\phiIEF}$ span\footnote{Up to singularities at the north/south poles of the spheres, where a different frame on the spheres is needed.}, the space $\mathcal{V}_{b,\EventHorizon}(\Manifold)$ of all smooth vector fields on $\Manifold$ which are tangent to the boundary $\EventHorizon$ of $\Manifold$,
\item $\rhoI\partial_{\rhoI}, \partial_u, \partial_{\theta},
  \partial_{\phiOEF}$ span the space
  $\mathcal{V}_{b,\NullInfinity}(\Manifold)$ of all smooth vector fields
  on $\Manifold$ which are tangent to the boundary $\NullInfinity$ of
  $\Manifold$. 
\end{itemize}

For $m\in \mathbb{N}$, we will use $\bDiffH^m(\Manifold)$ (or
$\bDiffI^m(\Manifold)$) to denote the space consisting of all finite
sums of up to $m$-fold products of members of
$\mathcal{V}_{b,\EventHorizon}(\Manifold)$ (or
$\mathcal{V}_{b,\NullInfinity}$ respectively). We write
$\bDiffH(\Manifold)=\cup_{m\geq 0} \bDiffH^m(\Manifold)$ and similarly
for $\bDiffI(\Manifold)$. We also define
\[\bDiffHI^m(\Manifold)=\bDiffH^m(\Manifold\cap \{r \leq 4M\}) \cup \bDiffI^m(\Manifold\cap \{ r \geq 4M\}).\]

For $\alpha\in \Real$, we define the \emph{conormal (to
  $\EventHorizon$) space}
$\conormalSpaceH{\alpha}(\Manifold) =
\rhoH^{\alpha}\conormalSpaceH{0}(\Manifold)$ to consist of all
$u\in \rhoH^{\alpha}L^{\infty}(\Manifold)$ such that
$Au\in \rhoH^{\alpha}L^{\infty}(\Manifold)$ for all
$A\in \bDiffH(\Manifold)$.  Similarly, for $\alpha\in \Real$, we
define the \emph{conormal (to $\NullInfinity$) space}
$\conormalSpaceI{\alpha}(\Manifold) =
\rhoI^{\alpha}\conormalSpaceI{0}(\Manifold)$ to consist of all
$u\in \rhoI^{\alpha}L^{\infty}(\Manifold)$ such that
$Au\in \rhoI^{\alpha}L^{\infty}(\Manifold)$ for all
$A\in \bDiffI(\Manifold)$.  Thus
$u \in \conormalSpaceH{\alpha}(\Manifold)$ (or
$u \in \conormalSpaceI{\alpha}(\Manifold)$) means that $u$ has size
$\rhoH^{\alpha}$ (or $\rhoI^{\alpha}$ respectively) and remains of the
same size after repeated differentiation by vector fields tangent to
the horizon (or null infinity respectively).

To help us keep track of the behavior of
global functions at both boundaries, we will denote by
$\conormalSpaceHI{\alpha, \beta}(\Manifold) =
\conormalSpaceH{\alpha}(\Manifold)\cap \conormalSpaceI{\beta}(\Manifold)$.
These spaces are natural for tracking polyhomogeneous behavior at the horizon and null infinity.
We record the following elementary properties:
\begin{enumerate}
\item Action by differential operators: if $V\in \rhoH^a\bDiffH\bigcap \rhoI^b\bDiffI$ and $u\in \conormalSpaceHI{\alpha,\beta}$, then $Vu\in \conormalSpaceHI{\alpha+a, \beta+b}$,
\item Product of conormal spaces: if $u \in \conormalSpaceHI{\alpha_1,\beta_1}, v\in\conormalSpaceHI{\alpha_2, \beta_2}$, then $uv\in \conormalSpaceHI{\alpha_1+\alpha_2,\beta_1+\beta_2}$,
\item Addition of conormal spaces: if $u \in \conormalSpaceHI{\alpha_1,\beta_1}, v\in\conormalSpaceHI{\alpha_2, \beta_2}$, then $u+v\in \conormalSpaceHI{\min\{\alpha_1,\alpha_2\},\min\{\beta_1,\beta_2\}}$.
\end{enumerate}

\subsubsection{Killing vectorfields}
The coordinate vectorfields
\begin{align*}
  T\vcentcolon=\partial_t=\partial_v=\partial_u, \qquad \Phi\vcentcolon=\partial_\phi^{\mathrm{BL}}=\partial_{\phiIEF}=\partial_{\phiOEF}
\end{align*}
are manifestly Killing for the Kerr--Newman metric. 
The stationary Killing vectorfield $T$ is asymptotically timelike as $r \to \infty$, and spacelike close to the horizon, in the ergoregion $\{ \Delta - a^2\sin^2\th <0\}$.
The vectorfield 
\begin{equation*}
  \That\vcentcolon=T+\frac{a}{r^2+a^2} \Phi
\end{equation*}
satisfies $\g(\That, \That)=-\frac{\Delta|q|^2}{(r^2+a^2)^2}$, so it is timelike in the exterior region $\mathcal{D}$ and null on the horizon $\HH$. This is the natural timelike combination of the stationary and axial Killing fields in the exterior and will serve as the model timelike multiplier away from the horizon.

We use the following shorthanded notations for the rescaled vector fields
\begin{align}\label{eq:def-VHH-VII}
  \HawkingHorizon\vcentcolon=(r^2+a^2)T +a\Phi=(r^2+a^2)\That, \qquad \InfinityHawking\vcentcolon=\frac{(r^2+a^2)}{r^2}T +\frac{a}{r^2}\Phi =\frac{r^2+a^2}{r^2}\That ,
\end{align}
where the weights are chosen so that $\HawkingHorizon$ is smooth at the horizon and $\InfinityHawking$ is the version adapted to null infinity.
From \zcref{eq:coord-change1} and \zcref{eq:relation-BL-oEF}, we have
\begin{align}\label{eq:relation-BL-iEF}
  \pr_r^{\mathrm{BL}}
  =\pr_r^{\mathrm{iEF}}
  +\rhoH^{-2}\HawkingHorizon, \qquad \pr_r^{\mathrm{BL}}
  =\pr_r^{\mathrm{oEF}}
  -\Upsilon^{-1}\InfinityHawking.
\end{align}

We define the Hawking Killing vectorfield 
\begin{align*}
  \That_{\HH}\vcentcolon=T+\frac{a}{r_{+}^2+a^2} \Phi,
\end{align*}
which satisfies
\begin{align*}
  \g(\That_{\HH}, \That_{\HH}) 
  ={}&  -\frac{\Delta}{|q|^2}\frac{(r_{+}^2+a^2\cos^2\th)^2}{(r_{+}^2+a^2)^2}+\frac{a^2\sin^2\theta}{|q|^2(r_{+}^2+a^2)^2}\big(r^2- r_{+}^2 \big)^2 \leq  \frac{-\Delta r_{+}^4+a^2 \big(r^2- r_{+}^2 \big)^2}{|q|^2(r_{+}^2+a^2)^2}.
\end{align*}
In extremal Kerr--Newman,
$-\Delta r_{+}^4+a^2 \big(r^2- r_{+}^2 \big)^2=-(r-M)^2\big(
M^4-a^2(r+M)^2\big)$, so for $\frac{|a|}{M}\ll 1$, the Hawking
vectorfield $\That_{\HH}$ is timelike sufficiently close to the event
horizon. We record this because it provides a timelike multiplier up
to the horizon in the slowly rotating regime.

\subsubsection{Trapped null geodesics}\label{section:trapped-null-geodesics}

Since the Morawetz vector field will be designed to degenerate
precisely at trapping, we record the form of trapped null geodesics
here.

Let $\ga(\la)$ be a null geodesic in Kerr--Newman spacetime. Using the
expression for the inverse of the metric given by
\zcref{eq:inverse-metric-BL}, along $\ga(\la)$, since
$\g(\gadot, \gadot)=0$ we have, with $\gadot_r=\pr_r^\a \gadot_\a$,
$\gadot_t=\pr_t^\a \gadot_\a$, $\gadot_\phi=\pr_\phi^\a \gadot_\a$
\begin{align*}
  0= |q|^2 \g^{\a\b} \gadot_\a \gadot_\b=\big( \De \pr_r^\a \pr_r^\b +\frac{1}{\De}\RR^{\a\b}\big)\gadot_\a \gadot_\b=
  \De\gadot_r \gadot_r+ \frac{1}{\De} \RR^{\a\b}\gadot_\a \gadot_\b
\end{align*}
with
\begin{equation}\label{eq:RR-ab-geodesics}
  \RR^{\a\b}\gadot_\a \gadot_\b= -(r^2+a^2) ^2\gadot_t \gadot_t- 2a(r^2+a^2) \gadot_t \gadot_\phi- a^2  \gadot_\phi \gadot_\phi  +\De O^{\a\b} \gadot_\a \gadot_\b.
\end{equation}

Since $T$ and $\Phi$ are Killing vectorfields we deduce that
$\gadot_t=\g(\gadot, T)$ and $\gadot_\phi= \g(\gadot, \Phi)$ are
constants of motion, i.e. constants along $\ga$, respectively called
the energy and the azimuthal angular momentum. We write
$\e\vcentcolon=-\g(\gadot, T)$ and $\lz=-\g(\gadot, \Phi)$. With this
convention, $\e$ is positive for future-directed null geodesics in the
asymptotically flat region.  We also define\footnote{Observe that
  $\k^2$ is a positive constant of motion by definition of $K$. }
$\k^2\vcentcolon=K^{\a\b} \gadot_\a\gadot_\b$, where $K$ denotes the
Carter Killing tensor in Kerr--Newman. Since $K$ is a Killing tensor,
$\k^2$ is also a constant of motion.

With these constants from \zcref{eq:RR-ab-geodesics}  we have
\begin{align*}
  \RR(r;   \e,\lz, \k^2)\vcentcolon=\RR^{\a\b}\gadot_\a \gadot_\b ={}& -(r^2+a^2) ^2 \e^2- 2a(r^2+a^2) \e  \lz - a^2  \lz^2  +\De \k^2\\
  ={}&-\big(( r^2+a^2)\e + a \lz\big)^2+\De\k^2,
\end{align*}
which is only a function of $r$ along  any fixed  null geodesic. 
Going back to the equation for null geodesics  we  infer that
\begin{align*}
  \De\Big(\frac{dr}{d\la} \Big)^2  =-\RR(r;  \e,\lz, \k^2),
\end{align*}
which is the radial equation for a null geodesic with constants of motion
$\e$, $\lz$ $\k^2$.

There exist null geodesics along which $\RR(r; \e,\lz, \k^2)=0$
i.e. $r$ remains constant. These are called spherical null geodesics, or
trapped null geodesics.  The $r$ values for which such solutions are
possible must then verify the equations
\begin{align*}
  \RR(r;  \e,\lz, \k^2)=\pr_r\RR(r;  \e,\lz, \k^2)=0.
\end{align*}

\begin{lemma}[Trapped null geodesics equation]\label{lemma:trapped-geodesics}
  All spherical null geodesics in extremal Kerr--Newman spacetime are
  either at the event horizon or are given by the equation
  \begin{equation}\label{eq:trapped-region-KN}
    \widetilde{\TT}_{\e, \lz}\vcentcolon= ( r^2-2Mr-a^2)\e - a \lz=0,
  \end{equation}
  which has a unique root $\rTrapping(\e, \lz)>M$.  Moreover, the
  exterior spherical null geodesics are unstable, in the sense that
  $\pr^2_r\RR(r; \e,\lz, \k^2) \leq 0$ at such orbits.
\end{lemma}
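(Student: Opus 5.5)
The plan is to treat the two conditions $\RR=\pr_r\RR=0$ as an algebraic system in $r$. Write
\[
\mathcal{A}(r)\vcentcolon=(r^2+a^2)\e+a\lz,\qquad \RR=-\mathcal{A}^2+(r-M)^2\k^2,
\]
so that
\[
\pr_r\RR=-4r\e\,\mathcal{A}+2(r-M)\k^2,\qquad \pr_r^2\RR=-4\e\,\mathcal{A}-8r^2\e^2+2\k^2 .
\]
If $r=M$, we are at the event horizon; this is the first alternative of the lemma. So assume $r>M$. Then $\RR=0$ gives $\k^2=\mathcal{A}^2/(r-M)^2$. Substituting into $\pr_r\RR=0$ yields
\[
\mathcal{A}\Big(\frac{\mathcal{A}}{r-M}-2r\e\Big)=0 .
\]
This produces two cases.

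\emph{Case $\mathcal{A}=0$.} Then $\k^2=0$. I would use that the Carter constant dominates the angular part, $\k^2\ge O^{\a\b}\gadot_\a\gadot_\b=\gadot_\th^2+(\lz/\sin\th+a\sin\th\,\e)^2$, with equality up to the paper's normalization of $K$. Hence $\gadot_\th=0$ and $\lz=-a\sin^2\th\,\e$. Inserting this into $\mathcal{A}=0$ gives $\e\,(r^2+a^2\cos^2\th)=0$, so $\e=\lz=0$. The radial equation then forces $\gadot_r=0$, so $\gadot=0$, which is not a genuine null geodesic. This case is therefore excluded.

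\emph{Case $\mathcal{A}=2r(r-M)\e$.} Expanding gives $(r^2+a^2)\e+a\lz-2r^2\e+2Mr\e=0$, which is exactly $\widetilde{\TT}_{\e,\lz}=0$. Note that $\e=0$ would force $a\lz=0$ and hence $\mathcal{A}=0$, which is the excluded case. So $\e\neq0$, and the equation is the quadratic
\[
(r-M)^2=M^2+a^2+a\lz/\e .
\]
Its roots are $M\pm\sqrt{M^2+a^2+a\lz/\e}$, so at most one root exceeds $M$; this gives uniqueness of $\rTrapping(\e,\lz)$. For existence I must show the radicand is positive. Here I would again use the constraint $\k^2=4r^2\e^2\ge\min_\th(\lz/\sin\th+a\e\sin\th)^2$. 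When $a\e\lz\le 0$, the radicand is at least $M^2+a^2-\abs{a\lz/\e}$ and needs a separate check. I expect this to be the main obstacle: it requires showing that trapped parameters are compatible with nonnegativity of the angular part, possibly using the slow rotation assumption $\frac{|a|}{M}\ll1$. If this does not close cleanly, I would phrase the claim as "whenever a spherical null geodesic exists in $\{r>M\}$, $r$ is the unique root $>M$."

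\emph{Instability.} Finally, evaluate $\pr_r^2\RR$ on such an orbit, using $\mathcal{A}=2r(r-M)\e$ and $\k^2=4r^2\e^2$:
\[
\pr_r^2\RR=-8r(r-M)\e^2-8r^2\e^2+8r^2\e^2=-8r(r-M)\e^2\le 0 .
\]
This gives the claimed instability of the exterior spherical null geodesics.
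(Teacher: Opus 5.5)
Your proposal is correct and follows the paper's argument. You factor the system $\RR=\pr_r\RR=0$ as $\mathcal{A}\,\widetilde{\TT}_{\e,\lz}=0$ with $\mathcal{A}=(r^2+a^2)\e+a\lz$, then evaluate $\pr_r^2\RR=-8r(r-M)\e^2$ on the trapped branch, exactly as the paper does.

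Your write-up is in fact more complete than the paper's proof in two respects. The paper leaves the branch $\k^2=0$ at $r>M$ unaddressed, and your argument that it forces $\dot{\gamma}=0$ fills that gap. The paper also never argues uniqueness of the root, which you obtain from the quadratic structure.

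The existence issue you flag is not a real obstacle. For an actual spherical orbit at radius $r>M$, the radicand $M^2+a^2+a\lz/\e$ equals $(r-M)^2>0$, so a root $>M$ exists automatically. For arbitrary $(\e,\lz)$ with $a\lz/\e<-(M^2+a^2)$ there is no real root at all, so no unconditional existence proof is possible. Your fallback formulation is therefore precisely the true content of the claim.
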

\begin{proof} 
  We solve for 
  \begin{align*}
    \RR(r; \e,\lz, \k^2)=-\big(( r^2+a^2)\e + a \lz\big)^2+(r-M)^2\k^2=0,\\
    \pr_r\RR(r;  \e,\lz, \k^2)=-4r \e \big(( r^2+a^2)\e + a \lz\big)+2\big(r-M\big)\k^2=0.
  \end{align*}
  Writing from the second equation $(r-M)\k^2=2r \e  \big(( r^2+a^2)\e + a \lz\big)$, and substituting in the first equation, we obtain
  \begin{align}\label{eq:intermediate-step-trapping}
    0=\big(( r^2+a^2)\e + a \lz\big)  \big(( r^2-2Mr-a^2)\e - a \lz \big).
  \end{align}
  Therefore, either the null geodesics satisfy $( r^2-2Mr-a^2)\e - a \lz=0$ or $( r^2+a^2)\e + a \lz=0$. 
  The latter implies $(r-M)\k^2=0$, so either $\k^2=0$ or $r-M=0$. 

  We now show that the trapped null geodesics are unstable, i.e. that $\pr^2_r\RR(r; \e,\lz, \k^2) \leq 0$; equivalently, the radial potential has a local maximum at the trapped orbit. We compute
  \begin{align*}
    - \pr^2_r\RR(r;  \e,\lz, \k^2)=4 \e  \big(( r^2+a^2)\e + a \lz\big)+8r^2 \e^2-2\k^2.
  \end{align*}
  Substituting from $\pr_r\RR=0$ that
  $2\k^2=\frac{4r}{r-M} \e  \big(( r^2+a^2)\e + a \lz\big)$ for
  trapped geodesics, we obtain
  \begin{align*}
    - \pr^2_r\RR(r;  \e,\lz, \k^2)= -\frac{4M}{r-M} \e  \big(( r^2+a^2)\e + a \lz\big)+8r^2 \e^2.
  \end{align*}
  Substituting from \zcref{eq:intermediate-step-trapping} that $\big(( r^2+a^2)\e + a \lz\big)=2r(r-M) \e$ for trapped geodesics, we conclude
  \begin{align*}
    - \pr^2_r\RR(r;  \e,\lz, \k^2)= 8r(r-M) \e^2\geq 0,
  \end{align*}
  as stated.
\end{proof}

In the axisymmetric setting $\lz=0$, the trapping condition reduces to
\begin{align*}
  \widetilde{\TT}(r)\vcentcolon=r^2-2Mr-a^2
\end{align*}
so trapping occurs at the single radius
$r_{\trap}=M+\sqrt{M^2+a^2}$. Outside axial
symmetry, for $|a|\neq 0$ there are null geodesics with constant $r$
for an open range of $r$.  In the very slowly rotating extremal case
the trapping region is localized near $r^{eRN}_{trap}\vcentcolon=2M$, known as
the photon sphere of extremal Reissner-Nordstr\"om.

\begin{remark}\label{remark-derivative-TT}
  From the computations in \zcref{lemma:trapped-geodesics}, one can
  see that the polynomial $\widetilde{\TT}$ can be obtained as the
  derivative of the geodesic potential $\frac{\Delta}{(r^2+a^2)^2}$,
  as
  \begin{equation*}
    z=\frac{(r-M)^2}{(r^2+a^2)^2}, \qquad  \pr_rz= -2\frac{(r-M)\widetilde{\TT}}{(r^2+a^2)^{3}}=-\frac{2\TT}{(r^2+a^2)^{3}},
  \end{equation*}
  for $\TT\vcentcolon=(r-M)\widetilde{\TT}$. This identity explains
  why $\TT$ will arise naturally in the construction of Morawetz
  weights below. The fact that $\TT$ vanishes at the event horizon is
  a feature of the extremal case.
\end{remark}

\subsection{The wave equation on extremal black holes}\label{sec:wave-eq}

The wave operator for a scalar function $\psi$ on a Lorentzian
manifold $(\Manifold, \Metric)$ is given by 
\begin{align*}
  \Box_{\Metric}\psi=\frac{1}{\sqrt{-\det \g}}\partial_\a ((\sqrt{-\det\g}) \g^{\a\b} \partial_\b \psi).
\end{align*}
In what follows, we consider the inhomogeneous wave equation on a
slowly rotating extremal Kerr--Newman spacetime,
\begin{equation}\label{eq:equation-extremal}
  \square_{\g}\psi=F, 
\end{equation}
where $\g$ denotes the metric of the extremal Kerr--Newman spacetime
with $e^2=M^2-a^2$ and $|a| \ll M$, and $F$ is a sufficiently regular
function.

We consider the Cauchy problem for the wave equation with initial data
prescribed on some spacelike hypersurface $\Sigma(\tau_1)$ (described
below) connecting the event horizon and null infinity, given by
\begin{align*}
  \psi |_{\Sigma(\tau_1)}=\psi_0 \in H^s_{\operatorname{loc}}(\Sigma(\tau_1)), \qquad N_{\Sigma(\tau_1)} \psi|_{\Sigma(\tau_1)}=\psi_1 \in H^{s-1}_{\operatorname{loc}}(\Sigma(\tau_1)), \qquad s > \frac 3 2 
\end{align*}
where $N_{\Sigma(\tau_1)}$ denotes the normal vector to $\Sigma(\tau_1)$. 

Moreover, recalling the boundary-defining function $\rhoI$ at null
infinity, we denote by $\widehat{\Sigma}(\tau)$ the manifold with
boundary obtained from $\Sigma(\tau)$ by adjoining the boundary
hypersurface $\{ \rhoI =0\}=S^2$. We assume that the radiation field
$\widecheck{\psi}=\rhoI^{-1}\psi=r\psi$ extends to
$\widehat{\Sigma}(\tau_1)$ and satisfies
\begin{align*}
  \widecheck{\psi}|_{\widehat{\Sigma}(\tau_1)} \in H^s(\widehat{\Sigma}(\tau_1)), \qquad N_{\widehat{\Sigma}(\tau_1)} (r\psi)|_{\widehat{\Sigma}(\tau_1)}\in H^{s-1}(\widehat{\Sigma}(\tau_1)), \qquad s> \frac 3 2.
\end{align*}

Standard results imply well-posedness and persistence of regularity
for the above Cauchy problem for the compactified equation, see for
example Proposition 3.2 in
\cite{gajicLatetimeAsymptoticsGeometric2023}. In particular, it
follows that $r\psi \in H^s(\widehat{\Sigma}(\tau))$, $s>\frac 3 2$,
for the time of existence of the solution. Since
$\widehat{\Sigma}(\tau)$ is 3-dimensional and $s>\frac 3 2$, Sobolev
embedding gives
$\widecheck{\psi}=r\psi \in L^\infty
(\widehat{\Sigma}(\tau))$. Therefore, towards null infinity we have
\[\lim_{r\to \infty} r \psi^2=\frac{1}{r}(r\psi)^2=0.\]

\subsubsection{Renormalized time foliation}
In this section, we specify our domain of integration and the various
boundary hypersurfaces and hypersurface normal conventions that we
use. We fix $\rho_\star < r_{+}$.

\begin{figure}[ht]
  \centering
  \begin{tikzpicture}[scale=0.7,every node/.style={scale=0.7}]


  \def \s{3} 
  \def \exts{0.2} 
  \def \t{0.4}
  \def \Tlen{.5}
  \def \lenNull{0.1} 
  \def \lenDomain{0.2} 

  \coordinate (tInf) at (0,\s); 
  \coordinate (EventZero) at (-\s,0); 
  \coordinate (CosmoZero) at (\s,0); 
  \coordinate (tNegInf) at (0,-\s);
  \coordinate (SigmaZeroEvent) at (-\s + 0.15*\s, 0.15*\s);
  \coordinate (SigmaTEvent) at ($(SigmaZeroEvent) + (\lenDomain*\s, \lenDomain*\s)$);
  \coordinate (SigmaZeroCosmo) at (\s - 0.15*\s, 0.15*\s);
  \coordinate (SigmaTCosmo) at ($(SigmaZeroCosmo) + (-\lenDomain*\s, \lenDomain*\s)$);

  \draw[shorten >= -10,name path=EventFuture] (tInf) --
  node[pos=0.5,left]{$\EventHorizon$} (EventZero) ;  
  \draw[shorten >= -10,name path=CosmoFuture,dashed] (tInf) --
  node[inner sep=10pt,scale=1.0,pos=0.5,right]{$\NullInfinity$} (CosmoZero) ; 

  

  \draw[dashed] (tInf)    -- (-\s - \exts*\s,-\exts*\s);
  \draw[dashed] (tInf)    -- ( \s + \exts*\s,-\exts*\s);


  \path[name path=EventLowerBound] (-1.6*\s, 0.6*\s)-- (-\s, 0)--
  (tInf);
  \path[name path=CosmoLowerBound] (tInf) -- (\s, 0) -- (1.6*\s, 0.6*\s);

  \coordinate (SigmaZeroH) at ($(SigmaZeroEvent) + (4*\lenNull*\s, -2*\lenNull*\s)$);
  \coordinate (SigmaTH) at ($(SigmaTEvent) + (2*\lenNull*\s, -\lenNull*\s)$);
  \coordinate (SigmaZeroI) at ($(SigmaZeroCosmo) + (-4*\lenNull*\s, -2*\lenNull*\s)$);
  \coordinate (SigmaTI) at ($(SigmaTCosmo) + (-2*\lenNull*\s, -  \lenNull*\s)$);
  \def \bottombend{15}
  \def \topbend{10}
  \path[fill=blue, fill opacity=0.1, draw=blue, thick]
  (SigmaZeroEvent) to[bend right=\bottombend] (SigmaZeroH) to [bend left = 20] node[midway,below,opacity=1]{$\Sigma(\tau_1)$} (SigmaZeroI) to[out=20,in=-135] (SigmaZeroCosmo) --
  (SigmaTCosmo) to[out=-135,in=15] (SigmaTI) to [bend right=20] node[midway,above,opacity=1]{$\Sigma(\tau_2)$} (SigmaTH) to[bend left=\topbend] (SigmaTEvent) -- cycle;
  \draw[blue, dashed] (SigmaZeroH) to [bend left = 10] (SigmaTH);
  \draw[blue, dashed] (SigmaZeroI) to [bend right = 10] (SigmaTI);





  
  \node[scale=0.5,fill=white,draw,circle,label=above:$i^+$]at(tInf){};

  \node[label=left:${r=M}$]at(SigmaZeroEvent){};
  \node[label=right:${r=\infty}$]at(SigmaZeroCosmo){};

\end{tikzpicture}

  \caption{Schematic representation of the foliation $\Sigma(\tau)$. The region
    $\Manifold(\tau_1,\tau_2)$, highlighted in blue, is bounded by the hypersurfaces
    $\Sigma(\tau_1)$ and $\Sigma(\tau_2)$, the event horizon
    $\EventHorizon(\tau_1,\tau_2)$, and null infinity
    $\NullInfinity(\tau_1,\tau_2)$.}
\end{figure}

\begin{definition}[Renormalized time foliation]\label{def:tau}
  We define a foliation $\Sigma(\tau)=\{ \tau = \operatorname{constant}\}$
  with
  \begin{equation*}
    \tau=
    \begin{cases}
      v-h_{\EventHorizon}(r) \qquad \qquad \rhoH \leq \rho_\star, \\
      t \qquad \qquad \qquad \qquad \rhoH \geq \rho_\star, \quad \rhoI \geq r_{+}^{-2}\rho_\star, \\
      u+h_{\NullInfinity}(r) \qquad \qquad \rhoI \leq r_{+}^{-2}\rho_\star
    \end{cases}
  \end{equation*}
  where $h_{\HH}(r)$ and $h_{\NullInfinity}(r)$ are functions satisfying
  \begin{align}
    h_{\HH}'(r)={}&\frac{2a^2}{r^2+a^2 +\sqrt{(r^2+a^2)^2-8a^2\Delta}} , \qquad h_{\NullInfinity}'(r)={}\frac{a^2}{r^2\Upsilon},\label{eq:condition-h-II}
  \end{align}
  and such that the $\tau$ function is continuous at $\rhoH = \rho_\star$ and $\rhoI = r_{+}^{-2}\rho_\star$.
\end{definition}

The derivatives of $h_{\EventHorizon}(r)$ and $h_{\NullInfinity}(r)$
are chosen so that the level sets of $\tau$ are spacelike and
asymptotically null towards $\NullInfinity$.  Observe that the foliation $\Sigma(\tau)$ in the
case of extremal Reissner-Nordstr\"om ($a=0$) agrees with
$v=\text{const}$-type slices near the horizon, $t=\text{const}$ in the
intermediate region, and $u=\text{const}$-type slices near null
infinity.

We will use the following notation to denote the spacetime domains
\begin{equation*}
  \Manifold(\tau_1,\tau_2) \vcentcolon= \Manifold\cap \{\tau\in [\tau_1,\tau_2]\}. 
\end{equation*}
The boundaries of $\Manifold(\tau_1,\tau_2)$ are given by
\begin{equation*}
  \Sigma(\tau_1)\vcentcolon=\{\tau=\tau_1\}, \quad  \Sigma(\tau_2)\vcentcolon=\{\tau=\tau_2\}, \quad  \EventHorizon(\tau_1,\tau_2) \vcentcolon = \EventHorizon\cap \{\tau\in [\tau_1,\tau_2]\},\quad
  \NullInfinity(\tau_1,\tau_2) \vcentcolon =  \NullInfinity\cap \{\tau\in [\tau_1,\tau_2]\}.
\end{equation*}
We also denote for some $\rho_0 \leq \rho_\star$ and
$\rho_1 \leq r_{+}^{-2}\rho_\star$ sufficiently small to be chosen
later, the following near boundaries regions:
\begin{align*}
  \Sigma_{\EventHorizon}(\tau)&=\Sigma(\tau) \cap \{ \rhoH \le \rho_0\}, \qquad \Sigma_{\NullInfinity}(\tau)=\Sigma(\tau) \cap \{ \rhoI\le \rho_1\}, \\
  \MM_{\EventHorizon}(\tau_1, \tau_2) &= \MM(\tau_1, \tau_2) \cap \{ \rhoH \le \rho_0\}, \qquad \MM_{\NullInfinity}(\tau_1, \tau_2) = \MM(\tau_1, \tau_2) \cap\{ \rhoI\le \rho_1\}.
\end{align*}
For any hypersurface defined by a scalar function $f$, we adopt the
convention
\begin{equation*}
  N_{\{f=\mathrm{const}\}}
  \vcentcolon= -\g^{\alpha\beta}(\partial_\alpha f)\,\partial_\beta
\end{equation*}
with natural induced volume form defined by interior derivative. In
particular, we use the following conventions for normals to the
boundary hypersurfaces
\begin{equation}
  \label{eq:boundary-normal-convention}
  N_{\Sigma(\tau)} \vcentcolon= -\Metric^{\alpha\beta}\partial_{\alpha}\tau\partial_{\beta},\qquad
  N_{\EventHorizon} \vcentcolon= -\Metric^{\alpha\beta}\partial_{\alpha}r\partial_{\beta},\qquad N_{\NullInfinity} \vcentcolon= -\Metric^{\alpha\beta}\partial_{\alpha}r\partial_{\beta}.
\end{equation}

\begin{lemma}\label{lemma:normal-vector-tau}
  The level sets $\Sigma(\tau)$ of $\tau$ in \zcref{def:tau} form a
  spacelike foliation in the exterior region whose leaves become
  asymptotically null towards $\NullInfinity$.  Moreover, in the three
  regions where $\tau$ is defined by $v-h_{\EventHorizon}(r)$, $t$,
  and $u+h_{\NullInfinity}(r)$, the timelike normal vector
  $N_{\Sigma(\tau)}$ is given respectively by
  \begin{equation*}
    |q|^2    N_{\Sigma(\tau)}=
    \begin{cases}
      \left(-(r^2+a^2)+\De h_{\EventHorizon}'\right)\partial_{\rhoH}+
      h_{\EventHorizon}' \HawkingHorizon 
      -a\sin\th \renormpphi  & \rhoH \leq \rho_\star, \\
      \frac{(r^2+a^2)}{\De} \HawkingHorizon -a\sin\th\renormpphi  & \rhoH \geq \rho_\star, \quad \rhoI \geq r_{+}^{-2}\rho_\star, \\
      -\partial_{\rhoI}+ \frac{a^2}{\Upsilon} \InfinityHawking-a\sin\th\renormpphi  &  \rhoI \leq r_{+}^{-2}\rho_\star,
    \end{cases}
  \end{equation*}
  where $h_{\EventHorizon}'$ is given by
  \zcref{eq:condition-h-II}. Moreover, the normal vectors of
  $\EventHorizon$ and $\NullInfinity$ are given by
  \begin{align}\label{eq:NHH-NII}    
    |q|^2  N_{\EventHorizon} 
    ={} - \HawkingHorizon,\qquad
    N_{\NullInfinity}={} -\InfinityHawking .
  \end{align}
\end{lemma}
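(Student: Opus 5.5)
The plan is a direct computation in each chart. For a level set $\{f=\mathrm{const}\}$ we have $N=-\Metric^{-1}(df,\cdot)$ and $\Metric(N,N)=\Metric^{-1}(df,df)$. Spacelike leaves correspond to $\Metric^{-1}(d\tau,d\tau)<0$, i.e.\ a timelike normal. In each of the three regions I will expand $d\tau$ in the chart used to define $\tau$, contract it with the appropriate form of the inverse metric, and then evaluate $\Metric^{-1}(d\tau,d\tau)$.

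\emph{Near the horizon} ($\rhoH\le\rho_\star$) I use \zcref{eq:inverse-iEF-rho} with $d\tau=dv-h_{\EventHorizon}'\,d\rhoH$. The cross term $2\HawkingHorizon\partial_{\rhoH}$ pairs $dv$ with the $\partial_v$-component $(r^2+a^2)$ of $\HawkingHorizon$, producing $(r^2+a^2)\partial_{\rhoH}$. It also pairs $d\rhoH$ with $\HawkingHorizon$. The term $(\rhoH\partial_{\rhoH})^2$ contributes $\Delta\,\partial_{\rhoH}$ against $d\rhoH$, and the angular square contributes $a\sin\th\,\renormpphi$ against $dv$. Collecting these and negating gives the first line of the claimed formula. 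The norm is
\[
|q|^2\Metric^{-1}(d\tau,d\tau)=\Delta (h_{\EventHorizon}')^2-2(r^2+a^2)h_{\EventHorizon}'+a^2\sin^2\th .
\]
I will treat this as a quadratic in $h_{\EventHorizon}'$ and bound it using $\sin^2\th\le1$. I then check that the choice in \zcref{eq:condition-h-II}, rewritten by rationalizing as
\[
h_{\EventHorizon}'=\frac{(r^2+a^2)-\sqrt{(r^2+a^2)^2-8a^2\Delta}}{4\Delta},
\]
lies strictly between the two roots $\big((r^2+a^2)\pm\sqrt{(r^2+a^2)^2-a^2\Delta}\big)/\Delta$. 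This holds for $|a|\ll M$, with the degenerate null case when $a=0$. The bound is elementary, but I expect it to be the most delicate inequality: the factor $8$ in place of $1$ has to be tracked carefully, uniformly as $\Delta\to0$.

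\emph{In the intermediate region} I use \zcref{eq:inverse-metric-BL} with $d\tau=dt$. Then $-|q|^2\Metric^{-1}(dt,\cdot)=-\Delta^{-1}\RR(dt,\cdot)$, which by \zcref{eq:RR-def} equals $\Delta^{-1}(r^2+a^2)\HawkingHorizon-a\sin\th\,\renormpphi$. The norm is $-(r^2+a^2)^2/\Delta+a^2\sin^2\th$. This is negative because $\Delta\le(r^2+a^2)^2/a^2$ always holds for small $a$.

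\emph{Near null infinity} I use \zcref{eq:inverse-oEF-rho} with $d\tau=du+h_{\NullInfinity}'\,dr=du-r^2h_{\NullInfinity}'\,d\rhoI$ and $r^2h_{\NullInfinity}'=a^2/\Upsilon$. The same bookkeeping, now with $\InfinityHawking$ and $\Upsilon(\rhoI\partial_{\rhoI})^2$, yields the third line. The norm simplifies to
\[
\frac{a^2\big(-2r^2-a^2+\Delta\sin^2\th\big)}{\Delta},
\]
which is negative since $\Delta<r^2$. Dividing by $|q|^2$ shows it tends to $0$ as $r\to\infty$, which gives the asymptotic nullness.

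Finally, for $N_{\EventHorizon}$ and $N_{\NullInfinity}$ I contract $dr$ (equivalently $d\rhoH$, or $-r^2\,d\rhoI$) with the same inverse metrics. On $\EventHorizon$, where $\Delta=0$, the contraction gives $-\HawkingHorizon$. At $\NullInfinity$ the $\Upsilon$-term is lower order, leaving $-\InfinityHawking$ after the normalization implicit in \zcref{eq:NHH-NII}. I also need to check that $\tau$ is continuous across the two gluing radii; this is arranged by the choice of integration constants in $h_{\EventHorizon}$ and $h_{\NullInfinity}$.
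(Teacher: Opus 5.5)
Your proposal is correct and follows the paper's proof essentially step by step: you contract $d\tau$ with the ingoing, Boyer--Lindquist and outgoing inverse metrics, rewrite the result in the $(\partial_{\rhoH},\HawkingHorizon,\renormpphi)$ and $(\partial_{\rhoI},\InfinityHawking,\renormpphi)$ frames, and check the sign of $|q|^2\Metric^{-1}(d\tau,d\tau)$. Your explicit value $a^2(-2r^2-a^2+\Delta\sin^2\theta)/\Delta$ near $\NullInfinity$ simply makes the paper's root comparison concrete. Near $\EventHorizon$ the delicacy you anticipate disappears if you keep the unrationalized form of $h_{\EventHorizon}'$: the relations $2\Delta(h_{\EventHorizon}')^2-(r^2+a^2)h_{\EventHorizon}'+a^2=0$ and $h_{\EventHorizon}'\ge a^2/(r^2+a^2)$ give $|q|^2\Metric^{-1}(d\tau,d\tau)\le -a^2$ uniformly up to $\Delta=0$.

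The one point to correct is the normal at $\NullInfinity$. Negating the contraction of $dr$ (or of $-r^2\,d\rhoI$) gives $|q|^2N_{\NullInfinity}=r^2\InfinityHawking-\Delta\,\partial_r^{\mathrm{oEF}}$, which is $+\InfinityHawking$ at $\NullInfinity$. The stated $-\InfinityHawking$ only comes out if you use the defining function $\rhoI$ itself, via $-|q|^2\Metric^{-1}(d\rhoI,\cdot)=-\InfinityHawking-\Upsilon\rhoI^2\partial_{\rhoI}$; this is a sign and normalization convention that the paper's own ``similarly compute'' also leaves implicit.
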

\begin{proof}
  Near the horizon, we compute $d\tau=dv-h_{\EventHorizon}'(r)\,dr$
  and therefore using \zcref{eq:inverse-metric:ingoing-EF}, we compute
  \begin{align*}
    \abs*{q}^2\Metric_{\mathrm{iEF}}^{-1}(d\tau,d\tau)
    ={}& \De \bigl(h_{\EventHorizon}'(r)\bigr)^2
         -2(r^2+a^2)h_{\EventHorizon}'(r)
         +a^2\sin^2\th\\
    \leq{}& \De \bigl(h_{\EventHorizon}'(r)\bigr)^2 -2(r^2+a^2)h_{\EventHorizon}'(r) +a^2.
  \end{align*}
  Analyzing the roots of 
  \[
    \De x^2-2(r^2+a^2)x+a^2=0,
  \]
  we see that the above is negative whenever 
  \[
    (r^2+a^2)-\sqrt{(r^2+a^2)^2-a^2\De}\leq \Delta h_{\EventHorizon}'(r)\leq (r^2+a^2)+\sqrt{(r^2+a^2)^2-a^2\De}
  \]
  which is clearly satisfied from the condition
  \zcref{eq:condition-h-II}. This implies that for
  $\rhoH \leq \rho_\star$, we have
  $\Metric_{\mathrm{iEF}}^{-1}(d\tau,d\tau)<0$.  Using
  \zcref{eq:inverse-metric:ingoing-EF} again, we compute
  \begin{align*}
    \abs*{q}^2\Metric^{-1}_{\mathrm{iEF}}(dv,\cdot)
    ={}&
         (r^2+a^2)\partial_r+a^2\sin^2\th\,\partial_v+a\partial_{\phiIEF}, \\
    \abs*{q}^2\Metric^{-1}_{\mathrm{iEF}}(dr,\cdot)
    ={}&
         (r^2+a^2)\partial_v+\De\,\partial_r+a\partial_{\phiIEF}.
  \end{align*}
  Hence
  \begin{align*}
    \abs*{q}^2\D\tau
    ={}&
         \abs*{q}^2\Metric^{-1}_{\mathrm{iEF}}(dv,\cdot)
         -h_{\HH}'(r)\abs*{q}^2\Metric^{-1}_{\mathrm{iEF}}(dr,\cdot)\\
    ={}&{}
         \left(
         -(r^2+a^2)h_{\HH}'(r)+a^2\sin^2\th
         \right)\partial_v- \left(\De h_{\EventHorizon}'(r)-(r^2+a^2)\right)\partial_{\rhoH}
         +a\left(
         1-h_{\HH}'(r)
         \right)\partial_{\phiIEF}.
  \end{align*}
  Using that $  \pr_v =\frac{1}{|q|^2}\HawkingHorizon -\frac{a\sin\th}{|q|^2} \renormpphi$, $\partial_{\phiIEF}=\frac{(r^2+a^2)\sin\th}{|q|^2}\renormpphi-\frac{1}{|q|^2}a\sin^2\th \HawkingHorizon$, we obtain the stated formula.

  In the intermediate region, we have $d\tau=dt$ and therefore using
  \zcref{eq:inverse-metric-BL} we compute
  \begin{align*}
    |q|^2 \g_{\mathrm{BL}}^{-1}(d\tau, d\tau)&={}-\frac{(r^2+a^2)^2}{\Delta}+a^2\sin^2\th= \frac{-(r^2+a^2)^2+\Delta a^2\sin^2\th}{\Delta}\\
                                             & \leq \frac{-(r^2+a^2)^2+\Delta a^2}{\Delta} = -\frac{r^4+a^2r^2+a^2M (2r-M)+a^4}{\Delta}  < 0
  \end{align*}
  and
  \[
    \abs*{q}^2\D\tau
    ={}\abs*{q}^2\Metric^{-1}_{\mathrm{BL}}(dt,\cdot)
    =
    \left(-\frac{(r^2+a^2)^2}{\De}+a^2\sin^2\th\right)\partial_t
    +a\left(1-\frac{r^2+a^2}{\De}\right)\partial_\phi.
  \]
  Using that
  $ \pr_t =\frac{1}{|q|^2}\HawkingHorizon -\frac{a\sin\th}{|q|^2}
  \renormpphi$,
  $\partial_{\phi}=\frac{(r^2+a^2)\sin\th}{|q|^2}\renormpphi-\frac{1}{|q|^2}a\sin^2\th
  \HawkingHorizon$, we obtain the stated formula.

  Near null infinity, we compute $d\tau=du + h_{\NullInfinity}'(r) dr$
  and therefore using \zcref{eq:inverse-metric-oEF}
  \begin{align*}
    \abs*{q}^2\Metric^{-1}_{\mathrm{oEF}}(d\tau,d\tau)
    ={}&
         \De \bigl(h_{\NullInfinity}'(r)\bigr)^2
         -2(r^2+a^2)h_{\NullInfinity}'(r)
         +a^2\sin^2\th\\
       &\leq \De \bigl(h_{\NullInfinity}'(r)\bigr)^2
         -2(r^2+a^2)h_{\NullInfinity}'(r)
         +a^2,
  \end{align*}
  which, as above, is negative as long as 
  \begin{align*}
    \frac{r^2+a^2}{r^2}-\frac{\sqrt{(r^2+a^2)^2-a^2\De}}{r^2}\leq  \Upsilon h_{\NullInfinity}'(r)\leq \frac{r^2+a^2}{r^2}+\frac{\sqrt{(r^2+a^2)^2-a^2\De}}{r^2}.
  \end{align*}
  The above is satisfied by the condition \zcref{eq:condition-h-II} since 
  \[
    \frac{a^2}{r^2}\geq \frac{r^2+a^2}{r^2}-\frac{\sqrt{(r^2+a^2)^2-a^2\De}}{r^2}= \frac{a^2\Upsilon}{r^2+a^2+\sqrt{(r^2+a^2)^2-a^2\De}}.
  \]
  Moreover, since 
  \begin{align*}
    \lim_{r\to \infty} \Big[\frac{a^2}{r^2}- \frac{a^2\Upsilon}{r^2+a^2+\sqrt{(r^2+a^2)^2-a^2\De}}\Big]=0
  \end{align*}
  we have that $\Metric^{-1}_{\mathrm{oEF}}(d\tau,d\tau) \to0^-$ as $r\to\infty$.
  i.e. the foliation is asymptotically null as $\rhoI \to 0$.
  Using \zcref{eq:inverse-metric-oEF} again, we compute 
  \begin{align*}
    \abs*{q}^2\Metric^{-1}_{\mathrm{oEF}}(du,\cdot)
    ={}&
         -(r^2+a^2)\partial_r+a^2\sin^2\th\,\partial_u+a\partial_{\phiOEF},\\
    \abs*{q}^2\Metric^{-1}_{\mathrm{oEF}}(dr,\cdot)
    ={}&
         -(r^2+a^2)\partial_u+\De\,\partial_r-a\partial_{\phiOEF}.
  \end{align*}
  Hence
  \begin{align*}
    \abs*{q}^2\D\tau
    ={}&
         \abs*{q}^2\Metric^{-1}_{\mathrm{oEF}}(du,\cdot)
         +h_{\NullInfinity}'(r)\abs*{q}^2\Metric^{-1}_{\mathrm{oEF}}(dr,\cdot)\\
    ={}&{}
         \left(
         -(r^2+a^2)h_{\NullInfinity}'(r)+a^2\sin^2\th
         \right)\partial_u+ \left(\De h_{\NullInfinity}'(r)-(r^2+a^2)\right)\partial_{r}
         +a\left(
         1-h_{\NullInfinity}'(r)
         \right)\partial_{\phiOEF}.
  \end{align*}
  Using that
  $ \pr_u =\frac{r^2}{|q|^2}\InfinityHawking -\frac{a\sin\th}{|q|^2}
  \renormpphi$,
  $\partial_{\phiOEF}=\frac{(r^2+a^2)\sin\th}{|q|^2}\renormpphi-\frac{r^2}{|q|^2}a\sin^2\th
  \InfinityHawking$ and
  $ \partial_{r}^{\mathrm{oEF}} = -\rhoI^2\partial_{\rhoI}$, we obtain
  \begin{align*}
    \abs*{q}^2N_{\Sigma_{\rhoI\le r_{+}^{-2}\rho_\star}(\tau)}
    ={}&{} \big(\Upsilon h_{\NullInfinity}'(r)-\frac{r^2+a^2}{r^2}\big)\partial_{\rhoI}+r^2 h_{\NullInfinity}'(r) \InfinityHawking-a\sin\th \renormpphi.
  \end{align*}
  The choice \zcref{eq:condition-h-II} makes the coefficient of
  $\partial_{\rhoI}$ equal to $-1$, and we obtain the stated
  formula. Finally, using \zcref{eq:inverse-metric:ingoing-EF} and
  \zcref{eq:inverse-metric-oEF} respectively we can similarly compute
  $N_\EventHorizon$ and $N_\NullInfinity$.
\end{proof}

We use the following convention for integrals over the boundary
hypersurfaces
\begin{equation*}
  \begin{split}
    \int_{\Sigma(\tau)}f ={}& \int_{r=M}^{r=\infty}\int_{\Sphere^2}f\, |q|^2drd\mathring{\gamma}
                              ,\\
    \int_{\EventHorizon(\tau_1,\tau_2)}f
    ={}& \int_{\tau_1}^{\tau_2}\int_{\Sphere^2}f\, |q|^2d\tau d\mathring{\gamma},\\
    \int_{\NullInfinity(\tau_1,\tau_2)}f
    ={}& \int_{\tau_1}^{\tau_2}\int_{\Sphere^2}f\, |q|^2d\tau d\mathring{\gamma},
  \end{split}  
\end{equation*}
where $d\mathring{\gamma}$ denotes the volume form of the round unit sphere $\mathbb{S}^2$.

\subsubsection{Trapping region}

Let $A_1, A_2, A_3$ be parameters with $\frac{3}{2}A_1 < A_2< M < A_3$
such that for $|a| \ll M$ sufficiently small all trapping radii lie in
$[A_2+M, A_3+M]$, or $\rhoH \in [A_2, A_3]$. We denote
\begin{align*}
  \Manifold_{\operatorname{trap}}(\tau_1, \tau_2)=\MM(\tau_1, \tau_2) \cap \{ \rhoH \in [A_2, A_3]\}
\end{align*}
and $\MM_{\ntrap}(\tau_1, \tau_2)$ the complement of
$\Manifold_{\operatorname{trap}}(\tau_1, \tau_2)$ in
$\MM(\tau_1, \tau_2)$.

Similarly, for $|a| \ll M$ sufficiently small the ergoregion is
contained in $\{\rhoH < A_1\}$ and the Hawking Killing vectorfield
$\That_{\HH}$ is timelike in $\{\rhoH< A_1\}$. In particular, we
choose $A_1, A_2, A_3$ so that the ergoregion and trapping region are
separated in physical space.

\subsubsection{Expressions and commutators for the wave operator}

Expressing the divergence form of $\Box_\Metric$ in \iEF{} and \oEF{}
coordinates and using the the expression for the inverse metrics
\zcref{eq:inverse-iEF-rho} and \zcref{eq:inverse-oEF-rho}, we deduce
that the wave operator is given by
\begin{align}
  |q|^2   \Box_{\Metric}={}&\left( \rhoH\partial_{\rhoH} \right)^2 + \rhoH\partial_{\rhoH}+2\HawkingHorizon\partial_{\rhoH}+2r\pr_v +\lapp_{\SSS^2}+a^{2}\sin^{2}\theta \pr_v^2+2a\pr_v\pr_{\phiIEF},\label{wave-iEF}\\
  |q|^2\Box_{\Metric}
  ={}&     \Upsilon(\rhoI\partial_{\rhoI})^2+\big( \Upsilon- \frac{\Delta'}{r} \big)(\rhoI\partial_{\rhoI})+2\InfinityHawking\partial_{\rhoI}-2\rhoI^{-1}\partial_u+\lapp_{\SSS^2} +2a \partial_u\partial_{\phiOEF}+a^2\sin^2\th \partial_u^2,\label{eq:wave-oEF}
\end{align}
where
$\lapp_{\SSS^2}=\frac{1}{\sin\th}\partial_\th(\sin\th
\partial_\th)+\frac{1}{\sin^2\th}\partial_\phi^2$ denotes the standard
Laplacian on the unit sphere.

We denote the rotational vectorfields as
\begin{align*}
  \RotationVF^{\mathrm{iEF}}_1 = \sin\phiIEF\partial_{\theta} + \cot\theta\cos\phiIEF \partial_{\phiIEF},\qquad
  \RotationVF^{\mathrm{iEF}}_2 = -\cos\phiIEF\partial_{\theta} + \cot\theta\sin\phiIEF \partial_{\phiIEF},\qquad
  \RotationVF^{\mathrm{iEF}}_3 = -\partial_{\phiIEF},\\
  \RotationVF^{\mathrm{oEF}}_1 = \sin\phiOEF\partial_{\theta} + \cot\theta\cos\phiOEF \partial_{\phiOEF},\qquad
  \RotationVF^{\mathrm{oEF}}_2 = -\cos\phiOEF\partial_{\theta} + \cot\theta\sin\phiOEF \partial_{\phiOEF},\qquad
  \RotationVF^{\mathrm{oEF}}_3 = -\partial_{\phiOEF},
\end{align*}
from which we can easily check that, in both cases,
\begin{equation}
  \label{eq:commutators:ingoing:rotation:rotation-commutation-prop}
  \left[ \RotationVF_1, \RotationVF_3 \right] = - \RotationVF_2,\qquad
  \left[ \RotationVF_2, \RotationVF_3 \right] =  \RotationVF_1.
\end{equation}

We list some commutators with the wave operator.

\begin{lemma}\label{lem:weighted-hierarchy-commutators}
  We have
  \begin{align*}
    \left[\abs*{q}^2\Box_{\Metric}, \rhoH\partial_{\rhoH}  \right]
    ={}& \abs*{q}^2\Box_{\Metric}
         - \left( \rhoH\partial_{\rhoH} \right)^2
         - \rhoH\partial_{\rhoH}
         - 2r\pr_v \\
       &-\lapp_{\SSS^2}-a^{2}\sin^{2}\theta \pr_v^2-2a\pr_v\pr_{\phiIEF}
         - 4r\rhoH\partial_{\rhoH}\pr_v
         - 2\rhoH\partial_v, \\
    \left[\abs*{q}^2\Box_{\Metric}, \rhoI\partial_{\rhoI}  \right]
    ={}&|q|^2\Box_{\Metric}-(\rhoI\partial_{\rhoI})^2
         +(\rhoI\partial_{\rhoI})\\
       &-\lapp_{\SSS^2}-a^2\sin^2\theta \pr_u^2
         -2a\pr_u\pr_{\phiOEF}+\conormalSpaceI{1}(\Manifold)\bDiffI^1+\conormalSpaceI{1}(\Manifold)(\rhoI \partial_{\rhoI})^{\leq 2},
  \end{align*}
  and 
  \begin{align*}
    \left[\abs*{q}^2\Box_{\Metric}, \RotationVF^{\mathrm{iEF}}_1  \right]&=-2a\left( \partial_{\rhoH}+ \partial_v \right)\RotationVF^{\mathrm{iEF}}_2 -2a^2\sin\th\cos\th\sin\phiIEF\partial_v^2, \\
    \left[\abs*{q}^2\Box_{\Metric}, \RotationVF^{\mathrm{iEF}}_2  \right]&= 2a\left( \partial_{\rhoH}+ \partial_v \right)\RotationVF^{\mathrm{iEF}}_1 +2a^2\sin\th\cos\th\cos\phiIEF\partial_v^2,\\
    \left[\abs*{q}^2\Box_{\Metric}, \RotationVF^{\mathrm{oEF}}_1  \right]&=-2a\left( \rhoI^2\partial_{\rhoI}+ \partial_u \right)\RotationVF^{\mathrm{oEF}}_2 -2a^2\sin\th\cos\th\sin\phiOEF\partial_u^2, \\
    \left[\abs*{q}^2\Box_{\Metric}, \RotationVF^{\mathrm{oEF}}_2  \right]&= 2a\left( \rhoI^2\partial_{\rhoI}+ \partial_u \right)\RotationVF^{\mathrm{oEF}}_1 +2a^2\sin\th\cos\th\cos\phiOEF\partial_u^2.
  \end{align*}
\end{lemma}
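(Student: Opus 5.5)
The plan is a direct computation from the coordinate expressions \zcref{wave-iEF} and \zcref{eq:wave-oEF}. We use the elementary commutator rules $[\rhoH\partial_{\rhoH}, f(\rhoH)]=\rhoH f'(\rhoH)$ and $[\rhoH\partial_{\rhoH},\partial_{\rhoH}]=-\partial_{\rhoH}$. We also use that $\partial_v,\partial_{\phiIEF},\partial_\th$ commute with $\rhoH\partial_{\rhoH}$.

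\textbf{Commutator with $\rhoH\partial_{\rhoH}$.} We go through \zcref{wave-iEF} term by term.
\begin{itemize}
\item The terms $(\rhoH\partial_{\rhoH})^2$ and $\rhoH\partial_{\rhoH}$ commute with $\rhoH\partial_{\rhoH}$.
\item The terms $\lapp_{\SSS^2}$, $a^2\sin^2\th\,\pr_v^2$ and $2a\pr_v\pr_{\phiIEF}$ have coefficients independent of $r$, so they also commute.
\item For $2r\pr_v$ with $r=M+\rhoH$, we get $[2r\pr_v,\rhoH\partial_{\rhoH}]=-2\rhoH\partial_v$.
\item The main term is $2\HawkingHorizon\partial_{\rhoH}=2((r^2+a^2)\partial_v+a\partial_{\phiIEF})\partial_{\rhoH}$. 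Here $[\partial_{\rhoH},\rhoH\partial_{\rhoH}]=\partial_{\rhoH}$, and the coefficient $r^2+a^2$ contributes $-2r\rhoH\partial_v\partial_{\rhoH}\cdot 2$ from differentiating it. So the commutator equals $2\HawkingHorizon\partial_{\rhoH}-4r\rhoH\partial_{\rhoH}\partial_v$.
\end{itemize}
Next we rewrite $2\HawkingHorizon\partial_{\rhoH}$ as $|q|^2\Box_{\Metric}$ minus the remaining terms of \zcref{wave-iEF}. This gives exactly the claimed formula. Care with signs and ordering, e.g.\ $\rhoH\partial_{\rhoH}\pr_v$ versus $\pr_v\rhoH\partial_{\rhoH}$, is routine because they commute.

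\textbf{Commutator with $\rhoI\partial_{\rhoI}$.} At null infinity the same scheme applies to \zcref{eq:wave-oEF}, with $r=\rhoI^{-1}$.
\begin{itemize}
\item $\Upsilon=1-2M\rhoI+M^2\rhoI^2$ and $\Upsilon-\Delta'/r$ are smooth in $\rhoI$. Commuting them produces $\rhoI\partial_{\rhoI}$ of these coefficients, which lies in $\conormalSpaceI{1}$, times $(\rhoI\partial_{\rhoI})^{\le2}$.
\item $-2\rhoI^{-1}\partial_u$ gives $[\,-2\rhoI^{-1}\partial_u,\rhoI\partial_{\rhoI}]=-2\rhoI^{-1}\partial_u$.
\item $2\InfinityHawking\partial_{\rhoI}$ gives $2\InfinityHawking\partial_{\rhoI}$ plus $\conormalSpaceI{1}\bDiffI^1$-type errors. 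These come from differentiating $(r^2+a^2)/r^2$ and $a/r^2$, and lie in $\conormalSpaceI{2}$ times $\partial_{\rhoI}=\rhoI^{-1}(\rhoI\partial_{\rhoI})$.
\end{itemize}
Replacing $2\InfinityHawking\partial_{\rhoI}-2\rhoI^{-1}\partial_u$ by $|q|^2\Box_{\Metric}$ minus the other terms yields the stated identity. The leading part $\Upsilon\to1$ accounts for $-(\rhoI\partial_{\rhoI})^2$. For the first-order term, $(\Upsilon-\Delta'/r)\to -1$ gives $+\rhoI\partial_{\rhoI}$ at leading order. All $O(\rhoI)$ corrections are absorbed into the remainders.

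\textbf{Rotation fields.} For $\RotationVF_1,\RotationVF_2$, the vector fields are $r,v$-independent and commute with $\lapp_{\SSS^2}$. They also commute with all radial and $\partial_v$ terms. The only nontrivial contributions come from the $\phi$-dependent, non-rotation-invariant terms $2a\partial_{\phiIEF}(\partial_{\rhoH}+\partial_v)$ and $a^2\sin^2\th\,\partial_v^2$. Here the first combines the $a\partial_{\phiIEF}\partial_{\rhoH}$ piece of $2\HawkingHorizon\partial_{\rhoH}$ with $2a\pr_v\pr_{\phiIEF}$.

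Using \zcref{eq:commutators:ingoing:rotation:rotation-commutation-prop} with $\RotationVF_3=-\partial_{\phiIEF}$, we get $[\partial_{\phiIEF},\RotationVF_1]=-\RotationVF_2$, which gives the first term. We also get $[\sin^2\th\,\partial_v^2,\RotationVF_1]=-\RotationVF_1(\sin^2\th)\partial_v^2=-2\sin\th\cos\th\sin\phiIEF\partial_v^2$. The oEF case is identical, with $\partial_{\rhoH}$ replaced by $\rhoI^2\partial_{\rhoI}$ from the $\frac{a}{r^2}\partial_{\phiOEF}\partial_{\rhoI}$ term.

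The only delicate step is bookkeeping: matching the exact coefficients, in particular the $\partial_{\rhoH}$ versus $\partial_v$ pieces in the rotation commutators. There we must track that the coefficient is $2a$ in both the cross term and the $\HawkingHorizon\partial_{\rhoH}$ term.
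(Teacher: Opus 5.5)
Your proposal is correct and follows essentially the paper's proof. In both, you commute term by term in the iEF and oEF expressions for $|q|^2\Box_{\Metric}$, using $[\HawkingHorizon,\rhoH\partial_{\rhoH}]=-2r\rhoH\partial_v$ (resp.\ $[\InfinityHawking,\rhoI\partial_{\rhoI}]=-2a\rhoI^2(a\partial_u+\partial_{\phiOEF})$), and then re-express $2\HawkingHorizon\partial_{\rhoH}$ (resp.\ $2\InfinityHawking\partial_{\rhoI}-2\rhoI^{-1}\partial_u$) through $|q|^2\Box_{\Metric}$; for the rotations, only the $\partial_{\phiIEF}$- (resp.\ $\partial_{\phiOEF}$-) terms and the $a^2\sin^2\theta$ second-order time term fail to commute.

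One minor bookkeeping point, shared with the paper: the error $-4a\rhoI^2(a\partial_u+\partial_{\phiOEF})\partial_{\rhoI}$ carries an extra $\partial_u$ or $\partial_{\phiOEF}$, so it lies in $\conormalSpaceI{1}(\Manifold)\bDiffI^2$ rather than being $\conormalSpaceI{2}$ times $\partial_{\rhoI}$ alone. This does not affect how the lemma is used.
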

\begin{proof}
  Using the expression for the wave operator in \zcref{wave-iEF}, we compute 
  \begin{align*}
    \left[\abs*{q}^2\Box_{\Metric}, \rhoH\partial_{\rhoH}  \right]
    ={}& \left[ 2\HawkingHorizon\partial_{\rhoH} +2r\pr_v ,  \rhoH\partial_{\rhoH}\right]\\
       &  + \left[ \left( \rhoH\partial_{\rhoH} \right)^2 + \rhoH\partial_{\rhoH}
         +\lapp_{\SSS^2}+a^{2}\sin^{2}\theta \pr_v^2+2a\pr_v\pr_{\phiIEF}, \rhoH\partial_{\rhoH} \right]\\
    ={}& 2\HawkingHorizon\partial_{\rhoH}
         - 4r\rhoH\partial_{\rhoH}\pr_v
         - 2\rhoH\partial_v   ,
  \end{align*}
  where we used that $[\HawkingHorizon, \rhoH\partial_{\rhoH}]=-2r\rhoH \partial_v$.
  By rewriting the first term on the \RHS{} of
  the above using the wave operator itself we obtain the stated.
  Similarly, using \zcref{eq:wave-oEF}
  \begin{align*}
    \left[\abs*{q}^2\Box_{\Metric}, \rhoI\partial_{\rhoI}  \right]
    ={}& \left[ 2\InfinityHawking\partial_{\rhoI} -2\rhoI^{-1}\pr_u
         ,  \rhoI\partial_{\rhoI}\right]  + \left[ \Upsilon\left( \rhoI\partial_{\rhoI} \right)^2
         + \Upsilon\rhoI\partial_{\rhoI}
         - \frac{\Delta'}{r}(\rhoI\partial_{\rhoI}), \rhoI\partial_{\rhoI} \right]\\
       & +\left[ \lapp_{\SSS^2}+a^2\sin^2\theta \pr_u^2
         +2a\pr_u\pr_{\phiOEF}, \rhoI\partial_{\rhoI} \right] \\
    ={}& 2\InfinityHawking\partial_{\rhoI}-2\rhoI^{-1}\partial_u+\conormalSpaceI{1}(\Manifold)(\rhoI \partial_{\rhoI})^{\leq 2}+\conormalSpaceI{1}(\Manifold)\bDiffI^1
  \end{align*}
  where we used that the third commutator vanishes and
  \begin{align*}
    [\InfinityHawking, \rhoI \partial_{\rhoI}]&=-2a\rhoI^2(\partial_{\phiOEF} + a \partial_u)=\conormalSpaceI{2}(\Manifold)\bDiffI^1\\
    \left[ \Upsilon\left( \rhoI\partial_{\rhoI} \right)^2
    + \Upsilon\rhoI\partial_{\rhoI}
    - \frac{\Delta'}{r}(\rhoI\partial_{\rhoI}), \rhoI\partial_{\rhoI} \right]
                                              &= \conormalSpaceI{1}(\Manifold)(\rhoI \partial_{\rhoI})^{\leq 2}.
  \end{align*}
  By rewriting the first term on the \RHS{} of
  the above using the wave operator itself we obtain the stated.

  Using again
  \zcref{wave-iEF}, we have  for
  $i\in \curlyBrace*{1, 2}$
  \begin{equation*}
    \begin{split}
      \left[\abs*{q}^2\Box_{\Metric}, \RotationVF^{\mathrm{iEF}}_i  \right]
      ={}& \left[ 2a\partial_{\phiIEF}\partial_{\rhoH} +  a^2\sin^2\theta\partial_v^2 + 2a\partial_v\partial_{\phiIEF}, \RotationVF^{\mathrm{iEF}}_i \right]\\
         & + \left[ \LaplaceAngular_{\Sphere^2} + (\rhoH\partial_{\rhoH})^2 + \rhoH\partial_{\rhoH} + 2r\partial_v + 2(r^2+a^2)\partial_v\partial_{\rhoH}, \RotationVF^{\mathrm{iEF}}_i \right].
    \end{split}
  \end{equation*}
  Noticing that the second line of the above 
  vanishes, we can compute the first line with the
  assistance of
  \zcref{eq:commutators:ingoing:rotation:rotation-commutation-prop} and obtain the stated.
  Similarly, using \eqref{eq:wave-oEF}, we have  for
  $i\in \curlyBrace*{1, 2}$ 
  \begin{equation*}
    \begin{split}
      \left[\abs*{q}^2\Box_{\Metric}, \RotationVF^{\mathrm{oEF}}_i  \right]
      ={}& \left[ 2\frac{a}{r^2}\partial_{\phiOEF}\partial_{\rhoI} +  a^2\sin^2\theta\partial_u^2 + 2a\partial_u\partial_{\phiOEF}, \RotationVF^{\mathrm{oEF}}_i \right]\\
         & + \left[ \LaplaceAngular_{\Sphere^2} + \Upsilon(\rhoI\partial_{\rhoI})^2
           + \Upsilon(\rhoI\partial_{\rhoI})
           - \frac{\Delta'}{r} (\rhoI\partial_{\rhoI}) + 2\rhoI\partial_u + 2\frac{r^2+a^2}{r^2}\partial_u\partial_{\rhoI}, \RotationVF^{\mathrm{oEF}}_i \right].
    \end{split}
  \end{equation*}
  and as above we obtain the stated commutator.
\end{proof}

\begin{definition}[Radiation field]
  We define the \emph{radiation field} associated to $\psi$ as 
  \begin{align}\label{eq:def-radiation-field}
    \widecheck{\psi}\vcentcolon=r\psi.
  \end{align}
\end{definition}

We collect here the relation between the wave equation satisfied by
$\widecheck{\psi}$.
\begin{lemma}[Radiation field equation]
  \label{lemma:wave-radiation-field}
  The radiation field $\widecheck{\psi}$ of
  $\psi$ as defined in \zcref{eq:def-radiation-field} satisfies
  \begin{equation*}
    \abs*{q}^2\Box_{\Metric}\widecheck{\psi}
    = r\abs*{q}^2\Box_{\Metric}\psi
    - 2\left( \Upsilon\rhoI\partial_{\rhoI} + \rhoI^{-1}\InfinityHawking \right)\widecheck{\psi}
    + \conormalSpaceI{1}(\Manifold)\widecheck{\psi}.
  \end{equation*}
\end{lemma}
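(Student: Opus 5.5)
The identity follows from the Leibniz rule for the wave operator combined with the oEF expression \eqref{eq:wave-oEF}. For any smooth $f$,
\[
\Box_{\Metric}(f\psi)=f\Box_{\Metric}\psi+2\Metric^{-1}(df,d\psi)+(\Box_{\Metric}f)\psi .
\]
We take $f=r=\rhoI^{-1}$. The only subtlety is that the identity is stated in terms of $\widecheck{\psi}=r\psi$ rather than $\psi$, so the cross term must be re-expressed through $\widecheck{\psi}$. This converts part of it into a zeroth-order term, which then has to be combined with $r\Box_\Metric r$.

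\textbf{Cross term.} From \eqref{eq:inverse-oEF-rho}, with $dr=-\rhoI^{-2}d\rhoI$, we get
\[
\abs*{q}^2\Metric^{-1}(dr,\cdot)=-\rhoI^{-2}\Big(2\InfinityHawking+\Upsilon\rhoI^2\partial_{\rhoI}\Big)=-2\rhoI^{-2}\InfinityHawking-\Upsilon\partial_{\rhoI}.
\]
Writing $\psi=\rhoI\widecheck{\psi}$ and using $\InfinityHawking\rhoI=0$ and $\partial_{\rhoI}(\rhoI\widecheck{\psi})=\widecheck{\psi}+\rhoI\partial_{\rhoI}\widecheck{\psi}$, this gives
\[
2\abs*{q}^2\Metric^{-1}(dr,d\psi)=-2\rhoI^{-1}\InfinityHawking\widecheck{\psi}-2\Upsilon\rhoI\partial_{\rhoI}\widecheck{\psi}-2\Upsilon\widecheck{\psi}.
\]
The first two terms are exactly $-2(\Upsilon\rhoI\partial_{\rhoI}+\rhoI^{-1}\InfinityHawking)\widecheck{\psi}$. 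The remaining term $-2\Upsilon\widecheck{\psi}$ is $O(1)$ and must cancel against the next contribution.

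\textbf{Zeroth-order term.} Apply \eqref{eq:wave-oEF} to the function $r=\rhoI^{-1}$, which depends only on $\rhoI$. We have $\rhoI\partial_{\rhoI}\rhoI^{-1}=-\rhoI^{-1}$ and $(\rhoI\partial_{\rhoI})^2\rhoI^{-1}=\rhoI^{-1}$. The term $2\InfinityHawking\partial_{\rhoI}$ and all $\partial_u$ and angular terms annihilate $r$. Hence
\[
\abs*{q}^2\Box_\Metric r=\Upsilon\rhoI^{-1}-\big(\Upsilon-\tfrac{\Delta'}{r}\big)\rhoI^{-1}=\Delta'=2(r-M).
\]
Multiplying by $\psi=\rhoI\widecheck{\psi}$ gives $2(1-M\rhoI)\widecheck{\psi}$. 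Combined with $-2\Upsilon\widecheck{\psi}=-2(1-2M\rhoI+M^2\rhoI^2)\widecheck{\psi}$, the $O(1)$ parts cancel, leaving
\[
(2M\rhoI-2M^2\rhoI^2)\widecheck{\psi}\in\conormalSpaceI{1}(\Manifold)\widecheck{\psi}.
\]

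\textbf{Assembly.} Multiplying the Leibniz identity by $\abs*{q}^2$ and collecting the three pieces yields the stated formula. The main point to verify is the exact cancellation of the $O(1)$ zeroth-order terms; this uses $\Upsilon$ and $\Delta'$ exactly as they appear in \eqref{eq:wave-oEF}. A secondary check is that $\Box_\Metric$ in \eqref{eq:wave-oEF} is the divergence-form operator, so no first-order lower terms beyond those displayed act on $r$. This holds because the expression was derived from the divergence form.
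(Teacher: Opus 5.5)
Your proof is correct and is essentially the paper's argument: the paper computes $[\abs*{q}^2\Box_{\Metric}, r]$ term by term from \eqref{eq:wave-oEF} using $[\partial_{\rhoI},r]=-\rhoI^{-2}$, $[\rhoI\partial_{\rhoI},r]=-\rhoI^{-1}$ and $[(\rhoI\partial_{\rhoI})^2,r]=\rhoI^{-1}-2\partial_{\rhoI}$, which is the same bookkeeping as your Leibniz split into $2\Metric^{-1}(dr,d\psi)+(\Box_{\Metric} r)\psi$, and it reaches the same cancellation $2\Upsilon-\frac{\Delta'}{r}\in\conormalSpaceI{1}(\Manifold)$. One slip: the correct contraction is $\abs*{q}^2\Metric^{-1}(dr,\cdot)=-\rhoI^{-2}\InfinityHawking-\Upsilon\partial_{\rhoI}$, since the factor $2$ in front of $\InfinityHawking\partial_{\rhoI}$ in \eqref{eq:inverse-oEF-rho} is the symmetrization factor; your next line, with $-2\rhoI^{-1}\InfinityHawking\widecheck{\psi}$, is nevertheless already the correct one.
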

\begin{proof}
  From \zcref{eq:wave-oEF}, 
  we compute that 
  \begin{align*}
    [|q|^2\Box_{\Metric},r]
    =
    \Upsilon[(\rhoI\partial_{\rhoI})^2,r]
    + \big( \Upsilon- \frac{\Delta'}{r} \big)[\rhoI\partial_{\rhoI},r]
    +2\InfinityHawking[\partial_{\rhoI},r].
  \end{align*}
  Using that
  \[[\partial_{\rhoI},r]=-\rhoI^{-2},\quad [\rhoI\partial_{\rhoI},r]=-\rhoI^{-1},\quad [(\rhoI\partial_{\rhoI})^2,r]=\rhoI^{-1}-2\pr_{\rhoI},\]
  we deduce
  \begin{align*}
    [|q|^2\Box_{\Metric},r]\psi
    ={}& \Upsilon(\rhoI^{-1}\psi-2\pr_{\rhoI}\psi)+\big( \Upsilon- \frac{\Delta'}{r} \big)
         (-\rhoI^{-1}\psi)
         -2\InfinityHawking \rhoI^{-2}\psi\\
    ={}& \Upsilon(\widecheck{\psi}-2\pr_{\rhoI}(\rhoI \widecheck{\psi}))-\big( \Upsilon- \frac{\Delta'}{r} \big)
         \widecheck{\psi}
         -2\rhoI^{-1}\InfinityHawking \widecheck{\psi}\\
    ={}& \Upsilon(\widecheck{\psi}-2\rhoI\pr_{\rhoI} \widecheck{\psi}-2 \widecheck{\psi})-\big( \Upsilon- \frac{\Delta'}{r} \big)
         \widecheck{\psi}
         -2\rhoI^{-1}\InfinityHawking \widecheck{\psi}\\
    ={}& -2\Upsilon\rhoI\pr_{\rhoI} \widecheck{\psi}-\big( 2\Upsilon- \frac{\Delta'}{r} \big)
         \widecheck{\psi}
         -2\rhoI^{-1}\InfinityHawking \widecheck{\psi}
  \end{align*}
  which implies the stated since
  $2\Upsilon- \frac{\De'}{r}\in \conormalSpaceI{1}(\Manifold)$.
\end{proof}

\subsection{Energy-momentum tensor and divergence theorem}\label{sec-AB}

In this section, we introduce the main vectorfield multiplier
framework we will use to prove the relevant estimates. 

\begin{definition}
  We define the \emph{energy-momentum tensor} of a real-valued scalar
  function $\psi$ as the following symmetric 2-tensor
  \begin{equation*}
    \EMTensor[\psi]_{\mu\nu}= \pr_\mu\psi \pr_\nu \psi -\frac{1}{2} \g_{\mu\nu} \pr_\lambda \psi \pr^\lambda \psi.
  \end{equation*}
\end{definition}

Let $X$ be a vectorfield, $w$ be a function and $J$ a one-form. The
current associated to the multiplier triplet $(X, w, J)$ is defined as
\begin{equation}\label{definition-of-P}
  \PP_\mu^{(X, w, J)}[\psi]\vcentcolon=\EMTensor[\psi]_{\mu\nu} X^\nu
  + \frac{1}{2}\partial_{\mu}\big( w\abs*{\psi}^2 \big)
  - \partial_{\mu}w \abs*{\psi}^2
  + J_{\mu} |\psi|^2.
\end{equation}
A standard computation yields for the divergence of $\PP$:
\begin{align}\label{le:divergPP-gen}
  \begin{split}
    \D\cdot \JCurrent{(X, w, J)}[\psi]={}&  \EMTensor[\psi]  \c\piX+  w (\pr_\lambda \psi \pr^\lambda \psi)-\frac{1}{2} \square_\g w |\psi|^2+ \div(J |\psi|^2)\\
                                         &+ \big(X(\psi)+ w\psi\big) \Box_\g \psi ,
  \end{split}
\end{align}
where $\D$ denotes the covariant derivative of $\Metric$,
$\piX_{\mu\nu}=\D_{(\mu} X_{\nu)}=\frac{1}{2} \big(\D_\mu X_\nu +
\D_\nu X_\mu \big)$ is the deformation tensor of the vectorfield $X$,
and $\div$ is acting on the vectorfield dual to $J$. Recall that if
$X$ is a Killing vectorfield, then $\piX=0$.  Equivalently, we can
write
\begin{align}
  \label{le:divergPP-gen-2a}
  \begin{split}
    \D\cdot\JCurrent{(X, w, J)}[\psi]={}& (K^{(X,w)})^{\mu\nu}\pr_\mu \psi \pr_\nu \psi -\frac{1}{2} \square_\g w |\psi|^2+ \div(J |\psi|^2)+ \big(X(\psi)+ w\psi\big) \Box_\g \psi,
  \end{split}
\end{align}
where we denote 
\begin{align}\label{eq:definition-K}
  (K^{(X,w)})^{\mu\nu}&\vcentcolon=\piX^{\mu\nu}+ \big(w-\frac{1}{2}\D_\lambda X^\lambda\big)\g^{\mu\nu} .
\end{align}

In what follows, we also denote
\begin{align*}
  \QQ^{(X, w, J)}[\psi]:=\D\cdot\JCurrent{(X, w, J)}[\psi]-\big(X(\psi)+ w\psi\big) \Box_\g \psi.
\end{align*}

The energies associated to  $(X, w, J)$ are defined as
\begin{align*}
  E^{(X, w, J)}[\psi](\tau)
  ={}&\int_{\Sigma(\tau)} \JCurrent{(X, w, J)}[\psi]\cdot N_{\Sigma(\tau)}, \\
  E_{\EventHorizon}^{(X, w, J)}[\psi](\tau_1, \tau_2)
  ={}&\int_{\EventHorizon(\tau_1, \tau_2)} \JCurrent{(X, w, J)}[\psi]\cdot (-N_{\EventHorizon}), \\
  E_{\NullInfinity}^{(X, w, J)}[\psi](\tau_1, \tau_2)
  ={}&\int_{\NullInfinity(\tau_1, \tau_2)} \JCurrent{(X, w, J)}[\psi]\cdot (-N_{\NullInfinity}),
\end{align*}
where recall that the normal vectors are defined in
\zcref{eq:boundary-normal-convention}. With our normal conventions,
the horizon and null-infinity fluxes are written against
$-N_{\EventHorizon}$ and $-N_{\NullInfinity}$ so that positive
currents yield nonnegative outward fluxes. In particular, applying the
divergence theorem to $\Manifold(\tau_1, \tau_2)$ we obtain
\begin{equation}\label{eq:general-divergence-theorem}
  E^{(X, w, J)}[\psi](\tau_2)
  + E_{\EventHorizon}^{(X, w, J)}[\psi](\tau_1, \tau_2)+E_{\NullInfinity}^{(X, w, J)}[\psi](\tau_1, \tau_2)
  + \int_{\Manifold(\tau_1,\tau_2)} \D\cdot\JCurrent{(X, w, J)}[\psi]
  = E^{(X, w, J)}[\psi](\tau_1). 
\end{equation}

\subsubsection{Current computations in ingoing and outgoing Eddington Finkelstein}

Here we collect general computations needed in the vectorfield method
in iEF coordinates $(v,r,\th,\phiIEF)$ and in oEF coordinates
$(u, r, \th, \phiOEF)$.

\begin{lemma}
  \label{lemma:bulk:X-of-r:basic-computation}
  Let $X$ be a vector field in either ingoing or outgoing EF
  coordinates whose components depend only on $r$ and such that
  $X^\th=0$.  Then $(K^{(X,0)})$ defined in \zcref{eq:definition-K} in
  both cases is given by
  \begin{equation*}
    2 |q|^2(K^{(X,0)})^{\mu\nu} =|q|^2\g^{\mu r} \partial_{r} X^\nu + |q|^2\g^{\nu r} \partial_{r} X^\mu - |q|^2\g^{\mu \nu} \partial_{r} X^{r} - X^{r} \partial_{r}(|q|^2 \g^{\mu\nu}).
  \end{equation*}
\end{lemma}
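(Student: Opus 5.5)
The plan is a direct coordinate computation of the two terms in \zcref{eq:definition-K} with $w=0$, namely $\piX^{\mu\nu}$ and $-\frac12(\D_\lambda X^\lambda)\g^{\mu\nu}$. We work in either iEF or oEF coordinates $(x^0,r,\th,x^3)$. In both charts the only non-constant metric coefficients depend on $(r,\th)$, and the volume density is $\sqrt{-\det\g}=|q|^2\sin\th$; this follows from the explicit forms of $\g_{\mathrm{iEF}}$ and $\g_{\mathrm{oEF}}$, whose determinants agree with the \BL{} one.

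\textbf{Step 1: the deformation tensor.} We use the Lie-derivative formula for the inverse metric,
\[
2\piX^{\mu\nu} = -(\mathcal{L}_X \g^{-1})^{\mu\nu} = \g^{\mu\lambda}\partial_\lambda X^\nu + \g^{\nu\lambda}\partial_\lambda X^\mu - X^\lambda\partial_\lambda \g^{\mu\nu}.
\]
The components $X^\nu$ depend only on $r$, so $\partial_\lambda X^\nu=\delta_\lambda^r\partial_r X^\nu$. Since $X^\th=0$ and the metric coefficients are independent of $x^0,x^3$, we get $X^\lambda\partial_\lambda\g^{\mu\nu}=X^r\partial_r\g^{\mu\nu}$. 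Hence
\[
2\piX^{\mu\nu} = \g^{\mu r}\partial_r X^\nu+\g^{\nu r}\partial_r X^\mu - X^r\partial_r\g^{\mu\nu}.
\]

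\textbf{Step 2: the divergence.} We have
\[
\D_\lambda X^\lambda=\frac{1}{|q|^2\sin\th}\partial_\lambda(|q|^2\sin\th X^\lambda)=\partial_r X^r + X^r\frac{\partial_r|q|^2}{|q|^2},
\]
again using that only $X^r$ depends on $r$ and $X^\th=0$.

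\textbf{Step 3: assemble.} Multiplying by $|q|^2$ gives
\[
2|q|^2 (K^{(X,0)})^{\mu\nu} = |q|^2\g^{\mu r}\partial_rX^\nu+|q|^2\g^{\nu r}\partial_rX^\mu - |q|^2\g^{\mu\nu}\partial_r X^r - X^r\big(|q|^2\partial_r\g^{\mu\nu}+\g^{\mu\nu}\partial_r|q|^2\big).
\]
The last bracket equals $X^r\partial_r(|q|^2\g^{\mu\nu})$, which is exactly the claimed identity.

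The only point needing care is the volume form in Step 2: one must confirm that $\det\g=-|q|^4\sin^2\th$ in both EF charts. This holds because the coordinate changes from \BL{} have unit Jacobian, being shears of the form $v=t+r^*(r)$ and $\phiIEF=\phi+\int a/\Delta$ (and the analogous oEF shears). There is no other real obstacle in the argument.
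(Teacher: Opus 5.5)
Your proposal is correct and follows essentially the same route as the paper: expand $2\piX^{\mu\nu}$ with the inverse-metric Lie-derivative formula, reduce every derivative to $\partial_r$ using $X^\th=0$ and the $r$-dependence of the components, compute $\D_\lambda X^\lambda$ from $\sqrt{|\g|}=|q|^2\sin\th$, and combine the $X^r$ terms into $X^r\partial_r(|q|^2\g^{\mu\nu})$. You also justify two points the paper uses without comment: the volume density, via the unit-Jacobian shear from \BL{} coordinates, and the fact that the metric coefficients are independent of $x^0$ and $x^3$.
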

\begin{proof}
  From the expression in \zcref{eq:definition-K},  see also \cite{stoginNonlinearWaveDynamics2017}, we have
  \begin{align*}
    2(K^{(X,0)})^{\mu\nu}={}&2\piX^{\mu\nu}-\D_\lambda X^\lambda\g^{\mu\nu}\\
    ={}&\g^{\mu\lambda}\pr_\lambda X^\nu + \g^{\nu\lambda}\pr_\lambda X^\mu - X^\lambda\pr_\lambda \g^{\mu\nu}-\D_\lambda X^\lambda\g^{\mu\nu}\\
    ={}&\g^{\mu r}\partial_{r} X^\nu + \g^{\nu r}\partial_{r} X^\mu - X^r\partial_{r} \g^{\mu\nu}-\D_\lambda X^\lambda\g^{\mu\nu},
  \end{align*}
  where we used that $X^\mu$ only depend on $r$ and $X^\th=0$.
  Using that   $\sqrt{|\g|}=|q|^2\sin\th$, we have 
  \begin{align*}
    \D\cdot X ={}&\frac{1}{|q|^2\sin\th}\pr_\lambda (|q|^2 \sin\th X^\lambda)=\partial_{r} X^{r} + \frac{\partial_{r} |q|^2}{|q|^2}X^{r},
  \end{align*}
  Combining the above we have
  \begin{align*}
    2 (K^{(X,0)})^{\mu\nu}
    ={}&\g^{\mu r}\partial_{r} X^\nu + \g^{\nu r}\partial_{r} X^\mu - X^r\partial_{r} \g^{\mu\nu}+(-\partial_{r} X^{r} - \frac{\partial_{r} |q|^2}{|q|^2}X^{r})\g^{\mu\nu}\\
    ={}&\g^{\mu r}\partial_{r} X^\nu + \g^{\nu r}\partial_{r} X^\mu -\g^{\mu\nu}\partial_{r}X^{r} - \frac{1}{|q|^2}X^{r}\partial_{r}(|q|^2\g^{\mu\nu}),
  \end{align*}
  as stated.
\end{proof}

We now compute the divergence of the current for vectorfields of the form\footnote{From \zcref{eq:def-VHH-VII} we have $(r^2+a^2)\That=(r^2+a^2) \pr_v + a \pr_{\phiIEF}$ in \iEF{} and $(r^2+a^2)\That=(r^2+a^2) \pr_u + a \pr_{\phiOEF}$ in \oEF{} coordinates.}
\begin{align*}
  X_{(1)}^{\mathrm{iEF}}= f(r) \pr_r^{\mathrm{iEF}}, \qquad  X_{(1)}^{\mathrm{oEF}}= f(r) \pr_r^{\mathrm{oEF}}, \qquad X_{(2)}=h(r)(r^2+a^2)\That=\begin{cases}
    h(r)\HawkingHorizon, \text{ near } \HH^+\\
    h(r)r^2\InfinityHawking, \text{ near } \II^+.
  \end{cases}
\end{align*}

\begin{lemma}
  \label{prop:fprr-HH}
  For multiplier $X_{(1)}= f(r) \pr_r$ in iEF and oEF coordinates respectively we have
  \begin{align*}
    |q|^2 \QQ^{(X^{\mathrm{iEF}}_{(1)},0, 0)}[\psi]
    ={}&\frac{1}{2} (\De  f'- f \De')|\partial^{\mathrm{iEF}}_{r} \psi|^2-\frac{1}{2} f' |\NablaAngular\psi|^2 -2rf \partial^{\mathrm{iEF}}_{r} \psi \pr_v\psi
         , \\
    |q|^2\QQ^{(X^{\mathrm{oEF}}_{(1)}, 0, 0)}[\psi]
    ={}& \frac{1}{2} (\De  f'- f\De')|\partial^{\mathrm{oEF}}_{r} \psi|^2-\frac{1}{2} f' |\NablaAngular\psi|^2+2rf \partial^{\mathrm{oEF}}_{r} \psi \pr_u\psi.
  \end{align*}
  For multiplier $X_{(2)}= h(r)(r^2+a^2) \That$ in \iEF{} and \oEF{} coordinates respectively we have 
  \begin{align*}
    |q|^2 \QQ^{(X^{\mathrm{iEF}}_{(2)}, 0, 0)}[\psi]
    ={}& h' |\HawkingHorizon\psi|^2+    2h r (\HawkingHorizon\psi)  \pr_v \psi+ \Delta  h'  (\HawkingHorizon\psi) \partial^{\mathrm{iEF}}_{r} \psi +   2h r\De\pr_v \psi \partial^{\mathrm{iEF}}_{r} \psi, \\
    |q|^2 \QQ^{(X^{\mathrm{oEF}}_{(2)}, 0, 0)}[\psi]
    ={}&- h' r^4|\InfinityHawking\psi|^2 -  2h r^3 (\InfinityHawking\psi) \pr_u \psi+ \Delta  h'  r^2(\InfinityHawking\psi) \partial^{\mathrm{oEF}}_{r} \psi+   2h r\De\pr_u \psi \partial_r^{\mathrm{oEF}} \psi.  
  \end{align*}
\end{lemma}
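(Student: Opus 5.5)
The plan is to read everything off \zcref{lemma:bulk:X-of-r:basic-computation}. Since $w=0$ and $J=0$, the definition of $\QQ$ together with \zcref{le:divergPP-gen-2a} gives $\QQ^{(X,0,0)}[\psi]=(K^{(X,0)})^{\mu\nu}\pr_\mu\psi\pr_\nu\psi$. All four multipliers have components depending only on $r$ and no $\pr_\th$ component, so the lemma applies. Contracting its formula with $\pr_\mu\psi\pr_\nu\psi$ yields
\begin{equation*}
  2|q|^2\QQ^{(X,0,0)}[\psi] = 2\big(|q|^2\g^{-1}(dr,d\psi)\big)\,(\pr_r X)\psi - (\pr_r X^r)\,|q|^2\g^{-1}(d\psi,d\psi) - X^r\,\big(\pr_r(|q|^2\g^{-1})\big)(d\psi,d\psi).
\end{equation*}
The three ingredients are then taken from \zcref{eq:inverse-metric:ingoing-EF} and \zcref{eq:inverse-metric-oEF}. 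In iEF coordinates, $|q|^2\g^{-1}(dr,d\psi)=\HawkingHorizon\psi+\De\pr_r\psi$. In oEF coordinates, $|q|^2\g^{-1}(dr,d\psi)=-r^2\InfinityHawking\psi+\De\pr_r\psi$. In both charts, $|q|^2\g^{-1}(d\psi,d\psi)=\pm2(\HawkingHorizon\psi)\pr_r\psi+\De|\pr_r\psi|^2+|\NablaAngular\psi|^2$, with the $+$ sign in iEF and the $-$ sign in oEF. Since only $r^2+a^2$ and $\De$ depend on $r$, the $r$-derivative of the inverse metric is $\pr_r(|q|^2\g^{-1})(d\psi,d\psi)=\pm4r\,\pr_v\psi\,\pr_r\psi+\De'|\pr_r\psi|^2$, where $\pr_v$ is replaced by $\pr_u$ in oEF.

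For $X_{(1)}=f\pr_r$ we have $X^r=f$ and $\pr_rX=f'\pr_r$. In iEF coordinates the first term is $2f'\pr_r\psi(\HawkingHorizon\psi+\De\pr_r\psi)$. The second term is $-f'\big(2\HawkingHorizon\psi\,\pr_r\psi+\De|\pr_r\psi|^2+|\NablaAngular\psi|^2\big)$, whose mixed part cancels the $\HawkingHorizon$ part of the first term. The third term is $-f(4r\pr_v\psi\pr_r\psi+\De'|\pr_r\psi|^2)$. Halving the sum gives the stated identity. In oEF coordinates the same computation goes through, and the sign flip of the mixed term $\mp 2V\pr_r$ in the inverse metric produces $+2rf\,\pr_r\psi\,\pr_u\psi$.

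For $X_{(2)}$ we have $X^r=0$, so the last two terms vanish and only the first term survives. In iEF coordinates $X_{(2)}=h\big((r^2+a^2)\pr_v+a\pr_{\phiIEF}\big)$, hence $\pr_rX_{(2)}=h'\HawkingHorizon+2rh\pr_v$. Expanding $(\HawkingHorizon\psi+\De\pr_r\psi)(h'\HawkingHorizon\psi+2rh\pr_v\psi)$ gives exactly the four stated terms. In oEF coordinates $X_{(2)}=hr^2\InfinityHawking=h\big((r^2+a^2)\pr_u+a\pr_{\phiOEF}\big)$, so $\pr_rX_{(2)}=h'r^2\InfinityHawking+2rh\pr_u$. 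Multiplying by $-r^2\InfinityHawking\psi+\De\pr_r\psi$ yields $-h'r^4|\InfinityHawking\psi|^2-2hr^3\InfinityHawking\psi\,\pr_u\psi+\De h'r^2\InfinityHawking\psi\,\pr_r\psi+2hr\De\,\pr_u\psi\,\pr_r\psi$, as claimed.

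There is no real obstacle beyond bookkeeping. The only points needing care are the symmetrization factors (the cross term $2V\pr_r$ in the inverse metric contributes $2(V\psi)(\pr_r\psi)$ when contracted) and the sign of the mixed term in the oEF chart. It is also worth remembering that $\pr_r$ here is the EF radial derivative, which commutes with $\pr_v,\pr_u,\pr_\phi$, so no additional commutator terms arise.
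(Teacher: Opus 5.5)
Your proposal is correct and follows essentially the same route as the paper: both apply \zcref{lemma:bulk:X-of-r:basic-computation} with $w=J=0$ and contract against $\pr_\mu\psi\,\pr_\nu\psi$ using the \iEF{} and \oEF{} inverse metrics. The paper tabulates each component of $2|q|^2K^{\mu\nu}$ and then reassembles; you instead contract invariantly through $|q|^2\g^{-1}(dr,d\psi)$, $|q|^2\g^{-1}(d\psi,d\psi)$ and $\pr_r(|q|^2\g^{-1})(d\psi,d\psi)$, which is simply tidier bookkeeping of the same computation (all your signs and coefficients check out).
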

\begin{proof}
  Recall from \zcref{eq:inverse-metric:ingoing-EF} that the components
  of inverse metric in \iEF{} coordinates $(v,r,\th, \phiIEF)$ are
  given by
  \begin{gather*}
    |q|^2\g^{vv}=a^2\sin^2\th,\quad |q|^2\g^{vr}=r^2+a^2,\quad |q|^2\g^{rr}=\De,\\
    |q|^2\g^{\phiIEF\phiIEF}=\frac{1}{\sin^2\th},\quad |q|^2\g^{r\phiIEF}=|q|^2\g^{v\phiIEF}=a, \quad |q|^2\g^{\th\th}=1,
  \end{gather*}
  and therefore
  \begin{equation*}
    \begin{gathered}
      \partial^{\mathrm{iEF}}_{r}(|q|^2\g^{vv}) ={} \partial^{\mathrm{iEF}}_{r}(|q|^2\g^{\phiIEF\phiIEF})=\partial^{\mathrm{iEF}}_{r}(|q|^2\g^{r\phiIEF})=\partial^{\mathrm{iEF}}_{r}(|q|^2\g^{v\phiIEF})=\partial^{\mathrm{iEF}}_{r}(|q|^2\g^{\th\th})=0,\\
      \partial^{\mathrm{iEF}}_{r}(|q|^2\g^{vr})={}2r,\quad \partial^{\mathrm{iEF}}_{r}(|q|^2\g^{rr})=\De'.
    \end{gathered}
  \end{equation*}
  Recall from \zcref{eq:inverse-metric-oEF} that the components of inverse metric in \oEF{} coordinates $(u,r,\th,\phiOEF)$ are given by
  \begin{gather*}
    |q|^2\g^{uu}=a^2\sin^2\th,\quad |q|^2\g^{ur}=-(r^2+a^2),\quad |q|^2\g^{rr}=\De,\\ |q|^2\g^{\phiOEF\phiOEF}=\frac{1}{\sin^2\th},\quad |q|^2\g^{r\phiOEF}=-|q|^2\g^{u\phiOEF}=-a, \quad |q|^2\g^{\th\th}=1,
  \end{gather*}
  and therefore
  \begin{equation*}
    \begin{gathered}
      \partial^{\mathrm{oEF}}_{r}(|q|^2\g^{uu}) ={} \partial^{\mathrm{oEF}}_{r}(|q|^2\g^{\phiOEF\phiOEF})=\partial^{\mathrm{oEF}}_{r}(|q|^2\g^{r\phiOEF})=\partial^{\mathrm{oEF}}_{r}(|q|^2\g^{u\phiOEF})=\partial^{\mathrm{oEF}}_{r}(|q|^2\g^{\th\th})=0,\\
      \partial^{\mathrm{oEF}}_{r}(|q|^2\g^{ur})={}-2r,\quad \partial^{\mathrm{oEF}}_{r}(|q|^2\g^{rr})=\De'.
    \end{gathered}
  \end{equation*}
  Using \zcref{lemma:bulk:X-of-r:basic-computation}, we then deduce 
  \begin{equation*}
    \begin{split}
      2 |q|^2(K^{(X^{\mathrm{iEF}}_{(1)},0)})^{vv}&=2 |q|^2(K^{(X^{\mathrm{oEF}}_{(1)},0)})^{uu}  ={}-  a^2\sin^2\th  f',\\
      2  |q|^2(K^{(X^{\mathrm{iEF}}_{(1)},0)})^{vr}&=-2  |q|^2(K^{(X^{\mathrm{oEF}}_{(1)},0)})^{ur} ={}- 2r f,\\
      2  |q|^2(K^{(X^{\mathrm{iEF}}_{(1)},0)})^{rr}&=2  |q|^2(K^{(X^{\mathrm{oEF}}_{(1)},0)})^{rr} ={}\De  f'- f \De',\\
      2  |q|^2(K^{(X^{\mathrm{iEF}}_{(1)},0)})^{\phiIEF\phiIEF}&=2  |q|^2(K^{(X^{\mathrm{oEF}}_{(1)},0)})^{\phiOEF\phiOEF} ={}-  \frac{1}{\sin^2\th}  f',\\
      2  |q|^2(K^{(X^{\mathrm{iEF}}_{(1)},0)})^{r\phiIEF}&=2  |q|^2(K^{(X^{\mathrm{oEF}}_{(1)},0)})^{r\phiOEF} ={}0,\\
      2  |q|^2(K^{(X^{\mathrm{iEF}}_{(1)},0)})^{v\phiIEF}&=2  |q|^2(K^{(X^{\mathrm{oEF}}_{(1)},0)})^{u\phiOEF} ={}-  a f',\\
      2  |q|^2(K^{(X^{\mathrm{iEF}}_{(1)},0)})^{\th\th}&=2  |q|^2(K^{(X^{\mathrm{oEF}}_{(1)},0)})^{\th\th}  ={}-f' ,
    \end{split}    
  \end{equation*}
  where recall that for a function of $r$ only $\pr_r^{\mathrm{iEF}}f=\pr_r^{\mathrm{oEF}}f=\pr_r^{\mathrm{BL}}f=f'$.
  From \zcref{le:divergPP-gen-2a}, we deduce 
  \begin{align*}
    2 |q|^2\D\cdot \PP ^{(X^{\mathrm{iEF}}_{(1)}, 0, 0)}[\psi]
    ={}& (\De f'- f \De')|\partial^{\mathrm{iEF}}_{r} \psi|^2-4rf \partial^{\mathrm{iEF}}_{r} \psi \pr_v\psi-f' |\pr_\th\psi|^2\\
       &-  a^2\sin^2\th f' |\pr_v\psi|^2 -  \frac{1}{\sin^2\th} f' |\pr_{\phiIEF}\psi|^2-2  a f' \pr_v \psi \pr_{\phiIEF}\psi  + 2f\partial^{\mathrm{iEF}}_{r}\psi  |q|^2\Box_\g \psi, \\
    2 |q|^2\D\cdot \PP ^{(X^{\mathrm{oEF}}_{(1)}, 0, 0)}[\psi]
    ={}& (\De  f'- f \De')|\partial^{\mathrm{oEF}}_{r} \psi|^2+4rf \partial^{\mathrm{oEF}}_{r} \psi \pr_u\psi- f' |\pr_\th\psi|^2\\
       &-  a^2\sin^2\th f' |\pr_u\psi|^2 -  \frac{1}{\sin^2\th} f' |\pr_{\phiOEF}\psi|^2-2 a f' \pr_u \psi \pr_{\phiOEF}\psi  + 2f\partial^{\mathrm{oEF}}_{r}\psi  |q|^2\Box_\g \psi.   
  \end{align*}
  Recalling from \zcref{eq:def-VHH-VII} that $\renormpphi=a\sin\th \pr_v +\frac{1}{\sin\th} \pr_{\phiIEF}= a\sin\th \pr_u +\frac{1}{\sin\th} \pr_{\phiOEF}$,
  we obtain 
  \begin{align*}
    |q|^2\D\cdot \PP ^{(X^{\mathrm{iEF}}_{(1)}, 0, 0)}[\psi]
    ={}& \frac{1}{2} (\De  f'- f \De')|\partial^{\mathrm{iEF}}_{r} \psi|^2-\frac{1}{2} f'|\renormpphi\psi|^2- \frac{1}{2} f' |\pr_\th\psi|^2\\
       & -2rf \partial^{\mathrm{iEF}}_{r} \psi \pr_v\psi
         + f\partial^{\mathrm{iEF}}_{r}\psi  |q|^2\Box_\g \psi, \\
    |q|^2\D\cdot \PP ^{(X^{\mathrm{oEF}}_{(1)}, 0, 0)}[\psi]
    ={}& \frac{1}{2} (\De  f'- f \De')|\partial^{\mathrm{oEF}}_{r} \psi|^2-\frac{1}{2} f'|\renormpphi\psi|^2- \frac{1}{2} f' |\pr_\th\psi|^2\\
       &+2rf \partial^{\mathrm{oEF}}_{r} \psi \pr_u\psi
         + f\partial^{\mathrm{oEF}}_{r}\psi  |q|^2\Box_\g \psi.
  \end{align*}
  Finally, recalling the notation $|\nabb\psi|^2$, we obtain
  the stated result for $X_{(1)}$.

  For $X^{\mathrm{iEF}}_{(2)}=h (r^2+a^2) \pr_v + ah \pr_{\phiIEF}$ and $X^{\mathrm{oEF}}_{(2)}=h (r^2+a^2) \pr_u + ah \pr_{\phiOEF}$, using \zcref{lemma:bulk:X-of-r:basic-computation}, we deduce 
  \begin{equation*}
    \begin{split}
      2|q|^2(K^{(X^{\mathrm{iEF}}_{(2)},0)})^{vv} ={}&-2|q|^2(K^{(X^{\mathrm{oEF}}_{(2)},0)})^{uu}= 2(r^2+a^2)^2 h' +  4h r(r^2+a^2),\\
      2|q|^2(K^{(X^{\mathrm{iEF}}_{(2)},0)})^{vr} ={}&2|q|^2(K^{(X^{\mathrm{oEF}}_{(2)},0)})^{ur}= \De(r^2+a^2)h' +  2h r\De,\\
      2|q|^2(K^{(X^{\mathrm{iEF}}_{(2)},0)})^{rr} ={}&2|q|^2(K^{(X^{\mathrm{oEF}}_{(2)},0)})^{rr}=0,\\
      2|q|^2(K^{(X^{\mathrm{iEF}}_{(2)},0)})^{\phiIEF\phiIEF} ={}&-2|q|^2(K^{(X^{\mathrm{oEF}}_{(2)},0)})^{\phiOEF\phiOEF} =  2a^2h',\\
      2|q|^2(K^{(X^{\mathrm{iEF}}_{(2)},0)})^{r\phiIEF} ={}&2|q|^2(K^{(X^{\mathrm{oEF}}_{(2)},0)})^{r\phiOEF} =a\De h' ,\\
      2|q|^2(K^{(X^{\mathrm{iEF}}_{(2)},0)})^{v\phiIEF} ={}&-2|q|^2(K^{(X^{\mathrm{oEF}}_{(2)},0)})^{u\phiOEF}= 2ah'(r^2+a^2) + 2arh,\\
      2|q|^2(K^{(X^{\mathrm{iEF}}_{(2)},0)})^{\th\th} ={}&2|q|^2(K^{(X^{\mathrm{oEF}}_{(2)},0)})^{\th\th}=0.
    \end{split}    
  \end{equation*}
  From \zcref{le:divergPP-gen-2a}, we deduce
  \begin{align*}
    |q|^2 \D\cdot \JCurrent{(X^{\mathrm{iEF}}_{(2)}, 0, 0)}[\psi]={}&\big( (r^2+a^2)^2  h' +  2h r(r^2+a^2)\big) |\pr_v \psi|^2+ \big( \De(r^2+a^2)  h'+  2h r\De\big)\pr_v \psi \partial^{\mathrm{iEF}}_{r} \psi \\
                                                                    &+ a^2h' |\pr_{\phiIEF} \psi|^2+  a \Delta  h' \pr^{\mathrm{iEF}}_r\psi \pr_{\phiIEF} \psi + \big(2a(r^2+a^2) h' + 2arh \big)\pr_v \psi \pr_{\phiIEF} \psi\\
                                                                    &+ h (\HawkingHorizon\psi) |q|^2 \Box_\g \psi, \\
    |q|^2 \D\cdot \JCurrent{(X^{\mathrm{oEF}}_{(2)}, 0, 0)}[\psi]={}&-\big( (r^2+a^2)^2 h' +  2h r(r^2+a^2)\big) |\pr_u \psi|^2+ \big( \De(r^2+a^2) h' +  2h r\De\big)\pr_u \psi \partial_r^{\mathrm{oEF}} \psi \\
                                                                    &- a^2h' |\pr_{\phiOEF} \psi|^2+  a \Delta h' \pr^{\mathrm{oEF}}_r \psi \pr_{\phiOEF} \psi - \big(2a(r^2+a^2) h' + 2arh \big)\pr_u \psi \pr_{\phiOEF} \psi\\
                                                                    &+ h (r^2\InfinityHawking\psi) |q|^2 \Box_\g \psi. 
  \end{align*}
  Recalling that $\HawkingHorizon=(r^2+a^2) \pr_v + a \pr_{\phiIEF}$ and $r^2\InfinityHawking=(r^2+a^2) \pr_u + a \pr_{\phiOEF}$ respectively, we obtain the stated result for $X_{(2)}$.  
\end{proof}

\subsubsection{Boundary term computations in ingoing and outgoing Eddington Finkelstein}

We now compute the boundary terms obtained from the vectorfields $X_{(1)}$ and $X_{(2)}$.

\begin{lemma}\label{lemma:boundary-terms-ieF}
  For multiplier $X_{(1)}= f(r) \pr^{\mathrm{iEF}}_r$ in \iEF{}
  coordinates we have
  \begin{align*}
    \JCurrent{(X_{(1)},0,0)}[\psi]\cdot N_{\Sigma_{\EventHorizon}(\tau)}
    ={}&|q|^{-2}f \Big[-\big(   r^2+a^2-\frac{1}{2}\Delta\,h_{\HH}' \big) |\pr_{\rhoH}\psi|^2 -\frac{1}{2} h_{\HH}'\abs*{\NablaAngular\psi}^2-a\sin\th 
         (\pr_{\rhoH}\psi )(\renormpphi\psi)\Big],
    \\
    \JCurrent{(X_{(1)},0,0)}[\psi]\cdot (-N_{\EventHorizon})
    ={}&-\frac{1}{2} |q|^{-2} f\big( |\rhoH \pr_{\rhoH} \psi|^2 +\abs*{\NablaAngular\psi}^2\big).
  \end{align*}
  For multiplier $X_{(2)}= h(r) \HawkingHorizon$ in \iEF{} coordinates we have 
  \begin{align*}
    \JCurrent{(X_{(2)},0,0)}[\psi]\cdot N_{\Sigma_{\EventHorizon}(\tau)}
    ={}& |q|^{-2} h \Big[ \frac{1}{2} \big( r^2+a^2-\frac 1 2 \Delta\,h_{\HH}' \big)  \abs*{\rhoH\partial_{\rhoH}\psi}^2 + \frac{1}{2} \bigl(r^2+a^2\bigr)   \abs*{\NablaAngular\psi}^2\nonumber\\
       &+  h_{\HH}'  \big(  \HawkingHorizon\psi   +  \frac{1}{2}\rhoH^2  \partial_{\rhoH}\psi  \big)^2-a \sin\th  (\HawkingHorizon\psi) (\renormpphi \psi)\Big],\\
    \JCurrent{(X_{(2)},0,0)}[\psi]\cdot (-N_{\EventHorizon})
    ={}&\abs*{q}^{-2}h \left(\frac{1}{2}\big(  \HawkingHorizon\psi + 2\rhoH^2 \partial_{\rhoH}\psi\big)^2+\frac{1}{2} |\HawkingHorizon\psi|^2 +\frac{1}{2} \rhoH^2 \abs*{\NablaAngular\psi}^2 
         \right).
  \end{align*} 
\end{lemma}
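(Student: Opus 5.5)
The plan is a direct computation from the definition \eqref{definition-of-P} with $w=0$, $J=0$. In that case
\[
\JCurrent{(X,0,0)}[\psi]\cdot N=(X\psi)(N\psi)-\tfrac12\Metric(X,N)\,\partial_\lambda\psi\partial^\lambda\psi .
\]
For each of the four cases I would compute the three ingredients $X\psi$, $N\psi$ and $\Metric(X,N)$, together with the Lagrangian $|q|^2\partial_\lambda\psi\partial^\lambda\psi$. The normals come from \zcref{lemma:normal-vector-tau} and \eqref{eq:NHH-NII}. The Lagrangian is read off from \eqref{eq:inverse-iEF-rho}:
\[
|q|^2\partial_\lambda\psi\partial^\lambda\psi=2(\HawkingHorizon\psi)(\partial_{\rhoH}\psi)+|\rhoH\partial_{\rhoH}\psi|^2+|\NablaAngular\psi|^2 .
\]
Here $\partial_{\rhoH}=\partial_r^{\mathrm{iEF}}$, and $|\NablaAngular\psi|^2=|\partial_\theta\psi|^2+|\renormpphi\psi|^2$.

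For the metric pairings I would use the one-form duals rather than vector components. Since $N_{\Sigma}=-\D\tau$ and $N_{\EventHorizon}=-\D r$, we have $\Metric(X,N_\Sigma)=-X\tau$ and $\Metric(X,N_{\EventHorizon})=-Xr$. In the region $\rhoH\le\rho_\star$, where $\tau=v-h_{\EventHorizon}(r)$, this gives:
\begin{itemize}
\item $\Metric(X_{(1)},N_\Sigma)=f\,h_{\EventHorizon}'$ and $\Metric(X_{(1)},N_{\EventHorizon})=-f$;
\item $\Metric(X_{(2)},N_\Sigma)=-h(r^2+a^2)$ and $\Metric(X_{(2)},N_{\EventHorizon})=0$.
\end{itemize}

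The derivatives $N\psi$ come from \zcref{lemma:normal-vector-tau}:
\[
|q|^2N_\Sigma\psi=\bigl(-(r^2+a^2)+\Delta h_{\EventHorizon}'\bigr)\partial_{\rhoH}\psi+h_{\EventHorizon}'\HawkingHorizon\psi-a\sin\theta\,\renormpphi\psi .
\]
On the horizon, $|q|^2N_{\EventHorizon}\psi=-\HawkingHorizon\psi$; I would note that at $\rhoH=0$ this is exactly the boundary component.

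Substituting into the formula and collecting terms should proceed case by case.

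For $X_{(1)}$ on $\Sigma$, the $(\HawkingHorizon\psi)(\partial_{\rhoH}\psi)$ cross terms should cancel: $h_{\EventHorizon}'f$ appears from $N\psi$ and $-h_{\EventHorizon}'f$ from the Lagrangian. The remainder is $\bigl(-(r^2+a^2)+\Delta h_{\EventHorizon}'-\tfrac12 h_{\EventHorizon}'\rhoH^2\bigr)|\partial_{\rhoH}\psi|^2$, which combines to the stated coefficient because $\Delta=\rhoH^2$, plus the angular and mixed $\renormpphi$ terms.

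For $X_{(1)}$ on $\EventHorizon$, we get $f\partial_{\rhoH}\psi\cdot(-|q|^{-2}\HawkingHorizon\psi)\cdot(-1)$ minus $\tfrac12 f\times$ the Lagrangian. The $\HawkingHorizon\partial_{\rhoH}$ terms cancel, leaving $-\tfrac12 f(|\rhoH\partial_{\rhoH}\psi|^2+|\NablaAngular\psi|^2)$.

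For $X_{(2)}$ on $\EventHorizon$, the Lagrangian does not contribute. The flux is therefore $h|q|^{-2}|\HawkingHorizon\psi|^2$. I expect the stated expression to agree with this only after restricting to $\rhoH=0$ (or after noting the extra terms are multiples of $\rhoH^2$), and I would check that the printed identity is algebraically consistent in that sense.

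For $X_{(2)}$ on $\Sigma$, I would complete the square. The terms are:
\begin{itemize}
\item $h_{\EventHorizon}'|\HawkingHorizon\psi|^2$;
\item $\bigl(\Delta h_{\EventHorizon}'-(r^2+a^2)\bigr)(\HawkingHorizon\psi)\partial_{\rhoH}\psi$;
\item $+\tfrac12(r^2+a^2)\bigl[2(\HawkingHorizon\psi)\partial_{\rhoH}\psi+\rhoH^2|\partial_{\rhoH}\psi|^2+|\NablaAngular\psi|^2\bigr]$;
\item the mixed term $-a\sin\theta\,(\HawkingHorizon\psi)(\renormpphi\psi)$.
\end{itemize}
The $(r^2+a^2)$ cross terms cancel. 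The combination $h_{\EventHorizon}'\bigl(|\HawkingHorizon\psi|^2+\rhoH^2(\HawkingHorizon\psi)\partial_{\rhoH}\psi\bigr)$ is then rewritten as $h_{\EventHorizon}'(\HawkingHorizon\psi+\tfrac12\rhoH^2\partial_{\rhoH}\psi)^2-\tfrac14 h_{\EventHorizon}'\rhoH^4|\partial_{\rhoH}\psi|^2$. This produces the coefficient $\tfrac12\bigl(r^2+a^2-\tfrac12\Delta h_{\EventHorizon}'\bigr)$ in front of $|\rhoH\partial_{\rhoH}\psi|^2$.

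The main obstacle is bookkeeping rather than ideas. The delicate points are the sign conventions, namely the orientation of $-N_{\EventHorizon}$ and the factor $|q|^{-2}$ from $N=-|q|^{-2}(\cdots)$, and making sure every $\renormpphi$ and $\partial_\theta$ piece lands inside $|\NablaAngular\psi|^2$ with the correct weight.
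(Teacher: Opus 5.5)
Your proposal is correct and follows the paper's proof. The paper tabulates the pairings of $\EMTensor$ in the frame $(\partial_{\rhoH},\HawkingHorizon,\renormpphi)$ and contracts them with the normals from \zcref{lemma:normal-vector-tau} and \eqref{eq:NHH-NII}, which is the same bookkeeping as your $(X\psi)(N\psi)-\tfrac12\Metric(X,N)\,\partial_\lambda\psi\partial^\lambda\psi$ with $\Metric(X,N_\Sigma)=-X\tau$ and $\Metric(X,N_{\EventHorizon})=-Xr$.

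Your caution about the $X_{(2)}$ horizon flux is warranted, and the formula you derive is the correct one. The paper combines $|q|^2N_{\EventHorizon}=-\HawkingHorizon$, which is exact only at $\rhoH=0$, with the off-horizon value of $\EMTensor(\HawkingHorizon,\HawkingHorizon)$, and its completed square carries $2\rhoH^2$ where $\rhoH^2$ is needed. Every discrepancy is a multiple of $\rhoH^2$, so on $\EventHorizon$ the flux is exactly your $|q|^{-2}h|\HawkingHorizon\psi|^2$.
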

\begin{proof}
  We first record the basic stress-energy pairings in the
  $(\partial_{\rhoH}, \HawkingHorizon, \renormpphi)$ frame. From
  \zcref{definition-of-P}, we have that
  $\JCurrent{(X,0,0)}[\psi]\cdot N= \EMTensor(X, N)$, and using the
  expression of the metric in iEF coordinates we compute
  \begin{align}\label{eq:EMtensor-rhoH-H}
    \begin{split}
      \EMTensor(\partial_{\rhoH}, \partial_{\rhoH})[\psi] ={}& \abs*{\partial_{\rhoH}\psi}^2,\\
      \EMTensor(\partial_{\rhoH}, \HawkingHorizon)[\psi]
      ={}&   -  \frac{1}{2} \big(\abs*{\rhoH\partial_{\rhoH}\psi}^2 +\abs*{\NablaAngular\psi}^2\big) ,\\
      \EMTensor(\HawkingHorizon, \HawkingHorizon)[\psi]
      = {}& |\HawkingHorizon\psi|^2 +\frac{1}{2} \rhoH^2\big(2 \HawkingHorizon\psi \pr_{\rhoH} \psi  + |\rhoH \pr_{\rhoH} \psi|^2 + \abs*{\NablaAngular\psi}^2 \big),\\
      \EMTensor(\partial_{\rhoH}, \renormpphi)[\psi]
      = {}& \pr_{\rhoH} \psi \renormpphi\psi, \\
      \EMTensor(\HawkingHorizon, \renormpphi)[\psi]
      = {}& \HawkingHorizon\psi \renormpphi\psi .
    \end{split}
  \end{align}
  We deduce from \zcref{eq:NHH-NII},
  \begin{align*}
    \JCurrent{(X_{(1)},0,0)}[\psi]\cdot N_{\EventHorizon}
    ={}&f \EMTensor(\partial_{\rhoH}, N_{\EventHorizon})[\psi]\\
    =&-|q|^{-2}f \EMTensor(\partial_{\rhoH}, \HawkingHorizon )=\frac{1}{2} |q|^{-2} f\big( |\rhoH \pr_{\rhoH} \psi|^2 +\abs*{\NablaAngular\psi}^2\big),\\
    \JCurrent{(X_{(2)},0,0)}[\psi]\cdot N_{\EventHorizon}
    ={}&h\EMTensor(\HawkingHorizon, N_{\EventHorizon})[\psi]\\
    =&-|q|^{-2}h\EMTensor(\HawkingHorizon, \HawkingHorizon )[\psi]= -\abs*{q}^{-2}h \left(2\big(\frac{1}{2}  \HawkingHorizon\psi + \rhoH^2 \partial_{\rhoH}\psi\big)^2+\frac{1}{2} |\HawkingHorizon\psi|^2 +\frac{1}{2} \Delta \abs*{\NablaAngular\psi}^2 
       \right),
  \end{align*}
  as stated. 
  Similarly, from the expression of $N_{\Sigma(\tau)}$ in \zcref{lemma:normal-vector-tau}
  we deduce
  \begin{align*}
    \EMTensor(\pr_{\rhoH}, |q|^2N_{\Sigma_{\EventHorizon}(\tau)})[\psi]
    ={}& \frac{1}{2}\left( \Delta\,h_{\HH}'(r)-   2\bigl(r^2+a^2\bigr) \right) |\pr_{\rhoH}\psi|^2 -\frac{1}{2} h_{\HH}'(r)\abs*{\NablaAngular\psi}^2 -a\sin\th
         (\pr_{\rhoH}\psi )(\renormpphi\psi)\\
    \EMTensor(\HawkingHorizon, |q|^2N_{\Sigma_{\EventHorizon}(\tau)})[\psi] 
    ={}& \frac{1}{4} \left( -\Delta\,h_{\HH}'(r)+   2\bigl(r^2+a^2\bigr) \right)  \abs*{\rhoH\partial_{\rhoH}\psi}^2 + \frac{1}{2} \bigl(r^2+a^2\bigr)   \abs*{\NablaAngular\psi}^2\\
       &+ \frac{1}{2} h_{\HH}'(r)  \big(
         2  |\HawkingHorizon\psi|^2
         +2  \rhoH \HawkingHorizon\psi (\rhoH\pr_{\rhoH} \psi)
         +  \frac{1}{2}\rhoH^4  \abs*{\partial_{\rhoH}\psi}^2  \big)
         -a \sin\th (\HawkingHorizon\psi) (\renormpphi\psi).
  \end{align*}
  as stated.
\end{proof}

\begin{lemma}\label{lemma:boundary-terms-oEF}
  For $\rhoI \leq \rho_1$ with $\rho_1 \ll r_{+}^{-1}$
  sufficiently small, we have
  \begin{align*}
    \JCurrent{(-\partial_{\rhoI}
    + \frac{a^2}{\Upsilon}\InfinityHawking, 0, 0)}[\psi]\cdot |q|^2N_{\Sigma_{\NullInfinity}(\tau)}
    &\gtrsim \abs*{\partial_{\rhoI}\psi}^2
      + |a|\abs*{\InfinityHawking\psi}^2
      +|a|\abs*{\NablaAngular\psi}^2
      ,     \\
    \JCurrent{(-\partial_{\rhoI}
    + \frac{a^2}{\Upsilon}\InfinityHawking, 0, 0)}[\psi]\cdot (-N_{\NullInfinity})
    &\gtrsim  \abs*{\rhoI\partial_{\rhoI}\psi}^2+|a|\abs*{\InfinityHawking\psi}^2
      + \abs*{\NablaAngular\psi}^2. 
  \end{align*}
\end{lemma}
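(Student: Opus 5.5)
Since $\JCurrent{(X,0,0)}[\psi]\cdot N=\EMTensor[\psi](X,N)$, both estimates reduce to evaluating the energy–momentum tensor on $X\vcentcolon=-\partial_{\rhoI}+\frac{a^2}{\Upsilon}\InfinityHawking$ paired with $|q|^2N_{\Sigma_{\NullInfinity}(\tau)}$ and with $-N_{\NullInfinity}$. By \zcref{lemma:normal-vector-tau}, $|q|^2N_{\Sigma_{\NullInfinity}(\tau)}=X-a\sin\th\renormpphi$ and $N_{\NullInfinity}=-\InfinityHawking$. I will proceed as in \zcref{lemma:boundary-terms-ieF}. First I compute the pairings of $\EMTensor[\psi]$ in the frame $(\partial_{\rhoI},\InfinityHawking,\renormpphi)$ from the inverse metric \zcref{eq:inverse-oEF-rho}. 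The expected leading forms are the analogues of \zcref{eq:EMtensor-rhoH-H}:
\begin{align*}
\EMTensor(\partial_{\rhoI},\partial_{\rhoI})&=|\partial_{\rhoI}\psi|^2,\qquad
\EMTensor(\partial_{\rhoI},\InfinityHawking)=-\tfrac12 r^{-2}|q|^{2}\cdot\tfrac{1}{|q|^2}\big(\Upsilon|\rhoI\partial_{\rhoI}\psi|^2+|\NablaAngular\psi|^2\big)+\ldots,\\
\EMTensor(\InfinityHawking,\InfinityHawking)&=|\InfinityHawking\psi|^2+O(\rhoI^2)\big(\ldots\big),\qquad
\EMTensor(\cdot,\renormpphi)=(\cdot\,\psi)(\renormpphi\psi).
\end{align*}
The exact $r$-weights will be bookkept as $1+O(\rhoI)$ factors, because in the region $\rhoI\le\rho_1$ we have $\Upsilon=1+O(\rhoI)$ and $|q|^2/r^2=1+O(\rhoI^2)$.

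For the flux through $\NullInfinity$, the pairing is $\EMTensor(X,\InfinityHawking)=-\EMTensor(\partial_{\rhoI},\InfinityHawking)+\frac{a^2}{\Upsilon}\EMTensor(\InfinityHawking,\InfinityHawking)$. The first term gives $\tfrac12(\Upsilon|\rhoI\partial_{\rhoI}\psi|^2+|\NablaAngular\psi|^2)$ up to $1+O(\rhoI)$ factors. The second term gives $a^2|\InfinityHawking\psi|^2$ plus errors carrying $\rhoI^2$ weights. Those errors, such as $\rhoI^2\InfinityHawking\psi\,\partial_{\rhoI}\psi$, are absorbed via Cauchy–Schwarz into $|\rhoI\partial_{\rhoI}\psi|^2$ and $a^2|\InfinityHawking\psi|^2$ once $\rho_1$ is small. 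At $\rhoI=0$ they vanish outright.

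There is a discrepancy in the weights. This computation produces $a^2$ in front of $|\InfinityHawking\psi|^2$, whereas the claim states $|a|$. For $|a|\le M$ we have $a^2\le M|a|$, so $a^2|\InfinityHawking\psi|^2$ does not dominate $|a|\,|\InfinityHawking\psi|^2$. I would therefore first recheck what weight actually arises, and in particular whether the $\renormpphi$ component of $N_{\Sigma}$ generates $|\InfinityHawking\psi|^2$ with an $|a|$ weight. If it does not, the honest conclusion is with $a^2$, and I would read the stated $|a|$ as meaning a lower bound with $a$-dependent constants. With that reading, $a\neq0$ fixed and $\rho_1$ small, the claim holds with constants depending on $a$.

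For the $\Sigma$ flux, the pairing is $\EMTensor(X,X)-a\sin\th\,\EMTensor(X,\renormpphi)$. $\EMTensor(X,X)$ expands into $|\partial_{\rhoI}\psi|^2$, then $\frac{a^2}{\Upsilon}(\Upsilon|\rhoI\partial_{\rhoI}\psi|^2+|\NablaAngular\psi|^2)$, then $\frac{a^4}{\Upsilon^2}|\InfinityHawking\psi|^2$, plus errors. The cross term $-a\sin\th\,\EMTensor(X,\renormpphi)=a\sin\th\,\partial_{\rhoI}\psi\,\renormpphi\psi-\frac{a^3\sin\th}{\Upsilon}\InfinityHawking\psi\,\renormpphi\psi$ is the key difficulty. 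The first piece is bounded by $\tfrac12|\partial_{\rhoI}\psi|^2+\tfrac{a^2}{2}|\NablaAngular\psi|^2$. This borderline balance is the main obstacle: the $a^2|\NablaAngular\psi|^2$ coefficient from $\EMTensor(X,X)$ must beat $\tfrac{a^2}{2}\sin^2\th$ with room to spare. To get that room I would use weighted Young's inequality, $\epsilon^{-1}$ on $\partial_{\rhoI}$ and $\epsilon$ on the angular term, together with the explicit $\tfrac12$ factors. If the coefficients are too tight, I would instead use the exact cancellation coming from $N_\Sigma$ being timelike, since $\EMTensor(N,N)$ is positive-definite for timelike $N$, and note that $X=|q|^2N+a\sin\th\renormpphi$. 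The $a$-weighted positivity then follows from the quantitative timelikeness margin computed in \zcref{lemma:normal-vector-tau}. In both estimates, the lower-order $\rhoI$-weighted errors are absorbed by choosing $\rho_1$ small.
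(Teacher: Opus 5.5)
Your proposal is correct and is essentially the paper's proof: the paper computes the same pairings in the frame $(\partial_{\rhoI},\InfinityHawking,\renormpphi)$, with $\EMTensor(\partial_{\rhoI},\InfinityHawking)=-\tfrac12\big(\Upsilon|\rhoI\partial_{\rhoI}\psi|^2+|\NablaAngular\psi|^2\big)$ exactly (without the stray $r^{-2}$ in your display). It treats the $\NullInfinity$ flux exactly as you do, and for the $\Sigma$ flux it follows your fallback: writing $X=|q|^2N_{\Sigma_{\NullInfinity}(\tau)}+a\sin\theta\,\renormpphi$ and completing squares using the margin $\lambda-\tfrac12a^2\sin^2\theta\ge a^2(\tfrac1\Upsilon-\tfrac12)$, $\lambda=a^2/\Upsilon$. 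Your direct Young's route also closes, provided you split both cross terms unequally, e.g.\ $\tfrac34|\partial_{\rhoI}\psi|^2+\tfrac13a^2|\renormpphi\psi|^2$ and $\tfrac34\lambda^2|\InfinityHawking\psi|^2+\tfrac13a^2|\renormpphi\psi|^2$, because both draw on the same $\lambda|\renormpphi\psi|^2$ budget.

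Your concern about the weights is justified. The paper's computation likewise yields only $\lambda\simeq a^2$ in front of $|\InfinityHawking\psi|^2$ on $\NullInfinity$, and only weights $a^4$ and $a^2$ on $|\InfinityHawking\psi|^2$ and $|\NablaAngular\psi|^2$ on $\Sigma$. So the stated $|a|$ holds only with $a$-dependent implicit constants, which is precisely the reading you adopt.
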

\begin{proof}
  Using the expression of the metric in oEF coordinates we compute 
  \begin{align}\label{eq:T-outgoing}
    \begin{split}
      \EMTensor(\partial_{\rhoI}, \partial_{\rhoI})[\psi]
      ={}& \abs*{\partial_{\rhoI}\psi}^2,\\
      \EMTensor(\partial_{\rhoI}, \InfinityHawking)[\psi]
      ={}& - \frac{1}{2}\left( \Upsilon\abs*{\rhoI\partial_{\rhoI}\psi}^2 + \abs*{\NablaAngular\psi}^2 \right),\\
      \EMTensor(\InfinityHawking, \InfinityHawking)[\psi]
      ={}& \abs*{\InfinityHawking\psi}^2
           + \frac{1}{2}\rhoI^2\Upsilon\left(
           2\InfinityHawking\psi\partial_{\rhoI}\psi
           + \Upsilon |\rhoI\partial_{\rhoI}\psi|^2
           + \abs*{\NablaAngular\psi}^2
           \right),\\
      \EMTensor(\partial_{\rhoI}, \renormpphi)[\psi]
      ={}& \partial_{\rhoI}\psi \,\renormpphi\psi,\\
      \EMTensor(\InfinityHawking, \renormpphi)[\psi]
      ={}& \InfinityHawking\psi\, \renormpphi\psi.   
    \end{split} 
  \end{align}
  From \zcref{lemma:normal-vector-tau}, since $\rho_1 \leq r_{+}^{-2}\rho_\star$, we can write 
  \begin{equation*}
    |q|^2 N_{\Sigma_{\NullInfinity}(\tau)}
    =
    -\partial_{\rhoI}
    + \lambda\InfinityHawking
    + Y \renormpphi,
  \end{equation*}
  with $\lambda=\frac{a^2}{\Upsilon}$, $Y=-a\sin\th$. 
  We then deduce 
  \begin{align*}
    \EMTensor(|q|^2N_{\Sigma_{\NullInfinity}}, |q|^2N_{\Sigma_{\NullInfinity}})[\psi]
    ={}& \abs*{q}^4\abs*{N_{\Sigma_{\NullInfinity}}\psi}^2
         - \frac{1}{2\abs*{q}^2}\Metric(\abs*{q}^2N_{\Sigma_{\NullInfinity}}, \abs*{q}^2N_{\Sigma_{\NullInfinity}})
         \left(
         2\InfinityHawking\psi\partial_{\rho_{\II}}\psi
         + \Upsilon|\rho_{\II}\partial_{\rho_{\II}}\psi|^2
         +\abs*{\NablaAngular\psi}^2
         \right),\\
    \EMTensor(Y\renormpphi, \abs*{q}^2N_{\Sigma_{\NullInfinity}})[\psi]
    ={}& Y\renormpphi\psi \abs*{q}^2N_{\Sigma_{\NullInfinity}}\psi
         - \frac{1}{2}\abs*{Y}^2\left(
         2\InfinityHawking\psi\partial_{\rho_{\II}}\psi
         + \Upsilon | \rho_{\II}\partial_{\rho_{\II}}\psi|^2
         +\abs*{\NablaAngular\psi}^2
         \right).
  \end{align*}
  and therefore, using that 
  \begin{align*}
    \Metric(\abs*{q}^2N_{\Sigma_{\NullInfinity}}, \abs*{q}^2N_{\Sigma_{\NullInfinity}})=|q|^2 \big(-2\lambda -\rhoI^2\Upsilon\lambda^2 +  Y^2 \big)
  \end{align*}
  we have
  \begin{align*}
    & \EMTensor(\abs*{q}^2N_{\Sigma_{\NullInfinity}}-Y\renormpphi, \abs*{q}^2N_{\Sigma_{\NullInfinity}})[\psi]\\
    ={}& \abs*{\abs*{q}^2N_{\Sigma_{\NullInfinity}}\psi}^2
         - Y\renormpphi\psi \abs*{q}^2N_{\Sigma_{\NullInfinity}}\psi
         +\left( \lambda + \frac{1}{2}  \rhoI^2\Upsilon\lambda^2  \right)\left(
         2\InfinityHawking\psi\partial_{\rho_{\II}}\psi
         + \Upsilon|\rho_{\II}\partial_{\rho_{\II}}\psi |^2
         + \abs*{\NablaAngular\psi}^2
         \right)\\
    ={}& \frac{1}{2}\abs*{\abs*{q}^2N_{\Sigma_{\NullInfinity}}\psi}^2
         + \frac{1}{2}\abs*{\abs*{q}^2N_{\Sigma_{\NullInfinity}}\psi - Y\renormpphi\psi}^2
         + \left( \lambda + \frac{1}{2}  \rhoI^2\Upsilon\lambda^2   - \frac{1}{2}Y^2  \right)\abs*{\renormpphi\psi}^2\\
    &+ \left( \lambda + \frac{1}{2}  \rhoI^2\Upsilon\lambda^2  \right)\left(
      2\InfinityHawking\psi\partial_{\rho_{\II}}\psi
      + \Upsilon| \rho_{\II}\partial_{\rho_{\II}}\psi |^2
      + \abs*{\partial_{\theta}\psi}^2
      \right)\\
    ={}& \frac{1}{2}\abs*{\abs*{q}^2N_{\Sigma_{\NullInfinity}}\psi}^2
         + \frac{1}{2} \abs*{\partial_{\rhoI}\psi}^2
         +  \frac{1}{2}\lambda^2 \abs*{\InfinityHawking\psi}^2
         + \left( \lambda + \frac{1}{2}  \rhoI^2\Upsilon\lambda^2  - \frac{1}{2}Y^2  \right)\abs*{\renormpphi\psi}^2\\
    &  + \left( \lambda +   \rhoI^2\Upsilon\lambda^2  \right)
      \InfinityHawking\psi\partial_{\rho_{\II}}\psi+\left( \lambda + \frac{1}{2}  \rhoI^2\Upsilon\lambda^2 \right)\left(
      \Upsilon |\rho_{\II}\partial_{\rho_{\II}}\psi |^2
      + \abs*{\partial_{\theta}\psi}^2
      \right)
      ,
  \end{align*}
  which can be written as 
  \begin{align*}
    \EMTensor(\abs*{q}^2N_{\Sigma_{\NullInfinity}}-Y\renormpphi, \abs*{q}^2N_{\Sigma_{\NullInfinity}})[\psi]  ={}&{}\frac{1}{2}\abs*{\abs*{q}^2N_{\Sigma_{\NullInfinity}}\psi}^2
                                                                                                                   + \frac{1}{2}  \abs*{(\partial_{\rhoI}
                                                                                                                   +  \lambda \InfinityHawking)\psi}^2
                                                                                                                   + \left( \lambda  - \frac{1}{2}Y^2  \right)\abs*{\renormpphi\psi}^2+ \lambda\abs*{\partial_{\theta}\psi}^2 \\
                                                                                                                 &  + \conormalSpaceI{2}(\Manifold)
                                                                                                                   \left(|\partial_{\rhoI}\psi|^2+\InfinityHawking\psi\partial_{\rho_{\II}}\psi+ \abs*{\renormpphi\psi}^2+\abs*{\partial_{\theta}\psi}^2\right).
  \end{align*}
  Observe that $\lambda - \frac{1}{2} Y^2 \geq a^2 \big( \frac{1}{\Upsilon}-\frac{1}{2} \big)\geq 0$ and, noticing that $\abs*{q}^2N_{\Sigma_{\NullInfinity}}$, $\partial_{\rhoI}
  +  \lambda \InfinityHawking$ and $\renormpphi\psi$ are three linearly independent combinations of $\partial_{\rhoI}$, $\InfinityHawking$ and $\renormpphi$, we see that for $\rhoI\le \rho_1$ for $\rho_1$
  sufficiently small one can absorb the second line above and rearrange it so that,
  \begin{equation*}
    \EMTensor(\abs*{q}^2N_{\Sigma_{\NullInfinity}}-Y\renormpphi, \abs*{q}^2N_{\Sigma_{\NullInfinity}})[\psi]
    \ges \abs*{\partial_{\rhoI}\psi}^2
    + |a|\abs*{\InfinityHawking\psi}^2
    + |a|\abs*{\NablaAngular\psi}^2.
  \end{equation*}

  Similarly, recalling from \zcref{lemma:normal-vector-tau}  that $ N_{\NullInfinity}= -\InfinityHawking $
  we compute that
  \begin{align*}
    \EMTensor(|q|^2N_{\Sigma_{\NullInfinity}} - Y\renormpphi, -N_{\NullInfinity})
    ={}& \EMTensor\left(-\partial_{\rhoI} + \lambda\InfinityHawking, \InfinityHawking\right)\\
    ={}& \lambda\abs*{\InfinityHawking\psi}^2
         + \frac{\lambda}{2}\rhoI^2\Upsilon \left(
         2\InfinityHawking\psi\partial_{\rhoI}\psi
         + \Upsilon(\rhoI\partial_{\rhoI}\psi)^2 
         + \abs*{\NablaAngular\psi}^2\right)\\
       & + \frac{1}{2}\left( \Upsilon\abs*{\rhoI\partial_{\rhoI}\psi}^2 + \abs*{\NablaAngular\psi}^2 \right)\\
    ={}& \lambda\abs*{\InfinityHawking\psi}^2
         + \frac{1}{2}\abs*{\rhoI\partial_{\rhoI}\psi}^2
         + \frac{1}{2} \abs*{\NablaAngular\psi}^2
         + \conormalSpaceI{2}(\Manifold)| \bDiffI\psi |^2.
  \end{align*}
  In particular, for
  $\rhoI<\rho_1$ sufficiently small we obtain the stated bound. 
\end{proof}

\subsubsection{Hardy inequalities}

We collect here the following bounds that will be used as Hardy inequalities.

\begin{lemma}[Hardy inequality in divergence form]\label{lemma:hardy-pointwise-rhoH}
  Let $\psi\in C^1(\Manifold)$. Then, for every $\beta\in\Real$,
  \begin{align*}
    \rhoH^\beta |\rhoH\pr_{\rhoH}\psi|^2
    &\geq
      \frac{(\beta+1)^2}{4}\rhoH^\beta|\psi|^2
      -
      |q|^2
      \D_\mu\left(
      \frac{\beta+1}{2}
      \rhoH^{\beta+1}|\psi|^2
      |q|^{-2}(\partial_{\rhoH})^\mu
      \right),
    \\
    \rhoI^\beta |\rhoI\pr_{\rhoI}\psi|^2
    &\geq
      \frac{(\beta-1)^2}{4}\rhoI^\beta|\psi|^2
      -
      |q|^2
      \D_\mu\left(
      \frac{\beta-1}{2}
      \rhoI^{\beta+1}|\psi|^2
      |q|^{-2}(\partial_{\rhoI})^\mu
      \right).
  \end{align*}
\end{lemma}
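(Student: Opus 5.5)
The plan is to prove both inequalities pointwise by completing a square. The key preliminary is an explicit formula for the divergence of a vector field of the form $f\,|q|^{-2}\partial_{\rho}$, where $\rho$ is the boundary-defining function used as a radial coordinate. Throughout, $\psi$ is real-valued, as elsewhere in the paper.

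\emph{Step 1 (horizon divergence).} Near $\EventHorizon$ I work in the iEF chart $(v,\rhoH,\theta,\phiIEF)$. Here $\partial_{\rhoH}=\partial_r^{\mathrm{iEF}}$ and $\sqrt{-\det\Metric}=|q|^2\sin\theta$, so for $Y=f\,|q|^{-2}\partial_{\rhoH}$
\begin{equation*}
  |q|^2\D_\mu Y^\mu=\frac{|q|^2}{|q|^2\sin\theta}\,\partial_{\rhoH}\big(\sin\theta\, f\big)=\partial_{\rhoH}f .
\end{equation*}
The factor $|q|^{-2}$ in the statement is there precisely so that the $\theta$-dependent volume density cancels. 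I take $f=\frac{\beta+1}{2}\rhoH^{\beta+1}|\psi|^2$. Then
\begin{equation*}
  -|q|^2\D_\mu Y^\mu=-\frac{(\beta+1)^2}{2}\rhoH^\beta|\psi|^2-(\beta+1)\rhoH^\beta\psi\,(\rhoH\partial_{\rhoH}\psi).
\end{equation*}
Adding $\frac{(\beta+1)^2}{4}\rhoH^\beta|\psi|^2$, the right-hand side of the first inequality equals
\begin{equation*}
  -\frac{(\beta+1)^2}{4}\rhoH^\beta|\psi|^2-(\beta+1)\rhoH^\beta\psi\,(\rhoH\partial_{\rhoH}\psi).
\end{equation*}
Subtracting this from the left-hand side gives
\begin{equation*}
  \rhoH^\beta\Big(\rhoH\partial_{\rhoH}\psi+\tfrac{\beta+1}{2}\psi\Big)^2\ge 0,
\end{equation*}
which proves the first inequality.

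\emph{Step 2 (null infinity divergence).} In the oEF chart with $\rhoI=r^{-1}$ replacing $r$, we have $dr=-\rhoI^{-2}d\rhoI$, so the volume density becomes $|q|^2\sin\theta\,\rhoI^{-2}$ (up to sign). Hence for $Y=f\,|q|^{-2}\partial_{\rhoI}$
\begin{equation*}
  |q|^2\D_\mu Y^\mu=\rhoI^{2}\partial_{\rhoI}\big(\rhoI^{-2}f\big)=\partial_{\rhoI}f-2\rhoI^{-1}f .
\end{equation*}
This extra $-2\rhoI^{-1}f$ term is exactly what shifts $\beta+1$ to $\beta-1$. I take $f=\frac{\beta-1}{2}\rhoI^{\beta+1}|\psi|^2$. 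Then
\begin{equation*}
  |q|^2\D\cdot Y=\frac{(\beta-1)^2}{2}\rhoI^\beta|\psi|^2+(\beta-1)\rhoI^\beta\psi\,(\rhoI\partial_{\rhoI}\psi).
\end{equation*}
The same bookkeeping as in Step 1 shows that the left-hand side minus the right-hand side of the second inequality equals
\begin{equation*}
  \rhoI^\beta\Big(\rhoI\partial_{\rhoI}\psi+\tfrac{\beta-1}{2}\psi\Big)^2\ge0.
\end{equation*}

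\emph{Main obstacle.} The only delicate point is getting the divergence formula exactly right: the correct volume density in each chart, the Jacobian $\rhoI^{-2}$ at infinity, and the sign conventions. Once $|q|^2\D\cdot(f|q|^{-2}\partial_\rho)$ is identified as $\partial_\rho f$ or $\rho^2\partial_\rho(\rho^{-2}f)$, the rest is algebra.
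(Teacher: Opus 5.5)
Your proof is correct and is essentially the paper's argument. The paper also completes the square $\big(\rho^{\frac{\beta+2}{2}}\partial_\rho\psi + c\,\rho^{\frac{\beta}{2}}\psi\big)^2\ge 0$ and rewrites $\partial_\rho(\rho^{\beta+1}\psi^2)$ as $|q|^2\D_\mu\big(\rho^{\beta+1}\psi^2|q|^{-2}(\partial_\rho)^\mu\big)$ via $\sqrt{|\g|}=|q|^2\sin\theta$, picking up the extra $2\rhoI^\beta\psi^2$ from the Jacobian at infinity; the only cosmetic difference is that the paper keeps $c$ free and sets $c=\frac{\beta\pm1}{2}$ at the end, whereas you insert the optimal constant from the start and check that the remainder is a perfect square.
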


\begin{proof}
  For any $c\in\Real$, we have the pointwise identity
  \begin{align*}
    0
    &\leq
      \left(
      \rho^{\frac{\beta+2}{2}}\pr_\rho\psi
      +
      c\rho^{\frac\beta2}\psi
      \right)^2
    \\
    &=
      \rho^\beta|\rho\pr_\rho\psi|^2
      +
      2c\rho^{\beta+1}\psi\pr_\rho\psi
      +
      c^2\rho^\beta\psi^2
    \\
    &=
      \rho^\beta|\rho\pr_\rho\psi|^2
      +
      c\pr_\rho(\rho^{\beta+1}\psi^2)
      -
      c(\beta+1)\rho^\beta\psi^2
      +
      c^2\rho^\beta\psi^2 .
  \end{align*}
  Therefore
  \begin{equation}\label{eq:hardy-basic-c}
    \rho^\beta|\rho\pr_\rho\psi|^2
    \geq
    \big(c(\beta+1)-c^2\big)\rho^\beta\psi^2
    -
    c\pr_\rho(\rho^{\beta+1}\psi^2).
  \end{equation}
  Using that in iEF and oEF coordinates,  $\sqrt{|\g|}=\sin\th|q|^2$ we have
  \begin{align*}
    \D_\mu\Big(|q|^{-2}(\pr_{r})^\mu\Big)
    =\frac{1}{\sqrt{|\g|}}\pr_\mu\big(\sqrt{|\g|}|q|^{-2}(\pr_{r})^\mu\big)
    =\frac{1}{\sin\th|q|^2}\pr_{r}\big(\sin\th\big)=0,
  \end{align*}
  and therefore  
  \begin{align*}
    \D_\mu\Big(|q|^{-2}(\pr_{\rhoH})^\mu\Big)
    =0, \qquad \D_\mu\Big(|q|^{-2}\rhoI^2(\pr_{\rhoI})^\mu\Big)
    =0.
  \end{align*}

  We first apply this with $\rho=\rhoH$. Writing
  \[
    \pr_{\rhoH}(\rhoH^{\beta+1}\psi^2)
    =
    |q|^2
    \D_\mu\left(
      \rhoH^{\beta+1}\psi^2
      |q|^{-2}(\partial_{\rhoH})^\mu
    \right),
  \]
  and substituting into \zcref{eq:hardy-basic-c} gives
  \[
    \rhoH^\beta|\rhoH\pr_{\rhoH}\psi|^2
    \geq
    \big(c(\beta+1)-c^2\big)\rhoH^\beta\psi^2
    -
    |q|^2
    \D_\mu\left(
      c\rhoH^{\beta+1}\psi^2
      |q|^{-2}(\partial_{\rhoH})^\mu
    \right).
  \]
  Choosing $c=\frac{\beta+1}{2}$
  gives the first stated bound.
  We now consider $\rho=\rhoI$. Writing
  \[
    \pr_{\rhoI}(\rhoI^{\beta+1}\psi^2)
    =
    |q|^2
    \D_\mu\left(
      \rhoI^{\beta+1}\psi^2
      |q|^{-2}(\partial_{\rhoI})^\mu
    \right)
    +
    2\rhoI^\beta\psi^2 
  \]
  and substituting into \zcref{eq:hardy-basic-c} gives
  \begin{align*}
    \rhoI^\beta|\rhoI\pr_{\rhoI}\psi|^2
    &\geq
      \big(c(\beta+1)-c^2\big)\rhoI^\beta\psi^2
      -
      c|q|^2
      \D_\mu\left(
      \rhoI^{\beta+1}\psi^2
      |q|^{-2}(\partial_{\rhoI})^\mu
      \right)
      -
      2c\rhoI^\beta\psi^2
    \\
    &=
      \big(c(\beta-1)-c^2\big)\rhoI^\beta\psi^2
      -
      |q|^2
      \D_\mu\left(
      c\rhoI^{\beta+1}\psi^2
      |q|^{-2}(\partial_{\rhoI})^\mu
      \right).
  \end{align*}
  Choosing $c=\frac{\beta-1}{2}$
  gives the second stated bound.
\end{proof}

We also state the following integrated Hardy inequality, proved in a
similar way, see for example
\cite{gajicLatetimeAsymptoticsGeometric2023}.

\begin{lemma}[Lemma 2.2 in \cite{gajicLatetimeAsymptoticsGeometric2023}]\label{lemma:hardy-dejan}
  Let $f\in C^1([a,b])$ with $a, b \in \mathbb{R}_{\geq 0}$ such that
  $a<b$. Then for $\gamma \in \mathbb{R}\setminus \{-1\}$:
  \begin{align*}
    \int_a^b \rho^\gamma |f|^2(\rho) d\rho \leq \frac{2}{\gamma+1} \Big[b^{\gamma+1}|f|^2(b)-a^{\gamma+1}|f|^2(a)\Big]+\frac{4}{(\gamma+1)^2}\int_a^b \rho^{\gamma+2}\big| \frac{df}{d\rho}\big|^2(\rho) d\rho.
  \end{align*}
\end{lemma}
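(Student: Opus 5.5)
The plan is the standard one-dimensional Hardy argument: a pointwise completed square followed by integration by parts in $\rho$. Fix $\gamma\neq-1$ and a constant $c$ to be chosen. Writing $\rho^{\gamma}|f|^2$ as a total derivative plus a remainder, we have
\begin{equation*}
  \frac{d}{d\rho}\big(\rho^{\gamma+1}|f|^2\big)=(\gamma+1)\rho^{\gamma}|f|^2+2\rho^{\gamma+1}f\frac{df}{d\rho}.
\end{equation*}
Integrating over $[a,b]$ gives
\begin{equation*}
  (\gamma+1)\int_a^b\rho^\gamma|f|^2\,d\rho=\Big[b^{\gamma+1}|f|^2(b)-a^{\gamma+1}|f|^2(a)\Big]-2\int_a^b\rho^{\gamma+1}f\frac{df}{d\rho}\,d\rho .
\end{equation*}
This is the integrated analogue of the pointwise identity used in \zcref{lemma:hardy-pointwise-rhoH}.

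For the cross term I would apply Cauchy--Schwarz with a weight split as $\rho^{\gamma+1}=\rho^{\gamma/2}\cdot\rho^{(\gamma+2)/2}$. This gives
\begin{equation*}
  2\Big|\int_a^b\rho^{\gamma+1}f\tfrac{df}{d\rho}\Big|\le \frac{|\gamma+1|}{2}\int_a^b\rho^\gamma|f|^2+\frac{2}{|\gamma+1|}\int_a^b\rho^{\gamma+2}\Big|\tfrac{df}{d\rho}\Big|^2 .
\end{equation*}
Absorbing half of the $\rho^\gamma|f|^2$ integral into the left-hand side (after dividing by $\gamma+1$ with the correct sign) produces the factor $\frac{2}{\gamma+1}$ in front of the boundary terms and $\frac{4}{(\gamma+1)^2}$ in front of the derivative term.

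The only delicate point is sign bookkeeping, and it splits into two cases.

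When $\gamma+1>0$, the boundary term already carries a positive coefficient, so it appears directly on the right-hand side.

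When $\gamma+1<0$, dividing by $\gamma+1$ flips the inequality. The boundary bracket then appears multiplied by $\frac{2}{\gamma+1}<0$, and this is exactly the form written in the statement. The term with $a^{\gamma+1}$ becomes favorable, while the term with $b^{\gamma+1}$ becomes negative. To keep the absorption step valid in both cases, I would write the inequality before division as
\begin{equation*}
  |\gamma+1|\int\rho^\gamma|f|^2\le \operatorname{sgn}(\gamma+1)[\ldots]+\ldots .
\end{equation*}

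The remaining concern is integrability when $a=0$ and $\gamma<-1$. In that case the $\rho^\gamma$ integral may diverge, or $a^{\gamma+1}|f|^2(a)$ is undefined. I would handle this by proving the inequality on $[\epsilon,b]$ and letting $\epsilon\to0$, interpreting the statement as holding whenever the right-hand side is finite. This limiting step is the only place requiring care; the rest is routine.
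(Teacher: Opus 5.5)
Your proof is correct and is essentially the argument the paper has in mind. The paper gives no proof of its own: it cites Gajic's Lemma 2.2 and says the bound is proved like the divergence-form Hardy inequality, namely from $\bigl(\rho^{\frac{\gamma+2}{2}}f'+\frac{\gamma+1}{2}\rho^{\frac{\gamma}{2}}f\bigr)^2\ge 0$, which is the same square that underlies your weighted Cauchy--Schwarz/Young step with weight $\frac{|\gamma+1|}{2}$.

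The only difference is the order of operations. Completing the square pointwise gives
$\rho^{\gamma+2}|f'|^2\ge\frac{(\gamma+1)^2}{4}\rho^{\gamma}|f|^2-\frac{\gamma+1}{2}\frac{d}{d\rho}\bigl(\rho^{\gamma+1}|f|^2\bigr)$.
Integrating this yields the statement for both signs of $\gamma+1$ at once. There is then no case split and no absorption step requiring $\int\rho^\gamma|f|^2<\infty$, so your $\epsilon\to0$ limit is needed only to interpret the boundary term at $a=0$.
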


In what follows we denote $\chi$ any non-negative bump function
supported in $[-2,2]$ and identically equal to $1$ on $[-1,1]$.
We deduce the following cutoff version of \zcref{lemma:hardy-dejan}. 
\begin{corollary}[Cutoff Hardy inequality]
  \label{corollary-hardy-dejan-cutoff}
  Let $f\in C^1([0,c])$ with $c>0$ and let $\gamma>-1$. Then
  \begin{align*}
    \int_0^{c}\rho^{\gamma}
    \abs*{\rho\partial_{\rho}f}^2\,d\rho
    &\geq
      \frac{(\gamma+1)^2}{8}
      \int_0^{\frac{c}{2}}\rho^{\gamma}
      \abs*{f}^2\,d\rho
      +
      \lim_{\rho\to0}
      \frac{\gamma+1}{2}\rho^{\gamma+1}\abs*{f}^2      
      -O(c^{-2})
      \int_{\frac12 c}^{c}
      \rho^{\gamma+2}|f|^2\,d\rho .
  \end{align*}
\end{corollary}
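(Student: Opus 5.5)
We apply the pointwise Hardy bound \eqref{eq:hardy-basic-c} from the proof of \zcref{lemma:hardy-pointwise-rhoH} to a cutoff of $f$, and integrate it directly in $\rho$ rather than passing through the spacetime divergence form. Set $\chi_c(\rho)\vcentcolon=\chi(2\rho/c)$. Then $\chi_c\equiv1$ on $[0,c/2]$, $\supp\chi_c\subset[0,c]$ on $[0,c]$, and $\abs*{\chi_c'}\lesssim c^{-1}$, with $\chi_c'$ supported in $[c/2,c]$. Write $g\vcentcolon=\chi_c f$, so that $g(c)=0$ and $g=f$ near $\rho=0$.

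\textbf{Step 1 (Hardy for $g$).} For any $k\in\Real$ we have the pointwise inequality
\[
0\le\bigl(\rho^{\frac{\gamma+2}{2}}\partial_\rho g+k\rho^{\frac{\gamma}{2}}g\bigr)^2 .
\]
Expanding, and writing $2\rho^{\gamma+1}g\,\partial_\rho g=\partial_\rho(\rho^{\gamma+1}g^2)-(\gamma+1)\rho^\gamma g^2$, we choose $k=\frac{\gamma+1}{2}$. This gives
\[
\rho^\gamma\abs*{\rho\partial_\rho g}^2\ge \frac{(\gamma+1)^2}{4}\rho^\gamma g^2-\frac{\gamma+1}{2}\partial_\rho\bigl(\rho^{\gamma+1}g^2\bigr).
\]
We integrate over $[\epsilon,c]$ and let $\epsilon\to0$. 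Since $g(c)=0$, the boundary term contributes exactly $+\lim_{\rho\to0}\frac{\gamma+1}{2}\rho^{\gamma+1}\abs*{f}^2$. This limit exists (possibly $+\infty$, in which case there is nothing to prove) because $f\in C^1$ and $\gamma>-1$; in fact it is $0$ when $\gamma>-1$, but we keep it as in the statement. Restricting the resulting $\rho^\gamma g^2$ integral to $[0,c/2]$, where $g=f$, yields
\[
\int_0^c\rho^\gamma\abs*{\rho\partial_\rho g}^2\ge\frac{(\gamma+1)^2}{4}\int_0^{c/2}\rho^\gamma\abs*{f}^2+\lim_{\rho\to0}\frac{\gamma+1}{2}\rho^{\gamma+1}\abs*{f}^2 .
\]

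\textbf{Step 2 (removing the cutoff).} Since $\rho\partial_\rho g=\chi_c\,\rho\partial_\rho f+\rho\chi_c' f$, we use $(A+B)^2\le2A^2+2B^2$ together with $\chi_c^2\le1$ to get
\[
\int_0^c\rho^\gamma\abs*{\rho\partial_\rho g}^2\le 2\int_0^c\rho^\gamma\abs*{\rho\partial_\rho f}^2+2\int_{c/2}^c\rho^{\gamma+2}\abs*{\chi_c'}^2\abs*{f}^2 .
\]
The last term is $O(c^{-2})\int_{c/2}^c\rho^{\gamma+2}\abs*{f}^2$. Dividing by $2$ and combining with Step 1 produces the constant $\frac{(\gamma+1)^2}{8}$ and the stated error term.

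\textbf{Main obstacle.} The division by $2$ also halves the boundary term, giving $\frac{\gamma+1}{4}$ in place of $\frac{\gamma+1}{2}$. There are two ways to deal with this. One is to observe that for $\gamma>-1$ and $f\in C^1$ the limit is zero, so the coefficient is immaterial. The other, if we want to keep the full coefficient, is to use a weighted splitting $(A+B)^2\le(1+\eta)A^2+(1+\eta^{-1})B^2$ with $\eta=1$ on the squared terms only, and keep the cross term exact near $0$ where $\chi_c'=0$. Either route gives the stated inequality.
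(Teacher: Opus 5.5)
Your proof is correct and is essentially the paper's argument. It uses the same cutoff $\chi(2\rho/c)$, the one-dimensional Hardy inequality for $\chi_c f$ (which you rederive from the pointwise square instead of quoting the integrated Hardy lemma), and the expansion $\rho\partial_\rho(\chi_c f)=\chi_c\,\rho\partial_\rho f+\rho\chi_c' f$ with $(A+B)^2\le 2A^2+2B^2$. Your first fix for the halved boundary coefficient is the right one, since $\rho^{\gamma+1}|f|^2\to 0$ for $\gamma>-1$ and bounded $f$ (the paper silently absorbs this into ``changing the implicit constant''); your second route with $\eta=1$ does not actually restore the factor $\frac{\gamma+1}{2}$, which would instead require some $\eta<1$ together with $k=\frac{(1+\eta)(\gamma+1)}{2}$, e.g.\ $\eta=\frac12$.
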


\begin{proof}
  Apply \zcref{lemma:hardy-dejan} to $\widetilde{\chi}_c f$, where
  \[
    \widetilde{\chi}_c(\rho)=\chi\left(\frac{\rho}{c/2}\right).
  \]
  . Since
  $\widetilde{\chi}_c(c)=0$, we get
  \begin{align*}
    \int_0^{c}\rho^{\gamma}
    \abs*{\rho\partial_{\rho}(\widetilde{\chi}_c f)}^2\,d\rho
    &\geq
      \frac{(\gamma+1)^2}{4}
      \int_0^{c}\rho^{\gamma}
      \abs*{\widetilde{\chi}_c f}^2\,d\rho
      -
      \frac{\gamma+1}{2}
      \evalAt*{\rho^{\gamma+1}
      \abs*{\widetilde{\chi}_c f}^2}_{\rho=0}^{\rho=c} \\
    &\geq
      \frac{(\gamma+1)^2}{4}
      \int_0^{\frac c2}\rho^{\gamma}|f|^2\,d\rho
      +
      \lim_{\rho\to0}
      \frac{\gamma+1}{2}\rho^{\gamma+1}|f|^2 .
  \end{align*}
  Expanding the left-hand side gives
  \begin{align*}
    \rho\partial_\rho(\widetilde{\chi}_c f)
    =
    \widetilde{\chi}_c \rho\partial_\rho f
    +
    \rho\widetilde{\chi}_c' f .
  \end{align*}
  Hence, by Cauchy's inequality,
  \begin{align*}
    \int_0^{c}\rho^{\gamma}
    \abs*{\rho\partial_{\rho}(\widetilde{\chi}_c f)}^2\,d\rho
    &\leq
      2\int_0^{c}\rho^{\gamma}
      \abs*{\rho\partial_{\rho}f}^2\,d\rho
      +
      2\int_{\frac c2}^{c}
      \rho^{\gamma+2}
      |\widetilde{\chi}_c'|^2|f|^2\,d\rho \\
    &\leq
      2\int_0^{c}\rho^{\gamma}
      \abs*{\rho\partial_{\rho}f}^2\,d\rho
      +
      O(c^{-2})
      \int_{\frac c2}^{c}
      \rho^{\gamma+2}|f|^2\,d\rho .
  \end{align*}
  Combining the two estimates and changing the implicit constant gives the
  claim.
\end{proof}

We will use cutoff functions to localize multipliers close to the event horizon or null infinity. 
For $\chi_c\vcentcolon=\chi\big(\frac{\rho}{c} \big)$ with $c>0$ we deduce from \zcref{le:divergPP-gen}
\begin{align}\label{current-cutoff}
  \begin{split}
    \D\cdot \JCurrent{( \chi_c X,  0, \chi_c J)}[\psi]={}&  \EMTensor[\psi]  \c {}^{(\chi_{c} X)}\pi + \div (\chi_c J|\psi|^2)+ \chi_c X(\psi) \Box_\g \psi \\
    ={}& \chi_c \D\cdot \JCurrent{( X, 0, J)}[\psi]+  \chi_c'\EMTensor (X, \pr_{\rho}) [\psi] +\chi_{c}' J^{\rho}|\psi|^2\\
    ={}& \chi_c \D\cdot \JCurrent{( X, 0, J)}[\psi]- O(c^{-1})\mathds{1}_{\{c \leq \rho \leq 2c\}}\big(\EMTensor (X, \pr_{\rho}) [\psi]+J^{\rho}|\psi|^2\big).
  \end{split}
\end{align}

\subsubsection{Divergence theorem for mixed pseudo-differential operators}
\label{sec:mixed-PDO-class}\label{sec:pseudo-diff:wave-commutation}

To treat trapping more sharply, we will replace classical radial
multipliers by pseudodifferential ones localized in phase space. For
this reason in this section, we define a class of pseudo-differential
multipliers which are pseudo-differential in space but differential in
time. This class of operators will allow for some flexibility to
account for the frequency-dependent behavior of trapping for extremal
Kerr--Newman, while also allowing the multiplier to produce local
energy decay estimates. We refer the reader to
\cite{maEnergyMorawetzEstimatesWave2024,
  lindbladLocalEnergyEstimate2020, tataruLocalEnergyEstimate2011} for
in-depth introductions to such operators and their properties.

\begin{definition}[$x_0$-tangential symbols on $\Real^d$]
  For $m\in \Real$, let
  $\TanSymClass{m}\subset C^{\infty}(\Real^{d-1}\times \Real^{d-1})$, the
  set of $x_0$-tangential symbols of order $m$, consist of functions
  $a(x, \xi): \Real^{d}\times \Real^{d-1}\to
  \Complex$ such that for all multi-indices $\alpha,\beta$,
  \begin{equation*}
    \abs*{\partial_x^{\alpha}\partial_{\xi}^{\beta}a(x,\xi)}\le C_{\alpha,\beta}\bangle*{\xi}^{m-\abs*{\beta}},
  \end{equation*}
  for all $x\in \Real_{x_0}\times \Real^{d-1}$, and let
  $\TanOpClass{m}$ denote the associated operator class. 
\end{definition}

\begin{definition}[$x_0$-mixed symbols on $\Real^d$]
  \label{def:mixed-sym-class}
  For $m\in \Real$, $n\in \Natural$, we define the class
  $\MixedSymClass{m}{n}(\Real^d)\vcentcolon=\MixedSymClass{m}{n}[x_0](\Real^d)\subset
  C^{\infty}(\Real^d\times \Real^d)$ of \emph{mixed symbols of order
    $(m,n)$}, such that, decomposing $\xi = (\xi_0,\xi')$,
  \begin{equation}\label{eq:a-x-xi}
    a(x,\xi) = \sum_{j=0}^na_{m-j}(x,\xi')\xi_0^j, \qquad a_{m-j}\in \TanSymClass{m-j}(\Real^d),
  \end{equation}
  and let $\MixedOpClass{m}{n}$ denote the associated operator class. 
\end{definition}

We define the Weyl quantization of a mixed symbol.
\begin{definition}[Weyl quantization of mixed symbol]
  Let $(m,n)\in \Real\times\Natural$, and $a\in \MixedSymClass{m}{n}(\Real^d)$ as in \zcref{eq:a-x-xi}.
  Then $\WeylQ{a}$, the \emph{Weyl quantization of $a$}, is given by
  \begin{equation*}
    \WeylQ{a}=\sum_{j=0}^n\sum_{k=0}^j2^{-k}\binom{j}{k}\WeylQ{D_{x_0}^ka_{m-j}}D^{j-k}_{x_0},
  \end{equation*}
  where $\WeylQ{D_{x_0}^ka_{m-j}}$ is the Weyl quantization in
  $\Real^{d-1}$ of the $x_0$-tangential symbol $D_{x_0}^ka_{m-j}$. 
\end{definition}

In this section, we present the main equivalent of the divergence
theorem in \eqref{eq:general-divergence-theorem} that we will use when
considering pseudo-differential multipliers. We recall that throughout
this section we will use the terms skew-symmetric and symmetric to be
defined with respect to the standard Hermitian $L^2$ inner product structure.

\begin{lemma}
  \label{lemma:PsiDO-divergence-theorem}
  Let $\psi$ be a function such that
  $\supp\psi\subset \Manifold_{\operatorname{trap}}$. Let
  \begin{equation*}
    \widetilde{\MorawetzVF}\vcentcolon=
    \widetilde{\MorawetzVF}_1
    + \widetilde{\MorawetzVF}_0\partial_t      
  \end{equation*}
  be a first-order anti-symmetric properly supported pseudodifferential operator on
  $\Manifold$ where
  $\widetilde{\MorawetzVF}_i\in
  \TanOpClass{i}(T^{*}\Manifold)$ and let
  \begin{equation*}
    \widetilde{\MorawetzLagrangeCorr}\vcentcolon=
    \widetilde{\MorawetzLagrangeCorr}_0
    + \widetilde{\MorawetzLagrangeCorr}_{-1}\partial_t
  \end{equation*}
  be a zero-order symmetric properly supported pseudodifferential operator on
  $\Manifold$, where
  $\widetilde{\MorawetzLagrangeCorr}_i\in \TanSymClass{i}(T^{*}\Manifold)$. 
  Then, 
  \begin{equation} \label{eq:PsiDO-divergence-theorem}
    -\bangle*{\Box_{\Metric}\psi, \big(\widetilde{\MorawetzVF}+\widetilde{\MorawetzLagrangeCorr}\big)\psi}_{L^2(\Manifold(\tau_1,\tau_2))}
    = \KCurrentPert{\widetilde{\MorawetzVF}, \widetilde{\MorawetzLagrangeCorr}}[\psi](\tau_1,\tau_2)
    + \JCurrentPert{\widetilde{\MorawetzVF}, \widetilde{\MorawetzLagrangeCorr}}[\psi](\tau_2)
    - \JCurrentPert{\widetilde{\MorawetzVF}, \widetilde{\MorawetzLagrangeCorr}}[\psi](\tau_1),
  \end{equation}
  where
  \begin{align}
    2\KCurrentPert{\widetilde{\MorawetzVF}, \widetilde{\MorawetzLagrangeCorr}}[\psi](\tau_1,\tau_2)
    \vcentcolon={}& \bangle*{[\widetilde{\MorawetzVF}, \Box_{\Metric}]\psi,\psi}_{L^2(\Manifold(\tau_1,\tau_2))}
                    - \bangle*{\left(\widetilde{\MorawetzLagrangeCorr}\Box_{\Metric} +\Box_{\Metric}\widetilde{\MorawetzLagrangeCorr}\right)\psi, \psi}_{L^2(\Manifold(\tau_1,\tau_2))},
                    \label{eq:KCurrentPert-def}
  \end{align}
  and 
  \begin{align}
    \JCurrentPert{\widetilde{\MorawetzVF}, \widetilde{\MorawetzLagrangeCorr}}[\psi](\tau)
    \lesssim{}& \norm*{\psi}^2_{H^1_c(\Sigma(\tau))}
                \label{eq:JCurrentPert-def}   
  \end{align}
  where $\norm*{\psi}_{\compactSobolev{s}(\Omega)} \vcentcolon={} \norm*{\chi_{\operatorname{comp}}\psi}_{H^s(\Omega)}$
  for an arbitrary cutoff $\chi_{\operatorname{comp}}(r)$ supported away from $\EventHorizon$ and $\NullInfinity$.
\end{lemma}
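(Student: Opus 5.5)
The identity \zcref{eq:PsiDO-divergence-theorem} is an integration by parts in the time variable for an operator that is pseudodifferential only in the spatial variables. The plan is to reduce everything to the algebraic identity
\[
  -2\Re\bangle*{\Box_{\Metric}\psi, (\widetilde{\MorawetzVF}+\widetilde{\MorawetzLagrangeCorr})\psi}
  = \bangle*{[\widetilde{\MorawetzVF},\Box_{\Metric}]\psi,\psi}
  - \bangle*{(\widetilde{\MorawetzLagrangeCorr}\Box_{\Metric}+\Box_{\Metric}\widetilde{\MorawetzLagrangeCorr})\psi,\psi}
  + \text{boundary terms},
\]
which holds on a slab $\Manifold(\tau_1,\tau_2)$. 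It follows from $\widetilde{\MorawetzVF}^*=-\widetilde{\MorawetzVF}$ and $\widetilde{\MorawetzLagrangeCorr}^*=\widetilde{\MorawetzLagrangeCorr}$, provided we know how far $\Box_{\Metric}$ and $\partial_t$ fail to be symmetric, respectively skew, on the slab. Since $\supp\psi\subset\Manifold_{\trap}$, we may work in Boyer--Lindquist coordinates, in which $\tau=t$ on the trapping region because $A_2,A_3$ lie in the intermediate region of \zcref{def:tau}. The slab is then literally $[\tau_1,\tau_2]_t\times\Sigma$, and the tangential operators act only in $x'=(r,\theta,\phi)$ at fixed $t$. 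This is why the mixed class of \zcref{def:mixed-sym-class} is convenient. The measure factor $\sqrt{-\det\Metric}$ is either absorbed into the $L^2$ pairing or conjugated away, at the cost of lower-order terms that stay in the same classes.

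First, I write $\Box_{\Metric}=\Metric^{tt}\partial_t^2+2\Metric^{ti}\partial_t\partial_i+P_1\partial_t+P_2$, with $P_j$ spatial differential operators. Everything is stationary, so $\partial_t$ commutes with all coefficients. Next, I integrate by parts in $t$ only; the spatial integrations by parts produce no boundary terms because of the support of $\psi$ and the proper support of the operators. Each time $\partial_t$ is moved across the pairing, it produces a term of the form $\pm\bangle*{A\psi,B\psi}_{L^2(\Sigma(\tau))}\big|_{\tau_1}^{\tau_2}$, where $A,B$ involve at most one time derivative and tangential operators of total order at most one. Collecting these gives $\JCurrentPert{}(\tau_2)-\JCurrentPert{}(\tau_1)$. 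Note that $\widetilde{\MorawetzVF}\psi$ already contains $\widetilde{\MorawetzVF}_0\partial_t\psi$, and $\Box_{\Metric}$ contributes one more time derivative. The boundary pairings are therefore bilinear in $(\psi,\partial_t\psi)|_{\Sigma(\tau)}$, with operator orders $(1,0)$, $(0,0)$ or $(1,1)$ split symmetrically via $\TanOpClass{1}=\langle D\rangle^{1/2}\TanOpClass{0}\langle D\rangle^{1/2}$. By the $L^2$-boundedness of $\TanOpClass{0}$ and localization to $\supp\chi_{\operatorname{comp}}$, they are bounded by $\norm*{\psi}^2_{H^1_c(\Sigma(\tau))}$, which gives \zcref{eq:JCurrentPert-def}.

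For the bulk, I use that the skew part of $\widetilde{\MorawetzVF}$ pairs with $\Box_{\Metric}$ to give $\tfrac12\bangle*{[\widetilde{\MorawetzVF},\Box_{\Metric}]\psi,\psi}$, and the symmetric $\widetilde{\MorawetzLagrangeCorr}$ gives the anticommutator term. The skewness of $\widetilde{\MorawetzVF}_0\partial_t$ needs care: $\partial_t$ commutes with $\widetilde{\MorawetzVF}_0$ only if the symbols are $t$-independent. Otherwise the commutator $[\partial_t,\widetilde{\MorawetzVF}_0]$ lies in $\TanOpClass{0}$ and simply appears inside $[\widetilde{\MorawetzVF},\Box_{\Metric}]$, where it is accounted for. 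The main obstacle is bookkeeping, not any estimate. One must verify that every term left over after symmetrization is either exactly the commutator or anticommutator in \zcref{eq:KCurrentPert-def}, or a time-boundary term. In particular, the cross terms $\Metric^{ti}\partial_t\partial_i$ must be handled by splitting symmetrically before integrating by parts, so that no bulk terms are left over.
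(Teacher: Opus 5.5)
Your overall route is the same as the paper's. You use $\Box_{\Metric}^*=\Box_{\Metric}$, $\widetilde{X}^*=-\widetilde{X}$ and $\widetilde{w}^*=\widetilde{w}$ on the slab, and collect what remains as time-boundary terms on $\Sigma(\tau_1)$ and $\Sigma(\tau_2)$. The paper does this separately for $\widetilde{X}_1$, $\widetilde{X}_0\partial_t$, $\widetilde{w}_0$ and $\widetilde{w}_{-1}\partial_t$; you do it in one pass in Boyer--Lindquist coordinates.

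The gap is the sentence concluding that all boundary pairings are bilinear in $(\psi,\partial_t\psi)|_{\Sigma(\tau)}$. It does not follow from the observation just before it, which if anything points to a $\partial_t^2\psi$ on $\Sigma(\tau)$. It is also false for general $\widetilde{X},\widetilde{w}$. The paper's own proof asserts the same bound term by term without checking it, so the comparison does not rescue this step.

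To see the obstruction, let $g^{tt}:=\Metric^{-1}(dt,dt)$ and $A:=\widetilde{X}_0+\widetilde{w}_{-1}$. For stationary symbols, antisymmetry of $\widetilde{X}$ and symmetry of $\widetilde{w}$ force $\widetilde{X}_0^*=\widetilde{X}_0$ and $\widetilde{w}_{-1}^*=-\widetilde{w}_{-1}$. The only part of $-2\Re\langle\Box_{\Metric}\psi,(\widetilde{X}+\widetilde{w})\psi\rangle$ carrying two time derivatives is $-2\Re\langle A^*g^{tt}\partial_t^2\psi,\partial_t\psi\rangle$.
\begin{itemize}
\item The symmetric part $S=\frac12(A^*g^{tt}+g^{tt}A)$ only produces the boundary term $-\langle S\partial_t\psi,\partial_t\psi\rangle_{L^2(\Sigma(\tau))}$ evaluated between $\tau_1$ and $\tau_2$.
\item The skew part produces the genuine bulk term $-\Re\langle K_{-1}\partial_t^2\psi,\partial_t\psi\rangle$, where
\[
  K_{-1}:=A^*g^{tt}-g^{tt}A=[\widetilde{X}_0,g^{tt}]-\big(\widetilde{w}_{-1}g^{tt}+g^{tt}\widetilde{w}_{-1}\big)\in\Psi^{-1}_{\operatorname{tan}}
\]
is exactly the $\partial_t^3$-coefficient of $[\widetilde{X},\Box_{\Metric}]-(\widetilde{w}\Box_{\Metric}+\Box_{\Metric}\widetilde{w})$.
\end{itemize}
To match this with the literal pairing $\Re\langle K_{-1}\partial_t^3\psi,\psi\rangle$ inside $2\widetilde{K}$, you need one more integration by parts in $t$. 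That puts $\Re\langle K_{-1}\partial_t^2\psi,\psi\rangle_{L^2(\Sigma(\tau))}$ into $\widetilde{J}(\tau)$.

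In general $\widetilde{X}_0$ does not commute with $g^{tt}$, and $\psi,\partial_t\psi,\partial_t^2\psi$ can be prescribed independently on $\Sigma(\tau)$. So this term is not bounded by $\|\psi\|_{H^1_c(\Sigma(\tau))}^2$ unless $K_{-1}=0$, and you cannot have both the exact bulk $\widetilde{K}$ and the first-order bound on $\widetilde{J}$. There are two ways to fix this:
\begin{itemize}
\item Add the hypothesis $K_{-1}=0$. This is precisely what the paper arranges later, by adding $-\widetilde{w}_{\bowtie}\partial_t$ to $\widetilde{w}$ so that $K^W_{-1}=0$.
\item Use the equation to eliminate $g^{tt}\partial_t^2\psi$, and allow $\widetilde{J}(\tau)$ to contain a term such as $\|\Box_{\Metric}\psi\|_{H^{-1}(\Sigma(\tau))}\|\psi\|_{L^2(\Sigma(\tau))}$.
\end{itemize}
With either fix, the rest of your bookkeeping closes, and the $g^{ti}\partial_t\partial_i$ cross terms, split symmetrically, only give first-order boundary terms.
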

\begin{proof}
  Recall that the wave operator $\Box_{\Metric}$ is a symmetric
  operator as an operator on $L^2(\Manifold)$. %
  Then, we have that 
  \begin{align*}
    2\Re\bangle*{\Box_{\Metric}\psi, \widetilde{\MorawetzVF}_1\psi}_{L^2(\Manifold(\tau_1,\tau_2))}
    ={}& \bangle*{\psi, \Box_{\Metric}^{*}\widetilde{\MorawetzVF}_1\psi}_{L^2(\Manifold(\tau_1,\tau_2))}
         + \bangle*{\widetilde{\MorawetzVF}_1^{*}\Box_{\Metric}\psi, \psi}_{L^2(\Manifold(\tau_1,\tau_2))}
         - \evalAt*{\JCurrentPert{\widetilde{\MorawetzVF}_1,0}[\psi](\tau)}_{\tau=\tau_1}^{\tau=\tau_2}
    \\
    ={}& \bangle*{\psi, \Box_{\Metric}\widetilde{\MorawetzVF}_1\psi}_{L^2(\Manifold(\tau_1,\tau_2))}
         - \bangle*{\widetilde{\MorawetzVF}_1\Box_{\Metric}\psi, \psi}_{L^2(\Manifold(\tau_1,\tau_2))}
         - \evalAt*{\JCurrentPert{\widetilde{\MorawetzVF}_1,0}[\psi](\tau)}_{\tau=\tau_1}^{\tau=\tau_2}
    \\
    ={}& \Re\bangle*{[\Box_{\Metric},\widetilde{\MorawetzVF}_1]\psi,\psi}_{L^2(\Manifold(\tau_1,\tau_2))}
         - \evalAt*{\JCurrentPert{\widetilde{\MorawetzVF}_1,0}[\psi](\tau)}_{\tau=\tau_1}^{\tau=\tau_2},
  \end{align*}
  where 
  \begin{equation*}
    \JCurrentPert{\widetilde{\MorawetzVF}_1,0}[\psi](\tau)
    \lesssim{} \norm*{\psi}^2_{H^1_c(\Sigma(\tau))}.
  \end{equation*}
  Similarly, we also have that
  \begin{align*}
    & 2\Re\bangle*{\Box_{\Metric}\psi, \widetilde{\MorawetzVF}_0\partial_t  \psi}_{L^2(\Manifold(\tau_1,\tau_2))}\\
    ={}&\bangle*{\Box_{\Metric}\psi, \widetilde{\MorawetzVF}_0\partial_t \psi}_{L^2(\Manifold(\tau_1,\tau_2))}
         + \bangle*{\widetilde{\MorawetzVF}_0\partial_t \psi, \Box_{\Metric}\psi}_{L^2(\Manifold(\tau_1,\tau_2))}
         - \evalAt*{\JCurrentPert{\widetilde{\MorawetzVF}_0\partial_t,0}[\psi](\tau)}_{\tau=\tau_1}^{\tau=\tau_2}
    \\
    ={}& - \bangle*{\widetilde{\MorawetzVF}_0\partial_t \Box_{\Metric}\psi, \psi}_{L^2(\Manifold(\tau_1,\tau_2))}
         + \bangle*{\Box_{\Metric}(\widetilde{\MorawetzVF}_0\partial_t \psi), \psi}_{L^2(\Manifold(\tau_1,\tau_2))}
         - \evalAt*{\JCurrentPert{\widetilde{\MorawetzVF}_0\partial_t,0}[\psi](\tau)}_{\tau=\tau_1}^{\tau=\tau_2}
    \\
    ={}& \Re\bangle*{[\Box_{\Metric},\widetilde{\MorawetzVF}_0\partial_t]\psi,\psi}_{L^2(\Manifold(\tau_1,\tau_2))}
         - \evalAt*{\JCurrentPert{\widetilde{\MorawetzVF}_0\partial_t,0}[\psi](\tau)}_{\tau=\tau_1}^{\tau=\tau_2},
  \end{align*}
  where
  \begin{align*}
    \JCurrentPert{\widetilde{\MorawetzVF}_0\partial_t,0}[\psi](\tau)
    \lesssim{} \norm*{\psi}^2_{H^1_c(\Sigma(\tau))}.
  \end{align*}
  Now let us consider the result of multiplying by the Lagrangian correction.
  We have that
  \begin{align*}
    & 2\Re\bangle*{\Box_{\Metric}\psi, \widetilde{\MorawetzLagrangeCorr}_0\psi}_{L^2(\Manifold(\tau_1,\tau_2))}\\
    ={}& \bangle*{\psi, \Box_{\Metric}^{*}\widetilde{\MorawetzLagrangeCorr}_0\psi}_{L^2(\Manifold(\tau_1,\tau_2))}
         + \bangle*{\widetilde{\MorawetzLagrangeCorr}_0^{*}\Box_{\Metric}\psi, \psi}_{L^2(\Manifold(\tau_1,\tau_2))}
         - \evalAt*{\JCurrentPert{0, \MorawetzLagrangeCorr_0}[\psi](\tau)}_{\tau=\tau_1}^{\tau=\tau_2}
    \\
    ={}& \bangle*{\psi, \Box_{\Metric}\widetilde{\MorawetzLagrangeCorr}_0\psi}_{L^2(\Manifold(\tau_1,\tau_2))}
         + \bangle*{\widetilde{\MorawetzLagrangeCorr}_0\Box_{\Metric}\psi, \psi}_{L^2(\Manifold(\tau_1,\tau_2))}
         - \evalAt*{\JCurrentPert{0, \MorawetzLagrangeCorr_0}[\psi](\tau)}_{\tau=\tau_1}^{\tau=\tau_2}
    \\
    ={}& \Re\bangle*{\left(\Box_{\Metric}\widetilde{\MorawetzLagrangeCorr}_0 + \widetilde{\MorawetzLagrangeCorr}_0\Box_{\Metric}\right)\psi,\psi}_{L^2(\Manifold(\tau_1,\tau_2))}
         - \evalAt*{\JCurrentPert{0, \MorawetzLagrangeCorr_0}[\psi](\tau)}_{\tau=\tau_1}^{\tau=\tau_2},
  \end{align*}
  where
  \begin{align*}
    \JCurrentPert{0, \MorawetzLagrangeCorr_0}[\psi](\tau)
    \lesssim{} \norm*{\psi}^2_{H^1_c(\Sigma(\tau))}
    .
  \end{align*}
  Similarly, we also have that
  \begin{align*}
    &2\Re\bangle*{\Box_{\Metric}\psi, \left(\widetilde{\MorawetzLagrangeCorr}_{-1}\partial_t + \partial_t\widetilde{\MorawetzLagrangeCorr}_{-1}\right)\psi}_{L^2(\Manifold(\tau_1,\tau_2))}\\
    ={}& \bangle*{\psi, \Box_{\Metric}^{*}\widetilde{\MorawetzLagrangeCorr}_{-1}\partial_t\psi}_{L^2(\Manifold(\tau_1,\tau_2))}
         + \bangle*{\left(\widetilde{\MorawetzLagrangeCorr}_{-1}\partial_t\right)^{*}\Box_{\Metric}\psi, \psi}_{L^2(\Manifold(\tau_1,\tau_2))}\\
    ={}& \bangle*{\psi, \Box_{\Metric}\left(\widetilde{\MorawetzLagrangeCorr}_{-1}\partial_t\psi\right)}_{L^2(\Manifold(\tau_1,\tau_2))}
         + \bangle*{\left(\widetilde{\MorawetzLagrangeCorr}_{-1}\partial_t\right)\Box_{\Metric}\psi, \psi}_{L^2(\Manifold(\tau_1,\tau_2))}
         - \evalAt*{\JCurrentPert{0, \widetilde{\MorawetzLagrangeCorr}_{-1}\partial_t}[\psi](\tau)}_{\tau=\tau_1}^{\tau=\tau_2}
    \\
    ={}& \Re\bangle*{\left(\Box_{\Metric}\widetilde{\MorawetzLagrangeCorr}_{-1}\partial_t + \widetilde{\MorawetzLagrangeCorr}_{-1}\partial_t \Box_{\Metric}\right)\psi,\psi}_{L^2(\Manifold(\tau_1,\tau_2))}
         - \evalAt*{\JCurrentPert{0, \widetilde{\MorawetzLagrangeCorr}_{-1}\partial_t}[\psi](\tau)}_{\tau=\tau_1}^{\tau=\tau_2},
  \end{align*}
  where
  \begin{align*}
    \JCurrentPert{0, \widetilde{\MorawetzLagrangeCorr}_{-1}\partial_t}[\psi](\tau)
    \lesssim{} \norm*{\psi}^2_{H^1_c(\Sigma(\tau))}.
  \end{align*}
  We then conclude by using the definitions of
  $\KCurrentPert{\widetilde{\MorawetzVF},\widetilde{\MorawetzLagrangeCorr}}[\psi]$,
  $\JCurrentPert{\widetilde{\MorawetzVF},\widetilde{\MorawetzLagrangeCorr}}[\psi]$
  in \zcref[noname]{eq:KCurrentPert-def} and \zcref[noname]{eq:JCurrentPert-def}. 
\end{proof}

\section{Main theorem}\label{sec:main-theorem}

In this section we state the main theorem. In
\zcref{sec:function-spaces} we define the energy norms and function
spaces, suitably weighted at the event horizon and null infinity,
appearing in the main theorem.  In \zcref{sec:statement-theorem} we
give the precise statement of the main theorem, and in \zcref{sec:decay-blow} we present decay and blow-up results as corollaries of the main theorem.

\subsection{Function spaces and energy norms}\label{sec:function-spaces}

We introduce several function spaces capturing regularity at the horizon via $b$-Sobolev spaces, decay at null infinity via weighted Sobolev norms, and degeneracy at trapping via microlocal norms.

We denote $A^{\Horizon}_s, A^{\NullInfinity}_s
$ a basis of $\bDiffH^s(\Manifold),\bDiffI^s(\Manifold)
$
respectively, and $A^{\Horizon,\NullInfinity}_s
$ a basis of $
\bDiffHI^s(\Manifold)
$.

For $\Omega\subset \Manifold$ and a real function $\psi: \Manifold\to \Real$, we define:
\begin{itemize}
\item the \emph{standard (compactified) Sobolev norm} for the Sobolev space $H^s(\Omega)$: 
  \begin{align*}
    \norm*{\psi}_{H^1(\Omega)}^2 \vcentcolon={}& \int_{\Omega\cap\{r\le 4M\}}\abs*{\partial_v\psi}^2+\abs*{\NablaAngular\psi}^2+\abs*{\partial_{\rhoH}\psi}^2+\abs*{\psi}^2
    \\
                                               &+ \int_{\Omega\cap \{r\ge 4M\} }\abs*{\partial_u\psi}^2+\abs*{\NablaAngular\psi}^2+\abs*{\partial_{\rhoI}\psi}^2+\abs*{\psi}^2, \\
    \norm*{\psi}_{H^s(\Omega)} \vcentcolon={}& \sum_{i=0}^{s-1}\norm*{A^{\Horizon,\NullInfinity}_i \psi}_{H^1(\Omega)},
  \end{align*} 
  and its weighted version, i.e. the \emph{weighted Sobolev norms} at $\EventHorizon$ and $\NullInfinity$ for the weighted Sobolev spaces $H_{\EventHorizon}^{s,\gamma}(\Omega) = \rhoH^{\gamma}H^{s}(\Omega)$ and $H_{\NullInfinity}^{s,\gamma}(\Omega) = \rhoI^{\gamma}H^{s}(\Omega)$:
  \begin{align*}
    \norm*{\psi}_{H_{\EventHorizon}^{1, \gamma}(\Omega)}^2
    \vcentcolon={}& \int_{\Omega\cap \{\rhoH\leq \rho_0\}}\rhoH^{-2\gamma} \big(\abs*{\partial_v\psi}^2+\abs*{\NablaAngular\psi}^2+\abs*{\partial_{\rhoH}\psi}^2+\abs*{\psi}^2 \big),\\
    \norm*{\psi}_{H_{\EventHorizon}^{s, \gamma}(\Omega)}
    \vcentcolon={}& \sum_{i=0}^{s-1}\norm*{A^{\Horizon}_i \psi}_{H_{\EventHorizon}^{1, \gamma}(\Omega)}
  \end{align*}
  and 
  \begin{align*}
    \norm*{\psi}_{H_{\NullInfinity}^{1, \gamma}(\Omega)}^2
    \vcentcolon={}& \int_{\Omega\cap \{\rhoI\leq \rho_1\}}\rhoI^{-2\gamma}\big(\abs*{\partial_u\psi}^2+\abs*{\NablaAngular\psi}^2+\abs*{\partial_{\rhoI}\psi}^2+\abs*{\psi}^2 \big), \\
    \norm*{\psi}_{H_{\NullInfinity}^{s, \gamma}(\Omega)}
    \vcentcolon={}& \sum_{i=0}^{s-1}\norm*{A^{\NullInfinity}_i \psi}_{H_{\NullInfinity}^{1, \gamma}(\Omega)}
  \end{align*}
  for some $\rho_0 \leq  r_{+}$, $\rho_1 \leq r_{+}^{-1}$.

  We also define the \emph{combined weighted Sobolev norm} 
  \begin{gather*}
    \norm*{\psi}_{H_{\EventHorizon, \NullInfinity}^{s, \gamma_1, \gamma_2}(\Omega)}\vcentcolon=\norm*{\psi}_{H_{\EventHorizon}^{s, \gamma_1}(\Omega)}+\norm*{\psi}_{H_{\NullInfinity}^{s, \gamma_2}(\Omega)}+\norm*{\psi}_{\compactSobolev{s}(\Omega)},
  \end{gather*}
  where $H^s_c(\Omega)$ is \emph{compact Sobolev space} with norm
  \begin{align*}
    \norm*{\psi}_{\compactSobolev{s}(\Omega)} \vcentcolon={}& \norm*{\chi_{\operatorname{comp}}\psi}_{H^s(\Omega)}
  \end{align*}
  for an arbitrary cutoff $\chi_{\operatorname{comp}}(r)$ supported away from $\EventHorizon$ and $\NullInfinity$.

  These norms control the first derivatives and the zero-th order term of $\psi$ with respect to the standard derivative $(\partial_v, \NablaAngular, \pr_{\rhoH})$ close to the event horizon and $(\partial_u, \NablaAngular, \pr_{\rhoI})$ close to null infinity.

\item the \emph{$b$-Sobolev norms} for the b-Sobolev spaces $\bSobolev{s}(\Omega)$:
  \begin{align*}
    \norm*{\psi}_{\bSobolev{0}(\Omega)}^2 \vcentcolon={}& \int_{\Omega} |\psi|^2, \\
    \norm*{\psi}_{\bSobolev{1}(\Omega)}^2 \vcentcolon={}& \int_{\Omega\cap\{r\le 4M\}}\abs*{\partial_v\psi}^2+\abs*{\NablaAngular\psi}^2+\abs*{\rhoH\partial_{\rhoH}\psi}^2+\abs*{\psi}^2
    \\
                                                        &+ \int_{\Omega\cap \{r\ge 4M\} }\abs*{\partial_u\psi}^2+\abs*{\NablaAngular\psi}^2+\abs*{\rhoI\partial_{\rhoI}\psi}^2+\abs*{\psi}^2, \\
    \norm*{\psi}_{\bSobolev{s}(\Omega)} \vcentcolon={}& \sum_{i=0}^{s-1}\norm*{A^{\Horizon,\NullInfinity}_i \psi}_{\bSobolev{1}(\Omega)}.
  \end{align*}
  and its weighted version, i.e. the \emph{weighted $b$-Sobolev norms} at $\EventHorizon$ and $\NullInfinity$ for the weighted $b$-Sobolev spaces $\bSobolevH{s,\gamma}(\Omega) = \rhoH^{\gamma}\bSobolev{s}(\Omega)$ and $\bSobolevI{s,\gamma}(\Omega) = \rhoI^{\gamma}\bSobolev{s}(\Omega)$:
  \begin{align*}
    \norm*{\psi}_{\bSobolevH{0, \gamma}(\Omega)}^2
    \vcentcolon={}& \int_{\Omega\cap \{\rhoH\leq \rho_0\}}\rhoH^{-2\gamma} \abs*{\psi}^2 ,\\
    \norm*{\psi}_{\bSobolevH{1, \gamma}(\Omega)}^2
    \vcentcolon={}& \int_{\Omega\cap \{\rhoH\leq \rho_0\}}\rhoH^{-2\gamma} \big(\abs*{\partial_v\psi}^2+\abs*{\NablaAngular\psi}^2+\abs*{\rhoH\partial_{\rhoH}\psi}^2+\abs*{\psi}^2 \big),\\
    \norm*{\psi}_{\bSobolevH{s, \gamma}(\Omega)}
    \vcentcolon={}& \sum_{i=0}^{s-1}\norm*{A^{\Horizon}_i \psi}_{\bSobolevH{1, \gamma}(\Omega)}
  \end{align*}
  and 
  \begin{align*}
    \norm*{\psi}_{\bSobolevI{0, \gamma}(\Omega)}^2
    \vcentcolon={}& \int_{\Omega\cap \{\rhoI\leq \rho_1\}}\rhoI^{-2\gamma}\abs*{\psi}^2 ,\\
    \norm*{\psi}_{\bSobolevI{1, \gamma}(\Omega)}^2
    \vcentcolon={}& \int_{\Omega\cap \{\rhoI\leq \rho_1\}}\rhoI^{-2\gamma}\big(\abs*{\partial_u\psi}^2+\abs*{\NablaAngular\psi}^2+\abs*{\rhoI\partial_{\rhoI}\psi}^2+\abs*{\psi}^2 \big), \\
    \norm*{\psi}_{\bSobolevI{s, \gamma}(\Omega)}
    \vcentcolon={}& \sum_{i=0}^{s-1}\norm*{A^{\NullInfinity}_i \psi}_{\bSobolevI{1, \gamma}(\Omega)},
  \end{align*}
  for some $\rho_0 \leq  r_{+}$, $\rho_1 \leq r_{+}^{-1}$. 

  We also define the \emph{combined weighted $b$-Sobolev norm} 
  \begin{align*}
    \norm*{\psi}_{\bSobolevHI{s, \gamma_1, \gamma_2 }(\Omega)}&\vcentcolon=\norm*{\psi}_{\bSobolevH{s, \gamma_1}(\Omega)}+\norm*{\psi}_{\bSobolevI{s, \gamma_2}(\Omega)}+\norm*{\psi}_{\compactSobolev{s}(\Omega)}.
  \end{align*}

  These norms control the first derivatives and the zero-th order term of $\psi$ with respect to the $b$-derivative $(\partial_v, \NablaAngular, \rhoH\pr_{\rhoH})$ close to the event horizon and $(\partial_u, \NablaAngular, \rhoI\pr_{\rhoI})$ close to null infinity.

  Observe that for $\rho_0 \ll r_{+}$, $\rho_1 \ll r_{+}^{-1}$
  \begin{align*}
    \norm*{\psi}_{\bSobolevHI{s, \gamma_1, \gamma_2}(\Omega)} \gtrsim  \norm*{\psi}_{\bSobolevHI{s, \lambda_1, \lambda_2}(\Omega)} \qquad \text{if $\gamma_1 \geq \lambda_1$, $\gamma_2 \geq \lambda_2$}.
  \end{align*}

\item the \emph{combined weighted mixed Sobolev and b-Sobolev norms} which are Sobolev norms at $\EventHorizon$ and b-Sobolev norms at $\NullInfinity$, given by
  \begin{align*}
    \norm*{\psi}_{H_{\EventHorizon, \operatorname{b}\NullInfinity}^{s, \gamma_1, \gamma_2 }(\Omega)}\vcentcolon=\norm*{\psi}_{H_{\EventHorizon}^{s, \gamma_1}(\Omega)}+\norm*{\psi}_{\bSobolevI{s, \gamma_2}(\Omega)}+\norm*{\psi}_{\compactSobolev{s}(\Omega)}.
  \end{align*}

\item the \emph{auxiliary degenerate bulk norm}, given by
  \begin{equation*}
    \begin{split}
      \norm*{\psi}^2_{\SobolevDeg{1}(\Manifold(\tau_1,\tau_2))}
      \vcentcolon={}& \int_{\Manifold(\tau_1,\tau_2)\cap \{ r \leq 4M\}}\abs*{\HawkingHorizon\psi}^2
                      + \rhoH^2\left( \abs*{\rhoH\partial_{\rhoH}\psi}^2 + \abs*{\NablaAngular\psi}^2 + \abs*{\psi}^2 \right)\\
                    & + \int_{\Manifold(\tau_1,\tau_2)\cap \{ r \geq 4M\}}
                      \rhoI^3\abs*{\partial_u\psi}^2
                      + \rhoI^5\left( \abs*{\rhoI\partial_{\rhoI}\psi}^2 + \abs*{\NablaAngular\psi}^2 + \abs*{\psi}^2 \right),\\
      \norm*{\psi}_{\SobolevDeg{s}(\Manifold(\tau_1,\tau_2))}={}& \sum_{i=0}^{s-1}\norm*{A^{\Horizon,\NullInfinity}_i\psi}_{\SobolevDeg{1}(\Manifold(\tau_1,\tau_2))}
                                                                  .
    \end{split}    
  \end{equation*}
\end{itemize}
Finally, we define the \emph{final weighted norm}, appearing in the statement of the main theorem, as given by 
\begin{align*}
  \norm*{\psi}_{\SobolevfinHI{s, -\frac{\alpha+1}{2}, -\frac{\beta+1}{2}}(\Sigma(\tau))}\vcentcolon =
  \norm*{\psi}_{H_{\EventHorizon}^{s, -\frac{\alpha+1}{2}}(\Sigma_{\EventHorizon}(\tau))}
  +\norm*{\widecheck{\psi}}_{H_{\NullInfinity}^{s, -\frac{\b+3}{2}}(\Sigma_{\NullInfinity}(\tau))}+\norm*{\psi}_{\compactSobolev{s}(\Sigma(\tau))}
\end{align*}   
where recall\footnote{The shift from $-\frac{\b+1}{2}$ to $-\frac{\b+3}{2}$ in the radiation field term reflects the relation $\psi=\rhoI \widecheck{\psi}$.} that $\widecheck{\psi}=r\psi$. 

\medskip

In order to introduce the Sobolev norms which are degenerate at trapping, we write the principal symbol of the scalar wave as
\begin{equation}
  \label{eq:trapping-norm:sigmai-def}
  p = p_{M,a,Q} \vcentcolon= \Metric^{-1}(dt, dt)(\sigma-\sigma_1)(\sigma-\sigma_2),
\end{equation}
where $\sigma_i = \sigma_i(r, \omega;\xi, \eta)$ are distinct smooth
1-homogeneous real symbols with respect to $(\xi, \eta)$, the spatial frequency variables.
Define then 
\begin{equation}
  \label{eq:trapping-norm:elli-def}
  \ell_i(r,\phi;\xi,\eta)
  = r - r_i(\sigma_i,\eta), \qquad \qquad r_i(\sigma_i,\eta) = \rTrapping(\sigma_i, \eta),
\end{equation}
where 
$\rTrapping(\sigma, \eta)>M$ solves
\begin{equation*}
  \widetilde{\TT}_{\sigma, \FreqPhi}(\rTrapping(\sigma, \eta)) = 0,
\end{equation*}
with $\widetilde{\TT}_{\sigma, \FreqPhi}$ given by \zcref{eq:trapped-region-KN}.
We then define the following norms. 

\begin{itemize}
\item the \emph{trapped Sobolev norm}: 
  \begin{align*}
    \norm*{\psi}_{\bSobolevTrap{1}(\Manifold(\tau_1, \tau_2))}^2 
    \vcentcolon={}& \int_{\Manifold_{\operatorname{trap}}(\tau_1,\tau_2)}\abs*{\partial_r^{\mathrm{BL}}\psi}^2 + \abs*{\psi}^2
    \\
                  &+ \int_{\Manifold_{\operatorname{trap}}(\tau_1,\tau_2)}\sum_{i\neq j}\abs*{\mathring{\chi}\left(1-\frac{r_i(\sigma_i,\eta)}{r}\right)\left(D_t-\sigma_j(D,x)\right)(\mathring{\chi}\psi)}^2.  
  \end{align*}
\item the \emph{weighted trapped $b$-Sobolev norms} at $\EventHorizon$ and $\NullInfinity$:
  \begin{equation*}
    \begin{split}
      \norm*{\psi}_{\bSobolevHITrap{1, \gamma_1, \gamma_2}(\Manifold(\tau_1, \tau_2))}^2 
      \vcentcolon={}&\norm*{\psi}_{\bSobolevHI{1, \gamma_1, \gamma_2 }(\Manifold_{\nontrap}(\tau_1, \tau_2))}^2
                      + \norm*{\psi}_{\bSobolevTrap{1}(\Manifold(\tau_1, \tau_2))}^2 ,
    \end{split}   
  \end{equation*}
  where $D_t = -i\partial_t$ and $\sigma_j(D,x)$ denotes the Weyl quantization of the symbol $\sigma_j$ defined in \zcref{eq:trapping-norm:sigmai-def}, and $\mathring{\chi}$ is a cutoff localizing to the
  trapping region
  .

\end{itemize}

\subsection{Statement of the main theorem}\label{sec:statement-theorem}

We are now ready to state our main theorem.
\begin{theorem}[Main Theorem]\label{thm:main-thm}
  Let $\psi$ solve the inhomogeneous wave equation 
  \[\Box_\g \psi = F\]
  in an extremal Kerr--Newman spacetime with $\mathbf{a}\vcentcolon=\frac{|a|}{M}\ll 1$. 
  Then for
  \begin{align*}
    \a \in \big( -1+4\mathbf{a}^2 ,  -\de_1\big), \qquad  \qquad \b \in \big( 1, 3\big)
  \end{align*}
  with $\de_1 \ll 1$, the following $\EventHorizon$- and $\NullInfinity$-weighted energy estimate holds true for $s \in \mathbb{N}$, $s \geq 1$ and any $\tau_1<\tau_2$:                   \begin{align}\label{eq:final-thm-with-hierarchies}
    \begin{split}
      & \norm*{\psi}_{\SobolevfinHI{s, -\frac{\alpha+1}{2}, -\frac{\beta+1}{2}}(\Sigma(\tau_2))}+ \norm*{\psi}_{\bSobolevH{s, - \frac{\alpha+1}{2}}(\EventHorizon(\tau_1,\tau_2))} +\norm*{\widecheck{\psi}}_{\bSobolevI{s, -\frac{\b+1}{2}}(\NullInfinity(\tau_1, \tau_2))}
        + \norm*{\psi}_{\bSobolevHI{s-1, -\frac{\a}{2}, -\frac \b 2}(\Manifold(\tau_1, \tau_2))}\\
      \les{}& \norm*{\psi}_{\SobolevfinHI{s, -\frac{\alpha+1}{2}, -\frac{\beta+1}{2}}(\Sigma(\tau_1))}+\norm*{F}_{\bSobolevHI{s-1,-\frac \a 2,-\frac{\b-4}{2}}(\Manifold(\tau_1,\tau_2))}.
    \end{split}
  \end{align}   
\end{theorem}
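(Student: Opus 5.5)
The proof of \zcref{thm:main-thm} glues three estimates: two boundary-weighted hierarchies (one at $\EventHorizon$, one at $\NullInfinity$) and one global energy--Morawetz estimate that handles trapping. I first prove the case $s=1$ and then obtain higher $s$ by commuting.

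\textbf{Step 1: horizon hierarchy.} In the region $\Manifold_{\EventHorizon}(\tau_1,\tau_2)$ I use the multiplier
\[
X_{\EventHorizon}=\chi_{\rho_0}\rhoH^{-\alpha}\big(-\partial_{\rhoH}+c\,\rhoH^{-2}\HawkingHorizon\big),
\]
together with a Lagrangian correction $w=O(\rhoH^{-\alpha})$. This is the $(r-M)^{-p}$ multiplier with $p=1-\alpha$, written in $b$-form.

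Applying \zcref{prop:fprr-HH} with $f=-\rhoH^{-\alpha}$, the bulk contains the term $\tfrac12(\Delta f'-f\Delta')|\partial_{\rhoH}\psi|^2$, which has coefficient $\sim(1-\alpha)\rhoH^{-\alpha}|\rhoH\partial_{\rhoH}\psi|^2$. It also contains the angular term $-\tfrac12 f'|\NablaAngular\psi|^2\sim\alpha\rhoH^{-\alpha-1}|\NablaAngular\psi|^2$ and the cross term $-2rf\,\partial_{\rhoH}\psi\,\partial_v\psi$.

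I rewrite $\partial_v$ in terms of $\HawkingHorizon$ and $\partial_{\phiIEF}$. The $\HawkingHorizon$ part is handled by the $\HawkingHorizon$-component of the multiplier. The dangerous piece is $a\,\rhoH^{-\alpha}\partial_{\rhoH}\psi\,\partial_{\phiIEF}\psi$. I control it by Cauchy--Schwarz between $\rhoH^{-\alpha}|\rhoH\partial_{\rhoH}\psi|^2$ and $\rhoH^{-\alpha-2}|\NablaAngular\psi|^2$, with the zeroth-order terms handled by the Hardy inequality \zcref{lemma:hardy-pointwise-rhoH}. This is where the condition $1-\alpha^2>4\mathbf a^2$, i.e.\ the restriction $\alpha>-1+4\mathbf a^2$, should appear.

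The boundary terms come from \zcref{lemma:boundary-terms-ieF}. They give the weighted flux on $\Sigma_{\EventHorizon}(\tau)$ and the $\bSobolevH{1,-\frac{\alpha+1}{2}}$ flux on $\EventHorizon$. The cutoff errors from \eqref{current-cutoff} live in $\{\rho_0\le\rhoH\le2\rho_0\}$, and I defer them to Step 3.

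\textbf{Step 2: null-infinity hierarchy.} At $\NullInfinity$ I do the analogous thing with $r^p\partial_r$ applied to $\widecheck\psi$, where $p=3-\beta$. I use \zcref{lemma:wave-radiation-field}, the oEF computations of \zcref{prop:fprr-HH}, and the boundary positivity of \zcref{lemma:boundary-terms-oEF}. Here the rotation terms are $O(\rhoI)$-suppressed, so the full range $\beta\in(1,3)$ survives. This step is standard, following \cite{moschidis$r^p$WeightedEnergyMethod2016}.

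\textbf{Step 3: energy and Morawetz estimate.} I need a global estimate that bounds
\[
\sup_\tau E[\psi](\tau)+\|\psi\|^2_{\SobolevDeg{1}}+\|\psi\|^2_{\bSobolevTrap{1}}
\]
by the initial energy, by $F$-terms, and by error terms supported near the horizon.

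For energy boundedness I use the timelike vector field $\That_\HH$ on $\{\rhoH<A_1\}$, interpolated to $T$ away from the ergoregion. Its non-Killing part is supported away from both the ergoregion and trapping, so for small $\mathbf a$ it is absorbed by the Morawetz bulk.

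For the Morawetz part I take $X_{ax}=\mathcal F(r)\partial_r^{\mathrm{BL}}$, with $\mathcal F$ vanishing at $r_{\trap}$, plus corrections $w_{ax}$ and $J_{ax}$. Outside $\Manifold_{\trap}$ this controls the degenerate norm, modulo the mixed term $O(|a|r^{-3})\That\psi\,\partial_\phi\psi$ and a negative zeroth-order term near $\EventHorizon$. Inside $\Manifold_{\trap}$ I replace the multiplier by the pseudodifferential one of \zcref{lemma:PsiDO-divergence-theorem}, with symbol $\mathfrak x\approx\prod(r-r_i(\sigma_i,\eta))$ in the style of Tataru--Tohaneanu. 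Its principal Poisson bracket $\{p,\mathfrak x\}$ is positive on the characteristic set up to $O(\mathbf a)$ errors, which yields the $\bSobolevTrap{1}$ bulk.

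\textbf{Step 4: closing the argument.} Near the horizon, the mixed term and the zeroth-order error from Step 3 are bounded by $\|\psi\|^2_{\bSobolevH{1,-\alpha/2}}$ only when $\alpha<-\delta_1$. So I apply Step 1 with $\alpha_0=-\delta_1$ as a degenerate redshift estimate and add a large multiple of it to Step 3. Conversely, the cutoff errors of Steps 1--2 in compact regions are controlled by the Step 3 bulk, which is nondegenerate in $\Manifold_{\nontrap}$.

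Summing gives \eqref{eq:final-thm-with-hierarchies} for $s=1$. For $s\ge2$ I commute with $T$, $\rhoH\partial_{\rhoH}$, $\rhoI\partial_{\rhoI}$ and the rotations $\RotationVF_i$, using \zcref{lem:weighted-hierarchy-commutators}. The commutators either reproduce $\Box_\Metric$, or give terms of lower $b$-order, or give terms with an $O(a)$ coefficient, all of which are absorbed by induction on $s$.

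The main obstacle is Step 1: proving coercivity of the horizon bulk despite the $2a\partial_{\phiIEF}\partial_{\rhoH}$ term, which is not lower order in $b$-Sobolev weights, and identifying the precise weight range that makes this work.
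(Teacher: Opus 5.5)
Your overall architecture matches the paper's: two boundary hierarchies, a $\That_\chi$-type energy, the axisymmetric Morawetz multiplier with a Tataru--Tohaneanu correction at trapping, the $\alpha=-\delta_1$ horizon estimate used as a degenerate redshift, and then gluing. However, Step~1, which you rightly call the crux, fails as written.

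With $f=-\rhoH^{-\alpha}$, Lemma~\ref{prop:fprr-HH} gives a bulk of weight $\rhoH^{-\alpha-1}$, namely $\frac{2+\alpha}{2}\rhoH^{-\alpha-1}|\rhoH\partial_{\rhoH}\psi|^2-\frac{\alpha}{2}\rhoH^{-\alpha-1}|\NablaAngular\psi|^2$. It does not give $(1-\alpha)\rhoH^{-\alpha}|\rhoH\partial_{\rhoH}\psi|^2$, and the $\Sigma$-flux has weight $\rhoH^{-\alpha}$ rather than the $\rhoH^{\alpha+1}$ of the norm you claim. Worse, the component $c\,\rhoH^{-\alpha-2}\HawkingHorizon$ is singular at $\EventHorizon$ for $\alpha\in(-1,0)$, so the fluxes through $\Sigma(\tau_1)$ and $\EventHorizon$ are infinite. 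Since $h=c\rhoH^{-\alpha-2}$ is decreasing, it also contributes a negative bulk term $h'|\HawkingHorizon\psi|^2$.

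The $p=1-\alpha$ multiplier is $(r-M)^{2-p}\partial_r$, i.e.\ $X_{\alpha,C}=\rhoH^{\alpha+1}(-\partial_{\rhoH}+Cr_+^{-2}\HawkingHorizon)$, with the same weight on both components. Up to terms carrying an extra factor $\rhoH$, its bulk is
\[
|q|^2\mathcal Q^{(X_{\alpha,C},0,0)}[\psi]=\rhoH^{\alpha}\Big[\tfrac{1-\alpha}{2}|\rhoH\partial_{\rhoH}\psi|^2+\tfrac{1+\alpha}{2}|\NablaAngular\psi|^2+(1+\alpha)Cr_+^{-2}|\HawkingHorizon\psi|^2+\tfrac{2r}{|q|^2}\HawkingHorizon\psi\,\rhoH\partial_{\rhoH}\psi-\tfrac{2ra\sin\theta}{|q|^2}\renormpphi\psi\,\rhoH\partial_{\rhoH}\psi\Big].
\]
The rotational cross term carries exactly the weight of the coercive terms. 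So your unbalanced Cauchy--Schwarz against $\rhoH^{-\alpha-2}|\NablaAngular\psi|^2$ is unavailable: no such more singular angular term is present in either normalization. The cross term has to be absorbed by an exact coefficient comparison. Concretely, you need positive-definiteness of the quadratic form in $(\rhoH\partial_{\rhoH}\psi,\renormpphi\psi,\HawkingHorizon\psi)$ via Sylvester's criterion: $\frac{(1-\alpha)(1+\alpha)}{4}>\mathbf a^2$ (using $r/|q|^2\le 1/M$), then $C$ large, with the Hardy current $J_\alpha$ supplying $|\psi|^2$. This is the actual origin of $\alpha>-1+4\mathbf a^2$.

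Two further steps need repair.

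\emph{Step~4.} The degenerate redshift must enter with a \emph{small} coefficient ($c_{red}$ in the paper), not a large one. Its own cutoff error on $\{r_+/80\le\rhoH\le r_+/40\}$ has to be absorbed by the Morawetz bulk, and that error scales with the coefficient. The near-horizon Morawetz errors are then made small relative to $c_{red}\rhoH^{-\delta_1}$ in two moves:
\begin{enumerate}
\item Take the regularization parameter $\varepsilon_0$ of Proposition~\ref{prop:ax-symm-choice} small. The negative potential $\overline{\mathcal V}_{\varepsilon_0}\lesssim 1$ lives at $\rhoH\sim\varepsilon_0$, where $\rhoH^{-\delta_1}\gtrsim\varepsilon_0^{-\delta_1}$.
\item Finally take $\mathbf a$ small. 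The mixed term has coefficient $O(|a|/\varepsilon_0)$ near $\EventHorizon$, since $u_{\varepsilon_0}=-2M^2/\varepsilon_0$ there.
\end{enumerate}

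\emph{Higher order.} For $s\ge2$, commuting with the non-Killing $\RotationVF_1,\RotationVF_2$ through $\Manifold_{\mathrm{trap}}$ produces top-order $O(\mathbf a)$ commutator terms there. The trapped bulk controls top-order derivatives only degenerately, so smallness of $\mathbf a$ does not absorb them. Your list of commutators also does not recover the radial second derivatives in the interior. The paper commutes globally only with the Killing fields $T$ and $\Phi$. It recovers all other derivatives from the $b$-ellipticity of the stationary part of $|q|^2\Box_{\Metric}$ (Lemmas~\ref{lemma:higher-order:deg-morawetz-bulk} and~\ref{lemma:higher-order-morawetz:energy}). It uses $\rhoH\partial_{\rhoH}$, $\rhoI\partial_{\rhoI}$ and $\RotationVF_i$ only inside the near-boundary hierarchies.

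\emph{Trapping symbol.} Your trapping symbol must vanish on each sheet $\{\sigma=\sigma_i\}$ exactly at $r=r_i(\sigma_i,\eta)$. This is achieved by $i\Delta|q|^{-2}(r-r_{\mathrm{trap}}(\sigma,\eta_\varphi))\xi$, made polynomial in $\sigma$ by Malgrange preparation. A product $\prod_i(r-r_i)$ would also vanish at an untrapped radius on each sheet and spoil the sign of $H_p\mathfrak x$.
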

\begin{proof}
  See \zcref{proof:main-them-s1} for the proof for $s=1$ and \zcref{proof:main-them-s2} for $s \geq 2$.
\end{proof}

\begin{remark}\label{rem:main-theorem-s=1}
  In \zcref{eq:final-thm-with-hierarchies} we recognize a familiar loss of derivative in the bulk norm $\norm*{\psi}_{\bSobolevHI{s-1, -\frac{\a}{2}, -\frac \b 2}(\Manifold(\tau_1, \tau_2))}$  due to the trapping phenomenon of null geodesics. For $s=1$, we prove the following more precise statement:
  \begin{align}\label{eq:main-theorem-s=1}
    \begin{split}
      &\norm*{\psi}_{\SobolevfinHI{1, -\frac{\alpha+1}{2}, -\frac{\beta+1}{2}}(\Sigma(\tau_2))}+ \norm*{\psi}_{\bSobolevH{1, - \frac{\alpha+1}{2}}(\EventHorizon(\tau_1,\tau_2))} +\norm*{\widecheck{\psi}}_{\bSobolevI{1, -\frac{\b+1}{2}}(\NullInfinity(\tau_1, \tau_2))}+
        \norm*{\psi}_{\bSobolevHITrap{1, -\frac{\a}{2}, -\frac \b 2}(\Manifold(\tau_1, \tau_2))}\\
      &\les \norm*{\psi}_{\SobolevfinHI{1, -\frac{\alpha+1}{2}, -\frac{\beta+1}{2}}(\Sigma(\tau_1))}+\norm*{F}_{\bSobolevHI{0,-\frac \a 2,-\frac{\b-4}{2}}(\Manifold(\tau_1,\tau_2))},
    \end{split}
  \end{align}
  with the weighted trapped norm in the bulk. Since 
  \[\norm*{\psi}_{\bSobolevHITrap{1, -\frac{\a}{2}, -\frac \b 2}(\Manifold(\tau_1, \tau_2))} \gtrsim \norm*{\psi}_{\bSobolevHI{0, -\frac{\a}{2}, -\frac \b 2}(\Manifold(\tau_1, \tau_2))}\]
  the estimate \zcref{eq:main-theorem-s=1}, proved in \zcref{proof:main-them-s1}, implies \zcref{eq:final-thm-with-hierarchies} for $s=1$.
\end{remark}

The proof of \zcref{thm:main-thm} is obtained in the following steps.
\begin{enumerate}
\item First, we obtain general $\EventHorizon$- and $\NullInfinity$-weighted hierarchies for solutions to the wave equations on extremal Kerr--Newman spacetime, not necessarily slowly rotating ones. Those estimates are localized near the event horizon and null infinity respectively, so they are conditional on some on a compact region. These estimates are obtained in \zcref{sec:hierarchy}.
\item We obtain unconditional energy-Morawetz estimates by combining a choice of differential multipliers from axial symmetry outside trapping and a choice of pseudodifferential multipliers at trapping. To close the energy-Morawetz estimates we also need to use a degenerate redshift estimate, which corresponds to an element of the $\EventHorizon$-hierarchy. These estimates are obtained in \zcref{sec:Morawetz}.
\item Finally, we obtain the proof of \zcref{thm:main-thm} for $s=1$ in \zcref{proof:main-them-s1} by combining the weighted hierarchies and the Energy-Morawetz estimates, and we obtain the higher order derivatives version of \zcref{thm:main-thm} in \zcref{proof:main-them-s2}.
\end{enumerate}

In the proof of \zcref{thm:main-thm} we use the following constants:
\begin{itemize}
\item $\delta_{1}<\frac 1 2 $ is the small constant appearing in the weights for the degenerate redshift estimate in \zcref{prop:extremal-redshift}.
\item $\daxi>0$ is the small constant in the construction of the axially symmetric choice in \zcref{prop:ax-symm-choice},
\item $c_{red}$ is the small constant which multiplies the degenerate redshift multiplier obtained in \zcref{prop:extremal-redshift}, 
\item $c_{\That}$ is the small constant which multiplies the Lagrangian corrector $w_{\That}$ used in the proof of \zcref{prop:Mor-axially-sym:main-bulk},
\item $C_{\That}$ is the large constant which multiplies the energy multiplier in the proof of \zcref{prop:energy-Morawetz}.
\end{itemize}
We take
\begin{gather*}
  C_{\That} \gg 1, \quad 0<\de_1 \ll 1, \qquad c_{\That} \ll 1, \quad c_{red} \ll 1, \qquad 
  \daxi \ll1,\qquad
  \mathfrak{a}\ll 1.
\end{gather*}

\subsection{Applications: decay and blow-up}\label{sec:decay-blow}

Following standard techniques, the weighted energy boundedness statement of \zcref{thm:main-thm} implies pointwise decay estimates for the solution, up to and including the event horizon. In this section, we restrict to the case of $F=0$, although the decay estimates would also remain valid if $F$ satisfied a suitable time-decay assumption. 

\begin{corollary}[Pointwise decay estimate]\label{cor:pointwise-decay}
  For solutions $\psi$ to the wave equation 
  \[\Box_\g \psi = 0\]
  on extremal Kerr--Newman with $\mathbf{a}\vcentcolon=\frac{|a|}{M}\ll 1$,  the following pointwise decay estimate holds true 
  \begin{align*}
    | \psi(\tau, r , \theta, \phi)| \les
    \frac{\sqrt{E_{\operatorname{init}}^{4}}}
    {r \, \langle \tau\rangle^{\frac{1-\de-\de_1}{2}}}, \qquad r\in [M, \infty)
  \end{align*}
  where $\de_1 \ll 1$, $\de> 4\mathbf{a}^2$, with $\de+\de_1<1$ and
  \begin{align*}
    E_{\operatorname{init}}^s=\norm*{\psi}_{\SobolevfinHI{s, -\frac{\delta}{2}, -\frac{2+\delta}{2}}(\Sigma(\tau_1))}^2.
  \end{align*}
\end{corollary}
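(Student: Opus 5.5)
We derive the corollary from \zcref{thm:main-thm} with $F=0$ through the Dafermos--Rodnianski pigeonhole/interpolation scheme, followed by a Sobolev embedding on the leaves $\Sigma(\tau)$. We fix $\alpha_0\vcentcolon=-1+\delta$ with $\delta>4\mathbf{a}^2$, so that $\alpha_0$ lies in the admissible range $(-1+4\mathbf{a}^2,-\delta_1)$, and we take $\alpha_1\vcentcolon=-\delta_1$ as the top of the horizon hierarchy. At null infinity we use the range $\beta_0=1+\epsilon$ and $\beta_1=3-\epsilon$, or equivalently $\beta=2+\delta$ after matching the data norm $E^s_{\operatorname{init}}$. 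For $\alpha\le\alpha'$ and $\beta\le\beta'$, the bulk norm $\norm*{\psi}_{\bSobolevHI{s-1,-\frac{\a'}{2},-\frac{\b'}{2}}}$ controls the slice energy $\norm*{\psi}_{\SobolevfinHI{s-1,-\frac{\a'-1+1}{2},\dots}}$ with weights shifted by exactly one power of $\rhoH$ (respectively $\rhoI$). This is because the slice norm at level $\alpha$ carries the weight $\rhoH^{\alpha+1}$, while the bulk norm at level $\alpha+1$ carries $\rhoH^{\alpha+1}$ as well, up to comparing $\partial_{\rhoH}$ with $\rhoH\partial_{\rhoH}$. That comparison is the main technical point, and we handle it below.

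The steps are as follows. (i) We apply \zcref{thm:main-thm} at the top level $\alpha_1$ on $[\tau_1,\tau_2]$. Since the bulk term at level $\alpha_1$ dominates $\int_{\tau_1}^{\tau_2}E_{\alpha_1-1}(\tau)\,d\tau$, the mean value theorem gives a dyadic sequence $\tau_k\sim 2^k$ with $E_{\alpha_1-1}(\tau_k)\lesssim 2^{-k}E_{\alpha_1}(\tau_1)$. Here $E_\alpha(\tau)$ denotes the slice norm $\norm*{\psi}^2_{\SobolevfinHI{s,-\frac{\alpha+1}{2},\cdot}(\Sigma(\tau))}$. The derivative in the trapping region is lost, which is why we start from $s=4$ and end with $s\ge 2$. (ii) Because the gap $\alpha_1-\alpha_0=1-\delta-\delta_1<1$ is less than one full step, we cannot iterate twice. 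Instead we interpolate: by Hölder in the weight $\rhoH^{\alpha}$, the energy $E_{\alpha_0}$ lies between $E_{\alpha_1-1}$ and $E_{\alpha_1}$, with exponent $\theta=\alpha_1-\alpha_0$. Combining this with the boundedness $E_{\alpha_0}(\tau)\lesssim E_{\alpha_0}(\tau')$ for $\tau\ge\tau'$ from the theorem, which fills in between dyadic times, we obtain $E^{s-1}_{\text{low}}(\tau)\lesssim\langle\tau\rangle^{-(1-\delta-\delta_1)}E^{4}_{\operatorname{init}}$ for a non-degenerate low-weight energy, namely the $\alpha=-1$-type unweighted $H^1$ energy near $\EventHorizon$ and the $\beta$-analogue near $\NullInfinity$. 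The analogous interpolation at $\NullInfinity$ has the larger gap $\beta_1-\beta_0\approx 2$, so the horizon end determines the rate.

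(iii) For the pointwise bound we use \zcref{lemma:hardy-dejan} and the fundamental theorem of calculus in $\rhoI$ along $\Sigma(\tau)$. Applied to $\widecheck\psi=r\psi$, starting from $\rhoI=0$ where it is finite, this controls $\sup_{\Sigma(\tau)}|\widecheck\psi|^2$ by the product of the $L^2$ norms of $\widecheck\psi$ and $\partial_{\rhoI}\widecheck\psi$ on spheres. Commuting with the rotation vector fields $\RotationVF_i$ and applying Sobolev on $\Sphere^2$ uses two more derivatives. The commutators of \zcref{lem:weighted-hierarchy-commutators} are $O(a)$ and can be absorbed with higher $s$, which is why we need $E^4$. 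Near the horizon the same argument applies with $\partial_{\rhoH}$ and the non-degenerate $H^1_\EventHorizon$ norm, and in the compact region ordinary Sobolev embedding suffices. Together these give $r|\psi|\lesssim \sqrt{E^{s}_{\text{low}}(\tau)}\lesssim\langle\tau\rangle^{-\frac{1-\delta-\delta_1}{2}}\sqrt{E^4_{\operatorname{init}}}$.

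The main obstacle is step (ii) at the horizon. The slice norm involves the full derivative $\partial_{\rhoH}$, whereas the bulk norm only contains $\rhoH\partial_{\rhoH}$, so the gain of one power of $\rhoH$ from bulk to slice has to be justified. We intend to do this by applying the coarea formula to the bulk at level $\alpha+1$, using the fact that $\Sigma(\tau)$ is transversal to $\partial_{\rhoH}$. The time derivative $\partial_v\psi$ is then expressed through the wave equation \zcref{wave-iEF} in terms of $\HawkingHorizon\partial_{\rhoH}\psi$ and tangential quantities. If this turns out to be too lossy, we will fall back on interpolating only the $b$-energies and recovering the $\partial_{\rhoH}$ control at the end from the horizon flux term in \zcref{thm:main-thm}.
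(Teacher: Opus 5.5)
There is a genuine gap: the hierarchy step runs in the opposite direction from the one you use. Write $E_\alpha(\tau)$, as you do, for the slice energy with horizon weight $\rhoH^{\alpha+1}$. The bulk norm at level $\alpha$ contains $\rhoH^{\alpha}|\rhoH\partial_{\rhoH}\psi|^2=\rhoH^{\alpha+2}|\partial_{\rhoH}\psi|^2$. On $\{\rhoH\le\rho_0\}$ it also contains $\rhoH^{\alpha}(|\partial_v\psi|^2+|\NablaAngular\psi|^2+|\psi|^2)\ge\rhoH^{\alpha+2}(|\partial_v\psi|^2+|\NablaAngular\psi|^2+|\psi|^2)$. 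So it trivially dominates $\int_{\tau_1}^{\tau_2}E_{\alpha+1}(\tau)\,d\tau$; this is the paper's \eqref{eq:bound-blk-integral-boundary}.

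Your claim that the bulk at $\alpha_1=-\delta_1$ dominates $\int E_{\alpha_1-1}\,d\tau$ needs a gain of two powers of $\rhoH$ on $\partial_{\rhoH}\psi$. This is not a technicality that coarea plus the wave equation can settle. In ingoing form the equation only gives access to $\HawkingHorizon\partial_{\rhoH}\psi$, the derivative of $\partial_{\rhoH}\psi$ along the generators, which is exactly the degenerate transport behind the Aretakis conservation law. Heuristically, $\partial_{\rhoH}\psi$ stays of the size of the horizon charge on $\{\rhoH\lesssim\tau^{-1}\}$, so one expects $\int E_{-1-\delta_1}\,d\tau=\infty$ for generic data. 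In any case $\alpha=-1-\delta_1$ lies outside the admissible range, so Theorem~\ref{thm:main-thm} does not even bound $E_{-1-\delta_1}$.

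Consequently steps (i)--(ii) invert the ordering of levels. They purport to give decay of $E_{-1+\delta}$, which is the data norm $E^s_{\operatorname{init}}$ itself (a nearly non-degenerate horizon energy), starting from the weaker data $E_{-\delta_1}(\tau_1)$. Nothing in the theorem yields that.

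Swapping the roles of $\alpha_0$ and $\alpha_1$ repairs the argument and removes your ``main obstacle'' altogether. Apply the theorem at $\alpha=-1+\delta$, $\beta=\alpha+2$. Then $E_{-1+\delta}$ stays bounded by $E_{\operatorname{init}}$, and, losing one derivative, $\int_{\tau_1}^{\infty}E_{\delta}\,d\tau\lesssim E_{\operatorname{init}}$; $E_\delta$ is integrable but not bounded, since $\delta$ is outside the range. Since $-\delta_1=(1-\delta-\delta_1)\delta+(\delta+\delta_1)(-1+\delta)$, Lemma~\ref{lemma:interpolation} gives $E_{-\delta_1}\le E_{\delta}^{1-\delta-\delta_1}E_{-1+\delta}^{\delta+\delta_1}$. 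Hence $\int_{\tau_1}^{\infty}E_{-\delta_1}^{1/(1-\delta-\delta_1)}\,d\tau\lesssim E_{\operatorname{init}}^{1/(1-\delta-\delta_1)}$. The almost-monotonicity of $E_{-\delta_1}$ (the theorem at $\alpha=-\delta_1$) on $[\tau/2,\tau]$ then yields $E_{-\delta_1}(\tau)\lesssim\tau^{-(1-\delta-\delta_1)}E_{\operatorname{init}}$; your dyadic pigeonhole would serve equally well at this point.

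The decaying energy is therefore degenerate, with weight $\rhoH^{1-\delta_1}$ on $|\partial_{\rhoH}\psi|^2$, so step (iii) cannot use a non-degenerate $H^1$ norm near $\EventHorizon$. Instead:
\begin{enumerate}
\item use the cutoff Hardy inequality with $\gamma=-1+\delta_1$ to upgrade the zeroth-order term to $\rhoH^{-1+\delta_1}|\psi|^2$;
\item apply the fundamental theorem of calculus with Cauchy--Schwarz against $\int_0^{\rho_0}\rho^{-1+\delta_1}\,d\rho<\infty$ (this is where $\delta_1>0$ is needed);
\item average in $\rho$ with weight $\rho^{-1+\delta_1}$ to control $\psi$ on the horizon, then use Sobolev on $\Sphere^2$.
\end{enumerate}
The same argument applied to $r\psi$ in $\rhoI$ treats $\NullInfinity$. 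Tying $\beta=\alpha+2$ makes both ends interpolate with the same exponent, so there is no separate ``gap'' at infinity to compare.
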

\begin{proof}
  See \zcref{proof:corollary-decay}.
\end{proof}

Finally, combining the decay estimate of \zcref{cor:pointwise-decay} and the Aretakis conservation law along the event horizon, we deduce the instability result along the event horizon.
We first recall here the Aretakis conservation law, proved as a particular case of the one obtained in
\cite{aretakisHorizonInstabilityExtremal2015}. 
\begin{proposition}[Proposition 3.1 in \cite{aretakisHorizonInstabilityExtremal2015}] 
  For solutions $\psi$ to the wave equation 
  \[\Box_\g \psi = 0\]
  on extremal Kerr--Newman, the quantity
  \begin{equation}
    H_{0}^{\text{eKN}}[\psi](\tau)\vcentcolon=\int_{\Sigma(\tau)\cap \EventHorizon}\Big(a^2\sin^{2}\theta \pr_v\psi + 2(M^2+a^2)  \pr_{\rhoH}\psi + 2M \psi \Big) d\mathring{\gamma},
    \label{cons-eKN}
  \end{equation}
  is conserved along $\EventHorizon$, i.e. it is independent of $\tau$.
\end{proposition}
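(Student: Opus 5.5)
The plan is to derive the conservation law from the wave equation restricted to the horizon, using the iEF expression \zcref{wave-iEF} together with the fact that $\Sigma(\tau)\cap\EventHorizon$ is just the sphere $\{v-h_{\EventHorizon}(M)=\tau, r=M\}$. The horizon is a level set of $v$ parametrized by $(v,\theta,\phiIEF)$, so it suffices to show $\partial_v$ of the integrand, integrated over $\Sphere^2$, vanishes at $\rhoH=0$.

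Evaluate $|q|^2\Box_\g\psi=0$ at $\rhoH=0$. Terms carrying a factor $\rhoH$ drop out: $(\rhoH\partial_{\rhoH})^2\psi$ and $\rhoH\partial_{\rhoH}\psi$ vanish at $\rhoH=0$ for $\psi$ smooth up to the horizon. The term $2\HawkingHorizon\partial_{\rhoH}\psi$ at $r=M$ becomes $2(M^2+a^2)\partial_v\partial_{\rhoH}\psi+2a\partial_{\phiIEF}\partial_{\rhoH}\psi$, and $2r\partial_v\psi$ becomes $2M\partial_v\psi$. Hence on $\EventHorizon$
\[
0=\partial_v\Big(2(M^2+a^2)\partial_{\rhoH}\psi+2M\psi+a^2\sin^2\theta\,\partial_v\psi\Big)+2a\partial_{\phiIEF}\big(\partial_{\rhoH}\psi+\partial_v\psi\big)+\lapp_{\SSS^2}\psi.
\]
Here we use that $\partial_v$ and $\partial_{\phiIEF}$ commute with $\partial_{\rhoH}$ and that the coefficients depend only on $\theta$, so $a^2\sin^2\theta\,\partial_v^2\psi=\partial_v(a^2\sin^2\theta\,\partial_v\psi)$.

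Now integrate over the horizon sphere with $d\mathring\gamma$. The term $\lapp_{\SSS^2}\psi$ integrates to zero, since it is the divergence on the round sphere. The terms $\partial_{\phiIEF}(\cdot)$ also integrate to zero, because $\partial_{\phiIEF}$ is a Killing field of the round metric and is divergence-free. Since $\partial_v$ is tangent to $\EventHorizon$ and maps the spheres $\Sigma(\tau)\cap\EventHorizon$ to one another with $\tau=v-\mathrm{const}$, we get $\frac{d}{d\tau}H_0^{\text{eKN}}[\psi](\tau)=0$.

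The only point needing care is regularity: we need $\psi$ to be $C^2$ up to $\rhoH=0$ (so that $\rhoH\partial_{\rhoH}^2\psi\to0$), which holds by the well-posedness and persistence of regularity recalled in \zcref{sec:wave-eq} for sufficiently regular data, extended by density otherwise. Finally, we check that the integration measure is $d\mathring\gamma$ rather than $|q|^2d\mathring\gamma$. This matches the structure of \zcref{wave-iEF}: the full operator carries the prefactor $|q|^2$, and dividing it out is exactly what makes the angular terms exact divergences.
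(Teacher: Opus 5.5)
Your argument is correct and is essentially the paper's proof: the paper also takes the ingoing Eddington--Finkelstein form of $|q|^2\Box_{\g}\psi$ and rewrites it as a horizon-tangential derivative of $a^2\sin^2\theta\,\partial_v\psi+2(r^2+a^2)\partial_{\rhoH}\psi+2r\psi$, plus a $\partial_{\phiIEF}$-derivative, plus $\lapp_{\SSS^2}\psi$, plus terms vanishing at $\rhoH=0$. It then restricts to the horizon and integrates over the spheres and in $\tau$. The only cosmetic difference is that the paper writes the tangential derivative as $\That=\partial_v+\frac{a}{r^2+a^2}\partial_{\phiIEF}$ off the horizon, whereas you set $r=M$ first and use $\partial_v$ directly; after integrating over the horizon spheres the two groupings coincide.
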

\begin{proof}
  Using \zcref{wave-iEF}, we rewrite the wave operator as 
  \begin{equation}\label{eq:box-for-conservation-law}
    \begin{split}
      |q|^2\Box_\g \psi={}& \That\Big(a^2\sin^2\th \pr_v\psi+ 2(r^2+a^2)\pr_{\rhoH}\psi + 2 r \psi  \Big)\\
                          &+\pr_{\phiIEF} \Big(\frac{|q|^2+r^2+a^2}{r^2+a^2} a \pr_v\psi - 2  \frac{ar}{r^2+a^2} \psi \Big)+\lapp_{\SSS^2}\psi-(\rhoH\pr_{\rhoH})^2\psi-\rhoH\pr_{\rhoH}\psi,
    \end{split}
  \end{equation}
  where recall that $\That=\partial_v+\frac{a}{r^2+a^2}\partial_{\phiIEF}$.
  Restricting it to $\EventHorizon$
  and integrating along $\Sigma(\tau)\cap \HH$ and then in $\tau$, where we observe that $\partial_\tau|_{\EventHorizon}=\partial_v|_{\EventHorizon}$, we obtain 
  \begin{equation*}
    H_{0}^{\text{eKN}}[\psi](\tau)-H_{0}^{\text{eKN}}[\psi](0)=-\int^{\tau}_0 \int_{\Sigma(\tau')\cap \HH}\left((\rhoH\pr_{\rhoH})^2\psi+\rhoH\pr_{\rhoH}\psi\right)d\si d\tau'=0,
  \end{equation*}
  therefore 
  $H_{0}^{\text{eKN}}[\psi](\tau)$ is conserved along $\HH$. 
\end{proof}

Notice that
$H_{0}^{\text{eKN}}[\psi]$ is a constant which depends only on the initial data and is generically non-zero.
As a consequence, we obtain the following manifestation of the Aretakis instability.

\begin{corollary}[Aretakis instability]\label{cor:aretakis}
  For solutions $\psi$ to the wave equation 
  \[\Box_\g \psi = 0\]
  on extremal Kerr--Newman with $\mathbf{a}\vcentcolon=\frac{|a|}{M}\ll 1$, with $E_{\operatorname{init}}^s<\infty$ for sufficiently high $s$,
  \begin{enumerate}
  \item the first transversal derivative of $\psi$ does not decay along $\EventHorizon$, i.e. there exists a constant $c$ depending on $M$ and $a$ such that
    \begin{align*}
      \sup_{\Sigma(\tau) \cap \EventHorizon} |\partial_{\rhoH}\psi| \geq c \big|H_{0}^{\text{eKN}}[\psi]\big|,
    \end{align*}
  \item there is a second order derivative of $\psi$ which blows up
    asymptotically along $\EventHorizon$.
  \end{enumerate}
\end{corollary}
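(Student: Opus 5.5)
We plan to combine the conserved charge $H_{0}^{\text{eKN}}[\psi]$ from \eqref{cons-eKN} with the decay of $\psi$ along $\EventHorizon$ given by \zcref{cor:pointwise-decay}, and then differentiate the wave equation \eqref{wave-iEF} once transversally to obtain growth.

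\emph{Part 1.} Evaluating \eqref{cons-eKN} at time $\tau$ on the sphere $\Sigma(\tau)\cap\EventHorizon$ gives
\[
2(M^2+a^2)\int \pr_{\rhoH}\psi\, d\mathring{\gamma} = H_0^{\text{eKN}}[\psi] - 2M\int\psi\, d\mathring{\gamma} - a^2\int \sin^2\th\, \pr_v\psi\, d\mathring{\gamma}.
\]
By \zcref{cor:pointwise-decay}, $\sup_{\EventHorizon\cap\Sigma(\tau)}|\psi|\lesssim \tau^{-\frac{1-\de-\de_1}{2}}$, so the second term on the right tends to zero. We handle the term $\pr_v\psi$ the same way. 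Since $\pr_v$ is Killing, it commutes with $\Box_\g$, so \zcref{cor:pointwise-decay} applied to $\pr_v\psi$ (with $E_{\operatorname{init}}$ at one higher order, which is why $s$ must be large) gives that it also decays on $\EventHorizon$. For $\tau\ge\tau_0(\psi)$ we then have $4\pi\cdot 2(M^2+a^2)\sup|\pr_{\rhoH}\psi|\ge \tfrac12|H_0^{\text{eKN}}|$. For $\tau$ below $\tau_0$ the statement as written is only asymptotic: we read it as a $\limsup$ bound, or we absorb the remaining compact range by taking the constant $c$ small; the paper presumably intends the former. This yields item 1 with $c\sim (16\pi(M^2+a^2))^{-1}$.

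\emph{Part 2.} We apply $\pr_{\rhoH}$ to \eqref{eq:box-for-conservation-law} and restrict to $\EventHorizon$. Every term coming from $(\rhoH\pr_{\rhoH})^2\psi+\rhoH\pr_{\rhoH}\psi$ carries a factor $\rhoH$ that survives one derivative, so on $\rhoH=0$ these contribute $\pr_{\rhoH}\psi$-type terms without vanishing coefficients. Computing $\pr_{\rhoH}$ of $(\rhoH\pr_{\rhoH})^2\psi+\rhoH\pr_{\rhoH}\psi$ at $\rhoH=0$ gives $2\pr_{\rhoH}\psi$ plus $\rhoH$-terms. We also need the commutator $[\pr_{\rhoH},\That]$, which involves $\pr_r\big(\frac{a}{r^2+a^2}\big)\pr_\phi$, and the radial derivatives of the coefficients $r^2+a^2$, $2r$. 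The result is an identity of the form
\[
\That\Big(2(M^2+a^2)\pr_{\rhoH}^2\psi + c_1\pr_{\rhoH}\psi + a^2\sin^2\th\,\pr_v\pr_{\rhoH}\psi+\dots\Big) = -2\pr_{\rhoH}\psi + \lapp_{\SSS^2}\pr_{\rhoH}\psi + \pr_{\phiIEF}(\cdots) + O(\pr_v\psi,\psi,\pr_\phi\psi).
\]
We integrate over the sphere. The Laplacian term drops out, $\pr_\phi$-derivatives drop out, and so do the $a\pr_\phi$ components of $\That$, since $\That$ restricted to the horizon is $\pr_v+\frac{a}{M^2+a^2}\pr_\phi$. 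We then integrate in $\tau$. The right-hand side has spherical mean $-2\int\pr_{\rhoH}\psi$ plus terms of order $\psi,\pr_v\psi$. By Part 1 together with \eqref{cons-eKN}, $\int\pr_{\rhoH}\psi\to H_0^{\text{eKN}}/(2(M^2+a^2))\neq0$, while the $\psi$ and $\pr_v\psi$ terms are $O(\tau^{-(1-\de-\de_1)/2})$, hence integrate to $o(\tau)$; this uses $\de+\de_1<1$. Consequently the spherical mean of $\pr^2_{\rhoH}\psi$ grows like $c\,H_0^{\text{eKN}}\,\tau$, so $\sup|\pr_{\rhoH}^2\psi|\to\infty$.

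The main obstacle is this bookkeeping of the $\pr_{\rhoH}$-commuted identity. We must check that, after spherical averaging, no undifferentiated $\pr_{\rhoH}$ or $\pr_{\rhoH}^2\psi$ term with indefinite coefficient sits outside the $\That$-derivative, and that the coefficient of $\int\pr_{\rhoH}\psi$ on the right is nonzero.
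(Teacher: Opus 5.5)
Your route is the paper's: the conservation law plus horizon decay of $\psi$ and $T\psi$ for item 1, and the $\pr_{\rhoH}$-commuted equation integrated over $S_\tau=\Sigma(\tau)\cap\EventHorizon$ and in $\tau$ (this is \eqref{blowup-eKN}) for item 2. However, the last step of Part 2 does not follow as written.

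\textbf{The gap in Part 2.} The bracket under $\That$ also contains $a^2\sin^2\th\,\pr_v\pr_{\rhoH}\psi$. After integrating in $\tau$, the quantity that grows linearly is therefore $\int_{S_\tau}\big(\tfrac12 a^2\sin^2\th\,\pr_v\pr_{\rhoH}\psi+(M^2+a^2)\pr_{\rhoH}^2\psi\big)$; the $\pr_{\rhoH}\psi$ and $\psi$ terms are bounded by Part 1. To isolate the spherical mean of $\pr_{\rhoH}^2\psi$ you would need $\int_{S_\tau}\sin^2\th\,\pr_v\pr_{\rhoH}\psi=o(\tau)$, and nothing in your argument gives this:
\begin{itemize}
\item Part 1 applied to $T\psi$ only shows that the \emph{unweighted} mean of $\pr_{\rhoH}T\psi$ tends to $0$, because $H_0^{\mathrm{eKN}}[T\psi]=\frac{d}{d\tau}H_0^{\mathrm{eKN}}[\psi]=0$.
\item The conservation law says nothing about the $\sin^2\th$-weighted mean.
\item The paper's energies carry weights $\rhoH^{\alpha+1}$ with $\alpha+1>0$, so they give no pointwise control of transversal derivatives at $\rhoH=0$.
\end{itemize}
Your ``main obstacle'' paragraph looks for bad terms outside the $\That$-derivative, but this one sits inside it.

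What your identity does give is the paper's conclusion: at least one of $\pr_v\pr_{\rhoH}\psi$, $\pr_{\rhoH}^2\psi$ is unbounded along $\EventHorizon$. That is exactly item 2. If you want $\pr_{\rhoH}^2\psi$ itself, you need an extra transport step. Restricting \eqref{wave-iEF} to $\EventHorizon$ and applying it to $T\psi$ gives $2\HawkingHorizon\pr_{\rhoH}(T\psi)=-\big(2M\pr_v+\lapp_{\SSS^2}+a^2\sin^2\th\,\pr_v^2+2a\pr_v\pr_{\phiIEF}\big)T\psi$. Integrate this along the generators. With pointwise horizon decay of these tangential derivatives of $T\psi$ (a more highly commuted version of the decay corollary, including angular commutators), this yields $|\pr_{\rhoH}T\psi|\lesssim\tau^{(1+\de+\de_1)/2}=o(\tau)$.

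\textbf{A remark on Part 1.} Only your asymptotic reading is tenable. Shrinking $c$ cannot cover finite times, because $c$ may depend only on $M,a$ and the bound can fail outright at a given time. For example, data with $\psi=1$ and $d\psi=0$ on $S_{\tau_1}$ have $\sup_{S_{\tau_1}}|\pr_{\rhoH}\psi|=0$ but $H_0^{\mathrm{eKN}}[\psi]=8\pi M$. Your argument gives the bound for all $\tau\ge\tau_0(\psi)$, i.e.\ a $\liminf$ statement, which is also all the paper's proof establishes.
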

\begin{proof}
  We differentiate \zcref{eq:box-for-conservation-law} with respect to $\partial_{\rhoH}$ and evaluate at the event horizon:
  \begin{align*}
    \pr_{\rhoH}(|q|^2\Box_\g \psi)\Bigg|_{\HH^+}
    ={}& \pr_v \Big(a^2\sin^2\th (\pr_v\pr_{\rhoH}\psi)+ 2(r^2+a^2)(\pr_{\rhoH}^2\psi) + 6 r (\pr_{\rhoH}\psi) + 2\psi \Big)\Big|_{\HH^+}\\
       &+\pr_{\phiIEF}\Big(2a (\pr_v\pr_{\rhoH}\psi)+ 2a (\pr_{\rhoH}^2\psi) \Big) \Big|_{\HH^+}+\Big(\lapp_{(\theta,\phiIEF)}+2\Big)(\partial_{\rhoH}\psi)\Big|_{\HH^+}\\
    =&\That\Big(a^2\sin^2\th (\pr_v \pr_{\rhoH}\psi)+ 2(M^2+a^2)(\pr_{\rhoH}^2\psi) + 6 M (\pr_{\rhoH}\psi) + 2 \psi \Big)\Bigg|_{\HH^+}\\
       &+\pr_{\phiIEF}\Big(\frac{2M^2+2a^2-a^2\sin^2\th}{M^2+a^2} a  (\pr_v \pr_{\rhoH}\psi) -  \frac{6aM}{M^2+a^2}  (\pr_{\rhoH}\psi) -  \frac{2a}{M^2+a^2}  \psi \Big)\Bigg|_{\HH^+}\\
       &+\Big(\lapp_{\SSS^2}+2\Big)(\pr_{\rhoH}\psi)\Bigg|_{\HH^+}.
  \end{align*}
  Integrating the above on $S_\tau=\Sigma_{\tau}\cap \HH$ and in $\tau$, we see that 
  \begin{equation}
    \int^\tau_0 \int_{S_{\tau'}}\That \Big(\frac{1}{2} a^2\sin^2\th (\pr_v \pr_{\rhoH}\psi)+ (M^2+a^2)(\pr_{\rhoH}^2\psi) + 3 M (\pr_{\rhoH}\psi) +  \psi \Big)d\si d\tau' + \int_0^\tau\int_{S_{\tau'}} (\pr_{\rhoH}\psi) d\si d\tau'=0.
    \label{blowup-eKN}
  \end{equation}

  Suppose now that $H_0^{eKN}[\psi](0)\neq 0$. From \zcref{cor:pointwise-decay} we have that $\psi$ decay along $\HH$.
  Since $\partial_v$ and $\partial_{\phiIEF}$ commute with $\Box_g$, the decay statement of \zcref{cor:pointwise-decay} may be applied, with higher-order initial energy, to $\partial_v\psi$ and $\partial_{\phiIEF}\psi$, hence both $\partial_v\psi$ and $\That\psi$ decay along $\HH$.
  Then from the fact that $H_0^{eKN}[\psi]$ is conserved along $\EventHorizon$, we deduce from \zcref{cons-eKN} 
  \begin{equation*}
    \int_{S_\tau} (\pr_{\rhoH}\psi) d\si \rightarrow \frac{1}{2(M^2+a^2)}H_0^{eKN}[\psi](0)\neq 0, \quad \text{as}\ \tau\rightarrow \infty.
  \end{equation*}
  Finally, \zcref{blowup-eKN} then implies 
  \begin{equation*}
    \int^\tau_0 \That \int_{S_{\tau'}}\Big(\frac{1}{2} a^2\sin^2\th (\pr_v \pr_{\rhoH}\psi)+ (M^2+a^2)(\pr_{\rhoH}^2\psi) \Big)d\si d\tau' \rightarrow \infty,\quad \text{as}\ \tau\rightarrow \infty.
  \end{equation*}
  By fundamental theorem of calculus, there is a second order derivative of $\psi$ which blows up along $\HH$, as stated.
\end{proof}

\section{\texorpdfstring{$\HH^+$- and $\II^+$- w}{W}eighted hierarchies}
\label{sec:hierarchy}

In this section we prove weighted estimates near the two asymptotic
boundaries of the exterior region. In \zcref{sec:hierarchy-HH} we prove
a horizon hierarchy with weights in powers of $\rhoH$, corresponding to
the familiar $(r-M)^{-p}$-weighted hierarchy near the event horizon introduced in \cite{angelopoulosLatetimeAsymptoticsWave2020}.
In \zcref{sec:hierarchy-infinity} we prove the analogous hierarchy near
null infinity, corresponding to the $r^p$-weighted estimates introduced in \cite{dafermosNewPhysicalSpaceApproach2010}. The parameters $\alpha$ and $\beta$ below are related
to the traditional $p$-weights by $p=1-\alpha$ at the horizon and
$p=3-\beta$ at null infinity.

\subsection{The horizon hierarchy}\label{sec:hierarchy-HH}

In this section, we use ingoing Eddington-Finkelstein coordinates
$(v,r, \th,\phiIEF)$ near the event horizon.  We illustrate the main
domain of integration below, for some
$\rhoH \leq \rho_0 \leq \rho_\star$. 
\begin{figure}[ht]
  \centering
  \begin{tikzpicture}[scale=0.7,every node/.style={scale=0.7}]


  \def \s{3} 
  \def \exts{0.2} 
  \def \t{0.4}
  \def \Tlen{.5}
  \def \lenNull{0.1} 
  \def \lenDomain{0.2} 

  \coordinate (tInf) at (0,\s); 
  \coordinate (EventZero) at (-\s,0); 
  \coordinate (CosmoZero) at (\s,0); 
  \coordinate (tNegInf) at (0,-\s);
  \coordinate (SigmaZeroEvent) at (-\s + 0.15*\s, 0.15*\s);
  \coordinate (SigmaTEvent) at ($(SigmaZeroEvent) + (\lenDomain*\s, \lenDomain*\s)$);
  \coordinate (SigmaZeroCosmo) at (\s - 0.15*\s, 0.15*\s);
  \coordinate (SigmaTCosmo) at ($(SigmaZeroCosmo) + (-\lenDomain*\s, \lenDomain*\s)$);

  \draw[shorten >= -10,name path=EventFuture] (tInf) --
  node[pos=0.5,left]{$\EventHorizonFuture$} (EventZero) ;  
  \draw[shorten >= -10,name path=CosmoFuture,dashed] (tInf) --
  node[inner sep=10pt,scale=1.0,pos=0.5,right]{$\mathcal{I}^{+}$} (CosmoZero) ; 

  

  \draw[dashed] (tInf)    -- (-\s - \exts*\s,-\exts*\s);
  \draw[dashed] (tInf)    -- ( \s + \exts*\s,-\exts*\s);


  \path[name path=EventLowerBound] (-1.6*\s, 0.6*\s)-- (-\s, 0)--
  (tInf);
  \path[name path=CosmoLowerBound] (tInf) -- (\s, 0) -- (1.6*\s, 0.6*\s);


  \coordinate (SigmaZeroH) at ($(SigmaZeroEvent) + (4*\lenNull*\s, -2*\lenNull*\s)$);
  \coordinate (SigmaTH) at ($(SigmaTEvent) + (2*\lenNull*\s, -\lenNull*\s)$);
  \coordinate (SigmaZeroI) at ($(SigmaZeroCosmo) + (-4*\lenNull*\s, -2*\lenNull*\s)$);
  \coordinate (SigmaTI) at ($(SigmaTCosmo) + (-2*\lenNull*\s, -  \lenNull*\s)$);
  \def \bottombend{15}
  \def \topbend{10}
  \path[fill=blue, fill opacity=0.1, draw=blue, thick]
  (SigmaZeroEvent) to[bend right=\bottombend] (SigmaZeroH) to [bend left=10]  (SigmaTH) to[bend left=\topbend] (SigmaTEvent) -- cycle;
   \path[fill=black, fill opacity=0.1, draw=black!30, thin, dashed]
   (SigmaTCosmo) to[out=-135,in=15] (SigmaTI) to [bend right = 20] (SigmaTH) to [bend right=10]
   (SigmaZeroH) to [bend left = 10] (SigmaZeroI) to[in=-135,out=20] (SigmaZeroCosmo) -- cycle;


  \node[scale=0.5,fill=white,draw,circle,label=above:$i^+$]at(tInf){};

  \node[label=left:${r=M}$]at(SigmaZeroEvent){};
  \node[label=right:${r=\infty}$]at(SigmaZeroCosmo){};

\end{tikzpicture}

\caption{Schematic representation of the near-horizon region
$\Manifold_{\rhoH \leq \rho_0}(\tau_1,\tau_2)$, highlighted in blue. The region is bounded by
$\Sigma(\tau_1)$, $\Sigma(\tau_2)$, the event horizon, and the hypersurface
$\{\rhoH=\rho_0\}$.}
\end{figure}

The main horizon-weighted estimate is the following.
\begin{proposition}\label{prop:horizon-weighted-estimate}
  Let $\psi$ solve the inhomogeneous wave equation 
  \[\Box_\g \psi = F\]
  in an
  extremal Kerr--Newman spacetime with
  $\mathbf{a}\vcentcolon=\frac{|a|}{M}\ <\frac{1}{2}$. For
  \[\a \in \big(-1+4\mathbf{a}^2, 1-4\mathbf{a}^2\big)\] 
  the
  following $\EventHorizon$-weighted hierarchy holds true for $s \in \mathbb{N}$, $s \geq 1$ and any $\tau_1<\tau_2$:                   
  \begin{equation}
    \label{eq:horizon-weighted-estimate}
    \begin{split}
      &\norm*{\psi}_{H_{\EventHorizon}^{s, -\frac{\alpha+1}{2}}(\Sigma_{\EventHorizon}(\tau_2))}
        + \norm*{\psi}_{\bSobolevH{s, - \frac{\alpha+1}{2}}(\EventHorizon(\tau_1,\tau_2))}
        +\norm*{\psi}_{\bSobolevH{s,\,-\frac{\alpha}{2}}(\Manifold_{\EventHorizon}(\tau_1,\tau_2))}
      \\
      &\les{}
        \norm*{\psi}_{H_{\EventHorizon}^{s, -\frac{\alpha+1}{2}}(\Sigma_{\rhoH\leq 2\rho_0}(\tau_1))}
        + \norm*{F}_{\bSobolevH{s-1,-\frac{\alpha}{2}}(\Manifold_{\rhoH \leq 2\rho_0}(\tau_1,\tau_2))}
        + \rho_0^{-\frac 12}\norm*{\psi}_{\compactSobolev{s}(\Manifold_{ \rhoH \leq 2\rho_0}(\tau_1,\tau_2))}
    \end{split}
  \end{equation}
  where $\Sigma_{\HH}(\tau)$ and $\MM_{\HH}(\tau_1, \tau_2)$ denote the regions where $\rhoH \leq \rho_0$ for $\rho_0 \ll r_{+}$ sufficiently small.
\end{proposition}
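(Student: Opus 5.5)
The plan is to prove the case $s=1$ by a vectorfield multiplier argument localized near $\EventHorizon$, and then obtain $s\ge 2$ by commutation. For $s=1$ I would use the multiplier triplet
\[
X=\chi_{\rho_0}\big(-c_1\rhoH^{-\alpha}\partial_{\rhoH} + c_2\rhoH^{-\alpha-1}\HawkingHorizon\big),
\]
with suitable positive constants $c_1,c_2$. Here $\chi_{\rho_0}$ is the cutoff of \zcref{current-cutoff}, and $-\rhoH^{-\alpha}\partial_{\rhoH}=-\rhoH^{-\alpha-1}(\rhoH\partial_{\rhoH})$ is the $(r-M)^{-p}\partial_r$ multiplier with $p=1+\alpha$, up to normalization. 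The triplet is supplemented by a Lagrangian correction $w=c_3\rhoH^{-\alpha}$ and a one-form $J$ built from the Hardy current of \zcref{lemma:hardy-pointwise-rhoH}.

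The bulk is computed from \zcref{prop:fprr-HH} with $f=-c_1\rhoH^{-\alpha}$ and $h=c_2\rhoH^{-\alpha-1}$, using $\Delta=\rhoH^2$ and $\Delta'=2\rhoH$. The radial part gives
\[
\tfrac12(\Delta f'-f\Delta')|\partial_{\rhoH}\psi|^2=\tfrac{c_1}{2}(\alpha+2)\rhoH^{-\alpha-1}\cdot\rhoH^{-1}|\rhoH\partial_{\rhoH}\psi|^2
\]
in the appropriate weight. The term $-\tfrac12 f'|\NablaAngular\psi|^2$ has a sign proportional to $\alpha$, so it must be supplemented: I would use $h'|\HawkingHorizon\psi|^2$ together with the $w$-term $w\,\partial\psi\cdot\partial\psi$. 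The latter produces $|\NablaAngular\psi|^2$ and $|\rhoH\partial_{\rhoH}\psi|^2$ with coefficient $\rhoH^{-\alpha}$, at the cost of the cross term $\HawkingHorizon\psi\,\partial_{\rhoH}\psi$. The cross terms $-2rf\,\partial_{\rhoH}\psi\,\partial_v\psi$ and $2hr\,\HawkingHorizon\psi\,\partial_v\psi$ are then rewritten using $\partial_v=|q|^{-2}(\HawkingHorizon-a\sin\theta\renormpphi)$.

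The resulting quadratic form in $(\rhoH\partial_{\rhoH}\psi,\HawkingHorizon\psi,\renormpphi\psi,\partial_\theta\psi,\psi)$, with weight $\rhoH^{-\alpha}$, should be positive definite up to divergence terms. The zeroth-order contributions $-\tfrac12\Box w$ and $\operatorname{div}J$ are handled via the Hardy inequality, which supplies $\tfrac{(\alpha+1)^2}{4}$-type constants.

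\textbf{Main obstacle.} The main obstacle is the cross term coming from $2a\,\partial_{\phiIEF}\partial_{\rhoH}$ in \zcref{eq:inverse-iEF-rho}. It produces $a\,\rhoH^{-\alpha}\,(\rhoH\partial_{\rhoH}\psi)(\renormpphi\psi)$ at the same weight as the positive terms, with no smallness from $\rhoH$. I would absorb it by Cauchy--Schwarz against the $|\rhoH\partial_{\rhoH}\psi|^2$ and $|\renormpphi\psi|^2$ coefficients. Optimizing the free constants, the discriminant condition should reduce to something like $(1-\alpha)(1+\alpha)>4\mathbf a^2\cdot(\text{const})$, hence $|\alpha|<1-4\mathbf a^2$. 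Here the Hardy constant $(\alpha+1)^2/4$ controls the endpoint $\alpha\to-1$, and the sign of the angular term controls the endpoint $\alpha\to 1$. I expect this algebra to be where the hypothesis $\mathbf a<\tfrac12$ enters, and checking that the positivity holds uniformly on $\{\rhoH\le\rho_0\}$ (the $a^2\sin^2\theta\,\partial_v^2$ term being harmless) is the delicate step.

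\textbf{Boundary terms and conclusion.} Next I would apply \zcref{eq:general-divergence-theorem} on $\Manifold_{\rhoH\le 2\rho_0}(\tau_1,\tau_2)$. \zcref{lemma:boundary-terms-ieF} shows that the $\Sigma$-flux controls $\rhoH^{\alpha+1}$-weighted $|\partial_{\rhoH}\psi|^2+|\NablaAngular\psi|^2+|\HawkingHorizon\psi|^2$, which is the norm $H^{1,-\frac{\alpha+1}{2}}_{\EventHorizon}$, and that the horizon flux is nonnegative and gives the $\bSobolevH{1,-\frac{\alpha+1}{2}}$ term; the sign is corrected by choosing $f<0$. The Hardy boundary limits are treated as in \zcref{corollary-hardy-dejan-cutoff}. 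The cutoff errors on $\{\rho_0\le\rhoH\le 2\rho_0\}$ are $O(\rho_0^{-1})\,\rho_0^{-\alpha}|\partial\psi|^2$, which gives the $\rho_0^{-1/2}\|\psi\|_{H^1_c}$ term. The inhomogeneity $(X\psi+w\psi)F$ is bounded by $\epsilon\|\psi\|^2_{\bSobolevH{1,-\alpha/2}}+C\|F\|^2_{\bSobolevH{0,-\alpha/2}}$.

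For $s\ge2$, I would commute with $\partial_v$, with $\rhoH\partial_{\rhoH}$, and with $\RotationVF_i^{\mathrm{iEF}}$ using \zcref{lem:weighted-hierarchy-commutators}. The commutator with $\rhoH\partial_{\rhoH}$ reproduces $|q|^2\Box_\Metric$ plus terms already in the weighted $\bDiffH^2$ norm. The rotation commutators produce $2a\,\partial_{\rhoH}\RotationVF_j$, which I would control by induction on the number of rotations together with smallness and the structure noted above, or alternatively absorb by the same Cauchy--Schwarz at top order. Finally, the elliptic-type term $(\rhoH\partial_{\rhoH})^2$ is recovered from the equation.
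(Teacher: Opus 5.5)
Your overall architecture matches the paper's: a multiplier combining $-\partial_{\rhoH}$ with a multiple of $\HawkingHorizon$, a Hardy current, a Sylvester condition giving $|\alpha|<1-4\mathbf a^2$, cutoff plus divergence theorem, then commutation. However, the central positivity step fails with the exponents you chose.

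\textbf{The $s=1$ multiplier.} Your two pieces carry different powers, $f=-c_1\rhoH^{-\alpha}$ and $h=c_2\rhoH^{-\alpha-1}$. By \zcref{prop:fprr-HH} the bulk then contains
$h'|\HawkingHorizon\psi|^2=-c_2(\alpha+1)\rhoH^{-\alpha-2}|\HawkingHorizon\psi|^2$.
This term is negative for every $\alpha$ in the stated range and one power of $\rhoH$ more singular than every other bulk term. No Cauchy--Schwarz or Hardy argument can absorb a negative diagonal term of this kind. Independently of this, your weights do not match the target norms:
\begin{itemize}
\item the radial bulk sits at $\rhoH^{-\alpha-1}$, whereas $\bSobolevH{1,-\frac{\alpha}{2}}$ requires $\rhoH^{\alpha}$;
\item the $\partial_{\rhoH}$-part of the $\Sigma$-flux sits at $\rhoH^{-\alpha}$, whereas $H^{1,-\frac{\alpha+1}{2}}_{\EventHorizon}$ requires $\rhoH^{\alpha+1}$;
\item the horizon flux $|q|^{-2}h|\HawkingHorizon\psi|^2$ would diverge at $\rhoH=0$.
\end{itemize}

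The fix is a single common power: $X_{\alpha,C}=\rhoH^{\alpha+1}(-\partial_{\rhoH}+Cr_+^{-2}\HawkingHorizon)$. The diagonal coefficients of $\rhoH^{\alpha}|\rhoH\partial_{\rhoH}\psi|^2$, $\rhoH^{\alpha}|\NablaAngular\psi|^2$ and $\rhoH^{\alpha}|\HawkingHorizon\psi|^2$ are then $\frac{1-\alpha}{2}$, $\frac{1+\alpha}{2}$ and $(1+\alpha)Cr_+^{-2}$. These are all positive for $|\alpha|<1$, so no Lagrangian correction is needed. Your $w$ would in fact add the indefinite term $2w|q|^{-2}\HawkingHorizon\psi\,\partial_{\rhoH}\psi$. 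Note also that the endpoint $\alpha\to 1$ is governed by the radial coefficient and $\alpha\to-1$ by the angular one, the reverse of what you state.

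The only non-perturbative cross term is $-\frac{2ra\sin\theta}{|q|^2}\rhoH^{\alpha}\,\renormpphi\psi\,\rhoH\partial_{\rhoH}\psi$. It arises from $-2rf\,\partial_{\rhoH}\psi\,\partial_v\psi$ once you write $\partial_v=|q|^{-2}(\HawkingHorizon-a\sin\theta\,\renormpphi)$. Using $r/|q|^2\le 1/M$, Sylvester's criterion reduces to $\frac{(1-\alpha)(1+\alpha)}{4}>\mathbf a^2$, i.e.\ $|\alpha|<\sqrt{1-4\mathbf a^2}$, which contains the stated interval. Choosing $C>\frac{2}{(1-\alpha)(1+\alpha)-4\mathbf a^2}$ then controls $\frac{2r}{|q|^2}\rhoH^{\alpha}\HawkingHorizon\psi\,\rhoH\partial_{\rhoH}\psi$.

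\textbf{The rotation commutators for $s\ge 2$.} These cannot be handled by Cauchy--Schwarz, smallness of $a$, or induction on the number of rotations. Take the $-\rhoH^{\alpha+1}\partial_{\rhoH}$ part of $X_{\alpha,C}(\RotationVF_1\psi)$ and pair it with the term $-2a\partial_{\rhoH}\RotationVF_2\psi$ of $[|q|^2\Box_\Metric,\RotationVF_1]\psi$ from \zcref{lem:weighted-hierarchy-commutators}. This produces $2a\rhoH^{\alpha-1}(\rhoH\partial_{\rhoH}\RotationVF_1\psi)(\rhoH\partial_{\rhoH}\RotationVF_2\psi)$. It is top order and one power of $\rhoH$ worse than the bulk $\rhoH^{\alpha}|\rhoH\partial_{\rhoH}\RotationVF_i\psi|^2$, so it cannot be absorbed for any $a\neq 0$.

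The missing idea is to add the commuted identities for $\RotationVF_1\psi$ and $\RotationVF_2\psi$. Since $[|q|^2\Box_\Metric,\RotationVF_2]$ contains $+2a\partial_{\rhoH}\RotationVF_1$, the two bad terms cancel exactly. What remains has weight $\rhoH^{\alpha}$ (or better) and carries a factor $T\RotationVF_i\psi$, $\Phi\RotationVF_i\psi$ or $T^2\psi$. These are controlled only after you have first commuted with the Killing fields $T$ and $\Phi$.

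Angular ellipticity on $\mathbb{S}^2$ then gives $\NablaAngular^2\psi$. Only after both steps does the $\rhoH\partial_{\rhoH}$-commuted estimate close, because its error terms involve $\NablaAngular^2\psi$, $T^2\psi$ and $T\Phi\psi$. The order of commutation --- Killing fields first, then rotations in pairs, then $\rhoH\partial_{\rhoH}$ --- is therefore essential.
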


The proof of \zcref{prop:horizon-weighted-estimate} is obtained as follows.
We first compute\footnote{This general computation will also be used for the degenerate redshift estimate in Section \ref{sec:deg-redshift}.} in \zcref{lemma:horizon:main-bulk} the divergence and boundary terms of the current associated to the vectorfield 
\[X_{\alpha, C} \vcentcolon=\rhoH^{\alpha+1}\big(-\partial_{\rhoH} + C r_+^{-2}\HawkingHorizon\big), \qquad C>0.\]
The proof of \zcref{prop:horizon-weighted-estimate} for $s=1$ is obtained by combining the above multiplier with a Hardy current which adds control of the zeroth-order
term. By applying higher order commutators, we obtain the proof of \zcref{prop:horizon-weighted-estimate} for $s\geq 2$.

\begin{remark}
  The horizon-weighted estimates appear in previous works as the $(r-M)^{-p}$-weighted hierarchy in the case of extremal Reissner-Nordstr\"om \cite{angelopoulosLatetimeAsymptoticsWave2020, angelopoulosNonlinearScalarPerturbations2020, gajicChargedScalarFields2026, angelopoulosSemilinearWaveEquations2025} for $p \in (0,2)$, and in the case of $m$-modes in extremal Kerr  \cite{gajicAzimuthalInstabilitiesExtremal2023}. 
  In our notation, 
  \[p=1-\alpha\]
  resulting in the range for p in \zcref{prop:horizon-weighted-estimate} of
  \[p \in \big(4\mathbf{a}^2, 2-4\mathbf{a}^2\big).\] 
  For $\mathbf{a}=0$, i.e. in extremal Reissner-Nordstr\"om, we recover the known range of $p \in (0, 2)$.

  Writing explicitly in terms of $p$, the above corresponds to the familiar multiplier $X=(r-M)^{2-p}\partial_r$ and we control in iEF coordinates
  \begin{align*}
    \int_{\MM_{\EventHorizon}(\tau_1, \tau_2)} (r-M)^{3-p} | \partial_r \psi|^2 +(r-M)^{1-p}\big( |\partial_v \psi|^2 + |\NablaAngular\psi|^2 +|\psi|^2 \big)  \les \text{RHS of \eqref{eq:horizon-weighted-estimate}}.
  \end{align*}
\end{remark}

\subsubsection{The horizon multiplier}

We first collect the following general computation. 
\begin{lemma}
  \label{lemma:horizon:main-bulk}
  \label{lemma:horizon-weighted-estimates:boundary-terms} 
  The current associated to the vectorfield
  \begin{equation*}
    X_{\alpha, C} \vcentcolon=\rhoH^{\alpha+1}\big(-\partial_{\rhoH} + C r_+^{-2}\HawkingHorizon\big), \qquad C>0
  \end{equation*}
  satisfies the following identity
  \begin{align}\label{eq:QQ-X-a-C-general}
    \begin{split}
      \abs*{q}^2\QQ^{(X_{\alpha, C},0,0)}[\psi]={}&\frac{1-\a}{2}\rhoH^\a |\rhoH\pr_{\rhoH}\psi|^2 +\frac{1+\a}{2}\rhoH^\a |\NablaAngular \psi|^2 +(1+\a)C r_+^{-2}\rhoH^\a (\HawkingHorizon\psi)^2\\
   &+2r\rhoH^{\alpha} (1+C r_+^{-2}\rhoH^2)\,\pr_v\psi\,\rhoH\pr_{\rhoH}\psi  +2C r_+^{-2} r\rhoH^{\alpha+1} (\HawkingHorizon\psi)\pr_v\psi  \\
          &+ C r_+^{-2}(1+\a)\rhoH^{\alpha+1}(\HawkingHorizon\psi)\rhoH\pr_{\rhoH}\psi  
    \end{split}
  \end{align}
  and
  \begin{align}\label{eq-boundary-terms-hierarchy-H}
    \begin{split}
      \JCurrent{(X_{\alpha, C},0,0)}[\psi]\cdot N_{\Sigma_{\EventHorizon}(\tau)}
      \ges{} & \rhoH^{\alpha + 1} \big(|\pr_{\rhoH}\psi|^2+ a^2     |\HawkingHorizon\psi|^2+|\NablaAngular \psi|^2\big), \\
      \JCurrent{(X_{\alpha, C},0,0)}[\psi]\cdot (- N_{\EventHorizon})
      \ges{}&\rhoH^{\alpha + 1} \big(|\rhoH\pr_{\rhoH}\psi|^2+      |\HawkingHorizon\psi|^2+|\NablaAngular \psi|^2\big)
              .   
    \end{split}
  \end{align}
  
\end{lemma}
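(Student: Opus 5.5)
The plan is to split $X_{\alpha,C}$ into its two pieces, $X_{(1)}=f(r)\pr_r^{\mathrm{iEF}}$ with $f=-\rhoH^{\alpha+1}$ and $X_{(2)}=h(r)\HawkingHorizon$ with $h=Cr_+^{-2}\rhoH^{\alpha+1}$. Since $\QQ$ and the fluxes are linear in the multiplier, both the bulk identity \zcref{eq:QQ-X-a-C-general} and the boundary terms follow by adding the formulas of \zcref{prop:fprr-HH} and \zcref{lemma:boundary-terms-ieF}, evaluated at these $f$ and $h$.

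\textbf{Bulk identity.} For $X_{(1)}$ we have $\Delta=\rhoH^2$ and $\Delta'=2\rhoH$, so
\[
  \tfrac12(\Delta f'-f\Delta')=\tfrac12\bigl(-(\alpha+1)+2\bigr)\rhoH^{\alpha+2}=\tfrac{1-\alpha}{2}\rhoH^{\alpha}\abs{\rhoH\pr_{\rhoH}\psi}^2 .
\]
The term $-\frac12 f'|\NablaAngular\psi|^2$ gives $\frac{1+\alpha}{2}\rhoH^\alpha|\NablaAngular\psi|^2$, and $-2rf\,\pr_r\psi\,\pr_v\psi$ gives $2r\rhoH^{\alpha}\pr_v\psi\,\rhoH\pr_{\rhoH}\psi$.

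For $X_{(2)}$ we have $h'=(1+\alpha)Cr_+^{-2}\rhoH^\alpha$. This produces the following terms:
\begin{itemize}
  \item $(1+\alpha)Cr_+^{-2}\rhoH^\alpha(\HawkingHorizon\psi)^2$;
  \item $2hr(\HawkingHorizon\psi)\pr_v\psi=2Cr_+^{-2}r\rhoH^{\alpha+1}(\HawkingHorizon\psi)\pr_v\psi$;
  \item $\Delta h'(\HawkingHorizon\psi)\pr_{\rhoH}\psi=Cr_+^{-2}(1+\alpha)\rhoH^{\alpha+1}(\HawkingHorizon\psi)\rhoH\pr_{\rhoH}\psi$;
  \item $2hr\Delta\,\pr_v\psi\,\pr_{\rhoH}\psi=2Cr_+^{-2}r\rhoH^{\alpha+2}\pr_v\psi\,\rhoH\pr_{\rhoH}\psi$.
\end{itemize}
The last of these combines with the $X_{(1)}$ cross term into $2r\rhoH^\alpha(1+Cr_+^{-2}\rhoH^2)\pr_v\psi\,\rhoH\pr_{\rhoH}\psi$. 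This reproduces \zcref{eq:QQ-X-a-C-general} exactly, so the bulk part is pure bookkeeping.

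\textbf{Horizon flux.} By \zcref{lemma:boundary-terms-ieF} with $f=-\rhoH^{\alpha+1}$, the $X_{(1)}$ contribution is
\[
  \tfrac12|q|^{-2}\rhoH^{\alpha+1}\bigl(|\rhoH\pr_{\rhoH}\psi|^2+|\NablaAngular\psi|^2\bigr)\ge0 .
\]
The $X_{(2)}$ contribution is $h|q|^{-2}$ times a sum of squares containing $\frac12|\HawkingHorizon\psi|^2$. Adding the two gives the second line of \eqref{eq-boundary-terms-hierarchy-H}, since every term is nonnegative. (On $\EventHorizon$ itself $\rhoH=0$, so the estimate is to be read with the weights as written, or as a limit.)

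\textbf{$\Sigma_{\EventHorizon}$ flux.} This is the only step that needs care, because of the indefinite cross terms $-a\sin\theta\,(\pr_{\rhoH}\psi)(\renormpphi\psi)$ from $X_{(1)}$ and $-a\sin\theta\,(\HawkingHorizon\psi)(\renormpphi\psi)$ from $X_{(2)}$. With $f<0$, the $X_{(1)}$ flux is
\[
  |q|^{-2}\rhoH^{\alpha+1}\Bigl[\bigl(r^2+a^2-\tfrac12\Delta h_{\HH}'\bigr)|\pr_{\rhoH}\psi|^2+\tfrac12h_{\HH}'|\NablaAngular\psi|^2+a\sin\theta\,\pr_{\rhoH}\psi\,\renormpphi\psi\Bigr].
\]
Here $h_{\HH}'\sim a^2/(r^2+a^2)$ and the coefficient of $|\pr_{\rhoH}\psi|^2$ is $\approx M^2$, so $|a\,\pr_{\rhoH}\psi\,\renormpphi\psi|$ must be absorbed using $|\pr_{\rhoH}\psi|^2$ and $|\renormpphi\psi|^2$.

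The $X_{(2)}$ flux carries, with weight $Cr_+^{-2}\rhoH^{\alpha+1}$:
\begin{itemize}
  \item a positive $\frac12(r^2+a^2)|\NablaAngular\psi|^2\ge\frac12 M^2|\renormpphi\psi|^2$;
  \item a positive $h_{\HH}'(\HawkingHorizon\psi+\frac12\rhoH^2\pr_{\rhoH}\psi)^2$;
  \item the cross term $-a\sin\theta\,\HawkingHorizon\psi\,\renormpphi\psi$.
\end{itemize}
I would apply Cauchy--Schwarz:
\[
  |a\,\pr_{\rhoH}\psi\,\renormpphi\psi|\le\tfrac{M^2}{4}|\pr_{\rhoH}\psi|^2+\tfrac{a^2}{M^2}|\renormpphi\psi|^2 .
\]
The second piece is absorbed by $C\,\frac{M^2}{2r_+^2}|\renormpphi\psi|^2$ once $C$ is fixed of order one, say $C\ge4a^2/M^2$, or simply $C$ not too small. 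The $X_{(2)}$ cross term is bounded by $\epsilon|\renormpphi\psi|^2+\epsilon^{-1}a^2|\HawkingHorizon\psi|^2$, and the latter must be controlled by $h_{\HH}'|\HawkingHorizon\psi|^2\sim a^2|\HawkingHorizon\psi|^2$.

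This last absorption is the main obstacle, since both sides are $O(a^2)$ and the constants must be compared. I would handle it with the full quadratic-form argument: the $3\times3$ form in $(\HawkingHorizon\psi,\pr_{\rhoH}\psi,\renormpphi\psi)$ has diagonal entries $\approx(a^2C/M^4,\,M^2,\,C/2)$ and off-diagonals $a$ and $Ca/(2M^2)$ (the $\rhoH^2$ corrections are small for $\rho_0\ll1$). Its positivity reduces to a determinant check that depends on $h_{\HH}'$ being of size exactly $a^2$ times the rescaled coefficient. If the check is marginal, I would adjust $C$ or reuse the fine structure from \zcref{lemma:normal-vector-tau} (timelike normal) to show that the form equals $\EMTensor(X,N)$ with $X$ and $N$ both future timelike near $\rhoH=0$. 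That yields the stated lower bound $\rhoH^{\alpha+1}(|\pr_{\rhoH}\psi|^2+a^2|\HawkingHorizon\psi|^2+|\NablaAngular\psi|^2)$, with the weaker $a^2$ coefficient on $\HawkingHorizon\psi$ reflecting exactly this degeneracy.
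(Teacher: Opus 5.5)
Your bulk identity and horizon flux are exactly the paper's computation: you add the $X_{(1)}$ and $X_{(2)}$ formulas with $f=-\rhoH^{\alpha+1}$ and $h=Cr_+^{-2}\rhoH^{\alpha+1}$, and both parts are correct. The gap is the $\Sigma_{\EventHorizon}(\tau)$ flux, which is the only non-bookkeeping content of the lemma. You set up the quadratic form in $(\pr_{\rhoH}\psi,\HawkingHorizon\psi,\renormpphi\psi)$ but never verify that it is positive, and the remedies you offer for a marginal check do not work.

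\emph{Tuning $C$ is useless.} Put $\lambda=Cr_+^{-2}$. The $\HawkingHorizon\psi$ diagonal $\lambda h_{\HH}'$, the cross term $-\lambda a\sin\theta\,\HawkingHorizon\psi\,\renormpphi\psi$, and the term $\frac12\lambda(r^2+a^2)|\renormpphi\psi|^2$ that must absorb it are all linear in $\lambda$. So the relevant discriminant does not depend on $C$.

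\emph{The dominant-energy fallback is too weak.} After checking $\Metric(-\pr_{\rhoH}+\lambda\HawkingHorizon,-\pr_{\rhoH}+\lambda\HawkingHorizon)=-(2\lambda+\lambda^2\rhoH^2)|q|^2$, it gives only nonnegativity, or an unquantified $a$-dependent constant by compactness. It cannot produce the $a^2$ weight, because $|q|^2N_{\Sigma}\to-(r^2+a^2)\pr_{\rhoH}$, which is null, as $a\to0$.

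\emph{Your Cauchy--Schwarz step restricts $C$.} Bounding $a\sin\theta\,\pr_{\rhoH}\psi\,\renormpphi\psi$ that way forces a lower bound on $C$, whereas the lemma is stated for every $C>0$.

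The step closes using two facts that follow directly from the formula for $h_{\HH}'$: $h_{\HH}'\ge\frac{a^2}{r^2+a^2}$, with equality at $\rhoH=0$, and $\Delta h_{\HH}'\le\frac14(r^2+a^2)$. You then group the terms by multiplier piece. Write $x=\pr_{\rhoH}\psi$, $y=\HawkingHorizon\psi$, $z=\renormpphi\psi$, $R=r^2+a^2$, $s=a\sin\theta$, and drop the common factor $|q|^{-2}\rhoH^{\alpha+1}$.
\begin{itemize}
\item The $X_{(1)}$ flux is $(R-\frac12\Delta h_{\HH}')x^2+sxz+\frac12 h_{\HH}'|\NablaAngular\psi|^2$. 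Its $(x,z)$ part has discriminant $s^2-h_{\HH}'(2R-\Delta h_{\HH}')\le-\frac34a^2$, so it is $\ge\frac38Rx^2$.
\item The $X_{(2)}$ flux is $\lambda(\frac12R\rhoH^2x^2+h_{\HH}'\rhoH^2xy+h_{\HH}'y^2-syz+\frac12R|\NablaAngular\psi|^2)$. Since $h_{\HH}'\rhoH^4\le\frac14R\rhoH^2$, the $xy$ term is absorbed, leaving $\lambda(\frac34h_{\HH}'y^2-syz+\frac12Rz^2)$ with discriminant $\le-\frac12a^2$. Written in the variable $ay$, its coefficients and discriminant are bounded independently of $a$, so it is $\gtrsim\lambda(a^2y^2+z^2)$ uniformly in $a$.
\end{itemize}
Together with the remaining $\frac12(\lambda R+h_{\HH}')|\pr_\theta\psi|^2$, this gives the stated bound for every $C>0$, wherever $h_{\HH}'$ is defined.

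For comparison, the paper completes squares in a different grouping with halved coefficients and appeals to $\Delta(h_{\HH}')^2-(r^2+a^2)h_{\HH}'+2a^2\le0$. For the displayed $h_{\HH}'$ this quantity equals $a^2$ at $\rhoH=0$, so that condition fails there. The split by multiplier piece above is the one that actually closes.
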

\begin{proof}
  Applying \zcref{prop:fprr-HH} with
  $f=-\rhoH^{\alpha+1}$ and $h=\rhoH^{\a+1}$ and using that
  $\partial_r^{\mathrm{iEF}}=\partial_{\rhoH}$ gives that
  \begin{align*}
    |q|^2\D\cdot\JCurrent{(X^{\mathrm{iEF}}_{(1)},0,0)}[\psi]
    ={}& \frac{1}{2}(1-\alpha)\rhoH^{\alpha}|\rhoH\pr_{\rhoH}\psi|^2
         +\frac{1}{2}(1+\alpha)\rhoH^\alpha|\NablaAngular\psi|^2
    \\
       & +2r\rhoH^{\alpha}\,\pr_v\psi\,\rhoH\pr_{\rhoH}\psi
         -\rhoH^{\alpha+1}\pr_{\rhoH}\psi\,|q|^2\Box_\g\psi,\\
    |q|^2\D\cdot\JCurrent{(X^{\mathrm{iEF}}_{(2)},0,0)}[\psi]
    ={}& (1+\a)\rhoH^{\alpha}|\HawkingHorizon\psi|^2
         +2r\rhoH^{\alpha+1} (\HawkingHorizon\psi)\pr_v\psi
         + (1+\a)\rhoH^{\alpha+1}(\HawkingHorizon\psi)\rhoH\pr_{\rhoH}\psi\\
       & + 2r\rhoH^{\alpha+2}\,\pr_v\psi\,\rhoH\pr_{\rhoH}\psi
         +\rhoH^{\alpha+1} (\HawkingHorizon\psi)\,|q|^2\Box_\g\psi.
  \end{align*}
  Observing that
  \[X_{ \alpha, C} =X_{(1)}+Cr_+^{-2}X_{(2)} \]
  we deduce \zcref{eq:QQ-X-a-C-general}.
  
  For the boundary terms, using \zcref{lemma:boundary-terms-ieF}
  with $f=-\rhoH^{\a+1}$ and $h=\rhoH^{\a+1}$, we deduce
  \begin{align*}
    & \rhoH^{-\alpha-1}  \EMTensor(X_{\alpha, C} , |q|^2N_{\Sigma_{\EventHorizon}(\tau)})[\psi] \\
    ={}& \Big( \big(  1 +\frac{1}{2} C r_{+}^{-2}\rhoH^2\big) \big(r^2+a^2-\frac 1 2 \Delta h_{\HH}'\big)  +\frac{1}{4}C r_{+}^{-2} \rhoH^4 h_{\HH}'\Big)|\pr_{\rhoH}\psi|^2 +\frac{1}{2}\Big( C r_+^{-2}    \bigl(r^2+a^2\bigr) + h_{\HH}'   \Big)\abs*{\NablaAngular\psi}^2\\
    &+  C r_+^{-2}  h_{\HH}'   |\HawkingHorizon\psi|^2+  C r_+^{-2}  h_{\HH}' \rhoH^2 \HawkingHorizon\psi (\pr_{\rhoH} \psi)
      +a\sin\th  (\pr_{\rhoH}\psi )(\renormpphi\psi)
      -C r_+^{-2} a\sin\th (\HawkingHorizon\psi) (\renormpphi \psi).
  \end{align*}
  Completing the square above,
  we deduce
  \begin{align*}
    &\rhoH^{-\alpha-1}  \EMTensor(X_{ \alpha, C} , |q|^2N_{\Sigma_{\EventHorizon}(\tau)})[\psi]\\
    \geq{}& \big(  1 +\frac{1}{2} C r_{+}^{-2}\rhoH^2\big) \Big(r^2+a^2-\Delta\,h_{\HH}'\Big)
            |\pr_{\rhoH}\psi|^2 +\frac{1}{2}\Big(C r_+^{-2}    \bigl(r^2+a^2\bigr)+ h_{\HH}'    \Big)\abs*{\NablaAngular\psi}^2+  \frac{1}{2} C r_+^{-2}  h_{\HH}'    |\HawkingHorizon\psi|^2\\
    &+a\sin\th (\pr_{\rhoH}\psi )(\renormpphi\psi)-C r_+^{-2} a\sin\th (\HawkingHorizon\psi) (\renormpphi \psi)\\
    \geq{}& \frac{1}{2}\big(   1 + C r_{+}^{-2}\rhoH^2\big) \Big(r^2+a^2-\Delta\,h_{\HH}'\Big)
            |\pr_{\rhoH}\psi|^2 + \frac{1}{4}\big( 1 + 2C r_{+}^{-2}\rhoH^2\big) h_{\HH}' |\NablaAngular\psi|^2+  \frac{1}{4}C r_+^{-2}  h_{\HH}'    |\HawkingHorizon\psi|^2\\
    &+ \Big[ \frac{1}{4}C r_+^{-2}  h_{\HH}'    |\HawkingHorizon\psi|^2-C r_+^{-2} a\sin\th (\HawkingHorizon\psi) (\renormpphi \psi) + \frac{1}{2}C r_+^{-2} \big(r^2+a^2-\Delta h_{\HH}'\big) |\renormpphi\psi|^2\Big]\\
    &+ \Big[\frac{1}{2}\big(r^2+a^2-\Delta\,h_{\HH}'\big)|\pr_{\rhoH}\psi|^2+a\sin\th  (\pr_{\rhoH}\psi )(\renormpphi\psi)+\frac{1}{4}h_{\HH}'(r) |\renormpphi\psi|^2\Big].
  \end{align*}
  Observe that the last two lines are positive if $\Delta\,(h_{\HH}'(r))^2  -(r^2+a^2)  h_{\HH}'(r)+2a^2 \leq 0$
  which holds true for the condition \zcref{eq:condition-h-II}. Similarly, the first line is positive for all $r$, and since $h_\HH'=O(a^2)$ in
  particular this gives
  \begin{align*}
    \EMTensor(X_{\alpha, C} , |q|^2N_{\Sigma_{\EventHorizon}(\tau)})[\psi]
    \ges{}& \rhoH^{\alpha + 1} \big(|\pr_{\rhoH}\psi|^2
            +|\NablaAngular \psi|^2+ a^2  |\HawkingHorizon\psi|^2\big).
  \end{align*}
  Using \zcref{lemma:boundary-terms-ieF}, we also obtain
  \begin{align*}
    \JCurrent{(X_{\alpha, C},0,0)}[\psi]\cdot (-N_{\EventHorizon})
    \ges{}&\rhoH^{\alpha + 1} \big(|\rhoH\pr_{\rhoH}\psi|^2+      |\HawkingHorizon\psi|^2+|\NablaAngular \psi|^2\big)             ,
  \end{align*}
  as stated in \zcref{eq-boundary-terms-hierarchy-H}.
\end{proof}

\subsubsection{Proof of \texorpdfstring{\zcref{prop:horizon-weighted-estimate}}{} for \texorpdfstring{$s=1$}{}}

From \zcref{lemma:horizon:main-bulk}, \zcref{eq:QQ-X-a-C-general} we have
\begin{align*}
  \abs*{q}^2\QQ^{(X_{\alpha, C},0,0)}[\psi]
  ={}&\rho^\a \Big[\frac{1-\a}{2} |\rhoH\pr_{\rhoH}\psi|^2 +\frac{1+\a}{2}|\NablaAngular \psi|^2 +(1+\a)C r_+^{-2}(\HawkingHorizon\psi)^2+2r \,\pr_v\psi\,\rhoH\pr_{\rhoH}\psi \Big]\\
     &+\conormalSpaceH{\a+1}(\Manifold)(\bDiffH^{\le 1}\psi)^2\\
  ={}&\rhoH^\a \Big[\frac{1-\a}{2} |\rhoH\pr_{\rhoH}\psi|^2 +\frac{1+\a}{2} |\pr_\th \psi|^2+\frac{1+\a}{2} |\renormpphi \psi|^2 +(1+\a)C r_+^{-2} (\HawkingHorizon\psi)^2\\
     &+\frac{2r}{|q|^2} \,\HawkingHorizon \psi \,\rhoH\pr_{\rhoH}\psi -\frac{2ra\sin\th}{|q|^2}  \,\renormpphi\psi \,\rhoH\pr_{\rhoH}\psi  \Big]+\conormalSpaceH{\a+1}(\Manifold)(\bDiffH^{\le 1}\psi)^2,
\end{align*}
where we used that $  \pr_v =\frac{1}{|q|^2}\HawkingHorizon -\frac{a\sin\th}{|q|^2} \renormpphi$ and $|\NablaAngular\psi|^2=|\pr_\th \psi|^2 + |\renormpphi \psi|^2$.
Consider the quadratic form
\begin{align*}
  Q
  \vcentcolon= & \frac{1-\a}{2} |\rhoH\pr_{\rhoH}\psi|^2 +\frac{1+\a}{2} |\renormpphi \psi|^2 + (1+\a)C r_+^{-2}|\HawkingHorizon\psi|^2  +\frac{2r}{|q|^2} \,\HawkingHorizon \psi \,\rhoH\pr_{\rhoH}\psi -\frac{2ra\sin\th}{|q|^2}  \,\renormpphi\psi \,\rhoH\pr_{\rhoH}\psi .
\end{align*}
By Sylvester's criterion, the quadratic form $Q$ is positive definite, if, in addition to $-1<\a<1$, we have 
\begin{align}\label{eq:sylvester-criterion}
  \begin{split}
    \frac{1-\a}{2}\frac{1+\a}{2}-(\frac{a\sin\th}{|q|^2}r)^2 >0, \\
    C r_+^{-2} \Big(  \frac{1-\a}{2}\frac{1+\a}{2}-(\frac{a\sin\th}{|q|^2}r)^2\Big)-\frac 1 2(\frac{r}{|q|^2})^2>0.
  \end{split}
\end{align}
Using that $\frac{r}{|q|^2}\leq \frac{1}{r_{+}}= \frac{1}{M}$ and $a^2\sin^2\th \leq a^2$, we immediately deduce that \zcref{eq:sylvester-criterion} are implied by the following:
\begin{align*}
  \frac{1-\a}{2}\frac{1+\a}{2}-\abs*{\frac{a}{M}}^2 >0, \\
  C \Big(  \frac{1-\a}{2}\frac{1+\a}{2}-\abs*{\frac{a}{M}}^2\Big)-\frac 1 2>0.
\end{align*}
The first condition only admits solutions for $\alpha$ if
$\mathbf{a}^2<\frac 1 4$. Within this regime, the first condition is
satisfied if
$\a \in \big(-\sqrt{1-4\mathbf{a}^2},
\sqrt{1-4\mathbf{a}^2}\big)$, and since
$\sqrt{1-4\mathbf{a}^2}\geq 1-4\mathbf{a}^2$, it is also satisfied
for $\a \in \big(-1+4\mathbf{a}^2, 1-4\mathbf{a}^2\big)$. The
second condition is satisfied if
$C>\frac{2}{(1-\a) (1+\a)-4\mathbf{a}^2}$. Hence, we conclude that
the current associated to the vectorfield
\[    X_{\alpha, C} \vcentcolon=-\rhoH^{\alpha+1}\partial_{\rhoH} + C r_+^{-2}\rhoH^{\alpha+1}\HawkingHorizon, \qquad C\vcentcolon=\frac{3}{(1-\a) (1+\a)-4\mathbf{a}^2} \]
satisfies 
\begin{align*}
  \abs*{q}^2\QQ^{(X_{\alpha, C},0,0)}[\psi]
  \gtrsim {}\rhoH^\a \big( |\rhoH\pr_{\rhoH}\psi|^2 +|\NablaAngular \psi|^2+|\HawkingHorizon\psi|^2 \big)+\conormalSpaceH{\a+1}(\Manifold)(\bDiffH^{\le 1}\psi)^2.
\end{align*}
For $\rho_0 \ll r_{+}$ sufficiently small the lower order terms
$\conormalSpaceH{\a+1}(\Manifold)(\bDiffH^{\le 1}\psi)^2$ can be
absorbed by the first three terms on the RHS and therefore in the
region $\{ \rhoH \leq \rho_0\}$ we have
\begin{align*}
  \abs*{q}^2\QQ^{(X_{\alpha, C},0,0)}[\psi]
  \ges{}&  \rhoH^\a |\rhoH\pr_{\rhoH}\psi|^2+\rhoH^\a |\NablaAngular \psi|^2 +\rhoH^\a |\HawkingHorizon\psi|^2. 
\end{align*}
Using \zcref{lemma:hardy-pointwise-rhoH}, and recalling that $\PP_\mu^{(0,0,J)}[\psi]
=
J_\mu |\psi|^2$ we have for $J_{\a}=\frac 1 2 c_J(\a+1)\rhoH^{\a+1}|q|^{-2}\partial_{\rhoH}$ with $c_J>0$
\begin{align*}
  |q|^2 \D \c \PP^{(0, 0, J_{\a})}[\psi]\geq -c_J\rhoH^{\a}|\rhoH\pr_{\rhoH} \psi|^2+
  \frac14c_J(\a+1)^2 \rhoH^\a |\psi|^2.
\end{align*}
Adding this bound to the above for $c_J \ll 1$ sufficiently small we
obtain for the same range of $\a$ as above,
\begin{equation}
  \label{eq::horizon-hierarchy-hardy}
  \begin{split}
    \abs*{q}^2\QQ^{(X_{\alpha, C},0, J_{\alpha})}[\psi]
    \ges{}&  \rhoH^\a  \Big(  |\pr_v\psi|^2+|\NablaAngular\psi|^2 +|\rhoH\pr_{\rhoH}\psi|^2+
            |\psi|^2\Big),
  \end{split}    
\end{equation}
where we recognize the energy density of a weighted b-Sobolev norm of $\psi$.

We now localize \zcref{eq::horizon-hierarchy-hardy} using a cut-off
function.  For any non-negative bump function $\chi$ supported in
$[-2,2]$ and identically equal to $1$ on $[-1,1]$, we define
\begin{align*}
  \chi_{\HH}\vcentcolon=\chi\big(\frac{\rhoH}{\rho_0} \big), \qquad X_{\HH, \alpha}\vcentcolon= \chi_{\HH}   X_{\alpha, C}, \qquad J_{\EventHorizon, \alpha} \vcentcolon=\chi_{\HH} J_{\alpha}.
\end{align*}
From \zcref{eq::horizon-hierarchy-hardy, current-cutoff} 
we obtain
\begin{align*}
  \begin{split}
    & \int_{\MM(\tau_1, \tau_2)} \D\cdot \JCurrent{(X_{\HH, \a},  0, J_{\HH, \a})}[\psi]\\
    \ges{}& \int_{\MM(\tau_1, \tau_2)}\mathds{1}_{\{\rhoH < \rho_0\}} \rhoH^{\a}\Big( |\pr_v\psi|^2+|\NablaAngular\psi|^2 +|\rhoH\pr_{\rhoH}\psi|^2+
            |\psi|^2\Big)\\
    &-\rho_0^{-1} \int_{\MM(\tau_1, \tau_2)}\mathds{1}_{\{\rho_0 \leq \rhoH \leq 2\rho_0\}}\abs*{(\partial_v,\partial_{\rhoH},\NablaAngular)^{\le 1}\psi}^2 -\int_{\MM(\tau_1, \tau_2)}|X_{\Horizon, \alpha}(\psi) || \Box_{\Metric}\psi|\\
    \ges{} & \norm*{\psi}^2_{\bSobolevH{1,\,-\frac{\alpha}{2}}(\Manifold_{\rhoH \leq \rho_0}(\tau_1,\tau_2))}- \rho_0^{-1}\norm*{\psi}_{\compactSobolev{1}(\Manifold_{\rhoH\leq 2\rho_0}(\tau_1,\tau_2))}^2\\
    &-\norm*{\psi}_{\bSobolevH{1,-\frac{\alpha}{2}}(\Manifold_{\rhoH \leq 2\rho_0}(\tau_1,\tau_2))}
      \norm*{\Box_{\Metric}\psi}_{\bSobolevH{0,-\frac{\alpha}{2}}(\Manifold_{\rhoH \leq 2 \rho_0}(\tau_1,\tau_2))}\\
    \ges{} & \norm*{\psi}^2_{\bSobolevH{1,\,-\frac{\alpha}{2}}(\Manifold_{\EventHorizon}(\tau_1,\tau_2))}- \rho_0^{-1}\norm*{\psi}_{\compactSobolev{1}(\Manifold_{\rhoH\leq 2\rho_0}(\tau_1,\tau_2))}^2-
             \norm*{F}_{\bSobolevH{0,-\frac{\alpha}{2}}(\Manifold_{\rhoH \leq 2\rho_0}(\tau_1,\tau_2))}^2,
  \end{split}
\end{align*}
where we used Cauchy-Schwarz to bound the last term. 

For the boundary terms, we compute
\begin{align}\label{eq:boundary-J-HH}
  \begin{split}
    \JCurrent{(0,0,J_{\HH, \a})}[\psi]\cdot N_{\Sigma_{\EventHorizon}(\tau)}
    & =
      \chi_{\HH} (J_{\a}\cdot N_{\Sigma_{\EventHorizon}(\tau)})|\psi|^2 \\
    & =
      \frac12 \chi_{\HH} c_J(\a+1)\rhoH^{\a+1}|q|^{-4}\g(\partial_{\rhoH}, \left(-(r^2+a^2)+\De h_{\EventHorizon}'\right)\partial_{\rhoH}+
      h_{\EventHorizon}' \HawkingHorizon 
      -a\sin\th \renormpphi)|\psi|^2 \\
    & =
      \frac12 \chi_{\HH} c_J(\a+1)\rhoH^{\a+1}h_{\EventHorizon}'|q|^{-2}|\psi|^2 , \\
    \JCurrent{(0,0,J_{\HH, \a})}[\psi]\cdot(- N_{\EventHorizon})
    & =
      \frac12 \chi_{\HH} c_J(\a+1)\rhoH^{\a+1}|q|^{-4}\g(\partial_{\rhoH},  \HawkingHorizon )|\psi|^2 \\
    & =
      \frac12 \chi_{\HH} c_J(\a+1)\rhoH^{\a+1}|q|^{-2}|\psi|^2 
  \end{split}
\end{align}
where we used that $\g_{\mathrm{iEF}}(\partial_{\rhoH}, \partial_{\rhoH})=\g_{\mathrm{iEF}}(\partial_{\rhoH}, \renormpphi)=0$ and $\g_{\mathrm{iEF}}(\partial_{\rhoH}, V_{\HH})=|q|^2$. We therefore obtain from \zcref{eq-boundary-terms-hierarchy-H} and the above,
\begin{align*}
  \int_{\Sigma(\tau)}  \JCurrent{(X_{\Horizon, \alpha},0,J_{\HH, \a})}[\psi]\cdot  N_{\Sigma(\tau)}
  \ges{} & \int_{\Sigma(\tau)}\mathds{1}_{\{\rhoH < \rho_0\}} \rhoH^{\alpha + 1} \big(|\pr_v\psi|^2+|\NablaAngular \psi|^2+|\pr_{\rhoH}\psi|^2+     |\psi|^2\big)\\
  \ges{} &  \norm*{\psi}_{H_{\EventHorizon}^{1, -\frac{\alpha+1}{2}}(\Sigma_{\EventHorizon}(\tau))}^2, \nonumber\\
  \int_{\EventHorizon(\tau_1, \tau_2)}  \JCurrent{(X_{\Horizon, \alpha},0,J_{\HH, \a})}[\psi]\cdot  (-N_{\EventHorizon}) 
  \ges{} &  \int_{\EventHorizon(\tau_1, \tau_2)}\rhoH^{\alpha + 1} \big(|\rhoH\pr_{\rhoH}\psi|^2+      |\HawkingHorizon\psi|^2+|\NablaAngular \psi|^2+|\psi|^2\big)\nonumber \\
  \ges{}& \norm*{\psi}_{\bSobolevH{1, -\frac{\alpha+1}{2}}(\EventHorizon(\tau_1, \tau_2))}^2
\end{align*}
where we recognize the weighted standard Sobolev of $\psi$ along $\Sigma(\tau)$.

Performing the divergence theorem over $\Manifold(\tau_1, \tau_2)$, we obtain from \zcref{eq:general-divergence-theorem}
\begin{align*}
  &\int_{\Sigma(\tau_2)}\JCurrent{(X_{\HH, \a},  0, J_{\HH, \a})}[\psi]\cdot N_{\Sigma(\tau_2)}
    + \int_{\EventHorizon(\tau_1,\tau_2)}\JCurrent{(X_{\HH, \a},  0, J_{\HH, \a})}[\psi]\cdot (- N_{\EventHorizon})
    + \int_{\Manifold(\tau_1,\tau_2)}\D\cdot \JCurrent{(X_{\HH, \a},  0, J_{\HH, \a})}[\psi]\\
  ={}& \int_{\Sigma(\tau_1)}\JCurrent{(X_{\HH, \a},  0, J_{\HH, \a})}[\psi]\cdot N_{\Sigma(\tau_1)}.
\end{align*}
Using the bounds above and the trivial bound
\begin{align*}
  \int_{\Sigma(\tau_1)}\JCurrent{(X_{\HH, \a},  0, J_{\HH, \a})}[\psi]\cdot N_{\Sigma(\tau_1)}
  \les \norm*{\psi}_{H_{\EventHorizon}^{1, -\frac{\alpha+1}{2}}(\Sigma_{\rhoH \leq 2\rho_0}(\tau_1))}^2,
\end{align*}
we deduce \zcref{eq:horizon-weighted-estimate}.

\subsubsection{Proof of \texorpdfstring{\zcref{prop:horizon-weighted-estimate}}{} for \texorpdfstring{$s\geq 2$}{s>1}}
\label{sec:higher-order-horizon}

We first prove the estimate for $s=2$. 

\begin{enumerate}
\item \emph{Killing commutators.}
  Since $T=\partial_v$ and $\Phi=\partial_{\phiIEF}$ are Killing, applying \zcref{eq:horizon-weighted-estimate} for $s=1$ to $T\psi$ and $\Phi\psi$ gives
  \begin{align}
    \label{eq:HH-higher-Killing-commuted-estimate}
    \begin{split}
      &\norm*{(T,\Phi)\psi}_{H_{\EventHorizon}^{1,-\frac{\alpha+1}{2}}(\Sigma_{\EventHorizon}(\tau_2))}
        + \norm*{(T,\Phi)\psi}_{\bSobolevH{1, - \frac{\alpha+1}{2}}(\EventHorizon(\tau_1,\tau_2))}
        + \norm*{(T,\Phi)\psi}_{\bSobolevH{1,-\frac{\alpha}{2}}(\Manifold_{\EventHorizon}(\tau_1,\tau_2))}  \\
      \les{}&
              \norm*{\psi}_{H_{\EventHorizon}^{2,-\frac{\alpha+1}{2}}(\Sigma_{\rhoH\leq 2\rho_0}(\tau_1))}
              + \norm*{F}_{\bSobolevH{1,-\frac{\alpha}{2}}(\Manifold_{\rhoH\leq 2\rho_0}(\tau_1,\tau_2))}
              + \rho_0^{-\frac1 2}\norm*{\psi}_{\compactSobolev{2}(\Manifold_{\rhoH\leq 2\rho_0}(\tau_1,\tau_2))}.
    \end{split}
  \end{align}

\item \emph{Angular commutators.}
  We apply \zcref{eq::horizon-hierarchy-hardy}
  to $\RotationVF_1\psi$ and
  $\RotationVF_2\psi$ and sum them to obtain
  \begin{align}\label{eq:bound-commutator-angular}
    \begin{split}
      &  \abs*{q}^2\D\cdot\big(\JCurrent{(X_{\alpha, C},0, J_{\alpha})}[\RotationVF_1\psi]+\JCurrent{(X_{\alpha, C},0, J_{\alpha})}[\RotationVF_2\psi]\big)\\
      \ges{}&  \sum_{i=1}^2 \rhoH^\a  \Big(  |\pr_v\RotationVF_i\psi|^2+|\NablaAngular\RotationVF_i\psi|^2 +|\rhoH\pr_{\rhoH}\RotationVF_i\psi|^2+
              |\RotationVF_i\psi|^2\Big)\\
      &+X_{\alpha, C}(\RotationVF_1\psi)[ |q|^2 \Box_\g ,\RotationVF_1]\psi+X_{\alpha, C}(\RotationVF_2\psi) [|q|^2 \Box_\g,  \RotationVF_2]\psi\\
      &+X_{\alpha, C}(\RotationVF_1\psi) \RotationVF_1(|q|^2 \Box_\g \psi)+X_{\alpha, C}(\RotationVF_2\psi)\RotationVF_2( |q|^2 \Box_\g \psi). 
    \end{split}
  \end{align}
  Using \zcref{lem:weighted-hierarchy-commutators}, we compute the expression:
  \begin{align*}
    I\vcentcolon={}&X_{\alpha, C}(\RotationVF_1\psi)[ |q|^2 \Box_\g ,\RotationVF_1]\psi+X_{\alpha, C}(\RotationVF_2\psi) [|q|^2 \Box_\g,  \RotationVF_2]\psi\\
    ={}&X_{\alpha, C}(\RotationVF_1\psi)(-2a(\partial_{\rhoH}+T)\RotationVF_2\psi
         +
         O(a^2)T^2\psi)+X_{\alpha, C}(\RotationVF_2\psi) (2a(\partial_{\rhoH}+T)\RotationVF_1\psi
         +
         O(a^2)T^2\psi)\\
    ={}&\Big(-\rhoH^{\alpha+1}\partial_{\rhoH}\RotationVF_1\psi + C r_+^{-2}\rhoH^{\alpha+1}\HawkingHorizon\RotationVF_1\psi\Big)(-2a(\partial_{\rhoH}+T)\RotationVF_2\psi
         +
         O(a^2)T^2\psi)\\
                   &+\Big(-\rhoH^{\alpha+1}\partial_{\rhoH}\RotationVF_2\psi + C r_+^{-2}\rhoH^{\alpha+1}\HawkingHorizon\RotationVF_2\psi\Big) (2a(\partial_{\rhoH}+T)\RotationVF_1\psi
                     +
                     O(a^2)T^2\psi)\\
    ={}& 2a\rhoH^{\alpha}
         \Big(
         T\RotationVF_2\psi\,\rhoH\partial_{\rhoH}\RotationVF_1\psi
         -
         T\RotationVF_1\psi\,\rhoH\partial_{\rhoH}\RotationVF_2\psi
         \Big) \\
                   &-
                     2aCr_+^{-2}\rhoH^\alpha
                     \Big(
                     \rhoH\partial_{\rhoH}\RotationVF_2\psi\,\HawkingHorizon\RotationVF_1\psi
                     -
                     \rhoH\partial_{\rhoH}\RotationVF_1\psi\,\HawkingHorizon\RotationVF_2\psi
                     \Big) \\
                   &-
                     2a^2Cr_+^{-2}\rhoH^{\alpha+1}
                     \Big(
                     T\RotationVF_2\psi\,\Phi\RotationVF_1\psi
                     -
                     T\RotationVF_1\psi\,\Phi\RotationVF_2\psi
                     \Big)\\
                   &+O(a^2)\rhoH^{\a}\big(-\rhoH\partial_{\rhoH}(\RotationVF_1+\RotationVF_2)\psi + C r_+^{-2}\rhoH\HawkingHorizon(\RotationVF_1+\RotationVF_2)\psi\big)
                     T^2\psi,
  \end{align*}
  where we wrote
  $X_{\alpha, C}=-\rhoH^{\alpha+1}\partial_{\rhoH} + C
  r_+^{-2}\rhoH^{\alpha+1}\HawkingHorizon$ and
  $\HawkingHorizon=(r^2+a^2)T +a\Phi$. Notice that the terms
  involving two $\partial_{\rhoH}$ and $T$ derivatives cancel out in the
  sum. Using Cauchy-Schwarz, we can bound the above for $0<\lambda<1$ by
  \begin{align*}
    \abs*{I}\leq \lambda \rhoH^\a\sum_{i=1}^2|\rhoH\partial_{\rhoH}\RotationVF_i\psi|^2+\lambda^{-1} \rhoH^\a \big(\sum_{i=1}^2|T\RotationVF_i\psi|^2+\sum_{i=1}^2|\Phi\RotationVF_i\psi|^2+|T^2\psi|^2\big).
  \end{align*}
  For $\lambda$ sufficiently small, the first term above can be absorbed
  by the first line in \zcref{eq:bound-commutator-angular}, while the
  remaining terms only involve $T$ and $\Phi$ derivatives and therefore can be bounded, upon integration on
  $\Manifold(\tau_1, \tau_2)$, by the initial data norms using
  \zcref{eq:HH-higher-Killing-commuted-estimate}. Applying divergence
  theorem to \zcref{eq:bound-commutator-angular} and following the same
  steps as in proof of \zcref{prop:horizon-weighted-estimate}
  for $s=1$, we then deduce
  \begin{align}
    \label{eq:HH-angular-commuted-estimate}
    \begin{split}
      &\sum_{i=1}^2
        \norm*{\RotationVF_i\psi}_{H_{\EventHorizon}^{1,-\frac{\alpha+1}{2}}
        (\Sigma_{\EventHorizon}(\tau_2))}
        + \sum_{i=1}^2\norm*{\RotationVF_i\psi}_{\bSobolevH{1, - \frac{\alpha+1}{2}}(\EventHorizon(\tau_1,\tau_2))}+
        \sum_{i=1}^2
        \norm*{\RotationVF_i\psi}_{\bSobolevH{1,-\frac{\alpha}{2}}
        (\Manifold_{\EventHorizon}(\tau_1,\tau_2))} \\
      \les{}&
              \norm*{\psi}_{H_{\EventHorizon}^{2,-\frac{\alpha+1}{2}}
              (\Sigma_{\rhoH\leq 2\rho_0}(\tau_1))}
              +
              \norm*{F}_{\bSobolevH{1,-\frac{\alpha}{2}}
              (\Manifold_{\rhoH\leq 2\rho_0}(\tau_1,\tau_2))} 
              +
              \rho_0^{-\frac12}
              \norm*{\psi}_{\compactSobolev{2}
              (\Manifold_{\rhoH\leq 2\rho_0}(\tau_1,\tau_2))} .
    \end{split}
  \end{align}
  Angular elliptic estimates on $\mathbb{S}^2$ then give
  \begin{align}
    \label{eq:HH-angular-second-derivative-control}
    \begin{split}
      \norm*{\NablaAngular^2\psi}_{\bSobolevH{0,-\frac{\alpha}{2}}
      (\Manifold_{\EventHorizon}(\tau_1,\tau_2))}^2
      \les{}&
              \sum_{i=1}^2
              \norm*{\RotationVF_i\psi}_{\bSobolevH{1,-\frac{\alpha}{2}}
              (\Manifold_{\EventHorizon}(\tau_1,\tau_2))}^2 +
              \norm*{\Phi\psi}_{\bSobolevH{1,-\frac{\alpha}{2}}
              (\Manifold_{\EventHorizon}(\tau_1,\tau_2))}^2
              +
              \norm*{\psi}_{\bSobolevH{1,-\frac{\alpha}{2}}
              (\Manifold_{\EventHorizon}(\tau_1,\tau_2))}^2 \\
      \les{}&\norm*{\psi}_{H_{\EventHorizon}^{2,-\frac{\alpha+1}{2}}
              (\Sigma_{\rhoH\leq 2\rho_0}(\tau_1))}^2
              +
              \norm*{F}_{\bSobolevH{1,-\frac{\alpha}{2}}
              (\Manifold_{\rhoH\leq 2\rho_0}(\tau_1,\tau_2))}^2 
      \\
            &+
              \rho_0^{-1}
              \norm*{\psi}_{\compactSobolev{2}
              (\Manifold_{\rhoH\leq 2\rho_0}(\tau_1,\tau_2))}^2 
    \end{split}
  \end{align}
  and similarly on $\Sigma(\tau)$ and $\EventHorizon(\tau_1, \tau_2)$.

\item \emph{The second $\rhoH$-derivative.}
  We apply \zcref{eq::horizon-hierarchy-hardy}
  to $\Psi\vcentcolon=\rhoH \partial_{\rhoH}\psi$ and we obtain
  \begin{align}\label{eq:bound-commutator-rhoH}
    \begin{split}
      \abs*{q}^2\D\cdot\big(\JCurrent{(X_{\alpha, C},0, J_{\alpha})}[\rhoH \partial_{\rhoH}\psi]\big)  \ges{}&   \rhoH^\a  \Big(  |\pr_v\Psi|^2+|\NablaAngular\Psi|^2 +|\rhoH\pr_{\rhoH}\Psi|^2+
                                                                                                               |\Psi|^2\Big)\\
                                                                                                             &+X_{\alpha, C}(\Psi)[ |q|^2 \Box_\g ,\rhoH \partial_{\rhoH}]\psi+X_{\alpha, C}(\Psi) \rhoH \partial_{\rhoH}(|q|^2 \Box_\g \psi). 
    \end{split}
  \end{align}
  Using \zcref{lem:weighted-hierarchy-commutators}, we compute the expression:
  \begin{align*}
    Y\vcentcolon={}&X_{\alpha, C}(\Psi)[ |q|^2 \Box_\g ,\rhoH \partial_{\rhoH}]\psi\\
                ={}&X_{\alpha, C}(\Psi)\Big(
                  -
                  \rhoH\partial_{\rhoH}\Psi
                  -
                  \Psi -
                  \lapp_{\SSS^2}\psi -a^{2}\sin^{2}\theta T^2\psi
                  -2aT\Phi\psi
                  -2rT\psi
                  -4rT\rhoH \partial_{\rhoH}
                  -2\rhoH T\psi +|q|^2F\Big)\\
                ={}&\Big(-\rhoH^{\alpha+1}\partial_{\rhoH}\Psi + C r_+^{-2}\rhoH^{\alpha+1}\HawkingHorizon\Psi\Big)\Big(
                  -
                  \rhoH\partial_{\rhoH}\Psi
                  \Big)\\
                &+X_{\alpha, C}(\Psi)\Big(
                  -
                  \Psi -
                  \lapp_{\SSS^2}\psi -a^{2}\sin^{2}\theta T^2\psi
                  -2aT\Phi\psi
                  -2rT\psi
                  -4rT\rhoH \partial_{\rhoH}
                  -2\rhoH T\psi +|q|^2F\Big).
  \end{align*}
  Notice that the first term in $|\partial_{\rhoH}\Psi|^2$ gives a positive contribution while the remaining terms can be bounded using Cauchy-Schwarz for $0<\lambda<1$ by
  \begin{align*}
    Y \geq{}& \rhoH^\a \abs*{\rhoH \partial_{\rhoH}\Psi}^2-\lambda \rhoH^\a \abs*{\rhoH \partial_{\rhoH}\Psi}^2\\
            &-\lambda^{-1}\rhoH^{\alpha}\Big(|T\rhoH \partial_{\rhoH}\psi|^2+|\Phi\rhoH \partial_{\rhoH}\psi|^2+|\rhoH \partial_{\rhoH}\psi|^2 +
              |\lapp_{\SSS^2}\psi|^2
              +
              |T^2\psi|^2
              +
              |T\Phi\psi|^2
              +
              |T\psi|^2+|F|^2\Big).
  \end{align*}
  For $\lambda$ sufficiently small, the second term above can be absorbed by the first line in \zcref{eq:bound-commutator-rhoH}, while the second line only involves second derivatives in $T$, $\Phi$, $\NablaAngular$ and therefore can be bounded, upon integration on $\Manifold(\tau_1, \tau_2)$, by the initial data norms using \zcref{eq:HH-higher-Killing-commuted-estimate, eq:HH-angular-second-derivative-control}. Applying divergence theorem to \zcref{eq:bound-commutator-rhoH} and following the same steps as in proof of \zcref{prop:horizon-weighted-estimate} for $s=1$, we then deduce
  \begin{align}
    \label{eq:HH-rho-commuted-estimate}
    \begin{split}
      &\norm*{\rhoH \partial_{\rhoH}\psi}_{H_{\EventHorizon}^{1,-\frac{\alpha+1}{2}}
        (\Sigma_{\EventHorizon}(\tau_2))}
        + \norm*{\rhoH \partial_{\rhoH}\psi}_{\bSobolevH{1, - \frac{\alpha+1}{2}}(\EventHorizon(\tau_1,\tau_2))}+
        \norm*{\rhoH \partial_{\rhoH}\psi}_{\bSobolevH{1,-\frac{\alpha}{2}}
        (\Manifold_{\EventHorizon}(\tau_1,\tau_2))} \\
      \les{}&
              \norm*{\psi}_{H_{\EventHorizon}^{2,-\frac{\alpha+1}{2}}
              (\Sigma_{\rhoH\leq 2\rho_0}(\tau_1))}
              +
              \norm*{F}_{\bSobolevH{1,-\frac{\alpha}{2}}
              (\Manifold_{\rhoH\leq 2\rho_0}(\tau_1,\tau_2))} 
              +
              \rho_0^{-\frac 12}
              \norm*{\psi}_{\compactSobolev{2}
              (\Manifold_{\rhoH\leq 2\rho_0}(\tau_1,\tau_2))}
    \end{split}
  \end{align}
\end{enumerate}

Combining \zcref{eq:HH-higher-Killing-commuted-estimate, eq:HH-angular-commuted-estimate, eq:HH-rho-commuted-estimate} we prove \eqref{eq:horizon-weighted-estimate} for $s=2$.

The case $s\geq 3$ follows by induction, by commuting
$A\in\bDiffH^{s-1}$ and using
\zcref{lem:weighted-hierarchy-commutators}.

\subsection{The null infinity hierarchy}\label{sec:hierarchy-infinity}

In this section, we use outgoing Eddington-Finkelstein coordinates
$(u,r,\th,\phiOEF)$ near null infinity.  We illustrate the main domain
of integration below, for some $\rhoI \leq \rho_1$ specified below.

\begin{figure}[ht]
  \centering
  \begin{tikzpicture}[scale=0.7,every node/.style={scale=0.7}]


  \def \s{3} 
  \def \exts{0.2} 
  \def \t{0.4}
  \def \Tlen{.5}
  \def \lenNull{0.1} 
  \def \lenDomain{0.2} 

  \coordinate (tInf) at (0,\s); 
  \coordinate (EventZero) at (-\s,0); 
  \coordinate (CosmoZero) at (\s,0); 
  \coordinate (tNegInf) at (0,-\s);
  \coordinate (SigmaZeroEvent) at (-\s + 0.15*\s, 0.15*\s);
  \coordinate (SigmaTEvent) at ($(SigmaZeroEvent) + (\lenDomain*\s, \lenDomain*\s)$);
  \coordinate (SigmaZeroCosmo) at (\s - 0.15*\s, 0.15*\s);
  \coordinate (SigmaTCosmo) at ($(SigmaZeroCosmo) + (-\lenDomain*\s, \lenDomain*\s)$);

  \draw[shorten >= -10,name path=EventFuture] (tInf) --
  node[pos=0.5,left]{$\EventHorizonFuture$} (EventZero) ;  
  \draw[shorten >= -10,name path=CosmoFuture,dashed] (tInf) --
  node[inner sep=10pt,scale=1.0,pos=0.5,right]{$\mathcal{I}^{+}$} (CosmoZero) ; 

  

  \draw[dashed] (tInf)    -- (-\s - \exts*\s,-\exts*\s);
  \draw[dashed] (tInf)    -- ( \s + \exts*\s,-\exts*\s);


  \path[name path=EventLowerBound] (-1.6*\s, 0.6*\s)-- (-\s, 0)--
  (tInf);
  \path[name path=CosmoLowerBound] (tInf) -- (\s, 0) -- (1.6*\s, 0.6*\s);

  \coordinate (SigmaZeroH) at ($(SigmaZeroEvent) + (4*\lenNull*\s, -2*\lenNull*\s)$);
  \coordinate (SigmaTH) at ($(SigmaTEvent) + (2*\lenNull*\s, -\lenNull*\s)$);
  \coordinate (SigmaZeroI) at ($(SigmaZeroCosmo) + (-4*\lenNull*\s, -2*\lenNull*\s)$);
  \coordinate (SigmaTI) at ($(SigmaTCosmo) + (-2*\lenNull*\s, -  \lenNull*\s)$);
  \def \bottombend{15}
  \def \topbend{10}

  \path[fill=blue, fill opacity=0.1, draw=blue, thick]
  (SigmaTCosmo) to[out=-135,in=15] (SigmaTI) to[bend left =10] (SigmaZeroI) to[in=-135,out=20] (SigmaZeroCosmo) -- cycle;
  
   \path[fill=black, fill opacity=0.1, draw=black!30, thin, dashed]
   (SigmaZeroEvent) to[bend right=\bottombend] (SigmaZeroH) to[bend left = 20] (SigmaZeroI) to [bend right=10]  (SigmaTI) to[bend right =20] (SigmaTH)
   to[bend left=\topbend] (SigmaTEvent) -- cycle;

  \node[scale=0.5,fill=white,draw,circle,label=above:$i^+$]at(tInf){};

  \node[label=left:${r=M}$]at(SigmaZeroEvent){};
  \node[label=right:${r=\infty}$]at(SigmaZeroCosmo){};

\end{tikzpicture}

  \caption{Schematic representation of the near-infinity region
$\Manifold_{\rhoI \leq \rho_1}(\tau_1,\tau_2)$, highlighted in blue. The region is bounded by
$\Sigma(\tau_1)$, $\Sigma(\tau_2)$, the null infinity, and the hypersurface
$\{\rhoI=\rho_1\}$.}
\end{figure}

The main null-infinity-weighted estimate is the following. 
\begin{proposition}
  \label{prop:infinity-weighted-estimate}
  Let $\psi$ solve the inhomogeneous wave equation 
  \[\Box_\g \psi = F\]
  in an
  extremal Kerr--Newman spacetime and let $\widecheck{\psi}=r\psi$ be its radiation field. For 
  \[\b \in \big(1,3\big)\]
  the following
  $\NullInfinity$-weighted hierarchy holds true for $s \in \mathbb{N}$, $s \geq 1$ and any $\tau_1<\tau_2$:      
  \begin{equation}
    \label{eq:infinity-weighted-estimate-psic}
    \begin{split}
      & \norm*{\widecheck{\psi}}_{H_{\NullInfinity}^{s, -\frac{\b+3}{2}}(\Sigma_{\NullInfinity}(\tau_2))}
        +\norm*{\widecheck{\psi}}_{\bSobolevI{s, -\frac{\b+1}{2}}(\NullInfinity(\tau_1, \tau_2))}
        + \norm*{\widecheck{\psi}}_{\bSobolevI{s,-\frac{\b+2}{2}}(\Manifold_{\NullInfinity}(\tau_1,\tau_2))}\\
      \les{}& \norm*{\widecheck{\psi}}_{H_{\NullInfinity}^{s, -\frac{\b+3}{2}}(\Sigma_{\rhoI \leq 2\rho_1}(\tau_1))}
              + \norm*{F}_{\bSobolevI{s-1,-\frac{\b-4}{2}}(\Manifold_{\rhoI \leq 2 \rho_1}(\tau_1,\tau_2))}
              + \rho_1^{-\frac1 2}\norm*{\widecheck{\psi}}_{\compactSobolev{s}(\Manifold_{\rhoI \leq 2\rho_1}(\tau_1,\tau_2))}.
    \end{split}
  \end{equation}
  where $\Sigma_{\NullInfinity}(\tau)$ and
  $\MM_{\NullInfinity}(\tau_1, \tau_2)$ denote the regions where
  $\rhoI \leq \rho_1$ for $\rho_1 \ll r_{+}^{-1}$ sufficiently small.
\end{proposition}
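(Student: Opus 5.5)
The plan mirrors the proof of \zcref{prop:horizon-weighted-estimate}, now applied to the radiation field $\widecheck{\psi}=r\psi$ in the outgoing chart $(u,\rhoI,\theta,\phiOEF)$. By \zcref{lemma:wave-radiation-field}, $\widecheck{\psi}$ satisfies
\[
|q|^2\Box_\g\widecheck{\psi}+2\big(\Upsilon\rhoI\pr_{\rhoI}+\rhoI^{-1}\InfinityHawking\big)\widecheck{\psi}=r|q|^2F+\conormalSpaceI{1}\widecheck{\psi}.
\]
Combined with \eqref{eq:wave-oEF}, the principal part of the operator acting on $\widecheck{\psi}$ is, modulo $\conormalSpaceI{1}\bDiffI^{\le2}$ errors,
\[
(\rhoI\pr_{\rhoI})^2+2\InfinityHawking\pr_{\rhoI}-4\rhoI^{-1}\pr_u+\lapp_{\SSS^2}+O(a)\,\pr_u\bDiffI .
\]
This is the Minkowski $b$-structure. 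Rotation enters only at relative order $\rhoI$, so no loss in the range of weights is expected, unlike at the horizon.

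For $s=1$, I multiply this equation by $X_\beta\widecheck{\psi}$, where
\[
X_\beta=\rhoI^{2-\beta}\big(-\pr_{\rhoI}+C\InfinityHawking\big),
\]
which is the $r^p\pr_v$ multiplier with $p=3-\beta$. I then integrate over $\Manifold(\tau_1,\tau_2)$ with the weighted volume form. Equivalently, I apply \zcref{prop:fprr-HH} in oEF coordinates with $f=\rhoI^{-\beta}$-type weights and add the first-order term's contribution. The key computation is the bulk, schematically
\[
\rhoI^{1-\beta}\Big[\tfrac{\beta-1}{2}|\rhoI\pr_{\rhoI}\widecheck{\psi}|^2+\tfrac{3-\beta}{2}|\NablaAngular\widecheck{\psi}|^2+c\,|\InfinityHawking\widecheck{\psi}|^2\Big].
\]
The coefficient of $|\rhoI\pr_{\rhoI}\widecheck{\psi}|^2$ arises from $\pr_{\rhoI}$ of the weight together with the $-4\rhoI^{-1}\pr_u$ term integrated by parts against $\pr_{\rhoI}\widecheck{\psi}$. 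The angular coefficient is positive exactly when $\beta<3$, and the radial one exactly when $\beta>1$, which produces the range $(1,3)$. A small multiple $C\ll1$ of $\InfinityHawking$ supplies $\rhoI^{1-\beta}|\pr_u\widecheck{\psi}|^2$ control, absorbing its cross terms via the $|\rhoI\pr_{\rhoI}\widecheck{\psi}|^2$ coercivity. The rotational cross terms $a\pr_u\pr_{\phiOEF}$ and $\frac{a}{r^2}\pr_{\phiOEF}\pr_{\rhoI}$ carry an extra factor $\rhoI$ and are absorbed for $\rho_1$ small. I then add the Hardy current from the second inequality of \zcref{lemma:hardy-pointwise-rhoH} to gain the zeroth-order term $\rhoI^{1-\beta}|\widecheck{\psi}|^2$; this needs $\beta\neq1$, which is consistent with the range. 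The resulting bulk is exactly $\norm{\widecheck{\psi}}^2_{\bSobolevI{1,-\frac{\beta+2}{2}}}$.

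Next I localize with $\chi(\rhoI/\rho_1)$ as in \eqref{current-cutoff}. The cutoff errors live in $\rho_1\le\rhoI\le2\rho_1$ and give the term $\rho_1^{-1/2}\norm{\widecheck{\psi}}_{\compactSobolev{1}}$. The source is handled by Cauchy--Schwarz: $r|q|^2F$ paired against $X_\beta\widecheck{\psi}$ is bounded by $\norm{F}_{\bSobolevI{0,-\frac{\beta-4}{2}}}\norm{\widecheck{\psi}}_{\bSobolevI{1,-\frac{\beta+2}{2}}}$; the weights match because $r|q|^2\sim\rhoI^{-3}$. For the boundary fluxes I use \zcref{lemma:boundary-terms-oEF}, since $N_{\Sigma_{\NullInfinity}}$ is precisely $-\pr_{\rhoI}+\frac{a^2}{\Upsilon}\InfinityHawking$ up to $\renormpphi$. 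This gives coercive fluxes on $\Sigma(\tau)$ with weight $\rhoI^{2-\beta}$, which is the norm $H^{1,-\frac{\beta+3}{2}}_{\NullInfinity}$ after accounting for the measure $|q|^2dr\sim\rhoI^{-4}d\rhoI$. On $\NullInfinity$ the fluxes give the $b$-norm $\bSobolevI{1,-\frac{\beta+1}{2}}$. The Hardy boundary terms also have a sign or are controlled there. I expect the main obstacle to be this exact bookkeeping of powers of $\rhoI$ and signs in the flux and bulk, including the $\InfinityHawking$ cross term in the $\Sigma$ flux, which has no sign and must be absorbed with the $|a|$-weighted positivity from \zcref{lemma:boundary-terms-oEF}.

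For $s\ge2$, I commute as in \zcref{sec:higher-order-horizon}. The Killing fields $T,\Phi$ commute exactly with the operator. The rotations $\RotationVF_i^{\mathrm{oEF}}$ produce, by \zcref{lem:weighted-hierarchy-commutators}, errors $a(\rhoI^2\pr_{\rhoI}+\pr_u)\RotationVF_j$. The $\rhoI^2\pr_{\rhoI}$ part is lower order by a factor $\rhoI$, and the antisymmetric pairing cancels the leading cross terms as at the horizon. The $\pr_u$ part is controlled by the Killing-commuted estimate. Commuting with $\rhoI\pr_{\rhoI}$ reproduces the operator plus terms expressible through the equation, angular elliptic estimates and already-controlled derivatives, up to $\conormalSpaceI{1}$ errors absorbed for small $\rho_1$. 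Induction on $s$ then concludes the proof.
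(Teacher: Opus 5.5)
Your architecture (a multiplier on $\widecheck{\psi}$, a Hardy current, a cutoff at $\rhoI\sim\rho_1$, fluxes from \zcref{lemma:boundary-terms-oEF}, then $T,\Phi$, rotation and $\rhoI\partial_{\rhoI}$ commutators) is the paper's. However, the bulk computation at its core is wrong in two places, and as written the argument does not close.

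First, the operator acting on $\widecheck{\psi}$ has no $\rhoI^{-1}\partial_u$ term. The $-2\rhoI^{-1}\partial_u$ in \eqref{eq:wave-oEF} is cancelled, not doubled, by the $+2\rhoI^{-1}\InfinityHawking$ coming from \zcref{lemma:wave-radiation-field}. What remains is $(\rhoI\partial_{\rhoI})^2+\rhoI\partial_{\rhoI}+2\InfinityHawking\partial_{\rhoI}+\lapp_{\SSS^2}+a^2\sin^2\theta\,\partial_u^2+2a\partial_u\partial_{\phiOEF}$ modulo $\conormalSpaceI{1}\bDiffI^{\le 2}$. You also dropped the first-order term $(3\Upsilon-\Delta'/r)\rhoI\partial_{\rhoI}$, whose coefficient tends to $1$.

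This cancellation is what makes the argument work. By \zcref{prop:fprr-HH}, $|q|^2\QQ^{(f\partial_r,0,0)}[\widecheck{\psi}]$ contains the unsigned cross term $2rf\,\partial_r\widecheck{\psi}\,\partial_u\widecheck{\psi}$, which is one power of $\rhoI$ more singular than the coercive terms. It is removed exactly by pairing $X\widecheck{\psi}$ with $-2\rhoI^{-1}\InfinityHawking\widecheck{\psi}$. With your $-4\rhoI^{-1}\partial_u$ this term survives (doubled), and no integration by parts turns it into $|\rhoI\partial_{\rhoI}\widecheck{\psi}|^2$. The radial coefficient comes instead from $\frac12(\Delta f'-f\Delta')$ plus the pairing of $X\widecheck{\psi}$ with $-2\Upsilon\rhoI\partial_{\rhoI}\widecheck{\psi}$. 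Note also that $2a\partial_u\partial_{\phiOEF}$ carries no extra factor of $\rhoI$. It must be kept together with $\lapp_{\SSS^2}+a^2\sin^2\theta\,\partial_u^2$ as the angular form producing $|\NablaAngular\widecheck{\psi}|^2$, since the proposition assumes no smallness of $a$.

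Second, the weight of your multiplier is wrong. The multiplier $r^p\partial_v$ with $p=3-\beta$ is $\rhoI^{\beta-1}(-\partial_{\rhoI})$ in the $1+1$ normalization of Dafermos--Rodnianski. In the spacetime normalization of \zcref{prop:fprr-HH} (volume form $|q|^2\,du\,dr\,d\mathring{\gamma}$) it is $r^{p-2}\partial_r=\rhoI^{\beta+1}(-\partial_{\rhoI})$. Your $\rhoI^{2-\beta}(-\partial_{\rhoI})=r^{\beta}\partial_r$ is neither. Plugging $f=r^{\beta}$ into \zcref{prop:fprr-HH}, as you propose, gives the angular coefficient $-\frac12 f'=-\frac{\beta}{2}r^{\beta-1}<0$ for every $\beta\in(1,3)$, so coercivity fails.

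With the paper's $X_\beta=\rhoI^{\beta+1}\big(-\partial_{\rhoI}+\frac{a^2}{\Upsilon}\InfinityHawking\big)$, i.e. $f=r^{1-\beta}$, one gets $|q|^2\D\cdot\PP=\rhoI^{\beta}\big(\frac{3-\beta}{2}|\rhoI\partial_{\rhoI}\widecheck{\psi}|^2+\frac{\beta-1}{2}|\NablaAngular\widecheck{\psi}|^2+\dots\big)+\dots$. So the radial coefficient needs $\beta<3$ and the angular one needs $\beta>1$, the reverse of your assignment. For the same reason, your bulk density $\rhoI^{1-\beta}$ (for $|q|^2\D\cdot\PP$) yields $\bSobolevI{1,-\frac{3-\beta}{2}}$ rather than $\bSobolevI{1,-\frac{\beta+2}{2}}$. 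Likewise your $\Sigma$-flux and source pairing do not produce $H_{\NullInfinity}^{1,-\frac{\beta+3}{2}}$ and $\bSobolevI{0,-\frac{\beta-4}{2}}$. All of these match once $\rhoI^{2-\beta}$ is replaced by $\rhoI^{\beta+1}$ and the Hardy current is taken at weight $\rhoI^{\beta}$; that is where $\beta\neq 1$ enters, whereas at your weight the Hardy constant would be $\beta^2/4$. With these two corrections your plan, including the commutator induction for $s\ge 2$, coincides with the paper's proof.
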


The proof of \zcref{prop:infinity-weighted-estimate} is obtained as follows.
We combine the divergence and boundary terms of the current associated to the vectorfield 
\[X_{\beta} \vcentcolon=\rhoI^{\b+1}\big(-\partial_{\rhoI} + \frac{a^2}{\Upsilon}\InfinityHawking\big).\]
with a Hardy current which adds control of the zeroth-order
term. This proves \zcref{prop:infinity-weighted-estimate} for $s=1$. By applying higher order commutators, we obtain the proof of \zcref{prop:infinity-weighted-estimate} for $s\geq 2$.

\begin{remark}
  The $\NullInfinity$-weighted estimates are best known in
  asymptotically flat spacetimes as $r^p$-weighted hierarchy for $p \in (0,2)$, as
  introduced by Dafermos-Rodnianski in
  \cite{dafermosNewPhysicalSpaceApproach2010}, see also \cite{moschidis$r^p$WeightedEnergyMethod2016, angelopoulosVectorFieldApproach2018}. In our notation,
  \[p=3-\b\]
  resulting in the range of $p$ in \zcref{prop:infinity-weighted-estimate} of $p \in (0,2)$.

  Writing explicitly in terms of $p$, the above corresponds to the multiplier $X=r^{p-2}\partial_r$ applied to the radiation field and we control in oEF coordinates
  \begin{align*}
    \int_{\MM_{\NullInfinity}(\tau_1, \tau_2)} r^{p-3} | \partial_r \widecheck{\psi}|^2 +r^{p-5}\big( |\partial_v \widecheck{\psi}|^2 + |\NablaAngular\widecheck{\psi}|^2 +|\widecheck{\psi}|^2 \big) \les \text{RHS of \eqref{eq:infinity-weighted-estimate-psic}}.
  \end{align*}
\end{remark}

\subsubsection{Proof of \texorpdfstring{\zcref{prop:infinity-weighted-estimate}}{} for \texorpdfstring{$s=1$}{}}

Applying \zcref{prop:fprr-HH} to $f=r^{-\b+1}$ and $h=r^{-\b-3}$ to $\widecheck{\psi}=r\psi$, and using that $r\partial_{r}^{\mathrm{oEF}} = -\rhoI\partial_{\rhoI}$, gives that
\begin{align*}
  |q|^2\D\cdot \PP ^{(X^{\mathrm{oEF}}_{(1)}, 0, 0)}[\widecheck{\psi}]
  ={}& -\frac{1}{2} \rhoI^{\b}(1+\b + \conormalSpaceI{1})|\rhoI\partial_{\rhoI} \widecheck{\psi}|^2+\frac{1}{2} \rhoI^\b (\b-1) |\NablaAngular\widecheck{\psi}|^2\\
     &-2\rhoI^{\b-1} \rhoI \partial_{\rhoI} \widecheck{\psi} \pr_u\widecheck{\psi}
       - \rhoI^{\b+1}\partial_{\rhoI}\widecheck{\psi} \c |q|^2\Box_\g \widecheck{\psi}, \\
  |q|^2 \D\cdot \JCurrent{(X^{\mathrm{oEF}}_{(2)}, 0, 0)}[\widecheck{\psi}]={}& \rhoI^{\b}(\b+3) |\InfinityHawking\widecheck{\psi}|^2 -  2\rhoI^{\b}  (\InfinityHawking \widecheck{\psi}) \pr_u \widecheck{\psi}+   (\b+3+\conormalSpaceI{1})\rhoI^{\b+1}  (\InfinityHawking \widecheck{\psi}) \rhoI \partial_{\rhoI} \widecheck{\psi}\\
     &-  \rhoI^{\b+1} (2+\conormalSpaceI{1} ) \pr_u\widecheck{\psi} \rhoI \partial_{\rhoI} \widecheck{\psi} + \rhoI^{\b+1} (\InfinityHawking\widecheck{\psi}) \c|q|^2 \Box_\g \widecheck{\psi}.
\end{align*}
Using \zcref{lemma:wave-radiation-field} and the fact that
$\InfinityHawking = \partial_u + \rhoI^2\bDiffI$, the above becomes
\begin{align*}
  |q|^2\D\cdot \PP ^{(X^{\mathrm{oEF}}_{(1)}, 0, 0)}[\widecheck{\psi}]
  ={}& \frac{1}{2} \rhoI^{\b}(3-\b)|\rhoI\partial_{\rhoI} \widecheck{\psi}|^2+\frac{1}{2} \rhoI^\b (\b-1) |\NablaAngular\widecheck{\psi}|^2\\
     &
       +\conormalSpaceI{\b+1}(\Manifold)(\bDiffI^{\le 1}\widecheck{\psi})^2  - \rhoI^{\b}\partial_{\rhoI}\widecheck{\psi} \c \abs*{q}^2\Box_{\Metric}\psi
       , \\
  |q|^2 \D\cdot \JCurrent{(X^{\mathrm{oEF}}_{(2)}, 0, 0)}[\widecheck{\psi}]={}& \rhoI^{\b}(\b-1) |\partial_u\widecheck{\psi}|^2+\conormalSpaceI{\b+1}(\Manifold)(\bDiffI^{\le 1}\widecheck{\psi})^2 + \rhoI^{\b} (\InfinityHawking\widecheck{\psi}) \c \abs*{q}^2\Box_{\Metric}\psi.
\end{align*}
Therefore, for the vectorfield
\[X_{\b}\vcentcolon=X^{\mathrm{oEF}}_{(1)} + \frac{a^2}{\Upsilon}  X^{\mathrm{oEF}}_{(2)}=\rhoI^{\b+1}\big(-\partial_{\rhoI}+ \frac{a^2}{\Upsilon} \InfinityHawking\big)\] 
we have
\begin{equation}\label{eq:div-PP-Xbeta}
  \begin{split}
    \abs*{q}^2\D\cdot \JCurrent{(X_{\b}, 0, 0)}[\widecheck{\psi}]
    ={}& \rhoI^{\b}\Big(\frac{1}{2}(3-\b)|\rhoI\partial_{\rhoI}\widecheck{\psi}|^2
         + (\b-1)|\partial_u \widecheck{\psi}|^2
         + \frac{1}{2}(\b-1)|\NablaAngular\widecheck{\psi}|^2\Big)\\
       & 
         + \conormalSpaceI{\b+1}(\bDiffI^{\le 1}\widecheck{\psi})^2+X_{\b}(\widecheck{\psi})\cdot r \abs*{q}^2\Box_{\Metric}\psi
         .
  \end{split}    
\end{equation}

Using \zcref{lemma:boundary-terms-oEF}, we also deduce for
$\rhoI \leq \rho_1$ for $\rho_1 \ll r_{+}^{-1}$ sufficiently small,
\begin{align*}
  \JCurrent{(X_{\b}, 0, 0)}[\widecheck{\psi}]\cdot N_{\Sigma_\NullInfinity}
  \ges{}& \rhoI^{\b+3}\Big(\abs*{\partial_{\rhoI}\widecheck{\psi}}^2
          + |a|\abs*{\InfinityHawking\widecheck{\psi}}^2
          +|a|\abs*{\NablaAngular\widecheck{\psi}}^2\Big), \\
  \JCurrent{(X_{\b}, 0, 0)}[\widecheck{\psi}]\cdot (-N_{\NullInfinity})
  \ges{}&  \rhoI^{\b+1}\Big( \abs*{\rhoI\partial_{\rhoI}\widecheck{\psi}}^2
          + |a|\abs*{\InfinityHawking\widecheck{\psi}}^2
          +\abs*{\NablaAngular\widecheck{\psi}}^2\Big).
\end{align*}
Using \zcref{lemma:hardy-pointwise-rhoH} and recalling that
$ \PP_\mu^{(0,0,J)}[\psi] =  J_\mu |\psi|^2$, we have for
$J_\b=\frac 1 2 c_J(\b-1)\rhoI^{\b+1}|q|^{-2}\partial_{\rhoI}$ with $c_J>0$
\begin{align*}
  |q|^2 \D \c \PP^{(0, 0, J_\b)}[\widecheck{\psi}]\geq -c_J\rhoI^{\b}|\rhoI\pr_{\rhoI} \widecheck{\psi}|^2
  +
  \frac14c_J(\b-1)^2 \rhoI^\b |\widecheck{\psi}|^2.
\end{align*}
Adding this bound to \zcref{eq:div-PP-Xbeta} we obtain
\begin{align*}
  \abs*{q}^2\D\cdot \JCurrent{(X_{\b}, 0, J_{\b})}[\widecheck{\psi}]
  ={}& \rhoI^{\b}\Big[\frac{1}{2}(3-\b-2c_J)|\rhoI\partial_{\rhoI}\widecheck{\psi}|^2
       + (\b-1)|\partial_u \widecheck{\psi}|^2
       + \frac{1}{2}(\b-1)|\NablaAngular\widecheck{\psi}|^2\\
     &+
       \frac14c_J(\b-1)^2 |\widecheck{\psi}|^2\Big]
       + \conormalSpaceI{\b+1}(\bDiffI^{\le 1}\widecheck{\psi})^2+X_{\b}(\widecheck{\psi})\cdot r \abs*{q}^2\Box_{\Metric}\psi
       . 
\end{align*}
For $c_J \ll 1$ sufficiently small and $1<\b<3$, the above terms in parenthesis are all positive, and for $\rho_1 \ll r_{+}^{-1}$ sufficiently small the lower order terms $\conormalSpaceI{\b+1}(\bDiffI^{\le 1}\widecheck{\psi})^2$ can be absorbed by the positive ones and therefore we have, dividing by $|q|^2$,
\begin{align}\label{eq:bound-DD-P-rp}
  \begin{split}
    \D\cdot \JCurrent{(X_{\b}, 0, J_{\b})}[\widecheck{\psi}]
    \gtrsim{}& \rhoI^{\b+2}\Big(|\rhoI\partial_{\rhoI}\widecheck{\psi}|^2
               +|\partial_u \widecheck{\psi}|^2
               + |\NablaAngular\widecheck{\psi}|^2+
               |\widecheck{\psi}|^2\Big)
               +X_{\b}(r\psi)\cdot r \Box_{\Metric}\psi,
  \end{split}
\end{align}
where observe that throughout the infinity estimates, division by $|q|^2 \sim \rhoI^{-2}$ shifts weights by $+2$. Here, we recognize the energy density of a weighted $b$-Sobolev norm of $\widecheck{\psi}$.

We now localize the above using a cut-off function. For any non-negative bump function $\chi$ supported in $[-2,2]$ and identically equal to $1$ on $[-1,1]$, define 
\begin{align*}
  \chi_{\NullInfinity}\vcentcolon=\chi\big(\frac{\rhoI}{\rho_1} \big), \qquad X_{\NullInfinity, \beta}\vcentcolon= \chi_{\NullInfinity}   X_{\beta}, \qquad J_{\NullInfinity, \beta}\vcentcolon=\chi_{\NullInfinity} J_{\beta}.
\end{align*}
From \zcref{eq:bound-DD-P-rp, current-cutoff}  we obtain
\begin{align*}
  &\int_{\Manifold(\tau_1,\tau_2)} \D\cdot \JCurrent{(X_{\NullInfinity, \b},  0, J_{\NullInfinity, \beta})}[\widecheck{\psi}]\\
  \ges{}& \int_{\Manifold(\tau_1,\tau_2)}\mathds{1}_{\{\rhoI < \rho_1\}} \rhoI^{\b+2}\Big(|\rhoI\partial_{\rhoI}\widecheck{\psi}|^2
          +|\partial_u \widecheck{\psi}|^2
          + |\NablaAngular\widecheck{\psi}|^2+|\widecheck{\psi}|^2\Big)\\
  &-\rho_1^{-1}\int_{\Manifold(\tau_1,\tau_2)}\mathds{1}_{\{\rho_1 \leq \rhoI \leq 2\rho_1\}}\abs*{(\partial_u,\partial_{\rhoI},\NablaAngular)^{\leq 1}\widecheck{\psi}}^2-\int_{\Manifold(\tau_1,\tau_2)}|X_{\NullInfinity, \b}(\widecheck{\psi}) || r\Box_{\Metric}\psi|\\
  \ges{}&  \norm*{\widecheck{\psi}}_{\bSobolevI{1,-\frac{\b+2}{2}}(\Manifold_{\NullInfinity}(\tau_1,\tau_2))}^2- \rho_1^{-1}\norm*{\widecheck{\psi}}_{\compactSobolev{1}(\Manifold_{\rhoI \leq 2\rho_1}(\tau_1,\tau_2))}^2\\
  &-\norm*{\widecheck{\psi}}_{\bSobolevI{1,-\frac{\b+2}{2}}(\Manifold_{\rhoI \leq 2 \rho_1}(\tau_1,\tau_2))}
    \norm*{\Box_{\Metric}\psi}_{\bSobolevI{0,-\frac{\b-4}{2}}(\Manifold_{\rhoI \leq 2\rho_1}(\tau_1,\tau_2))}\\
  \ges{}&  \norm*{\widecheck{\psi}}_{\bSobolevI{1,-\frac{\b+2}{2}}(\Manifold_{\NullInfinity}(\tau_1,\tau_2))}^2- \rho_1^{-1}\norm*{\widecheck{\psi}}_{\compactSobolev{1}(\Manifold_{\rhoI \leq 2\rho_1}(\tau_1,\tau_2))}^2  -
          \norm*{F}_{\bSobolevI{0,-\frac{\b-4}{2}}(\Manifold_{\rhoI \leq 2\rho_1}(\tau_1,\tau_2))}^2,
\end{align*}
where we used Cauchy-Schwarz to bound the last term. 

For the boundary terms, we deduce
\begin{align*}
  \int_{\Sigma(\tau)}    \JCurrent{(X_{\NullInfinity, \b},0,0)}[\widecheck{\psi}]\cdot N_{\Sigma(\tau)}
  \ges{} &     \int_{\Sigma(\tau)}\mathds{1}_{\{\rhoI < \rho_1\}} \rhoI^{\b+3}\Big(\abs*{\partial_{\rhoI}\widecheck{\psi}}^2
           + \abs*{\InfinityHawking\widecheck{\psi}}^2
           +\abs*{\NablaAngular\widecheck{\psi}}^2\Big).
\end{align*}
We also compute
\begin{align*}
  \JCurrent{(0,0,J_{\NullInfinity,\b})}[\widecheck{\psi}]\cdot N_{\Sigma_{\NullInfinity}(\tau)}
  & =
    \chi_{\NullInfinity}(J_{\b}\cdot N_{\Sigma_{\NullInfinity}(\tau)})|\widecheck{\psi}|^2 \\
  & =
    \frac12 \chi_{\NullInfinity} c_J(\b-1)\rhoI^{\b+1}|q|^{-4}\g(\partial_{\rhoI}, -\partial_{\rhoI}+
    \frac{a^2}{\Upsilon}\InfinityHawking 
    -a\sin\th \renormpphi)|\widecheck{\psi}|^2 \\
  & =
    \frac12 \chi_{\NullInfinity} c_J(\b-1)\rhoI^{\b+1}\frac{a^2}{\Upsilon}|q|^{-2}|\widecheck{\psi}|^2 , \\
  \JCurrent{(0,0,J_{\NullInfinity,\b})}[\widecheck{\psi}]\cdot(- N_{\NullInfinity})
  & =
    \frac12 \chi_{\NullInfinity} c_J(\b-1)\rhoI^{\b+1}|q|^{-2}\g(\partial_{\rhoI},  \InfinityHawking )|\widecheck{\psi}|^2 \\
  & =
    \frac12 \chi_{\NullInfinity} c_J(\b-1)\rhoI^{\b+1}|\widecheck{\psi}|^2 
\end{align*}
where we used that $\g_{\mathrm{oEF}}(\partial_{\rhoI}, \partial_{\rhoI})=\g_{\mathrm{oEF}}(\partial_{\rhoI}, \renormpphi)=0$ and $\g_{\mathrm{oEF}}(\partial_{\rhoI}, V_{\NullInfinity})=|q|^2$.  We therefore obtain 
\begin{align*}
  \begin{split}
    \int_{\Sigma_\NullInfinity(\tau)}  \JCurrent{(X_{\NullInfinity, \b}, 0, J_{\NullInfinity, \b})}[\widecheck{\psi}]\cdot N_{\Sigma_\NullInfinity}
    \ges{}& \int_{\Sigma_\NullInfinity(\tau)}\rhoI^{\b+3}\Big(\abs*{\partial_{\rhoI}\widecheck{\psi}}^2
            + \abs*{\InfinityHawking\widecheck{\psi}}^2
            +\abs*{\NablaAngular\widecheck{\psi}}^2+\abs*{\widecheck{\psi}}^2\Big) , \\
    \ges{}& \norm*{\widecheck{\psi}}_{H_{\NullInfinity}^{1, -\frac{\b+3}{2}}(\Sigma_{\NullInfinity}(\tau))}^2\\
    \int_{\NullInfinity(\tau_1, \tau_2)} \JCurrent{(X_{\NullInfinity, \b}, 0, J_{\NullInfinity, \b})}[\widecheck{\psi}]\cdot (-N_{\NullInfinity})
    \ges{}&  \int_{\NullInfinity(\tau_1, \tau_2)}\rhoI^{\b+1}\Big( \abs*{\rhoI\partial_{\rhoI}\widecheck{\psi}}^2
            + \abs*{\InfinityHawking\widecheck{\psi}}^2
            +\abs*{\NablaAngular\widecheck{\psi}}^2+\abs*{\widecheck{\psi}}^2\Big)\\
    \ges{}&  \norm*{\widecheck{\psi}}_{\bSobolevI{1, -\frac{\b+1}{2}}(\NullInfinity(\tau_1, \tau_2))}^2.
  \end{split}
\end{align*}

Performing the divergence theorem over $\Manifold(\tau_1, \tau_2)$, we obtain from \zcref{eq:general-divergence-theorem}
\begin{align*}
  &\int_{\Sigma(\tau_2)}\JCurrent{(X_{\NullInfinity, \b},  0, J_{\NullInfinity, \b})}[\widecheck{\psi}]\cdot N_{\Sigma(\tau_2)}
    + \int_{\NullInfinity(\tau_1,\tau_2)}\JCurrent{(X_{\NullInfinity, \b},  0, J_{\NullInfinity, \b})}[\widecheck{\psi}]\cdot (- N_{\NullInfinity})
    + \int_{\Manifold(\tau_1,\tau_2)}\D\cdot \JCurrent{(X_{\NullInfinity, \b},  0, J_{\NullInfinity, \b})}[\widecheck{\psi}]\\
  &
    = \int_{\Sigma(\tau_1)}\JCurrent{(X_{\NullInfinity, \b},  0, J_{\NullInfinity, \b})}[\widecheck{\psi}]\cdot N_{\Sigma(\tau_1)}.
\end{align*}
Using the bounds above and the trivial bound
\begin{align*}
  \int_{\Sigma(\tau_1)}\JCurrent{(X_{\NullInfinity, \b},  0, J_{\NullInfinity, \b})}[\widecheck{\psi}]\cdot N_{\Sigma(\tau_1)}
  &\les \int_{\Sigma_{\NullInfinity}(\tau_1)}\rhoI^{\b+3}\Big(\abs*{\partial_{\rhoI}\widecheck{\psi}}^2
    + \abs*{\InfinityHawking\widecheck{\psi}}^2
    +\abs*{\NablaAngular\widecheck{\psi}}^2+\abs*{\widecheck{\psi}}^2\Big) \\
  &\les \norm*{\widecheck{\psi}}_{H_{\NullInfinity}^{1, -\frac{\b+3}{2}}(\Sigma_{\rhoI \leq 2 \rho_1}(\tau_1))}^2,
\end{align*}
we deduce \zcref{eq:infinity-weighted-estimate-psic}.

\subsubsection{Proof of Proposition \texorpdfstring{\ref{prop:infinity-weighted-estimate}}{} for \texorpdfstring{$s\geq 2$}{}}
\label{sec:higher-order-infinity}

We first prove the estimate for $s=2$.

\begin{enumerate}
\item \emph{Killing commutators.}
  Since $T=\partial_v$ and $\Phi=\partial_{\phiOEF}$ are Killing, applying \zcref{eq:infinity-weighted-estimate-psic} for $s=1$ to $T\widecheck{\psi}$ and $\Phi\widecheck{\psi}$ gives
  \begin{align}
    \label{eq:NI-higher-Killing-commuted-estimate}
    \begin{split}
      & \norm*{(T,\Phi)\widecheck{\psi}}_{H_{\NullInfinity}^{1, -\frac{\b+3}{2}}(\Sigma_{\NullInfinity}(\tau_2))}
        +\norm*{(T,\Phi)\widecheck{\psi}}_{\bSobolevI{1, -\frac{\b+1}{2}}(\NullInfinity(\tau_1, \tau_2))}
        + \norm*{(T,\Phi)\widecheck{\psi}}_{\bSobolevI{1,-\frac{\b+2}{2}}(\Manifold_{\NullInfinity}(\tau_1,\tau_2))}\\
      \les{}& \norm*{\widecheck{\psi}}_{H_{\NullInfinity}^{2, -\frac{\b+3}{2}}(\Sigma_{\rhoI \leq 2\rho_1}(\tau_1))}
              + \norm*{F}_{\bSobolevI{1,-\frac{\b-4}{2}}(\Manifold_{\rhoI \leq 2 \rho_1}(\tau_1,\tau_2))}
              + \rho_1^{-\frac12}\norm*{\widecheck{\psi}}_{\compactSobolev{2}(\Manifold_{\rhoI \leq 2\rho_1}(\tau_1,\tau_2))}. 
    \end{split}
  \end{align}

\item \emph{Angular commutators.}
  We apply \zcref{eq:bound-DD-P-rp}
  to $\RotationVF_1\widecheck{\psi}$ and
  $\RotationVF_2\widecheck{\psi}$ and we obtain
  \begin{align}\label{eq:bound-angular-infinity}
    \begin{split}
      & |q|^2  \D\cdot \big(\JCurrent{(X_{\b}, 0, J_{\b})}[\RotationVF_1\widecheck{\psi}]+\JCurrent{(X_{\b}, 0, J_{\b})}[\RotationVF_2\widecheck{\psi}]\big)\\
      \gtrsim{}&  \sum_{i=1}^2\rhoI^{\b}\Big(|\rhoI\partial_{\rhoI}\RotationVF_i\widecheck{\psi}|^2
                 +|\partial_u \RotationVF_i\widecheck{\psi}|^2
                 + |\NablaAngular\RotationVF_i\widecheck{\psi}|^2+
                 |\RotationVF_i\widecheck{\psi}|^2\Big)\\
      &
        +X_{\b}(\RotationVF_1\widecheck{\psi})\cdot r [|q|^2 \Box_{\Metric},\RotationVF_1]\psi +X_{\b}(\RotationVF_2\widecheck{\psi})\cdot r [|q|^2 \Box_{\Metric},\RotationVF_2]\psi\\
      &
        +X_{\b}(\RotationVF_1\widecheck{\psi})\cdot r \RotationVF_1(|q|^2 \Box_{\Metric}\psi )+X_{\b}(\RotationVF_2\widecheck{\psi})\cdot r \RotationVF_2(|q|^2 \Box_{\Metric}\psi).
    \end{split}
  \end{align}
  Using \zcref{lem:weighted-hierarchy-commutators}, we compute the expression:
  \begin{align*}
    I\vcentcolon={}&X_{\b}(\RotationVF_1\widecheck{\psi})\cdot r [|q|^2 \Box_{\Metric},\RotationVF_1]\psi +X_{\b}(\RotationVF_2\widecheck{\psi})\cdot r [|q|^2 \Box_{\Metric},\RotationVF_2]\psi\\
    ={}&r X_{\b}(\RotationVF_1\widecheck{\psi})(-2a(\rhoI^2\partial_{\rhoI}+T)\RotationVF_2\psi
         +
         O(a^2)T^2\psi)+rX_{\b}(\RotationVF_2\widecheck{\psi}) (2a(\rhoI^2\partial_{\rhoI}+T)\RotationVF_1\psi
         +
         O(a^2)T^2\psi)\\
    ={}&\Big(-\rhoI^{\b}\partial_{\rhoI}\RotationVF_1\widecheck{\psi} + \frac{a^2}{\Upsilon}\rhoI^{\b}\InfinityHawking\RotationVF_1\widecheck{\psi}\Big)(-2a\rhoI (\rhoI^2\partial_{\rhoI}+T)\RotationVF_2 \widecheck{\psi}-2a\rhoI^2\RotationVF_2\widecheck{\psi}
         +
         O(a^2)\rhoI T^2 \widecheck{\psi})\\
                   &+\Big(-\rhoI^{\b}\partial_{\rhoI}\RotationVF_2\widecheck{\psi} + \frac{a^2}{\Upsilon}\rhoI^{\b}\InfinityHawking\RotationVF_2\widecheck{\psi}\Big)(2a\rhoI(\rhoI^2\partial_{\rhoI}+T)\RotationVF_1\widecheck{\psi}+ 2 a\rhoI^2\RotationVF_1\widecheck{\psi}
                     +
                     O(a^2)\rhoI T^2 \widecheck{\psi})\\
    ={}& 2a\rhoI^{\b}
         \Big(
         T\RotationVF_2\widecheck{\psi}\,\rhoI\partial_{\rhoI}\RotationVF_1\widecheck{\psi}
         -
         T\RotationVF_1\widecheck{\psi}\,\rhoI\partial_{\rhoI}\RotationVF_2\widecheck{\psi}
         \Big) \\
                   &-
                     2a\frac{a^2}{\Upsilon}\rhoI^{\b+2}
                     \Big(
                     \rhoI\partial_{\rhoI}\RotationVF_2\widecheck{\psi}\,\InfinityHawking\RotationVF_1\widecheck{\psi}
                     -
                     \rhoI\partial_{\rhoI}\RotationVF_1\widecheck{\psi}\,\InfinityHawking\RotationVF_2\widecheck{\psi}
                     \Big) \\
                   &-
                     2a^2\frac{a^2}{\Upsilon}\rhoI^{\b+2}
                     \Big(
                     T\RotationVF_2\widecheck{\psi}\,\Phi\RotationVF_1\widecheck{\psi}
                     -
                     T\RotationVF_1\widecheck{\psi}\,\Phi\RotationVF_2\widecheck{\psi}
                     \Big)\\
                   &+O(a)\rhoI^{\b}\big(-\rhoI\partial_{\rhoI}(\RotationVF_1+\RotationVF_2)\widecheck{\psi} + \frac{a^2}{\Upsilon}\rhoI \InfinityHawking(\RotationVF_1+\RotationVF_2)\widecheck{\psi}\big)
                     \big( T^2\widecheck{\psi}+ \rhoI (\RotationVF_1+\RotationVF_2)\widecheck{\psi}\big),
  \end{align*}
  where we wrote $X_{\b}=\rhoI^{\b+1}\big(-\partial_{\rhoI}+ \frac{a^2}{\Upsilon} \InfinityHawking\big)$, $\InfinityHawking=\frac{r^2+a^2}{r^2}T+ \frac{a}{r^2}\Phi$ and $\psi=\rhoI \widecheck{\psi}$.
  Notice that the terms involving two $\partial_{\rhoI}$ and $T$ derivatives cancel out in the sum.
  Using Cauchy-Schwarz, we can bound the above for $0<\lambda<1$ by
  \begin{align*}
    \abs*{I}\leq \lambda \rhoI^\b\sum_{i=1}^2|\rhoI\partial_{\rhoI}\RotationVF_i\widecheck{\psi}|^2+\lambda^{-1} \rhoI^\b \big(\sum_{i=1}^2|T\RotationVF_i\widecheck{\psi}|^2+\sum_{i=1}^2|\Phi\RotationVF_i\widecheck{\psi}|^2+\sum_{i=1}^2|\RotationVF_i\widecheck{\psi}|^2+|T^2\widecheck{\psi}|^2\big).
  \end{align*}
  For $\lambda$ sufficiently small, the first term above can be absorbed by the first line in \zcref{eq:bound-angular-infinity}, while the remaining terms only involve second derivatives of $T$ and $\Phi$ and therefore can be bounded, upon integration on $\Manifold(\tau_1, \tau_2)$, by the initial data norms using \zcref{eq:NI-higher-Killing-commuted-estimate} and using \zcref{prop:infinity-weighted-estimate} for $s=1$. Applying divergence theorem to \zcref{eq:bound-angular-infinity} and following the same steps as in proof of \zcref{prop:infinity-weighted-estimate} for $s=1$, we then deduce
  \begin{align}
    \label{eq:II-angular-commuted-estimate}
    \begin{split}
      & \sum_{i=1}^2\norm*{\RotationVF_i\widecheck{\psi}}_{H_{\NullInfinity}^{1, -\frac{\b+3}{2}}(\Sigma_{\NullInfinity}(\tau_2))}
        +\sum_{i=1}^2\norm*{\RotationVF_i\widecheck{\psi}}_{\bSobolevI{1, -\frac{\b+1}{2}}(\NullInfinity(\tau_1, \tau_2))}
        + \sum_{i=1}^2\norm*{\RotationVF_i\widecheck{\psi}}_{\bSobolevI{1,-\frac{\b+2}{2}}(\Manifold_{\NullInfinity}(\tau_1,\tau_2))}\\
      \les{}& \norm*{\widecheck{\psi}}_{H_{\NullInfinity}^{2, -\frac{\b+3}{2}}(\Sigma_{\rhoI \leq 2\rho_1}(\tau_1))}
              + \norm*{F}_{\bSobolevI{1,-\frac{\b-4}{2}}(\Manifold_{\rhoI \leq 2 \rho_1}(\tau_1,\tau_2))}
              + \rho_1^{-\frac12}\norm*{\widecheck{\psi}}_{\compactSobolev{2}(\Manifold_{\rhoI \leq 2\rho_1}(\tau_1,\tau_2))}.
    \end{split}
  \end{align}
  The angular elliptic estimate then gives respective control of the second angular derivative of $\widecheck{\psi}$.

\item \emph{The second $\rhoI$-derivative.}
  We apply \zcref{eq:bound-DD-P-rp}
  to $\Psi\vcentcolon=\rhoI \partial_{\rhoI}\widecheck{\psi}$ and we obtain
  \begin{align}\label{eq:bound-commutator-rhoI}
    \begin{split}
      \abs*{q}^2\D\cdot\big(\JCurrent{(X_{\b},0, J_{\b})}[\rhoI \partial_{\rhoI}\widecheck{\psi}]\big)  \ges{}&   \rhoI^\b  \Big(  |\pr_u\Psi|^2+|\NablaAngular\Psi|^2 +|\rhoI\pr_{\rhoI}\Psi|^2+
                                                                                                                |\Psi|^2\Big)\\
                                                                                                              &+r X_{\b}(\Psi)[ |q|^2 \Box_\g ,\rhoI \partial_{\rhoI}]\psi+r X_{\b}(\Psi) \rhoI \partial_{\rhoI}(|q|^2 \Box_\g \psi). 
    \end{split}
  \end{align}
  Using \zcref{lem:weighted-hierarchy-commutators}, we compute the expression:
  \begin{align*}
    Y\vcentcolon={}&r X_{\b}(\Psi)[ |q|^2 \Box_\g ,\rhoI \partial_{\rhoI}]\psi\\
                ={}&r X_{\b}(\Psi)\Big[-(\rhoI\partial_{\rhoI})^2\psi
                  +(\rhoI\partial_{\rhoI}\psi)\\
                &-\lapp_{\SSS^2}\psi-a^2\sin^2\theta \pr_u^2\psi
                  -2a\pr_u\pr_{\phiOEF}\psi+\conormalSpaceI{1}(\Manifold)\bDiffI^1\psi+\conormalSpaceI{1}(\Manifold)(\rhoI \partial_{\rhoI})^{\leq 2}\psi+|q|^2F\Big]\\
                ={}&\Big(-\rhoI^{\b}\partial_{\rhoI}\Psi + \frac{a^2}{\Upsilon}\rhoI^{\b}\InfinityHawking\Psi \Big)\rhoI \Big(-\rhoI\partial_{\rhoI}\Psi\Big)\\
                &+ X_{\b}(\Psi)\Big( \Psi-\lapp_{\SSS^2}\widecheck{\psi}-a^2\sin^2\theta T^2\widecheck{\psi}
                  -2aT\Phi\widecheck{\psi}+\conormalSpaceI{1}(\Manifold)\bDiffI^1\widecheck{\psi}+\conormalSpaceI{1}(\Manifold)(\rhoI \partial_{\rhoI})^{\leq 2}\widecheck{\psi}+|q|^2r F\Big)
  \end{align*}
  Notice that the first term in $|\partial_{\rhoI}\Psi|^2$ gives a positive contribution while the remaining terms can be bounded using Cauchy-Schwarz for $0<\lambda<1$ by
  \begin{align*}
    Y \geq{}& \rhoI^\b(1 -\conormalSpaceI{1}(\Manifold))\abs*{\rhoI \partial_{\rhoI}\Psi}^2-\lambda \rhoI^\b \abs*{\rhoI \partial_{\rhoI}\Psi}^2\\
      &-\lambda^{-1}\rhoI^{\b}\Big[|T\rhoI \partial_{\rhoI}\widecheck{\psi}|^2+|\Phi\rhoI \partial_{\rhoI}\widecheck{\psi}|^2+|\rhoI \partial_{\rhoI}\widecheck{\psi}|^2 +
        |\lapp_{\SSS^2}\widecheck{\psi}|^2
        +
        |T^2\widecheck{\psi}|^2\\
      &
        +\conormalSpaceI{1}(\Manifold)|\bDiffI^1\widecheck{\psi}|^2
        +
        |T\widecheck{\psi}|^2+r^2|F|^2\Big].
  \end{align*}
  For $\lambda$ and $\rhoI$ sufficiently small, the second term
  can be absorbed by the first line in \zcref{eq:bound-commutator-rhoI},
  while the remaining  terms only involve second derivatives in $T$, $\Phi$ or $\NablaAngular$ and therefore can be bounded, upon integration on
  $\Manifold(\tau_1, \tau_2)$, by the initial data norms using
  \zcref{eq:NI-higher-Killing-commuted-estimate, eq:II-angular-commuted-estimate}. Applying divergence theorem
  to \zcref{eq:bound-commutator-rhoI} and following the same steps as in
  proof of \zcref{prop:horizon-weighted-estimate} for $s=1$, we then
  deduce
  \begin{align}
    \label{eq:II-rho-commuted-estimate}
    \begin{split}
      & \norm*{\rhoI \partial_{\rhoI}\widecheck{\psi}}_{H_{\NullInfinity}^{1, -\frac{\b+3}{2}}(\Sigma_{\NullInfinity}(\tau_2))}
        +\norm*{\rhoI \partial_{\rhoI}\widecheck{\psi}}_{\bSobolevI{1, -\frac{\b+1}{2}}(\NullInfinity(\tau_1, \tau_2))}
        + \norm*{\rhoI \partial_{\rhoI}\widecheck{\psi}}_{\bSobolevI{1,-\frac{\b+2}{2}}(\Manifold_{\NullInfinity}(\tau_1,\tau_2))}\\
      \les{}& \norm*{\widecheck{\psi}}_{H_{\NullInfinity}^{2, -\frac{\b+3}{2}}(\Sigma_{\rhoI \leq 2\rho_1}(\tau_1))}
              + \norm*{F}_{\bSobolevI{1,-\frac{\b-4}{2}}(\Manifold_{\rhoI \leq 2 \rho_1}(\tau_1,\tau_2))}
              + \rho_1^{-\frac12}\norm*{\widecheck{\psi}}_{\compactSobolev{2}(\Manifold_{\rhoI \leq 2\rho_1}(\tau_1,\tau_2))}. 
    \end{split}
  \end{align}
\end{enumerate}

Combining \zcref{eq:NI-higher-Killing-commuted-estimate,eq:II-angular-commuted-estimate,eq:II-rho-commuted-estimate} we prove \zcref{eq:infinity-weighted-estimate-psic} for $s=2$.

The case $s\geq 3$ follows by induction, by commuting with
$A\in\bDiffI^{s-1}$ and using
\zcref{lem:weighted-hierarchy-commutators}.

\section{Energy-Morawetz estimate}\label{sec:Morawetz}

The goal of this section is to prove the following energy-Morawetz estimate.
\begin{proposition}\label{prop:energy-Morawetz}
  Let $\psi$ solve the inhomogeneous wave equation 
  \[\Box_\g \psi = F\]
  in an
  extremal Kerr--Newman spacetime with $\frac{|a|}{M}\ll 1$. 
  Then for $\de_1 \ll 1$ the following energy-Morawetz estimate holds true for any $\tau_1<\tau_2$:
  \begin{align}\label{eq:final-theorem-energy-morawetz}
    \norm*{\psi}_{H_{\EventHorizon, \operatorname{b}\NullInfinity}^{1, -\frac{1-\de_1}{2}, -1}(\Sigma(\tau_2))}
    +\norm*{\psi}_{\bSobolevHITrap{1, \frac{\de_1}{2}, -\frac 5 2 }(\Manifold(\tau_1, \tau_2))}
    \les \norm*{\psi}_{H_{\EventHorizon, \operatorname{b}\NullInfinity}^{1, -\frac{1-\de_1}{2}, -1}(\Sigma(\tau_1))}
    +\norm*{F}_{\bSobolevHI{0,\frac{\delta_1}{2}, -\frac{1}{2}}(\Manifold(\tau_1,\tau_2))}.
  \end{align}
\end{proposition}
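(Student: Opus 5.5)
The plan is to build a single global current as a sum of four pieces: an energy piece, an axisymmetric-type Morawetz piece, a microlocal correction at trapping, and a degenerate redshift piece. The coefficients are taken in the hierarchy $C_{\That}\gg1$, $c_{red},c_{\That}\ll1$, and everything is closed using $|a|/M\ll1$. The degenerate redshift piece is the member $\alpha=-\delta_1$ of the horizon hierarchy.

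\textbf{Step 1: energy current.} We take the multiplier $C_{\That}\,Y$. Here $Y$ equals the Hawking field $\That_{\HH}$ in $\{\rhoH<A_1\}$, where it is timelike, and equals $T$ for large $r$. In between it is interpolated in a compact region disjoint from both the ergoregion and the trapped set $[A_2,A_3]$.

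\begin{itemize}
\item The boundary fluxes on $\Sigma(\tau)$, $\EventHorizon$ and $\NullInfinity$ are then nonnegative and control the nondegenerate energy near $\NullInfinity$ and a degenerate energy near $\EventHorizon$. The latter is compared with the unweighted $H^1$ norm via \zcref{lemma:boundary-terms-ieF} and \zcref{lemma:boundary-terms-oEF}.
\item The bulk error is supported in the interpolation region. It is of size $O(a)$ times first derivatives, because $\That_{\HH}-T=O(a)\Phi$.
\end{itemize}

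\textbf{Step 2: Morawetz current away from trapping.} Following \cite{giorgiPhysicalspaceEstimatesAxisymmetric2024}, we take $X_{ax}=\mathcal F(r)\partial_r^{\mathrm{BL}}$ with $\mathcal F$ vanishing at $r_{\trap}=M+\sqrt{M^2+a^2}$. We add a Lagrangian correction $w_{ax}$, a one-form $J_{ax}$, and the small corrector $c_{\That}w_{\That}$.

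\begin{itemize}
\item Using \zcref{prop:fprr-HH}, \zcref{remark-derivative-TT} and the Hardy bounds \zcref{lemma:hardy-pointwise-rhoH}, the bulk should be bounded below by the displayed degenerate density $r^{-3}|\That\psi|^2+\rhoH^4 r^{-7}|\partial_r\psi|^2+\rhoH r^{-2}|\nabla\psi|^2+\rhoH^2r^{-6}|\psi|^2$ outside $\Manifold_{\trap}$.
\item This holds up to two errors: a mixed term $O(|a|r^{-3})\That\psi\,\partial_\phi\psi$, and a negative zeroth-order term localized near $\EventHorizon$.
\item The boundary terms of this current are bounded by $C$ times the energy. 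They are absorbed by $C_{\That}$ from Step 1.
\end{itemize}

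\textbf{Step 3: trapping region.} In $\Manifold_{\trap}$ we replace $\mathcal F$ by a mixed pseudodifferential multiplier $\widetilde X+\widetilde w$ whose symbol vanishes on $r=r_i(\sigma_i,\eta)$, in the spirit of \cite{tataruLocalEnergyEstimate2011}. We apply \zcref{lemma:PsiDO-divergence-theorem}.

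\begin{itemize}
\item The principal symbol of the commutator $\KCurrentPert{}$ should be a sum of squares $\sum|(1-r_i/r)(\sigma-\sigma_j)|^2$ plus $|\xi_r|^2$. This is exactly the density of $\bSobolevTrap{1}$, obtained by the factorization \zcref{eq:trapping-norm:sigmai-def} and the nondegeneracy $\partial_r^2\RR<0$ from \zcref{lemma:trapped-geodesics}.
\item Subprincipal errors are lower order and compactly supported in $r$. They are absorbed by the Hardy-improved zeroth-order term, using $|a|$ small and Gårding.
\item Gluing the pseudodifferential multiplier to the physical-space one produces errors supported away from trapping, where Step 2 is coercive.
\end{itemize}

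\textbf{Step 4: closing near the horizon (main obstacle).} Neither the mixed term nor the localized negative $|\psi|^2$ term near $\EventHorizon$ is controlled by the degenerate density there. We therefore add $c_{red}$ times the horizon current $(X_{\HH,\alpha},0,J_{\HH,\alpha})$ from the proof of \zcref{prop:horizon-weighted-estimate}, with $\alpha=-\delta_1$.

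\begin{itemize}
\item Its bulk gives $\rhoH^{-\delta_1}(|\partial_v\psi|^2+|\NablaAngular\psi|^2+|\rhoH\partial_{\rhoH}\psi|^2+|\psi|^2)$. This dominates $|a||\That\psi||\partial_\phi\psi|+|\psi|^2$ near $\rhoH=0$ once $|a|\ll c_{red}$.
\item Its cutoff error at $\rhoH\sim\rho_0$ is compactly supported and $O(c_{red}\rho_0^{-1})$. It is absorbed by the Morawetz bulk there, since that bulk is nondegenerate away from trapping and the horizon.
\item Its boundary flux yields the $H_{\EventHorizon}^{1,-\frac{1-\delta_1}{2}}$ part of the $\Sigma$ norm.
\end{itemize}

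The delicate bookkeeping is the ordering of constants. I would first fix $c_{\That}$, then $c_{red}$, then $\rho_0$, then take $|a|$ small, and finally $C_{\That}$ large.

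\textbf{Step 5: conclusion.} At null infinity, the Step 2 bulk gives the $\rhoI$ weights encoded in $\bSobolevHI{1,\frac{\delta_1}2,-\frac52}$. The $F$ terms are handled by Cauchy--Schwarz with weights dual to the bulk. This yields the $F$ norm with weights $(\delta_1/2,-1/2)$. Summing the divergence identities \zcref{eq:general-divergence-theorem} then gives \zcref{eq:final-theorem-energy-morawetz}.
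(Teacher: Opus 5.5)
\textbf{Overall.} Your architecture matches the paper's. It has the $\That_\chi$ energy, non-Killing only in $A_1<\rhoH<\frac32A_1$; the axisymmetric triplet plus $c_{\That}w_{\That}$; the $\alpha=-\delta_1$ horizon current as a degenerate redshift; and an $O(a)$ Tataru--Tohaneanu correction at trapping. The gap is in the closing bookkeeping you flag in Step 4: as ordered, the argument does not close.

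\textbf{The zeroth-order term and $\daxi$.} The negative term is $-4\overline{\VV}_\daxi\mathds{1}_{\{\rho_{1,\daxi}\le\rhoH\le\rho_{2,\daxi}\}}|\psi|^2$ with $\overline{\VV}_\daxi\lesssim1$. It is \emph{not} small in $a$: it does not vanish as $a\to0$, since it comes from regularizing Stogin's singular multiplier near $r=M$. So $|a|\ll c_{red}$ does nothing for it. It is absorbed only because it lives at $\rhoH\simeq\daxi$, where $c_{red}\rhoH^{-\delta_1}\gtrsim c_{red}\daxi^{-\delta_1}\gg1$. This is exactly where $\alpha<0$ is used, and it forces the regularization scale $\daxi$ to be chosen after $c_{red}$ and $\delta_1$; \zcref{prop:ax-symm-choice} allows this because its constant $c$ is independent of $\daxi$. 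Your list of constants omits $\daxi$, and this propagates:
\begin{enumerate}
\item Since $u_\daxi\simeq-2M^2/\daxi$ near $\EventHorizon$, the mixed term has coefficient of order $|a|/\daxi$. You need $|a|\ll c_{red}\daxi$, not $|a|\ll c_{red}$.
\item The axisymmetric fluxes through $\Sigma(\tau)$ and $\EventHorizon$ are of size $\daxi^{-1}$ times the $\That_\chi$-energy, so $C_{\That}$ depends on $\daxi$.
\end{enumerate}

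\textbf{Order of $C_{\That}$ and $|a|$.} You cannot take $C_{\That}$ large after $|a|$. The energy current produces a bulk error $C_{\That}\cdot O(a)$ in $A_1<\rhoH<\frac32A_1$ (\zcref{lemma:That-chi-deformation-tensor}), and the Morawetz bulk must absorb it. A consistent order is $\delta_1$, $c_{\That}$, $c_{red}$, $\daxi$, $C_{\That}$, and $|a|$ last.

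\textbf{Order of $\rho_0$ and $c_{red}$.} In your variant of the redshift (the hierarchy current cut off at a small $\rho_0$), choosing $\rho_0$ after $c_{red}$ contradicts your treatment of the cutoff error. At $\rhoH\simeq\rho_0$ the Morawetz bulk is not nondegenerate: it controls $|\rhoH\partial_{\rhoH}\psi|^2$ and $|\psi|^2$ only with weights that are positive powers of $\rho_0$. The cutoff error, however, is of size $c_{red}\rho_0^{-\delta_1}$ in those same quantities, so $c_{red}$ must be small relative to $\rho_0$. This is repairable: the positivity threshold for $\rho_0$ in the hierarchy depends only on $\delta_1$ and an upper bound for $|a|$, so fix $\rho_0$ first. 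The paper avoids the issue entirely. \zcref{prop:extremal-redshift} checks positivity of the $\alpha=-\delta_1$ bulk by explicit computation ($C=800$, $|a|/M<1/30$) on the fixed region $\{\rhoH<r_+/80\}$. The cutoff error then lives in $r_+/80\le\rhoH\le r_+/40$, where the Morawetz bulk really is nondegenerate, and $c_{red}$ never has to be compared with a small radius.

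\textbf{Step 3 remainders.} Gårding in the spatial variables does not dispose of the Weyl-calculus remainders. The multipliers are pseudodifferential only tangentially, so the remainders include $\langle R_1\partial_t\psi,\psi\rangle$ and $\langle R_0\partial_t\psi,\partial_t\psi\rangle$ with $R_j$ of tangential order $j-2$. These involve $\|\partial_t\psi\|_{H^{-1}_c}$, which the zeroth-order Morawetz term does not control. The paper handles this through the equation (\zcref{lemma:ILED-near:LoT-control:zero-order-mixed-term}). That produces $O(a)\big(\|F\|_{L^2}^2+\sup_{\tau}\|\psi\|^2_{\bSobolevHI{1,0,-1}(\Sigma(\tau))}\big)$, which is then absorbed using the energy.
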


The proof of \zcref{prop:energy-Morawetz} consists of two parts:
\begin{enumerate}
\item the integrated local energy decay (or Morawetz estimates): 
  these are obtained by combining differential multipliers whose bulk is positive away from trapping with a pseudodifferential correction localized near the trapped set.
  More precisely:
  \begin{enumerate}
  \item outside trapping, the choice of differential multipliers is based on a choice in axial symmetry introduced in \cite{giorgiBoundednessDecayTeukolsky2026}. Nevertheless, outside axial symmetry a mixed term in the bulk cannot be absorbed near the event horizon, even when \(\frac{|a|}{M}\ll1\),
  \item the \(\a=-\de_1\) member of the \(\EventHorizon\)-hierarchy is used as a degenerate redshift estimate to absorb the mixed term near the event horizon. For this particular member, the hierarchy remains valid in a fixed neighborhood of the event horizon, rather than only in an arbitrarily small one, which makes it possible to absorb the problematic terms in that region.
  \item at trapping a pseudodifferential correction is needed, following \cite{tataruLocalEnergyEstimate2011}, to obtain positivity of the bulk. 
  \end{enumerate}
  The final estimates are stated in \zcref{lemma:Morawetz:cutoff-PsiDO:bulk} and proved in Section \ref{sec:ILED}.
\item the energy estimates: these are obtained through a globally timelike vectorfield, which is Killing outside a compact region away from the event horizon and trapping. The error term produced in the bulk can be absorbed for $\frac{|a|}{M} \ll 1$ by the Morawetz bulk obtained in the previous point and all the boundary terms are positive. This is proved in \zcref{subs:energy}.
\end{enumerate}
The proof of \zcref{prop:energy-Morawetz} is obtained by combining the Morawetz estimates with the energy estimates multiplied by a large constant, and it is proved in \zcref{sec:proof-e-mor}.

\subsection{Integrated local energy decay (Morawetz) estimates}\label{sec:ILED}

The goal of this subsection is to prove \zcref{lemma:Morawetz:cutoff-PsiDO:bulk}, stated at the end of the subsection. The proof is obtained by combining the choice of differential multipliers outside trapping (given by the axially symmetric choice and a degenerate redshift estimate) constructed in~\zcref{sec:positivity-bulk-outside-trapping} with a pseudodifferential correction at trapping obtained in~\zcref{sec:Morawetz:principal-order-bulk}.

\subsubsection{Positivity of the bulk outside trapping}\label{sec:positivity-bulk-outside-trapping}

In the following proposition we construct differential multipliers which have a positive lower bound outside trapping. 

\begin{proposition}
  \label{prop:Mor-axially-sym:main-bulk}
  On extremal Kerr--Newman spacetime with $\frac{|a|}{M}< \frac{1}{30}$, for any $\de_1 \ll 1$ there exists a regular function $u_\daxi(r)\in \conormalSpaceHI{0,-2}(\Manifold)$ and a multiplier triplet $(X_{\Mor}^\diff, w_{\Mor}^\diff, J_{\Mor}^\diff)$
  such that for every function $\psi$:
  \begin{equation}\label{eq:Mor-axially-sym:multiplier-choices:square:expansion}
    \begin{split}
      \QQ^{(X_{\Mor}^\diff, w_{\Mor}^\diff, J_{\Mor}^\diff)}[\psi]
      \geq{}&  c \Big(\AxiSquareSumCoeff_1^2\abs*{\That\psi}^2
              + \AxiSquareSumCoeff_2^2\abs*{\partial^{\mathrm{BL}}_r\psi}^2+ \widetilde{\AxiSquareSumCoeff}_2^2\abs*{\rhoH\partial_{\rhoH}\psi}^2
              + \AxiSquareSumCoeff_3^2\abs*{\nabla\psi}^2
              + \AxiSquareSumCoeff_4^2\abs*{\psi}^2\Big)
      \\
            &-    \frac{2aru_\daxi}{|q|^2(r^2+a^2)^2}\That \psi \pr_\phi \psi
              ,
    \end{split}    
  \end{equation}
  where
  \begin{align*}
    \AxiSquareSumCoeff_1^2={}& \frac{1}{r^3} \big( 1-\frac{r_{\trap}}{r}\big)^2 +\mathds{1}_{\{\rhoH < \frac{M}{80} \}} \rhoH^{-\de_1}\in \conormalSpaceHI{-\de_1,3}(\Manifold), \\
    \AxiSquareSumCoeff_2^2={}&  \frac{\rhoH^4}{r^7} \in \conormalSpaceHI{4,3}(\Manifold), \\
    \widetilde{\AxiSquareSumCoeff}_2^2={}& \mathds{1}_{\{\rhoH < \frac{M}{80} \}} \rhoH^{-\de_1} \in \conormalSpaceH{-\de_1}(\Manifold), \\
    \AxiSquareSumCoeff_3^2={}& \frac{1}{r^2}\rhoH \big( 1-\frac{r_{\trap}}{r}\big)^2 +\mathds{1}_{\{\rhoH < \frac{M}{80} \}} \rhoH^{-\de_1}\in \conormalSpaceHI{-\de_1,1}(\Manifold), \\
    \AxiSquareSumCoeff_4^2={}& \frac{\rhoH^2}{r^6}+c_{red}\mathds{1}_{\{\rhoH < \frac{M}{80} \}} \rhoH^{-\de_1}\in \conormalSpaceHI{-\de_1,4}(\Manifold),
  \end{align*}
  for some $c>0$.
\end{proposition}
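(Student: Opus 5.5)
The proof splits the triplet into an axisymmetric Morawetz part and a degenerate redshift part:
\[
(X_{\Mor}^\diff, w_{\Mor}^\diff, J_{\Mor}^\diff)=(X_{ax},w_{ax},J_{ax})+c_{red}\,\big(\chi_{\HH} X_{-\de_1,C},\,0,\,\chi_{\HH}J_{-\de_1}\big),
\]
with $\chi_{\HH}$ a cutoff adapted to the fixed scale $\{\rhoH<\frac{M}{80}\}$. The first part is the physical-space choice in axial symmetry, following \cite{giorgiPhysicalspaceEstimatesAxisymmetric2024}. The second part is the $\alpha=-\de_1$ member of the horizon multiplier of \zcref{lemma:horizon:main-bulk}, together with the Hardy current used in the proof of \zcref{prop:horizon-weighted-estimate}.

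\textbf{Step 1: the axisymmetric part.}
\begin{itemize}
\item Take $X_{ax}=\mathcal F(r)\partial_r^{\mathrm{BL}}$ with $\mathcal F=u_\daxi(r)\,\frac{\Delta}{(r^2+a^2)^2}$. The function $u_\daxi$ is chosen so that $u_\daxi'$ is proportional to $\frac{(r^2+a^2)^2}{\Delta}\,\TT$-type weights.
\item By \zcref{remark-derivative-TT}, the principal $\That$- and angular coefficients of $K^{(X,w)}$ then come out as nonnegative multiples of $\widetilde{\TT}^2\sim r^2(r-r_\trap)^2$.
\item Choose $w_{ax}=\frac12\D\cdot X_{ax}-(\text{small correction})$ to cancel the $\g^{\mu\nu}$ contribution and keep a positive $|\partial_r^{\mathrm{BL}}\psi|^2$ coefficient $\sim\rhoH^4r^{-7}$. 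Choose $J_{ax}$ as a multiple of $\partial_r$ so that the zeroth-order term $-\frac12\Box w|\psi|^2+\div(J|\psi|^2)$ gives $\rhoH^2r^{-6}|\psi|^2$.
\item Work in Boyer--Lindquist form, using $|q|^2\g^{-1}=\Delta\partial_r^2+\Delta^{-1}\RR$ with $\RR$ from \zcref{eq:RR-def}. Decompose $\partial_t=\That-\frac{a}{r^2+a^2}\Phi$ and $\Phi$ in terms of $\renormpphi$.
\item The only non-axisymmetric remainder is the cross term between $\That$ and $\Phi$ coming from $\partial_r$ of the $a(r^2+a^2)$ entries. This is exactly the displayed error $-\frac{2aru_\daxi}{|q|^2(r^2+a^2)^2}\That\psi\,\pr_\phi\psi$, which is kept on the right-hand side.
\item The $\daxi$-smallness and the condition $\frac{|a|}{M}<\frac1{30}$ are used to make the remaining $O(a)$ and $O(a^2)$ coefficients perturbative relative to $\AxiSquareSumCoeff_i^2$.
\item This step produces all the weights $\AxiSquareSumCoeff_i^2$ except the indicator terms. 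It also leaves a localized negative zeroth-order term supported near the horizon.
\end{itemize}

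\textbf{Step 2: the degenerate redshift part.}
\begin{itemize}
\item Apply \zcref{eq:QQ-X-a-C-general} with $\alpha=-\de_1\in(-1+4\mathbf{a}^2,1-4\mathbf{a}^2)$.
\item The Sylvester argument in the proof of \zcref{prop:horizon-weighted-estimate} gives, on $\{\rhoH<\frac{M}{80}\}$, the lower bound $\rhoH^{-\de_1}\big(|\rhoH\partial_{\rhoH}\psi|^2+|\NablaAngular\psi|^2+|\HawkingHorizon\psi|^2+|\psi|^2\big)$.
\item The key point is that the lower-order terms in $\conormalSpaceH{\alpha+1}$ must now be absorbed on a fixed neighborhood rather than an arbitrarily small one. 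For this I would bound them explicitly: they carry an extra $\rhoH/M\le\frac1{80}$ factor against constants controlled by $C$. This bound is what fixes the numerical threshold $\frac{M}{80}$.
\item The cutoff errors from \zcref{current-cutoff} live on $\frac{M}{80}\le\rhoH\le\frac{M}{40}$. There the Step 1 bulk is nondegenerate, since this region lies away from trapping and from $\rhoH=0$. Taking $c_{red}$ small absorbs them.
\item Taking $c_{red}$ small, rather than large, also ensures that the Step 1 negative zeroth-order term near the horizon is dominated. That term is $O(1)$ in $|\psi|^2$, while $c_{red}\rhoH^{-\de_1}$ blows up as $\rhoH\to0$. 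More precisely, I would make the negative region depend on $\daxi$ and shrink it so that $c_{red}\rhoH^{-\de_1}$ wins there.
\end{itemize}

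\textbf{Step 3: summing.} Adding the two parts gives the stated weights, including the indicator contributions in $\AxiSquareSumCoeff_1^2$, $\widetilde{\AxiSquareSumCoeff}_2^2$, $\AxiSquareSumCoeff_3^2$ and $c_{red}$ in $\AxiSquareSumCoeff_4^2$.

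\textbf{Main obstacle.} I expect the hardest step to be the precise Step 1 computation. It requires choosing $u_\daxi$, $w_{ax}$ and $J_{ax}$ so that every coefficient is nonnegative globally, with only the mixed term left over, while $u_\daxi$ stays in $\conormalSpaceHI{0,-2}$. In particular, near the horizon the $\partial_r^{\mathrm{BL}}$ weight degenerates like $\rhoH^4$, and the positivity of $-\frac12\Box w$ must be checked by hand there. The compatibility of the constants ($\daxi$, $c_{red}$, the size of the negative zeroth-order region) is the delicate ordering to carry out.
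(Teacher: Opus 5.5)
Your overall architecture matches the paper's. You take the axisymmetric triplet of \cite{giorgiPhysicalspaceEstimatesAxisymmetric2024}, built from $z=\frac{(r-M)^2}{(r^2+a^2)^2}$ and $u_\daxi$, and add $c_{red}$ times the $\alpha=-\de_1$ horizon current with its Hardy term on the fixed neighborhood $\{\rhoH<\frac{M}{80}\}$. Then $c_{red}$ is taken small to absorb the cutoff errors. Finally $\daxi$ is taken small, which is legitimate because the axisymmetric lower bounds are uniform in $\daxi$. This lets $c_{red}\rhoH^{-\de_1}$ beat the localized negative potential $\overline{\VV}_\daxi$, which arises from regularizing Stogin's singular multiplier $u_S\sim-\frac{c}{r-M}$ at scale $\daxi$. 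Your Sylvester-plus-explicit-absorption treatment of the fixed-scale redshift is a workable alternative to the paper's explicit completion of squares with $C=800$.

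\textbf{The gap is in Step 1.} As you set it up, the axisymmetric part does not produce $\frac{1}{r^3}\big(1-\frac{r_\trap}{r}\big)^2\abs*{\That\psi}^2$. With $w_{ax}\approx\frac12\D\cdot X_{ax}$ the bulk is essentially $\pi^{X_{ax}}$. Its angular coefficient $-\frac12\mathcal F\,\partial_r(|q|^{-2})=\frac{r\mathcal F}{|q|^4}$ has the sign of $u_\daxi$. Its $\abs*{\That\psi}^2$ coefficient has the opposite sign; for $a=0$ it equals $-\frac{Mu_\daxi}{r^3(r-M)}$. Since $u_\daxi$ changes sign at $r_\trap$, one of the two coefficients is negative on each side of trapping. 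The correct Lagrangian is $w_{ax}=\frac12 z\,\partial_r u_\daxi$ from \zcref{lemma:identity:prop.Morawetz1}. For $a=0$ it equals $\frac12\D\cdot X_{ax}-\frac{M(r-M)u_\daxi}{r^5}$, so it is not a small perturbation of $\frac12\D\cdot X_{ax}$. It cancels the $\abs*{\That\psi}^2$ coefficient exactly and converts it into the angular positivity $\frac{u_\daxi\TT}{(r^2+a^2)^3}\abs*{\NablaAngular\psi}^2$, leaving only the mixed term. So after a correct Step 1 there is no $\That$ control at all away from the horizon.

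The missing ingredient is a separate small Lagrangian corrector added on top of $w_{ax}$. The paper adds $c_{\That}w_{\That}$ with $2w_{\That}=-\frac{(r-M)^2(r-r_\trap)^2}{r^7}$. Then $|q|^2\QQ^{(0,w_{\That},0)}[\psi]$ contains $\frac{(r-r_\trap)^2(r^2+a^2)^2}{2r^7}\abs*{\That\psi}^2$. It also contains negative radial, angular and zeroth-order terms, which vanish at least as fast at the horizon and at trapping as the positive axisymmetric ones and are absorbed for $c_{\That}\ll 1$.

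Shifting $w$ by $\lambda<0$ adds $\lambda\,\g^{\mu\nu}\partial_\mu\psi\partial_\nu\psi$ (plus a zeroth-order term) to the bulk. This raises the $\abs*{\That\psi}^2$ coefficient and lowers the radial and angular ones. So the purpose you assign to your ``small correction'' (keeping $\abs*{\partial^{\mathrm{BL}}_r\psi}^2$ positive) is backwards. Moreover, its weight must vanish quadratically at $r_\trap$ to preserve the angular positivity there.

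Your Step 2 also depends on this. The cutoff errors on $\{\frac{M}{80}\le\rhoH\le\frac{M}{40}\}$ contain $\abs*{\partial_{\rhoH}\psi}^2$ and $\abs*{\partial_v\psi}^2$. These involve $\That\psi$ and are not controlled there by the axisymmetric bulk alone.

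Two smaller points:
\begin{itemize}
\item $u_\daxi$ is increasing with $u_\daxi(r_\trap)=0$, so that $u_\daxi\TT\ge 0$. It is not the case that $u_\daxi'$ is proportional to $\TT$-type weights.
\item Step 1 needs no smallness of $a$ and leaves no $O(a)$ remainder other than the mixed term, since the identity in \zcref{lemma:identity:prop.Morawetz1} is exact. The hypothesis $\frac{|a|}{M}<\frac1{30}$ enters only through the fixed-scale redshift.
\end{itemize}
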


The proof of \zcref{prop:Mor-axially-sym:main-bulk} is given at the end of this subsection, after combining the axially symmetric Morawetz multiplier with a degenerate redshift estimate.
Observe that for $|a|/M \ll 1$, the mixed term on the right hand side of \zcref{eq:Mor-axially-sym:multiplier-choices:square:expansion} can be absorbed \emph{outside the trapping region}.

\begin{corollary}\label{cor:control-outside-trapping}
  On extremal Kerr--Newman spacetime with $\frac{|a|}{M}\ll 1$, the same choice as in \zcref{prop:Mor-axially-sym:main-bulk} gives 
  \begin{equation*}
    \int_{\Manifold_{\nontrap}(\tau_1,\tau_2)}\KCurrent{(X_{\Mor}^\diff, w_{\Mor}^\diff, J_{\Mor}^\diff)}[\psi]
    \ges \norm*{\psi}^2_{\bSobolevHI{1,\frac{\delta_1}{2},-\frac{5}{2}}(\Manifold_{\nontrap}(\tau_1,\tau_2))}.
  \end{equation*}
\end{corollary}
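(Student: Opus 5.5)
The plan is to integrate the pointwise lower bound \zcref{eq:Mor-axially-sym:multiplier-choices:square:expansion} of \zcref{prop:Mor-axially-sym:main-bulk} over $\Manifold_{\nontrap}(\tau_1,\tau_2)$. I would first absorb the mixed term $-\frac{2aru_\daxi}{|q|^2(r^2+a^2)^2}\That\psi\,\pr_\phi\psi$ into the positive part, and then compare the resulting weights $\AxiSquareSumCoeff_i^2$ with the density of $\norm*{\cdot}^2_{\bSobolevHI{1,\frac{\delta_1}{2},-\frac52}}$.

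\textbf{Step 1: the mixed term.} Since $u_\daxi\in\conormalSpaceHI{0,-2}(\Manifold)$, the coefficient is $O(|a|)\,\conormalSpaceHI{0,3}$, i.e.\ bounded near $\EventHorizon$ and $O(r^{-3})$ at infinity. I write $\pr_\phi=\sin\th\,\renormpphi-a\sin^2\th\,\pr_t$ and express $\pr_t$ through $\That$ and $\renormpphi$, so that $|\pr_\phi\psi|\lesssim |\renormpphi\psi|+|a||\That\psi|\lesssim r|\nabla\psi|+|a||\That\psi|$ on $\{r\le 4M\}$. A similar bound holds for large $r$, where it is compatible with the $r^{-3}$ decay. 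Cauchy--Schwarz then bounds the mixed term by
\[
|a|\,\conormalSpaceHI{0,3}\big(|\That\psi|^2+r^2|\nabla\psi|^2\big).
\]
Outside the trapping region, $(1-r_{\trap}/r)^2\gtrsim 1$ uniformly, because $\rhoH\notin[A_2,A_3]$ and all trapping radii lie inside that interval for $|a|\ll M$. The remaining regions are handled as follows.
\begin{itemize}
\item Near the horizon ($\rhoH<M/80$): $\AxiSquareSumCoeff_1^2,\AxiSquareSumCoeff_3^2\gtrsim \rhoH^{-\de_1}\ge c$, so the bounded coefficient is dominated for $|a|$ small. This is exactly where the degenerate redshift piece is needed.
\item In the intermediate compact region away from trapping: $\AxiSquareSumCoeff_1^2\gtrsim 1$ and $\AxiSquareSumCoeff_3^2\gtrsim\rhoH\gtrsim 1$, so again the term is absorbed.
\item For large $r$: $\AxiSquareSumCoeff_1^2\sim r^{-3}$ and $\AxiSquareSumCoeff_3^2\sim r^{-2}$, which match the $r^{-3}$ decay of the coefficient.
\end{itemize}
In all three cases the mixed term is absorbed by half of the positive terms once $|a|/M\ll1$.

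\textbf{Step 2: comparison of weights.} It remains to check that the positive terms control the norm on the right-hand side.

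Near $\EventHorizon$, the weight $\rhoH^{-\de_1}$ multiplies $|\That\psi|^2$, $|\rhoH\pr_{\rhoH}\psi|^2$, $|\nabla\psi|^2$ and $c_{red}|\psi|^2$. Since $\pr_v$ is a combination of $\HawkingHorizon$ and $\renormpphi$ with bounded coefficients, this controls $\rhoH^{-\de_1}(|\pr_v\psi|^2+|\NablaAngular\psi|^2+|\rhoH\pr_{\rhoH}\psi|^2+|\psi|^2)$, which is precisely the $\bSobolevH{1,\frac{\de_1}{2}}$ density. One caveat: $\rhoH\pr_{\rhoH}$ in iEF differs from $\rhoH\partial_r^{\mathrm{BL}}$ by $\rhoH^{-1}\HawkingHorizon$. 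This is harmless because the $\widetilde{\AxiSquareSumCoeff}_2$ term is stated directly in terms of $\rhoH\pr_{\rhoH}$.

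On the compact region away from $\EventHorizon$, $\NullInfinity$ and trapping, all coefficients are $\gtrsim1$. There $\pr^{\mathrm{BL}}_r$, $\That$ and $\nabla$ span, so we control $\norm*{\psi}_{\compactSobolev{1}}$.

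Near $\NullInfinity$, the weight $-\frac52$ corresponds to density $\rhoI^{5}$ in the coordinate volume. One converts $\pr^{\mathrm{BL}}_r=\pr^{\mathrm{oEF}}_r-\Upsilon^{-1}\InfinityHawking$ via \zcref{eq:relation-BL-iEF}, so that $|\rhoI\pr_{\rhoI}\psi|^2=r^2|\pr_r^{\mathrm{oEF}}\psi|^2\lesssim r^2(|\pr_r^{\mathrm{BL}}\psi|^2+|\That\psi|^2)$. The weights then match:
\begin{itemize}
\item $r^{-7}\cdot r^{-2}\cdot r^{2}$ (the $\AxiSquareSumCoeff_2^2$ term against $|\rhoI\pr_{\rhoI}\psi|^2$), together with $r^{-3}\ge r^{-5}$ for the $\That$ contribution;
\item $r^{-2}|\nabla\psi|^2=r^{-4}|\NablaAngular\psi|^2$, which dominates $r^{-5}|\NablaAngular\psi|^2$;
\item $r^{-6}|\psi|^2$ against $r^{-5}|\psi|^2$.
\end{itemize}
The weight in the zeroth-order term does not quite match here. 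I would fix it using the one-dimensional Hardy inequality of \zcref{corollary-hardy-dejan-cutoff} in $\rhoI$, trading $|\psi|^2$ for $|\rhoI\pr_{\rhoI}\psi|^2$ plus terms in the compact region. Alternatively, one can read the zeroth-order density as already acceptable after accounting for the $|q|^2$ volume factor in the integration convention.

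Finally I integrate over $\Manifold_{\nontrap}(\tau_1,\tau_2)$. I expect the main obstacle to be Step 1 near the horizon: one has to verify carefully that the horizon part of the bound dominates the bounded, non-vanishing mixed coefficient using only the indicator-supported $\rhoH^{-\de_1}$ terms. This is where $|a|\ll M$ and the choice of $c_{red}$ must be compatible.
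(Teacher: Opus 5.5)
Your approach is the same as the paper's. You absorb the mixed term $-\frac{2aru_\daxi}{|q|^2(r^2+a^2)^2}\That\psi\,\partial_\phi\psi$ by Cauchy--Schwarz region by region using $|a|/M\ll1$, with the $\rhoH^{-\de_1}$ redshift terms doing the work near $\EventHorizon$. You then compare the weights $\AxiSquareSumCoeff_i^2$ pointwise with the $\bSobolevHI{1,\frac{\delta_1}{2},-\frac52}$ density, converting $\partial_r^{\mathrm{BL}}=\partial_r^{\mathrm{oEF}}-\Upsilon^{-1}\InfinityHawking$ at infinity.

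However, your large-$r$ bookkeeping drops the factors $\rhoH\sim r$, and one resulting claim breaks Step 1 as written. In fact $\AxiSquareSumCoeff_3^2=r^{-2}\rhoH(1-r_{\trap}/r)^2\sim r^{-1}$, not $r^{-2}$. With your value the absorption would fail. Your own bound produces a cross term $|a|r^{-2}|\That\psi||\nabla\psi|$. The weights $r^{-3}|\That\psi|^2$ and $r^{-2}|\nabla\psi|^2$ can only absorb products whose coefficient is at most a multiple of $r^{-5/2}$. So no smallness of $|a|$ helps once $r$ is large. With the correct $r^{-1}$ you obtain exactly the paper's bound $O(|a|)\big(\rhoI^3|\That\psi|^2+\rhoI|\nabla\psi|^2\big)$, which is absorbed.

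The same slip affects Step 2.
\begin{itemize}
\item $\AxiSquareSumCoeff_4^2=\rhoH^2r^{-6}\sim\rhoI^4\ge\rhoI^5$. So the zeroth-order mismatch you found does not exist, and the Hardy detour is unnecessary. It would work, but the alternative reason you offer (the $|q|^2$ volume factor) is not valid, since both sides are integrated against the same measure.
\item $\AxiSquareSumCoeff_2^2=\rhoH^4r^{-7}\sim r^{-3}$. This matches $\rhoI^5|\rhoI\partial_{\rhoI}\psi|^2=r^{-3}|\partial_r^{\mathrm{oEF}}\psi|^2$ exactly, together with $\AxiSquareSumCoeff_1^2|\That\psi|^2\sim r^{-3}|\That\psi|^2$.
\item $\AxiSquareSumCoeff_3^2|\nabla\psi|^2\sim\rhoI^3|\NablaAngular\psi|^2$, better than the $\rhoI^4$ you wrote.
\end{itemize}

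Finally, near $\EventHorizon$ the mixed coefficient is $O(|a|/\daxi)$, not merely $O(|a|)$, because $u_\daxi=-2M^2/\daxi$ there. Meanwhile the constant $c$ in the positive terms depends on $c_{red}$. So $|a|$ must be chosen small after $\daxi$ and $c_{red}$ are fixed. This is consistent with the paper's ordering of constants and sharpens the compatibility issue you flagged at the end.
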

\begin{proof}
  Using that $u_\daxi(r)\in \conormalSpaceHI{0,-2}(\Manifold)$, we can bound the mixed term on the right hand side of \zcref{eq:Mor-axially-sym:multiplier-choices:square:expansion} by Cauchy-Schwarz for $\rhoH \ll 1$:
  \begin{align*}
    \abs*{\frac{2aru_\daxi}{|q|^2(r^2+a^2)^2}\That \psi \pr_\phi \psi } \les  
    O(|a|)    |\That \psi \pr_\phi \psi| \les O(|a|) \big(\rhoH^{-\de_1} \abs*{\That\psi}^2 + \rhoH^{-\de_1} \abs*{\nabla\psi}^2\big)
  \end{align*}
  and for $\rhoI \ll 1$:
  \begin{align*}
    \abs*{\frac{2aru_\daxi}{|q|^2(r^2+a^2)^2}\That \psi \pr_\phi \psi } \les  
    O(|a|) \frac{1}{r^3}     |\That \psi \pr_\phi \psi| \les O(|a|) \big(\rhoI^{3} \abs*{\That\psi}^2 + \rhoI \abs*{\nabla\psi}^2\big)
  \end{align*}
  and similarly in a compact region. Hence for $\frac{|a|}{M}\ll 1$ outside the trapping region, the mixed term can be absorbed by the first line of \zcref{eq:Mor-axially-sym:multiplier-choices:square:expansion}. In particular, integrating on $\Manifold_{\nontrap}(\tau_1,\tau_2)$, we deduce, using \zcref{eq:relation-BL-iEF}, 
  \begin{align*}
    &\int_{\Manifold_{\nontrap}(\tau_1,\tau_2)}  \KCurrent{(X_{\Mor}^\diff, w_{\Mor}^\diff, J_{\Mor}^\diff)}[\psi] \\
    \ges{}&  \norm*{\psi}_{\compactSobolev{1}(\Manifold_{\nontrap}(\tau_1,\tau_2))}^2+\int_{\Manifold_{\nontrap}(\tau_1,\tau_2)\cap \{ \rhoH < \frac{M}{160}\}}\rhoH^{-\de_1}\big(\abs*{\partial_v\psi}^2
      + \abs*{\rhoH\partial_{\rhoH}\psi}^2
      +\abs*{\NablaAngular\psi}^2
      + \abs*{\psi}^2\big)\\
    & + \int_{\Manifold_{\nontrap}(\tau_1,\tau_2)\cap \{ \rhoI \ll 1 \}}  \rhoI^3\abs*{\partial_u\psi}^2
      + \rhoI^5\abs*{\rhoI\partial_{\rhoI}\psi}^2
      + \rhoI^3\abs*{\NablaAngular\psi}^2
      + \rhoI^4\abs*{\psi}^2+\norm*{\psi}_{\compactSobolev{1}(\Manifold_{\nontrap}(\tau_1,\tau_2))}^2\\
    \gtrsim{}&    \norm*{\psi}_{\bSobolevH{1, \frac{\de_1}{2}}(\Manifold_{\nontrap}(\tau_1,\tau_2))}^2+\norm*{\psi}_{\bSobolevI{1, -\frac{5}{2}}(\Manifold_{\nontrap}(\tau_1,\tau_2))}^2+\norm*{\psi}_{\compactSobolev{1}(\Manifold_{\nontrap}(\tau_1,\tau_2))}^2,
  \end{align*}
  as stated. 
\end{proof}

\paragraph*{The choice in axial symmetry}

We first recall the following basic lemma from \cite{giorgiBoundednessDecayTeukolsky2023}. In this subsection $u$ and $v$ are not the Eddington-Finkelstein coordinates, but given functions of $r$.

\begin{lemma}[Proposition B.10 in \cite{giorgiBoundednessDecayTeukolsky2023}]
  \label{lemma:identity:prop.Morawetz1}
  For a multiplier triplet $(X_{ax}, w_{ax}, J_{ax})$ given by
  \begin{equation*}
    X_{ax}= \FF \partial^{\mathrm{BL}}_r, \qquad \quad
    \FF=z u,  \qquad \quad
    2 w_{ax} = z \pr_r u , \qquad \quad
    J_{ax}=v \partial^{\mathrm{BL}}_r, 
  \end{equation*}
  for some functions $(z, u, v) = (z(r), u(r), v(r))$, the current
  $\QQ^{(X_{ax}, w_{ax}, J_{ax})}[\psi]$ satisfies 
  \begin{align}
    \label{identity:prop.Morawetz1}
    \begin{split}
      |q|^2\QQ^{(X_{ax}, w_{ax}, J_{ax})}[\psi]
      ={}&z^{\frac{1}{2} }\Delta^{\frac{3}{2} }\AA |\pr^{\mathrm{BL}}_r\psi|^2 + \UU^{\a\b}(\pr_\a \psi )(\pr_\b \psi )+\VV |\psi|^2+|q|^2\div(J_{ax} |\psi|^2), 
    \end{split}
  \end{align}
  where 
  \begin{align*}
    \AA={}& \partial_r\left( \frac{ z^{\frac{1}{2} }  u }{\Delta^{\frac{1}{2} }}  \right),  \\
    \UU^{\a\b}={}& -  \frac{ 1}{2}  u \pr_r\left( \frac z \De\RR^{\a\b}\right), \\
    \VV={}&-\frac{1}{2} \pr_r \big(\De \pr_r w_{ax} \big)= -\frac{1}{4} \pr_r\big(\De \pr_r \big(
            z \pr_ru  \big)  \big) ,\\
    |q|^2\div(J_{ax} |\psi|^2)={}&   |q|^2\Big( 2 v\psi \pr^{\mathrm{BL}}_r \psi + \big(\pr_r v+ \frac{2r}{|q|^2} v\big) |\psi|^2 \Big).
  \end{align*}
  where $\RR^{\a\b}$ is defined in \eqref{eq:RR-def}. 
\end{lemma}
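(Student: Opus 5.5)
The plan is to compute $\QQ^{(X_{ax},w_{ax},J_{ax})}$ term by term in Boyer--Lindquist coordinates, using the formula \eqref{le:divergPP-gen-2a}. There, $\QQ = (K^{(X,w)})^{\mu\nu}\pr_\mu\psi\pr_\nu\psi -\frac12\Box_\g w|\psi|^2+\div(J|\psi|^2)$, and the $J$-term is simply carried along. For the $J$-term I will only expand $\div(v\pr_r|\psi|^2)$ using $\sqrt{|\g|}=|q|^2\sin\th$. This gives $\frac{1}{|q|^2}\pr_r(|q|^2 v|\psi|^2)$, which is the stated expression since $\pr_r|q|^2=2r$.

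For the principal part, I will use the analogue of \zcref{lemma:bulk:X-of-r:basic-computation} for $X=\FF\pr_r^{\mathrm{BL}}$. From \eqref{eq:inverse-metric-BL}, $|q|^2\g^{r\mu}=\Delta\delta^\mu_r$, so
\[
2|q|^2 (K^{(X,0)})^{\mu\nu}=2\Delta\FF'\delta^\mu_r\delta^\nu_r-\FF'|q|^2\g^{\mu\nu}-\FF\pr_r(|q|^2\g^{\mu\nu}).
\]
Adding $2w_{ax}|q|^2\g^{\mu\nu}=z u'|q|^2\g^{\mu\nu}$ and writing $\FF'=z'u+zu'$, the $zu'$ part cancels the $u'$ contribution in the $|q|^2\g^{\mu\nu}$ term. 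What remains is $-z'u\,|q|^2\g^{\mu\nu}-zu\,\pr_r(|q|^2\g^{\mu\nu})$ together with the $rr$ term.

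Next I split $|q|^2\g^{\mu\nu}=\Delta\pr_r\pr_r+\Delta^{-1}\RR$. The non-radial part is then
\[
-u\bigl(z'\Delta^{-1}\RR+z\pr_r(\Delta^{-1}\RR)\bigr)=-u\,\pr_r\!\left(\frac{z}{\Delta}\RR\right),
\]
which, with the overall factor $\frac12$, gives $\UU^{\a\b}$. The $rr$ coefficient, halved, is
\[
\Delta(z'u+zu')-\tfrac12 z'u\Delta-\tfrac12 zu\Delta' .
\]
One checks directly that $z^{1/2}\Delta^{3/2}\pr_r(z^{1/2}u\Delta^{-1/2})$ expands to exactly this, giving the term $z^{1/2}\Delta^{3/2}\AA$. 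This algebraic identification is the step most prone to sign and factor slips, so I will verify it carefully.

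For the zeroth-order term, I need $\Box_\g w$ for $w=w(r)$ in BL coordinates. This is $\frac{1}{|q|^2}\pr_r(\Delta\pr_r w)$, which yields $|q|^2(-\frac12\Box w)=-\frac12\pr_r(\Delta\pr_r w)=\VV$. Since all terms are multiplied by $|q|^2$, this assembles \eqref{identity:prop.Morawetz1}. I expect no real obstacle beyond bookkeeping of the $w$-cancellation and confirming the $\AA$ factorization.
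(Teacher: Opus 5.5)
Your proposal is correct. With $\sqrt{-\det\mathbf{g}}=|q|^2\sin\theta$ and $|q|^2\mathbf{g}^{r\mu}=\Delta\,\delta^\mu_r$ in Boyer--Lindquist coordinates, the choice $2w_{ax}=zu'$ leaves $2\Delta\mathcal{F}'\delta^\mu_r\delta^\nu_r-z'u\,|q|^2\mathbf{g}^{\mu\nu}-zu\,\partial_r(|q|^2\mathbf{g}^{\mu\nu})$. Its $\mathcal{R}$-part is $-u\,\partial_r(z\Delta^{-1}\mathcal{R})$, and its $rr$-part is exactly $2z^{1/2}\Delta^{3/2}\partial_r(z^{1/2}u\Delta^{-1/2})$ (for $z>0$, as with the paper's choice $z=\Delta/(r^2+a^2)^2$); your $\Box w$ and $J$-divergence computations also check out.

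The paper gives no proof of its own and only cites the reference. Your argument is the natural direct verification: the Boyer--Lindquist analogue of the paper's own computation of $K^{(X,0)}$ for radial multipliers in Eddington--Finkelstein coordinates.
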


We apply \zcref{lemma:identity:prop.Morawetz1} with the
choice of $z(r)$ given by the geodesic potential.
\begin{corollary}
  \label{coro:eq:Morawetz:axisymmetry:general-bulk}
  For $(X_{ax}, w_{ax}, J_{ax})$ as described in
  \zcref{lemma:identity:prop.Morawetz1}, for a suitable choice of
  $z(r)$, we have for any $\varepsilon>0$,
  \begin{align}
    \label{eq:Morawetz:axisymmetry:general-bulk}
    \begin{split}
      |q|^2\QQ^{(X_{ax}, w_{ax}, J_{ax})}[\psi]
      ={}& 2\varepsilon\frac{(r-M)^4}{(r^2+a^2)}\AA[u] |\pr^{\mathrm{BL}}_r\psi|^2 + \frac{u\TT}{(r^2+a^2)^{3}}   |\NablaAngular \psi|^2 +I_{u,v, \varepsilon}[\psi]\\
         &-    u\frac{2ar}{(r^2+a^2)^2}\That \psi \pr_\phi \psi,
    \end{split}    
  \end{align}
  where
  \begin{align}
    \AA[u]\vcentcolon= \partial_r \Big(\frac{u}{r^2+a^2}\Big), \qquad 
    \VV=\VV[u] \vcentcolon= -\frac14\,\partial_r \Big((r-M)^2\,\partial_r \Big(\frac{(r-M)^2}{(r^2+a^2)^2}\,\partial_r u\Big)\Big),\label{eq:definition-VV}\\
    I_{u,v, \varepsilon}[\psi]\vcentcolon=  \frac{(r-M)^4}{(r^2+a^2)}(1-2\varepsilon)\,\AA[u]\,|\partial^{\mathrm{BL}}_r\psi|^2 +\VV[u]\,|\psi|^2  + |q|^2\Big(2v\,\psi\,\partial^{\mathrm{BL}}_r\psi  +(\partial_r v + \tfrac{2r}{|q|^2}v)\,|\psi|^2\Big).\label{eq:I_uv-axisym}
  \end{align}
\end{corollary}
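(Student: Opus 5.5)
The plan is to apply \zcref{lemma:identity:prop.Morawetz1} with the geodesic potential of \zcref{remark-derivative-TT} as the choice of $z$, namely
\[
z=\frac{\Delta}{(r^2+a^2)^2}=\frac{(r-M)^2}{(r^2+a^2)^2},
\]
and then to identify each term of \zcref{identity:prop.Morawetz1} with the corresponding term of \zcref{eq:Morawetz:axisymmetry:general-bulk}. The argument is purely algebraic: no estimates are involved, only the explicit form of $\RR^{\a\b}$ in \zcref{eq:RR-def} and the identity $\pr_r z=-2\TT/(r^2+a^2)^3$.

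\emph{Radial term.} Since $\Delta=(r-M)^2$, we have $z^{1/2}=(r-M)/(r^2+a^2)$ and $z^{1/2}\Delta^{-1/2}=(r^2+a^2)^{-1}$. Hence the coefficient in \zcref{lemma:identity:prop.Morawetz1} becomes $\AA=\partial_r\big(u/(r^2+a^2)\big)=\AA[u]$. The prefactor is $z^{1/2}\Delta^{3/2}=(r-M)^4/(r^2+a^2)$. We then split $1=2\varepsilon+(1-2\varepsilon)$: the $2\varepsilon$ part is kept explicitly, and the $(1-2\varepsilon)$ part is placed into $I_{u,v,\varepsilon}$.

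\emph{Zeroth-order and $J$ terms.} Substituting $z$ into $\VV=-\frac14\pr_r(\Delta\pr_r(z\pr_r u))$ gives exactly $\VV[u]$ of \zcref{eq:definition-VV}. The $\div(J_{ax}|\psi|^2)$ term is copied verbatim into \zcref{eq:I_uv-axisym}.

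\emph{Main step: the tensor $\UU^{\a\b}$.} Here the point is to compute $\frac{z}{\Delta}\RR^{\a\b}$. Completing the square in \zcref{eq:RR-def}, and noting that $\frac z\Delta=(r^2+a^2)^{-2}$, we obtain
\[
\frac{z}{\Delta}\RR^{\a\b}(\pr_\a\psi)(\pr_\b\psi)=-\Big(T\psi+\tfrac{a}{r^2+a^2}\Phi\psi\Big)^2+z\,O^{\a\b}\pr_\a\psi\pr_\b\psi=-|\That\psi|^2+z|\NablaAngular\psi|^2 .
\]
When we differentiate in $r$ at fixed frequencies, i.e.\ with $T$, $\Phi$ and $O^{\a\b}$ independent of $r$, only two pieces survive:
\begin{itemize}
\item $-2\That\psi\cdot\pr_r\big(\tfrac{a}{r^2+a^2}\big)\Phi\psi=\tfrac{4ar}{(r^2+a^2)^2}\That\psi\,\pr_\phi\psi$;
\item $\pr_r z\,|\NablaAngular\psi|^2=-\tfrac{2\TT}{(r^2+a^2)^3}|\NablaAngular\psi|^2$.
\end{itemize}
Multiplying by $-\frac12 u$ yields
\[
\frac{u\TT}{(r^2+a^2)^3}|\NablaAngular\psi|^2-u\frac{2ar}{(r^2+a^2)^2}\That\psi\,\pr_\phi\psi,
\]
which are precisely the angular term and the mixed term in \zcref{eq:Morawetz:axisymmetry:general-bulk}.

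Summing the four contributions gives the stated identity. The step requiring care is the $\UU$ computation: the $\partial_r$ in $\UU^{\a\b}$ acts only on the $r$-dependent coefficients of the symbol, not on $\psi$. It is exactly this completed-square structure that isolates the single non-axisymmetric cross term $\That\psi\,\pr_\phi\psi$. The signs should also be double-checked against the convention for $\TT$ in \zcref{remark-derivative-TT}.
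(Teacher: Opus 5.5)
Your proposal is correct and follows the paper's proof. Both use the same choice $z=(r-M)^2/(r^2+a^2)^2$, the same identification of $\AA$, $\VV$ and the $J$-term, and the same computation of $\UU^{\a\b}$ via $\pr_r z=-2\TT/(r^2+a^2)^3$. The only cosmetic difference is that you complete the square $\frac{z}{\Delta}\RR^{\a\b}\pr_\a\psi\pr_\b\psi=-|\That\psi|^2+z|\NablaAngular\psi|^2$ before differentiating, whereas the paper differentiates $\frac{z}{\Delta}\RR^{\a\b}$ term by term and then regroups the $\pr_t\pr_\phi$ and $\pr_\phi\pr_\phi$ contributions into the single cross term $\That\psi\,\pr_\phi\psi$.
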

\begin{proof}
  We make the same choice of $z$ as in \cite{giorgiPhysicalspaceEstimatesAxisymmetric2024}, i.e. 
  \begin{align*}
    z=\frac{(r-M)^2}{(r^2+a^2)^2},
  \end{align*}
  so that $\AA=\AA[u]$ and $\VV=\VV[u]$ as defined in \zcref{eq:definition-VV}. 
  Using \zcref{eq:RR-def}, we compute
  \begin{align*}
    \pr_r\left( \frac z \De\RR^{\a\b}\right)={}&2a \big(\frac{2r}{(r^2+a^2)^2} \big) \pr_t^{(\a} \pr_\phi^{\b)}+ a^2\big( \frac{4r}{(r^2+a^2)^3} \big) \pr_\phi^{\a} \pr_\phi^{\b} -\frac{2\TT}{(r^2+a^2)^{3}} O^{\a\b}.
  \end{align*}
  Therefore,
  \begin{align*}
    \UU^{\a\b}(\pr_\a \psi )(\pr_\b \psi )
    =     \frac{u\TT}{(r^2+a^2)^{3}}  |\NablaAngular \psi|^2
    -    u\frac{2ar}{(r^2+a^2)^2}\That \psi \pr_\phi \psi.
  \end{align*}
  Substituting this into \zcref{identity:prop.Morawetz1}, we deduce
  \begin{align*}
    |q|^2\QQ^{(X_{ax}, w_{ax}, J_{ax})}[\psi]={}& \frac{(r-M)^4}{(r^2+a^2)}\AA[u] |\pr^{\mathrm{BL}}_r\psi|^2 + \frac{u\TT}{(r^2+a^2)^{3}}   |\NablaAngular \psi|^2 +\VV[u] |\psi|^2\\
                                                &+ |q|^2\Big(2v\,\psi\,\partial^{\mathrm{BL}}_r\psi  +(\partial_r v + \tfrac{2r}{|q|^2}v)\,|\psi|^2\Big)-    u\frac{2ar}{(r^2+a^2)^2}\That \psi \pr_\phi \psi \\
    ={}& 2\varepsilon\frac{(r-M)^4}{(r^2+a^2)}\AA[u] |\pr^{\mathrm{BL}}_r\psi|^2 + \frac{u\TT}{(r^2+a^2)^{3}}   |\NablaAngular \psi|^2 +I_{u,v, \varepsilon}[\psi]-    u\frac{2ar}{(r^2+a^2)^2}\That \psi \pr_\phi \psi
  \end{align*}
  for any $\varepsilon>0$, as stated.
\end{proof}

We now construct functions $u$ and $v$ such that the first line on the right-hand side of
\zcref{eq:Morawetz:axisymmetry:general-bulk} is (mostly) positive. We follow
the same strategy as in
\cite{giorgiPhysicalspaceEstimatesAxisymmetric2024} in the extremal
Kerr case.

\begin{proposition}\label{prop:ax-symm-choice}
  On extremal Kerr--Newman spacetime, there exists
  \(\daxi_1=\daxi_1(M,a)>0\) such that, for every
  \(0<\daxi\le \daxi_1\), there exist regular functions
  \(u_\daxi(r)\) and \(v(r)\) such that, for all \(r\ge M\),
  \begin{align}
    \frac{u_{\daxi} \TT}{(r^2+a^2)^3}
    &\gtrsim
      \frac{r-M}{r^2}
      \left(1-\frac{r_{\mathrm{trap}}}{r}\right)^2,
      \label{eq:condition-one-u-daxi}
    \\
    \AA[u_\daxi]
    \vcentcolon=
    \partial_r \left(\frac{u_\daxi}{r^2+a^2}\right)
    &\gtrsim
      \,\frac{r}{(r^2+a^2)^2}.
      \label{eq:condition-A-u-daxi}
  \end{align}

  Moreover, for every sufficiently small \(\varepsilon>0\), independent
  of \(\daxi\), there exists \(c=c(M,a,\varepsilon)>0\), independent of
  \(\daxi\), such that, for every \(\psi\),
  \begin{equation}\label{eq:I-u-daxi-v}
    I_{u_\daxi,v,\varepsilon}[\psi]
    \ge
    \left(
      c\,\frac{(r-M)^2}{r^4}
      -
      4\,\overline{\VV}_\daxi\,
      \mathds{1}_{\{\rho_{1,\daxi}\le \rhoH\le \rho_{2,\daxi}\}}
    \right)|\psi|^2 .
  \end{equation}
  Here \(\overline{\VV}_\daxi\) is a nonnegative function supported in $\{\rho_{1,\daxi}\le \rhoH\le \rho_{2,\daxi}\}$
  with
  \[
    \rho_{1,\daxi}\simeq_{M,a}\daxi,
    \qquad
    \rho_{2,\daxi}\simeq_{M,a}\daxi,
    \qquad
    \rho_{2,\daxi}-\rho_{1,\daxi}\simeq_{M,a}\daxi,
  \]
  and
  \[
    0\le \overline{\VV}_\daxi \lesssim_{M,a} 1.
  \]
\end{proposition}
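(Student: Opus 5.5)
Following the strategy of \cite{giorgiPhysicalspaceEstimatesAxisymmetric2024}, I would build $u_\daxi$ from a primitive and then repair the zeroth-order term with $v$. Since $\AA[u]=\pr_r(u/(r^2+a^2))$, I set
\[
  \frac{u_\daxi}{r^2+a^2}(r)=\int_{r_{\trap}}^{r} g_\daxi(s)\,ds ,
\]
where $g_\daxi>0$ satisfies $g_\daxi\gtrsim r(r^2+a^2)^{-2}$; this gives \eqref{eq:condition-A-u-daxi} directly. With this normalization $u_\daxi$ vanishes exactly at $r_{\trap}=M+\sqrt{M^2+a^2}$, the simple root of $\widetilde{\TT}$, and has the same sign as $\widetilde\TT$ on both sides of it. Since $\TT=(r-M)\widetilde\TT$ and both factors vanish simply at $r_{\trap}$, the product $u_\daxi\TT$ is nonnegative with a double zero at $r_{\trap}$ and a simple zero at $r=M$. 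For large $r$, $u_\daxi\to$ const and $\TT\sim r^3$. Together these give \eqref{eq:condition-one-u-daxi} with the stated weights.

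The freedom in $g_\daxi$ is reserved for the zeroth-order term. Away from a small neighborhood of the horizon, I choose $g_\daxi$ as in the axisymmetric extremal Kerr construction, which is perturbative in $a$ from extremal Reissner--Nordstr\"om. In the region $\rhoH\lesssim\daxi$, I modify $g_\daxi$ so that $u_\daxi$ becomes affine-like in $\rhoH$ there. The transition happens in a layer $\{\rho_{1,\daxi}\le\rhoH\le\rho_{2,\daxi}\}$ of width $\simeq\daxi$.

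For \eqref{eq:I-u-daxi-v}, I treat $I_{u,v,\varepsilon}$ as a quadratic form in $(\pr_r^{\mathrm{BL}}\psi,\psi)$ and complete the square. The choice of $v$ is
\[
  v=-\frac{1}{|q|^2}\,\frac{(r-M)^4}{r^2+a^2}\,(1-2\varepsilon)\,\AA[u]\,\frac{h'}{h}
\]
for a positive auxiliary function $h(r)$, in the spirit of a Hardy/Riccati substitution. The cross term $2v\psi\pr_r\psi$ is then absorbed, leaving the coefficient
\[
  \VV[u]+\pr_r(|q|^2 v)-\frac{|q|^4 v^2}{(1-2\varepsilon)(r-M)^4(r^2+a^2)^{-1}\AA[u]} .
\]
I then choose $h$ so that this coefficient is $\ge c(r-M)^2r^{-4}$ wherever $\VV[u]$ is either favorable or degenerates mildly. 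The $|q|^2$ factors should be harmless after using $\pr_r|q|^2=2r$. With a suitable $h$, this should handle the far region, where $\VV\sim r^{-4}$ and the situation is standard, as well as the trapping and intermediate regions, using the explicit $\daxi$-independent form of $u$ there.

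The main obstacle is the transition layer near the horizon. There $\VV[u_\daxi]$ involves the third derivative of $u_\daxi$ through $\pr_r((r-M)^2\pr_r(z\pr_r u))$. The Hardy mechanism cannot recover positivity because the weights $(r-M)^4$ in front of $|\pr_r\psi|^2$ degenerate too strongly. I would accept a negative contribution supported in $\{\rho_{1,\daxi}\le\rhoH\le\rho_{2,\daxi}\}$. To control it, I would check that each differentiation of the cutoff costs $\daxi^{-1}$ while the prefactors $(r-M)^2\cdot(r-M)^2$ gain $\daxi^4$. I expect this to give a bounded function $\overline{\VV}_\daxi\lesssim 1$, with $\varepsilon$ and $c$ fixed before $\daxi$ is taken small. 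The delicate point is ensuring that $c$ does not depend on $\daxi$: the positive lower bound $c(r-M)^2r^{-4}$ near the horizon must come from the unmodified profile, with the layer's negative part isolated into $-4\overline{\VV}_\daxi\mathds{1}$ rather than degrading $c$.
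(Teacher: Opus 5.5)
Your overall architecture matches the paper's: a Stogin-type multiplier, a modification in a layer of width $\simeq\daxi$ at the horizon, a Hardy/Riccati current $v$, and a bounded negative term confined to that layer. Parametrizing by $g=\AA[u]$ also makes \eqref{eq:condition-A-u-daxi} and the sign part of \eqref{eq:condition-one-u-daxi} immediate. The gap is that this parametrization moves all the difficulty into $\VV[u]=-\frac12\pr_r\big((r-M)^2\pr_r w\big)$, with $2w=\frac{(r-M)^2}{(r^2+a^2)^2}\pr_r u$. This involves $g''$ and has no sign in terms of $g$. The step that actually proves \eqref{eq:I-u-daxi-v} (``with a suitable $h$ this should handle the far, trapping and intermediate regions'') is asserted, not shown.

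The paper prescribes $w$ instead of $\AA$, setting $u_S=\int_{r_{\mathrm{trap}}}^r\frac{(s^2+a^2)^2}{(s-M)^2}\,2w_S\,ds$.
\begin{itemize}
\item Taking $w_S\equiv\widetilde C$ on $[M,r_*]$ gives $\VV\equiv0$ there.
\item Taking $w_S=\frac{r(r-M)^2}{(r^2+a^2)^2}$ on $[r_*,\infty)$ gives $\VV\ge c_Vr^{-2}$, by positivity of an explicit quartic for $r\ge 3M$.
\item The bound on $\AA[u_S]$ is then recovered: $\frac{(r^2+a^2)^2}{2r}\AA[u_S]$ is decreasing on $[M,r_*]$, constant beyond, and positive at $r_*$.
\item On $[M,r_*]$, an explicit $v$ with $(r^2+a^2)v$ a small multiple of $(r-M)^3$ gives the $(r-M)^2$ bound. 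It uses only the $\daxi$-uniform lower bound on $\AA$.
\end{itemize}
The far region is also not ``standard'' Hardy. With $\AA\simeq r^{-3}$ (your lower bound, and the paper's choice), the coefficient of $|\pr^{\mathrm{BL}}_r\psi|^2$ in $I_{u,v,\varepsilon}$ is $\simeq r^{-1}$, so no $v$ gives more than $O(r^{-3})$ there. The target $c(r-M)^2r^{-4}\simeq r^{-2}$ must therefore come from $\VV$ itself, and your ``$\VV\sim r^{-4}$'' would not suffice.

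You also do not say why a horizon modification is needed, and this is what fixes the size of the layer error. Since $w_S$ is constant up to $r=M$, $\pr_r u_S\simeq(r-M)^{-2}$ and $u_S\sim -c\,(r-M)^{-1}$. Hence $|u_\daxi|\gtrsim M^2/\daxi$ on the layer. In your parametrization this is forced too, since $u_\daxi/(r^2+a^2)$ is increasing and equals $u_S/(r^2+a^2)$ at $\rhoH=\rho_{2,\daxi}$. Your count ($\daxi^4$ from the prefactors against $\daxi^{-3}$ from three derivatives) gives $O(\daxi)$ for a profile of unit size. It is the amplitude $\daxi^{-1}$ that makes $\overline{\VV}_\daxi\simeq 1$ and hence non-absorbable by Hardy.

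The paper regularizes by $u_\daxi=-\frac{M^2}{\daxi}F\big(-\frac{\daxi}{M^2}u_S\big)$, which is constant for $\rhoH\le\rho_{1,\daxi}$. There $\VV\equiv0$ and $\AA[u_\daxi]=\frac{4M^2r}{\daxi(r^2+a^2)^2}$. An affine profile of slope $B_0$ would instead give $\VV\approx-\frac{3B_0}{2(M^2+a^2)^2}\rhoH^2$, which you would have to check against the Hardy gain.

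The $\daxi$-independence of $c$ comes from two facts: the uniform bound $\AA[u_\daxi]\ge c_A r(r^2+a^2)^{-2}$, which lets $v$ be chosen independently of $\daxi$, and $\VV[u_\daxi]\ge0$ off the layer. It does not come from an unmodified profile near the horizon, since the profile is modified there.

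Finally, some slips:
\begin{itemize}
\item At infinity it is $u_\daxi/(r^2+a^2)$ that tends to a constant, so $u_\daxi\simeq r^2$. If $u_\daxi$ itself tended to a constant, \eqref{eq:condition-one-u-daxi} would fail by a factor $r^{-2}$.
\item The simple zeros at $r_{\mathrm{trap}}$ are those of $u_\daxi$ and $\widetilde{\TT}$, not of $r-M$.
\item No smallness of $a$ is needed, and extremal Kerr ($a=M$) is not a perturbation of extremal Reissner--Nordstr\"om.
\end{itemize}
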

\begin{proof}
  See \zcref{sec:proof-Lemma-ax-symm}. We remark that no restriction on the
  smallness of $a$ is needed in \zcref{prop:ax-symm-choice}.
\end{proof}

Choosing \(u=u_\daxi\) and \(v(r)\) as in \zcref{prop:ax-symm-choice}, we immediately deduce from \zcref{coro:eq:Morawetz:axisymmetry:general-bulk} that 
\begin{equation}\label{eq:bulklowerbound-Xax-wax-Jax}
  \begin{split}
    |q|^2\KCurrent{(X_{ax}, w_{ax}, J_{ax})}[\psi]
    \geq{}& c_0 \Big[\frac{(r-M)^4}{r^5} |\pr^{\mathrm{BL}}_r\psi|^2 +\frac{(r-M)}{r^2} \big( 1-\frac{r_{\trap}}{r}\big)^2 |\NablaAngular \psi|^2 +\frac{(r-M)^2}{r^4}|\psi|^2 \Big]\\
          &- 4\,\overline{\VV}_\daxi\,\mathds{1}_{\{\rho_{1,\daxi} \leq \rhoH \leq \rho_{2,\daxi}\}}\,|\psi|^2-    \frac{2aru_\daxi}{(r^2+a^2)^2}\That \psi \pr_\phi \psi,            
  \end{split}
\end{equation}
for some $c_0>0$.

\begin{remark}
  From the construction in \zcref{sec:proof-Lemma-ax-symm} we deduce in particular that 
  \begin{align*}
    X_{ax}
    =
    \frac{(r-M)^2}{(r^2+a^2)^2}\Bigl(-\frac{2M}{\varepsilon_0}\Bigr)\partial_r^{\mathrm{BL}},
    \qquad
    w_{ax}=0,
    \qquad
    J_{ax}
    =
    \frac{C\delta_1(r-M)^3}{r^2+M^2}\partial_r^{\mathrm{BL}}, \qquad \text{near $\EventHorizon$}, \\
    X_{ax}
    =
    \frac{(r-M)^2}{(r^2+a^2)^2}(r^2+C_2)\partial_r^{\mathrm{BL}}
    , \qquad 
    w_{ax}
    =
    \frac{r(r-M)^2}{(r^2+a^2)^2}
    \qquad
    J_{ax}=0, \qquad \text{near $\NullInfinity$}
  \end{align*}
  and therefore, using \zcref{eq:relation-BL-iEF},
  \begin{align}
    X_{ax} &\in \conormalSpaceH{1}(\Manifold)\rhoH\partial_{\rhoH}
             + \conormalSpaceH{0}(\Manifold)\HawkingHorizon,
             \qquad
             J_{ax}\in \conormalSpaceH{2}(\Manifold)\rhoH\partial_{\rhoH}
             + \conormalSpaceH{1}(\Manifold)\HawkingHorizon, \label{eq:axi-multipliers:boundary-behavior:horizon}\\
    X_{ax}
           &\in
             \conormalSpaceI{1}(\Manifold)\rhoI\partial_{\rhoI}
             +
             \conormalSpaceI{0}(\Manifold)\InfinityHawking, \qquad 
             w_{ax}
             \in \conormalSpaceI{1}(\Manifold).\label{eq:axi-multipliers:boundary-behavior:null-infinity}
  \end{align}
\end{remark}

\paragraph*{The degenerate redshift estimate}\label{sec:deg-redshift}

We now use the horizon weighted hierarchy from 
\zcref{sec:hierarchy-HH} to construct a multiplier which yields a
residual degenerate redshift near the horizon. 

\begin{lemma}
  \label{prop:extremal-redshift}
  Fix $0<\de_1<\frac{1}{2}$ and assume $ \frac{|a|}{M}<\frac{1}{30}$. Then the current associated to
  \begin{align}\label{eq:def-XHH-JHH}
    X_{red}\vcentcolon= \chi_{red} X_{\a, C}, \qquad J_{red}=\chi_{red} J_{\a}, \qquad \chi_{red}\vcentcolon=\chi\Big(\frac{\rhoH}{\frac{r_{+}}{80}}\Big), \qquad \a=-\de_1, \qquad C=800,
  \end{align}
  with $X_{\a, C}$ and $J_\a$ as in \zcref{lemma:horizon:main-bulk}, satisfies 
  \begin{align}\label{eq:bulklowerbound-XHH-JHH}
    \begin{split}
      |q|^2 \QQ^{(X_{red},0,J_{red})}[\psi]
      \ges{}&\mathds{1}_{\{\rhoH < \frac{r_{+}}{80}\}} \rhoH^{-\de_1}  \Big(  |\pr_v\psi|^2+|\NablaAngular\psi|^2 +|\rhoH\pr_{\rhoH}\psi|^2+
              |\psi|^2\Big)\\
            &- \mathds{1}_{\{\frac{r_{+}}{80} \leq \rhoH \leq \frac{r_{+}}{40}\}}\abs*{(\partial_v,\partial_{\rhoH},\NablaAngular)^{\le 1}\psi}^2 .
    \end{split}
  \end{align}
\end{lemma}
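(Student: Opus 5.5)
The plan is to rerun the $s=1$ argument for \zcref{prop:horizon-weighted-estimate}, now with $\a=-\de_1$ fixed and the cutoff scale $\rho_0=\frac{r_+}{80}$ fixed instead of arbitrarily small. Everything depends on keeping explicit constants.

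First I would start from the exact identity \zcref{eq:QQ-X-a-C-general} in \zcref{lemma:horizon:main-bulk}. Since $\chi_{red}$ depends only on $\rhoH$, the current splits as in \zcref{current-cutoff}. On $\{\rhoH<\frac{r_+}{80}\}$ we have $\chi_{red}=1$. On $\{\frac{r_+}{80}\le\rhoH\le\frac{r_+}{40}\}$ the derivative of $\chi_{red}$ contributes $O(1)\,\EMTensor(X_{\a,C},\pr_{\rhoH})+O(1)J_\a^{\rhoH}|\psi|^2$. There $\rhoH\simeq M$, so this is bounded by $\abs*{(\partial_v,\partial_{\rhoH},\NablaAngular)^{\le1}\psi}^2$, which gives the negative term in \zcref{eq:bulklowerbound-XHH-JHH}. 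The bulk term $\chi_{red}\QQ^{(X_{\a,C},0,J_\a)}$ on the transition region is handled the same way.

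The main step is to show positivity of $\rhoH^{\de_1}|q|^2\QQ^{(X_{\a,C},0,J_\a)}$ on the whole region $\rhoH\le\frac{r_+}{40}$, without treating the $\conormalSpaceH{\a+1}$ terms as small just because $\rhoH$ is small. I would write the full quadratic form in the variables $(\rhoH\pr_{\rhoH}\psi,\HawkingHorizon\psi,\renormpphi\psi,\pr_\th\psi)$ using $\pr_v=|q|^{-2}(\HawkingHorizon-a\sin\th\renormpphi)$. This includes the exact cross terms
\[
2r\rhoH^{2}C r_+^{-2}\pr_v\psi\,\rhoH\pr_{\rhoH}\psi,\qquad 2Cr_+^{-2}r\rhoH(\HawkingHorizon\psi)\pr_v\psi,\qquad C r_+^{-2}(1-\de_1)\rhoH(\HawkingHorizon\psi)\rhoH\pr_{\rhoH}\psi .
\]
Then I would check Sylvester's criterion with explicit numbers: $\rhoH\le M/40$, $r\le\frac{41}{40}M$, $\frac{r}{|q|^2}\le\frac1M$, $|a|\le M/30$, $\de_1<\frac12$, and $C=800$.

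The diagonal coefficients are $\frac{1+\de_1}{2}$, $\frac{1-\de_1}{2}\ge\frac14$, and $(1-\de_1)C r_+^{-2}\ge 400M^{-2}$ for $(\HawkingHorizon\psi)^2$. The term $2Cr_+^{-2}r\rhoH\HawkingHorizon\psi\,|q|^{-2}\HawkingHorizon\psi$ is also positive. The dangerous off-diagonal terms are:
\begin{itemize}
\item $\frac{2r}{|q|^2}\HawkingHorizon\psi\,\rhoH\pr_{\rhoH}\psi$, of size $M^{-1}$;
\item the $a$-terms, of size $\le\frac1{30}$;
\item the $C\rhoH$-terms, of size $\le 800\cdot\frac1{40}M^{-1}(\cdots)$.
\end{itemize}

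The last group is the main obstacle: $C\rhoH/r_+^2$ is not small compared to the diagonal $C r_+^{-2}$. I would handle it by comparing it with the $(\HawkingHorizon\psi)^2$ diagonal $\sim C$ and the $\rhoH\pr_{\rhoH}$ diagonal $\sim1$. The mixed coefficient is $\sim C\rhoH/r_+^2\le C/(40 r_+)$, and its square $C^2/1600$ has to beat the product of diagonals $\sim C/4$. I expect that verifying this, together with the $\renormpphi$ cross term $C r_+^{-2}\rhoH\, a\sin\th\,|q|^{-2}$, which is why $|a|/M<1/30$ is needed, is exactly where the numerical choices must be tuned, and I would recheck constants there first.

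Once the quadratic form is bounded below by $c\rhoH^{-\de_1}(|\rhoH\pr_{\rhoH}\psi|^2+|\HawkingHorizon\psi|^2+|\NablaAngular\psi|^2)$, I would add the Hardy current $J_\a$ from \zcref{lemma:hardy-pointwise-rhoH} with small $c_J$. Because $\a+1=1-\de_1>\frac12$, its coefficient $\frac14c_J(1-\de_1)^2$ is positive, and this gives the $|\psi|^2$ term. Finally, $|\pr_v\psi|^2$ is recovered from $\HawkingHorizon\psi$ and $\renormpphi\psi$ via $\pr_v=|q|^{-2}(\HawkingHorizon-a\sin\th\renormpphi)$. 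This yields \zcref{eq:bulklowerbound-XHH-JHH}.
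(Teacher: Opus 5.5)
Your argument is correct, and its overall structure matches the paper's: you take the cutoff errors on $\{\frac{r_+}{80}\le\rhoH\le\frac{r_+}{40}\}$, prove a pointwise positive quadratic form on $\{\rhoH<\frac{r_+}{80}\}$, add the Hardy current $J_{-\de_1}$ for $|\psi|^2$, and recover $\pr_v\psi$. The difference is in how you prove positivity of the form.

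The paper keeps $\pr_v\psi$ as an auxiliary variable. It completes three squares in \eqref{eq:QQ-X-a-C-general}, so that every cross term is paid for by negative multiples of $\rhoH^{-\de_1}|\pr_v\psi|^2$ and $C r_+^{-2}\rhoH^{2-\de_1}|\rhoH\pr_{\rhoH}\psi|^2$. It then buys back a large multiple of $|\pr_v\psi|^2$ from $A|\renormpphi\psi|^2+B|\HawkingHorizon\psi|^2$ by minimizing over $\pr_{\phiIEF}\psi$, which gives $\frac{1+\a}{4}\frac{C|q|^4}{r_+^2+Ca^2\sin^2\th}\rhoH^{\a}|\pr_v\psi|^2$. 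This makes the role of each hypothesis explicit:
\begin{itemize}
\item $|a|/M<\frac{1}{30}$ is used exactly to get $Ca^2<r_+^2$ with $C=800$;
\item $\rhoH\le\frac{r_+}{80}$ is used exactly to get $A_1=\frac{1+\de_1}{8}-800(1-\de_1)\rhoH^2/r_+^2\ge\frac{\de_1}{4}$.
\end{itemize}

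Your route is a direct Sylvester check in the variables $(\rhoH\pr_{\rhoH}\psi,\HawkingHorizon\psi,\renormpphi\psi)$, with $\pr_\th\psi$ decoupled. It is more mechanical but it closes with room to spare, even on $\{\rhoH\le\frac{r_+}{40}\}$. Normalize $r_+=M=1$ and divide by $\rhoH^{-\de_1}$. The matrix entries then satisfy $m_{RR}=\frac{1+\de_1}{2}$, $m_{VV}\ge(1-\de_1)C$, $m_{PP}=\frac{1-\de_1}{2}$ on the diagonal. Off the diagonal, $m_{RV}=\frac{r(1+C\rhoH^2)}{|q|^2}+\frac{(1-\de_1)C\rhoH}{2}\le 11.5-10\de_1$, $|m_{RP}|\le 0.05$ and $|m_{VP}|\le\frac{2}{3}$. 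Hence $m_{RR}m_{VV}-m_{RV}^2\ge 400(1-\de_1^2)-(11.5-10\de_1)^2>250$, and the $a$-couplings are negligible in the $3\times3$ determinant. So in your route $|a|/M<\frac{1}{30}$ is far from binding. This contradicts your guess that the $\renormpphi$ cross term is what forces that restriction.

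Two small corrections.

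First, your heuristic ``square $C^2/1600$ versus product of diagonals $\sim C/4$'' drops a factor of $4$. For $aR^2+bV^2+cRV$ the condition is $c^2<4ab$; equivalently, the matrix entry is $c/2$. With that, even your crude numbers give $400<800$. Read literally, your count would wrongly suggest that $C=800$ fails.

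Second, positivity on all of $\{\rhoH\le\frac{r_+}{40}\}$ is not needed. $\chi_{red}\equiv 1$ only on $\{\rhoH<\frac{r_+}{80}\}$, and on the transition region the bulk $\chi_{red}\QQ$ is simply absorbed into the negative error term, as you note yourself.
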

\begin{proof}
  Consider the current associated to the vectorfield $X_{\a, C}$. From \zcref{lemma:horizon:main-bulk} \zcref{eq:QQ-X-a-C-general}, 
  we can write by completing the square, 
  \begin{align*}
    \abs*{q}^2\QQ^{(X_{\alpha, C},0,0)}[\psi]
    ={}&\frac{3(1-\a)}{8}\rhoH^\a |\rhoH\pr_{\rhoH}\psi|^2 +\frac{1+\a}{2}\rhoH^\a |\NablaAngular \psi|^2 +\frac{1+\a}{2}C r_+^{-2}\rhoH^\a (\HawkingHorizon\psi)^2\\
       & +\frac{1-\a}{8}\rhoH^\a (\rhoH\pr_{\rhoH}\psi)^2+2r\rhoH^{\alpha} (1+C r_+^{-2}\rhoH^2)\,\pr_v\psi\,\rhoH\pr_{\rhoH}\psi\\
       & +\frac{1+\a}{4}C r_+^{-2}\rhoH^\a (\HawkingHorizon\psi)^2 +2C r_+^{-2} r\rhoH^{\alpha+1} (\HawkingHorizon\psi)\pr_v\psi\\
       & +\frac{1+\a}{4}C r_+^{-2}\rhoH^\a (\HawkingHorizon\psi)^2  + C r_+^{-2}(1+\a)\rhoH^{\alpha+1}(\HawkingHorizon\psi)\rhoH\pr_{\rhoH}\psi\\
    \ge {}&\frac{3(1-\a)}{8}\rhoH^\a |\rhoH\pr_{\rhoH}\psi|^2 +\frac{1+\a}{2}\rhoH^\a |\partial_\th\psi|^2+\frac{1+\a}{2}\rhoH^\a |\renormpphi\psi|^2+\frac{1+\a}{2}C r_+^{-2}\rhoH^\a (\HawkingHorizon\psi)^2\\
       &-\frac{8r^2(1+C r_+^{-2}\rhoH^2)^2}{1-\a}\rhoH^\a|\pr_v\psi|^2-C r_+^{-2}\frac{4r^2}{1+\a}\rhoH^{\a+2}|\pr_v\psi|^2-C r_+^{-2}(1+\a)\rhoH^{\a+2}|\rhoH\pr_{\rhoH}\psi|^2.
  \end{align*}
  Using the definition \zcref{eq:def-VHH-VII} in \iEF{} coordinates, we complete the square
  \begin{align*}
    A |\renormpphi\psi|^2
    + B |\HawkingHorizon\psi|^2 ={}& A\Big|a\sin\theta\,\pr_v\psi+\frac{1}{\sin\theta}\pr_{\phiIEF}\psi\Big|^2
                                     +B \Big|(r^2+a^2)\pr_v\psi+a\,\pr_{\phiIEF}\psi\Big|^2\\
    ={}& (A a^2\sin^2\theta+B(r^2+a^2)^2)|\pr_v\psi|^2
         + \Big(\frac{A}{\sin^2\theta}+Ba^2\Big)|\pr_{\phiIEF}\psi|^2\\
                                   &+2\Big(Aa+B a(r^2+a^2)\Big)(\pr_v\psi)(\pr_{\phiIEF}\psi) \\
    \geq{}&  (A a^2\sin^2\theta+B(r^2+a^2)^2)|\pr_v\psi|^2
            -\frac{(Aa+Ba(r^2+a^2))^2}{A/\sin^2\theta+Ba^2}|\pr_v\psi|^2\\
    ={}&A \frac{A|q|^4}{A/B+a^2\sin^2\theta}|\pr_v\psi|^2.
  \end{align*}
  Taking $A=\frac{1+\a}{4}\rhoH^\a$ and $B=\frac{1+\a}{4}C r_+^{-2}\rhoH^\a$, we obtain
  \begin{align*}
    \frac{1+\a}{4}\rhoH^\a |\renormpphi \psi|^2 + \frac{1+\a}{4}C r_+^{-2}\rhoH^\a |\HawkingHorizon\psi|^2\ge \frac{1+\a}{4}\frac{C|q|^4}{r_+^2+Ca^2\sin^2\th} \rhoH^\a |\pr_v\psi|^2.
  \end{align*}
  Combining this with the above inequality yields 
  \begin{equation*}
    \begin{split}
      \abs*{q}^2\QQ^{(X_{\alpha, C},0,0)}[\psi]
      \geq{}&  \frac{1-\a}{4}\rhoH^\a |\rhoH\pr_{\rhoH}\psi|^2+\frac{1+\a}{4}\rhoH^\a |\NablaAngular \psi|^2 +\frac{(1+\a)C}{4r_+^2}\rhoH^\a |\HawkingHorizon\psi|^2\\
            & +A_1^{(\a,C)}\rhoH^\a |\rhoH\pr_{\rhoH}\psi|^2+ A_2^{(\a,C)} \rhoH^\a |\pr_v\psi|^2,
    \end{split}    
  \end{equation*}
  where
  \begin{equation*}
    \begin{split}
      A_1^{(\a,C)}\vcentcolon={}&\frac{1-\a}{8}-(1+\a)Cr_+^{-2}\rhoH^2,\\
      A_2^{(\a,C)}\vcentcolon={}&\frac{1+\a}{4}\frac{C|q|^4}{r_+^2+Ca^2\sin^2\th} - \frac{8r^2(1+C r_+^{-2}\rhoH^2)^2}{1-\a}-C r_+^{-2} \frac{4r^2}{1+\a}\rhoH^2.
    \end{split}
  \end{equation*}
  Now consider the case of $\a=-\de_1$, $C=800$. Then
  \begin{equation*}
    \begin{split}
      \abs*{q}^2 \QQ^{(X_{-\de_1, 800},0,0)}[\psi]
      \geq{}&  \Big[\frac{1+\de_1}{4}+A_1^{(-\de_1,800)}\Big]\rhoH^{-\de_1} (\rhoH\pr_{\rhoH}\psi)^2
              + A_2^{(-\de_1,800)} \rhoH^{-\de_1} (\pr_v\psi)^2\\
            &+\frac{1-\de_1}{4}\rhoH^{-\de_1} |\NablaAngular \psi|^2
              +\frac{200(1-\de_1)}{r_+^2}\rhoH^{-\de_1} (\HawkingHorizon\psi)^2.
    \end{split}
  \end{equation*}
  We now show that 
  \[
    A_1^{(-\de_1,800)}>0,
    \qquad
    A_2^{(-\de_1,800)}>0.
  \]
  in the region $\{0\leq \rhoH\le \frac{r_+}{80}\}$.
  We compute
  \[A_1^{(-\de_1,800)}=\frac{1+\de_1}{8}-800(1-\de_1)r_+^{-2}\rhoH^2
    \geq \frac{1+\de_1}{8}-800(1-\de_1)\frac{1}{80^2}
    = \frac{\de_1}{4}>0.\]
  Since $Ca^2\le \frac{800}{900}r_+^2<r_+^2$ and $\frac{C\rhoH^2}{r_+^2}\le \frac{800}{6400}=\frac18$,
  we have
  \begin{align*}
    A_2^{(-\de_1,800)}={}&\frac{1-\de_1}{4}\frac{C|q|^4}{r_+^2+Ca^2\sin^2\th}
                           -\frac{8r^2(1+C\rhoH^2/r_+^2)^2}{1+\de_1}
                           -\frac{4Cr^2}{1-\de_1}\rhoH^2/r_+^2\\
    >{}&\frac{100r^4}{r_+^2+Ca^2}
         -8r^2(1+C\rhoH^2/r_+^2)^2
         -8r^2C\rhoH^2/r_+^2\\
    >{}&50r^2-8r^2(1+1/8)^2-r^2>0.
  \end{align*}
  We therefore deduce that in the region $\{0\leq \rhoH\leq \frac{r_+}{80}\}$, we have
  \begin{equation*}
    \begin{split}
      \abs*{q}^2 \QQ^{(X_{-\de_1, 800},0,0)}[\psi] 
      \geq{}&  \frac{1}{8}\rhoH^{-\de_1}\Big((\rhoH\pr_{\rhoH}\psi)^2+\abs*{\NablaAngular\psi}^2+r_+^{-2}|\HawkingHorizon\psi|^2\Big).
    \end{split}
  \end{equation*}
  Using \zcref{lemma:hardy-pointwise-rhoH}, we have for $J_{-\de_1}=\frac 1 2 c_J(1-\de_1)\rhoH^{1-\de_1}|q|^{-2}\partial_{\rhoH}$ for some $c_J>0$ that
  \begin{align*}
    |q|^2 \D \c \PP^{(0, 0, J_{-\de_1})}[\psi]\geq -c_J\rhoH^{-\de_1}|\rhoH\pr_{\rhoH} \psi|^2+
    \frac14c_J(1-\de_1)^2 \rhoH^{-\de_1} |\psi|^2.
  \end{align*}
  Adding those two bounds together we obtain for $c_J < \frac 1 8$
  that in the region $\{0\leq \rhoH\leq \frac{r_+}{80}\}$
  \begin{equation*}
    \begin{split}
      \abs*{q}^2\QQ^{(X_{-\de_1, 800},0, J_{-\de_1})}[\psi]
      \ges{}&  \rhoH^{-\de_1}  \Big(  |\pr_v\psi|^2+|\NablaAngular\psi|^2 +|\rhoH\pr_{\rhoH}\psi|^2+
              |\psi|^2\Big).
    \end{split}    
  \end{equation*}
  Adding the cutoff $\chi_{red}$ using \zcref{current-cutoff} we then
  obtain \zcref{eq:bulklowerbound-XHH-JHH}, completing the proof.
\end{proof}

\paragraph*{Proof of Proposition \ref{prop:Mor-axially-sym:main-bulk}}

We are now ready to prove \zcref{prop:Mor-axially-sym:main-bulk}.
\begin{proof}[Proof of \zcref{prop:Mor-axially-sym:main-bulk}]

  First of all, we use a small Lagrangian corrector to add control of the $\That$ derivatives outside the
  trapping region in the bound \zcref{eq:bulklowerbound-Xax-wax-Jax}.
  The multiplier triplet $(X,w,J)=(0,w_{\That},0)$ with
  \begin{equation}\label{eq:def-wThat}
    2 w_{\That}=-\frac{(r-M)^2(r-r_{\trap})^2}{r^7},
  \end{equation}
  using \zcref{eq:inverse-metric-BL}  gives 
  \begin{align}\label{eq:bulklowerbound-wThat}
    \begin{split}
      |q|^2 \QQ^{(0, w_{\That}, 0)}[\psi]
      =&  \De  \, w_{\That}     |\pr^{\mathrm{BL}}_r\psi|^2-\frac{w_{\That} (r^2+a^2)^2}{  \De}|\That \psi|^2 + w_{\That} O^{\a\b}\pr_\a\psi\pr_\b \psi -\frac{1}{2} |q|^2\square_\g w_{\That} |\psi|^2 \\
      =&\frac{(r-r_{\trap})^2(r^2+a^2)^2}{2r^7}|\That \psi|^2-\frac{1}{2}  \, \frac{(r-M)^4(r-r_{\trap})^2}{r^7}     |\pr^{\mathrm{BL}}_r\psi|^2 \\
       &-\frac{1}{2}  \frac{(r-M)^2(r-r_{\trap})^2}{r^7} \abs*{\NablaAngular\psi}^2-\frac{1}{2} |q|^2\square_\g w_{\That} |\psi|^2. 
    \end{split}
  \end{align}
  We now combine the bulks of the axially symmetric Morawetz, the
  degenerate redshift and the above Lagrangian. In particular, for
  small positive constants $c_{red}$ and $c_{\That}$, define
  \begin{align*}
    X_{\Mor}^{\diff}\vcentcolon= X_{ax}+c_{red}X_{red}, \qquad w_{\Mor}^{\diff}\vcentcolon= w_{ax} +c_{\That} w_{\That}, \qquad J_{\Mor}^{\diff}=J_{ax} + c_{red}J_{red}.
  \end{align*}
  By summing \zcref{eq:bulklowerbound-Xax-wax-Jax,eq:bulklowerbound-XHH-JHH,eq:bulklowerbound-wThat}, we obtain
  \begin{equation}
    \label{eq:degenerate-redshift:final-big-computation}
    \begin{split}
      |q|^2\QQ^{(X_{\Mor}^\diff, w_{\Mor}^\diff, J_{\Mor}^\diff)}[\psi]
      \geq{}& c_0 \Big[\frac{\rhoH^4}{r^5} |\pr^{\mathrm{BL}}_r\psi|^2 +\big( 1-\frac{r_{\trap}}{r}\big)^2\big( \frac{\rhoH}{r^2}\abs*{\NablaAngular\psi}^2 + \frac{1}{r} |\That \psi|^2\big) +\frac{\rhoH^2}{r^4}|\psi|^2 \Big]\\
            &+c_{red}\mathds{1}_{\{\rhoH \leq \frac{r_{+}}{80} \}} \rhoH^{-\de_1}\Big( |\rhoH\pr_{\rhoH}\psi|^2+ \abs*{\NablaAngular\psi}^2 +|\partial_v\psi|^2+|\psi|^2\Big)\\
            &- 4\,\overline{\VV}_\daxi\,\mathds{1}_{\{\rho_{1,\daxi} \leq \rhoH \leq \rho_{2,\daxi}\}}\,|\psi|^2-    \frac{2aru_\daxi}{(r^2+a^2)^2}\That \psi \pr_\phi \psi\\
            &-O(c_{red}) \mathds{1}_{\{\frac{r_{+}}{80}  \leq \rhoH \leq \frac{r_{+}}{40} \}}\abs*{(\partial_v,\partial_{\rhoH},\NablaAngular)^{\le 1}\psi}^2\\
            &-c_{\That} \Big[\frac{1}{2}  \, \frac{\rhoH^4(r-r_{\trap})^2}{r^7}     |\pr^{\mathrm{BL}}_r\psi|^2 +\frac{1}{2}  \frac{\rhoH^2(r-r_{\trap})^2}{r^7} \abs*{\NablaAngular\psi}^2+\frac{1}{2} |q|^2\square_\g w_{\That} |\psi|^2\Big].
    \end{split}
  \end{equation}
  We now choose the parameters so that the
  first two lines of the \RHS{} of
  \zcref{eq:degenerate-redshift:final-big-computation}, which are
  positive, dominate over the remaining terms. 

  First, take $c_{\That}\ll 1$ small enough so that
  \begin{equation}
    \label{eq:degenerate-redshift:final-big-computation:aux1}
    \begin{split}
      & c_{\That} \Big[\frac{1}{2}  \, \frac{\rhoH^4(r-r_{\trap})^2}{r^7}     |\pr^{\mathrm{BL}}_r\psi|^2 +\frac{1}{2}  \frac{\rhoH^2(r-r_{\trap})^2}{r^7} \abs*{\NablaAngular\psi}^2+\frac{1}{2} |q|^2\square_\g w_{\That} |\psi|^2\Big]\\
      \ll{}& c_0 \Big[\frac{\rhoH^4}{r^5} |\pr^{\mathrm{BL}}_r\psi|^2 +\big( 1-\frac{r_{\trap}}{r}\big)^2\big( \frac{\rhoH}{r^2}\abs*{\NablaAngular\psi}^2 + \frac{1}{r} |\That \psi|^2\big) +\frac{\rhoH^2}{r^4}|\psi|^2 \Big],
    \end{split}    
  \end{equation}
  making use of the fact that 
  $\square_\g w_{\That}\in \conormalSpaceHI{2,3}$.

  Next, take $c_{red} \ll 1$ small enough so that 
  \begin{equation}
    \label{eq:degenerate-redshift:final-big-computation:aux2}
    \begin{split}
      &O(c_{red}) \mathds{1}_{\{\frac{r_{+}}{80}  \leq \rhoH \leq \frac{r_{+}}{40} \}}\abs*{(\partial_v,\partial_{\rhoH},\NablaAngular)^{\le 1}\psi}^2\\
      \ll{}& c_0 \Big[\frac{\rhoH^4}{r^5} |\pr^{\mathrm{BL}}_r\psi|^2 +\big( 1-\frac{r_{\trap}}{r}\big)^2\big( \frac{\rhoH}{r^2}\abs*{\NablaAngular\psi}^2 + \frac{1}{r} |\That \psi|^2\big) +\frac{\rhoH^2}{r^4}|\psi|^2 \Big].
    \end{split}    
  \end{equation}

  Finally, for any $\de_1\ll1$, take $\daxi\ll 1$ small enough so that $\rho_{2,\daxi}\leq \frac{r_{+}}{80}$
  and
  \begin{equation}
    \label{eq:degenerate-redshift:final-big-computation:aux3}
    4\,\overline{\VV}_\daxi\,\mathds{1}_{\{\rho_{1,\daxi} \leq \rhoH \leq \rho_{2,\daxi}\}}\,|\psi|^2\ll c_{red}\mathds{1}_{\{\rhoH \leq \frac{M}{160} \}}\rhoH^{-\de_1}|\psi|^2.
  \end{equation}
  Combining
  \zcref{eq:degenerate-redshift:final-big-computation,eq:degenerate-redshift:final-big-computation:aux1,eq:degenerate-redshift:final-big-computation:aux2,eq:degenerate-redshift:final-big-computation:aux3},
  we obtain that
  \begin{align*}
    |q|^2\QQ^{(X_{\Mor}^\diff, w_{\Mor}^\diff, J_{\Mor}^\diff)}[\psi]
    \geq{}& c \Big[\frac{\rhoH^4}{r^5} |\pr^{\mathrm{BL}}_r\psi|^2 +\big( 1-\frac{r_{\trap}}{r}\big)^2\big( \frac{\rhoH}{r^2}\abs*{\NablaAngular\psi}^2 + \frac{1}{r} |\That \psi|^2\big) +\frac{\rhoH^2}{r^4}|\psi|^2 \\
          &+\mathds{1}_{\{\rhoH < \frac{M}{80} \}} \rhoH^{-\de_1}\Big( |\rhoH\pr_{\rhoH}\psi|^2+ \abs*{\NablaAngular\psi}^2 +|\partial_v\psi|^2+|\psi|^2\Big)\Big]\\
          &-    \frac{2aru_\daxi}{(r^2+a^2)^2}\That \psi \pr_\phi \psi.
  \end{align*}
  By writing explicitly the coefficients of the derivatives, we conclude the proof of \zcref{prop:Mor-axially-sym:main-bulk}.
\end{proof}

\subsubsection{Non-negativity of the bulk near trapping}
\label{sec:Morawetz:principal-order-bulk}

We now construct a pseudodifferential correction of
\((X_{\Mor}^\diff,w_{\Mor}^\diff,J_{\Mor}^\diff)\) which restores
positivity of the Morawetz bulk near the trapped set. This correction is
needed because, for $a\neq0$, the trapped set depends on the temporal
and azimuthal frequencies. The construction follows the strategy of
\cite{tataruLocalEnergyEstimate2011} for slowly rotating Kerr
spacetimes.

\begin{lemma}
  \label{lemma:ILED-KdS:Bulk}
  There exist
  \begin{enumerate}
  \item a pseudo-differential operator
    $\widetilde{\MorawetzVF}\in \MixedOpClass{1}{1}(\mathcal{M})$ such that
    \begin{equation}
      \label{eq:ILED-KdS:X-pert-def}
      \widetilde{\MorawetzVF} = \widetilde{\MorawetzVF}_0\partial_t +  \widetilde{\MorawetzVF}_1,
    \end{equation}
    where $\widetilde{\MorawetzVF}_i\in \TanOpClass{i}(\mathcal{M})$
    is an anti-symmetric operator with symbol
    $\widetilde{\MorawetzSym}_i$, 
  \item a pseudo-differential operator
    $\widetilde{\MorawetzLagrangeCorr}\in
    \MixedOpClass{0}{1}(\mathcal{M})$
    such that
    \begin{equation}
      \label{eq:ILED-KdS:q-pert-def}
      \widetilde{\MorawetzLagrangeCorr}
      = \widetilde{\MorawetzLagrangeCorr}_0
      + \widetilde{\MorawetzLagrangeCorr}_{-1}\partial_{t},
    \end{equation}
    where
    $\widetilde{\MorawetzLagrangeCorr}_i\in
    \TanOpClass{i}(\mathcal{M})$ is a self-adjoint
    pseudo-differential operator with symbol
    $\widetilde{\MorawetzLagrangeCorrSym}_i$,
  \end{enumerate}
  such that
  \begin{equation}
    \label{eq:Morawetz:KdS:bulk:square-sum-decomposition}
    \abs*{q}^2\left(
      \frac{1}{2\ImagUnit}H_p\MorawetzSym + \PrinSymb \MorawetzLagrangeCorrSym
    \right)
    = \sum_{j=1}^{7}\SquareDecomp_j^2,
  \end{equation}
  where $H_p$ denotes the Hamiltonian vectorfield associated to $p$ the
  principal symbol of $\Box_{\Metric}$, and
  $\SquareDecomp_j\in \MixedSymClass{1}{1}(T^{*}\mathcal{M})$ are
  principally scalar, and
  \begin{gather*}
    \MorawetzSym = \MorawetzSym_{\Mor} + a\widetilde{\MorawetzSym}, \qquad
    \MorawetzSym_{\Mor} = \ImagUnit \mathcal{F}(r)\xi, \\
    \MorawetzLagrangeCorrSym = \MorawetzLagrangeCorrSym_{\Mor} + a\widetilde{\MorawetzLagrangeCorrSym},\qquad
    \MorawetzLagrangeCorrSym_{\Mor} = \MorawetzLagrangeCorr_{\Mor}^\diff - \frac{1}{2}\CovariantDeriv_{\Metric}\cdot \MorawetzVF_{\Mor}^\diff,
  \end{gather*}
  where $\MorawetzSym_{Mor}$ agrees with the principal symbol of $X_{ax}$
  in $\abs*{r-\rTrapping} < \delta_{\trap}$.
  
  Moreover, the decomposition in
  \zcref{eq:Morawetz:KdS:bulk:square-sum-decomposition} extends the
  decomposition in
  \zcref{eq:Mor-axially-sym:multiplier-choices:square:expansion} in the
  sense that if $a=0$, then
  $\widetilde{\MorawetzLagrangeCorrSym} = \widetilde{\MorawetzSym} = 0$,
  and
  \begin{equation*}    
    \sum_{j=1}^7\SquareDecomp_j^2 = \mathfrak{k}^{\alpha\beta}\zeta_{\alpha}\zeta_{\beta},
  \end{equation*}
  where $\zeta = \{\sigma,\xi,\eta\}\in T^{*}\Manifold$, and
  $\mathfrak{k}^{\alpha\beta}\partial_{\alpha}\psi\partial_{\beta}\psi
  =
  \KCurrent{X_{\Mor}^\diff, w_{\Mor}^\diff,J_{\Mor}^\diff}[\psi] + O(\abs*{\psi}^2)$.
\end{lemma}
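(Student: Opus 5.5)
We follow the Tataru--Tohaneanu construction for slowly rotating Kerr. The idea is to start from the $a=0$ square decomposition and perturb it symbolically in a neighborhood of the trapped set.

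\emph{Step 1: the case $a=0$.} In \eRN{} the bulk of $(X_{\Mor}^\diff,w_{\Mor}^\diff,J_{\Mor}^\diff)$ from \zcref{prop:Mor-axially-sym:main-bulk} is a sum of squares; the mixed term is absent because $a=0$. Its principal quadratic part is $\mathfrak{k}^{\alpha\beta}\zeta_\alpha\zeta_\beta$, and it equals $\abs*{q}^2(\frac{1}{2i}H_p\MorawetzSym_{\Mor}+p\MorawetzLagrangeCorrSym_{\Mor})$ by the standard identity relating multiplier currents to Poisson brackets. We split it into $\SquareDecomp_j$:
\begin{itemize}
\item $\SquareDecomp_1\sim \AxiSquareSumCoeff_1(\sigma+\ldots)$, for the $\That$-part;
\item $\SquareDecomp_2,\widetilde{\SquareDecomp}_2$, for the radial parts;
\item $\SquareDecomp_3,\dots,\SquareDecomp_5$, for the angular parts, built from $\eta_\theta$ and the components of $\renormpphi$ (or the Carter symbol) as principally scalar symbols;
\item the remaining $\SquareDecomp_j$, which absorb lower-order and zeroth-order contributions.
\end{itemize}
Near $r=2M$ the coefficients of $\sigma^2$ and of the angular symbol vanish to second order in $r-2M$. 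This degenerate structure is what has to be preserved.

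\emph{Step 2: perturbation for $a\neq0$.} For $\abs{a}\ll M$ the trapped set is $\{\xi=0,\ r=\rTrapping(\sigma,\eta)\}$, by \zcref{lemma:trapped-geodesics}. The roots $\sigma_{1},\sigma_2$ of $p$ are smooth and homogeneous. Since $\widetilde{\TT}_{\sigma,\eta_\phi}$ is linear in $(\sigma,\eta_\phi)$, $\rTrapping$ is a $0$-homogeneous symbol that is $O(a)$-close to $2M$. Away from $\{|r-2M|<\delta_{\trap}\}$ we keep $\MorawetzSym_{\Mor}$ unchanged. Near trapping we replace the radial weight by a frequency-dependent one:
\[\MorawetzSym = i\,\mathcal F(r)\xi + a\widetilde{\MorawetzSym}, \qquad a\widetilde{\MorawetzSym}\approx i\big(\mathcal F_{\sigma,\eta}(r)-\mathcal F(r)\big)\xi,\]
where $\mathcal F_{\sigma,\eta}$ vanishes exactly at $\rTrapping(\sigma_j,\eta)$ on the corresponding root sheet. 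The symbol is defined on each sheet $\sigma=\sigma_j$ and extended off the characteristic set as a polynomial of degree $1$ in $\sigma$; this keeps it in $\MixedSymClass{1}{1}$, so it is differential in $t$. We write this as $\widetilde{\MorawetzSym}_0\sigma+\widetilde{\MorawetzSym}_1$ and take Weyl quantizations, which gives the anti-symmetric operators $\widetilde{\MorawetzVF}_i$. The Lagrangian correction $\widetilde{\MorawetzLagrangeCorrSym}$ is chosen in the same way: it is real, so its quantization is self-adjoint, and it cancels the non-sign-definite part of $H_p\MorawetzSym$ that is proportional to $p$.

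\emph{Step 3: computing $H_p\MorawetzSym$ on the characteristic set.} On $p=0$ we use the factorization $p=\Metric^{-1}(dt,dt)(\sigma-\sigma_1)(\sigma-\sigma_2)$. The computation reduces to the Kerr-type identity: $H_p$ applied to the radial weight equals the derivative of the frequency-dependent potential $\RR$. Its sign is given by $-\pr_r\RR\propto \widetilde{\TT}_{\sigma,\eta}$ times $(r-\rTrapping)$, which is a square by the non-degeneracy $\pr_r^2\RR<0$ established in \zcref{lemma:trapped-geodesics}. Off the characteristic set, every term is written as $p\cdot(\text{order }0)$ plus a square and is absorbed into $\widetilde{\MorawetzLagrangeCorrSym}$. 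This yields squares of the form
\[\SquareDecomp_j\sim \Big(1-\frac{r_i(\sigma_i,\eta)}{r}\Big)(\sigma-\sigma_j)\cdot(\ldots),\]
matching the trapped norm. Outside trapping, the new squares are $O(a)$ perturbations of the positive $a=0$ squares and are absorbed. Setting $a=0$ recovers Step 1 by construction.

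\emph{Main obstacle.} The main difficulty is the sum-of-squares identity in the transition region $|r-2M|\sim\delta_{\trap}$. There, the frequency-dependent weight must be glued to $\mathcal F(r)$ while each $\SquareDecomp_j$ remains a genuine symbol in $\MixedSymClass{1}{1}$ that is principally scalar. In addition, the degeneracy at $\rTrapping(\sigma_j,\eta)$ must be exactly quadratic on both sheets simultaneously. I would handle this by working separately on each sheet with cutoffs in $\sigma/|\eta|$. The $O(a)$ gluing errors would then be absorbed using the strictly positive nontrapped squares available for $|r-2M|\gtrsim\delta_{\trap}$.
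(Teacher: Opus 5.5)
Your architecture is the paper's Tataru--Tohaneanu route. It has four components:
\begin{itemize}
\item a radial symbol vanishing on the frequency-dependent trapped radius;
\item reduction of that symbol to an affine function of $\sigma$ (your interpolation between the sheets $\sigma=\sigma_1$ and $\sigma=\sigma_2$ is exactly what the paper's Malgrange division by $p$ produces, since the roots are distinct, so it is a legitimate elementary substitute);
\item the identity $\frac{1}{i}H_{|q|^2p}\mathfrak{x}=\alpha^2(r-r_{\mathrm{trap}})^2+\beta^2\xi^2$ on $\{p=0\}$, from non-degeneracy of trapping;
\item a Lagrangian correction absorbing multiples of $p$.
\end{itemize}

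The gap is that the step carrying the lemma is only asserted (``off the characteristic set, every term is written as $p\cdot(\text{order }0)$ plus a square''). Positivity on the two sheets does not give it. Since $\mathfrak{x}$ is affine and $p$ quadratic in $\sigma$, $H_p\mathfrak{x}$ is cubic in $\sigma$. If you divide by $p$ and put the whole quotient into $\widetilde{\mathfrak{w}}\in S^0_1$, the remainder $\gamma_2+\gamma_1\sigma$ is affine in $\sigma$. For $a\neq 0$ its slope $\gamma_1=\frac14(\sigma_1-\sigma_2)(\alpha_1^2-\alpha_2^2)+\frac{\beta_1^2-\beta_2^2}{\sigma_1-\sigma_2}\xi^2$ is generically nonzero, because $r_{\mathrm{trap}}(\sigma_1,\eta_\phi)\neq r_{\mathrm{trap}}(\sigma_2,\eta_\phi)$. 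So the remainder changes sign in $\sigma$ and cannot be a sum of squares.

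The missing idea is to deliberately keep back a multiple of $p$ chosen in an admissible window.
\begin{itemize}
\item For the $\alpha$-part $\gamma^{(\alpha)}$ of the remainder, $\gamma^{(\alpha)}+\mathfrak{e}(\sigma-\sigma_1)(\sigma-\sigma_2)$ is a nonnegative quadratic in $\sigma$ exactly when $\mathfrak{e}$ lies between $\frac14(\alpha_1-\alpha_2)^2$ and $\frac14(\alpha_1+\alpha_2)^2$. The paper takes $\mathfrak{e}=\frac14(\alpha_1-\alpha_2)^2+\delta_1\alpha_1\alpha_2$. This turns it into $\frac{1-\delta_1}{4}\big(\alpha_1(\sigma-\sigma_2)-\alpha_2(\sigma-\sigma_1)\big)^2+\frac{\delta_1}{4}\big(\alpha_1(\sigma-\sigma_2)+\alpha_2(\sigma-\sigma_1)\big)^2$. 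One also needs $\mathfrak{e}-e_{\mathrm{Mor}}\in aS^0_1$, so that the leftover multiple of $p$ is $O(a)$, goes into $a\widetilde{\mathfrak{w}}$, and the $a=0$ decomposition is recovered.
\item For the $\beta$-part, the affine interpolant of $\beta_1^2\xi^2$ and $\beta_2^2\xi^2$ is rewritten, modulo $O(a)p$, as $\frac12(\beta_1^2+\beta_2^2-Ca)\xi^2+\frac{(Ca+\beta_1^2-\beta_2^2)(\sigma-\sigma_2)^2+(Ca-\beta_1^2+\beta_2^2)(\sigma-\sigma_1)^2}{2(\sigma_1-\sigma_2)^2}\xi^2$. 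The buffer $Ca\geq|\beta_1^2-\beta_2^2|$ keeps every coefficient nonnegative.
\end{itemize}
Your guess $\mathfrak{a}_j\sim(1-r_i/r)(\sigma-\sigma_j)$ is the right shape. But it is produced by this interpolation-plus-$p$-multiple algebra, which is the actual content of the lemma.

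Conversely, the transition region you single out is not where the work is. The identity is only needed for $|r-2M|<\delta_{\mathrm{trap}}$. The paper globalizes afterwards:
\begin{itemize}
\item it conjugates $\widetilde{X}$ and $\widetilde{w}$ with the physical-space cutoff $\mathring{\chi}$;
\item it puts the terms containing $H_p\mathring{\chi}$ into $\widetilde{K}_{\bowtie}$; these carry a factor $a$ and are supported in $\delta_{\mathrm{trap}}<|r-2M|<2\delta_{\mathrm{trap}}$, away from the trapped set;
\item it bounds those terms by the nontrapped Morawetz bulk.
\end{itemize}
That is your absorption idea, but used at the level of the integrated estimate rather than inside the symbol identity.

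If you do introduce cutoffs in $\sigma/|\eta|$, they may only act on tangential quantities such as $\sigma_j/|\eta|$. A cutoff in the fibre variable $\sigma$ itself is not polynomial in $\sigma$. It would make the multipliers pseudodifferential in $t$, which the mixed classes are designed to avoid, since the divergence identity on $\mathcal{M}(\tau_1,\tau_2)$ needs them differential in $t$. Such cutoffs are unnecessary anyway: $dt$ is timelike near $r=2M$, so $|\sigma_j|\sim|(\xi,\eta)|$ on the characteristic set, and $r_{\mathrm{trap}}(\sigma_j,\eta_\phi)$ is automatically $O(a)$-close to $2M$.
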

\begin{remark}
  Observe that the specific forms of
  $\widetilde{\MorawetzVF}$, $\widetilde{\MorawetzLagrangeCorr}$ in
  \zcref{eq:ILED-KdS:X-pert-def} and \zcref{eq:ILED-KdS:q-pert-def}
  are not unique due to their pseudodifferential nature.
\end{remark}
\begin{remark}
  The definitions of $\MorawetzSym_{\Mor}$,
  $\MorawetzLagrangeCorrSym_{\Mor}$ are chosen so that
  \begin{equation*}
    \WeylQ{\MorawetzSym_{\Mor} + \MorawetzLagrangeCorrSym_{\Mor}} = \MorawetzVF_{\Mor}^\diff + \MorawetzLagrangeCorr_{\Mor}^\diff. 
  \end{equation*}
\end{remark}

\begin{proof}[Proof of \zcref{lemma:ILED-KdS:Bulk}]

  For the sake of simplifying some of our ensuing calculations, define
  \begin{equation*}
    \MorawetzLagrangeCorrSym'_{\Mor} = \MorawetzLagrangeCorrSym_{\Mor}
    - 2\PoissonB{\ln q, \MorawetzSym_{\Mor}}, \qquad
    \tilde{\MorawetzLagrangeCorrSym}'_{\Mor} =
    \tilde{\MorawetzLagrangeCorrSym}_{\Mor} -2\PoissonB{\ln q, \tilde{\MorawetzSym}}, 
  \end{equation*}
  so that
  \begin{equation*}
    \abs*{q}^2\left(
      \frac{1}{2\ImagUnit}H_\PrinSymb
      (\MorawetzSym_{\Mor}+a\tilde{\MorawetzSym})
      +(\MorawetzLagrangeCorrSym_{\Mor}+a\tilde{\MorawetzLagrangeCorrSym})\PrinSymb  
    \right)
    = \frac{1}{2\ImagUnit}H_{\abs*{q}^2\PrinSymb} (\MorawetzSym_{\Mor}+a\tilde{\MorawetzSym}) +
    \left(\tilde{\MorawetzLagrangeCorrSym}'_{\Mor}
      +a\tilde{\MorawetzLagrangeCorrSym}'\right)(\abs*{q}^2\PrinSymb).
  \end{equation*}
  We first choose $\tilde{\MorawetzSym}$ so that
  $H_{\RescaledPrinSymb} (\MorawetzSym_{\Mor}+a\tilde{\MorawetzSym})$
  vanishes at $\{r=\rTrapping(\sigma_i, \FreqPhi), \sigma=\sigma_i\}$.
  The most immediate choice for this is the symbol
  \begin{equation*}
    \mathfrak{x}'\vcentcolon=\ImagUnit \Delta\abs*{q}^{-2}\left(r-\rTrapping(\sigma,\FreqPhi)\right)\xi
    =\ImagUnit\frac{r-\rTrapping(\sigma,\FreqPhi)}{2\abs*{q}^2}H_{\abs*{q}^2\PrinSymb}r.
  \end{equation*}
  This symbol is clearly well defined and smooth in a neighborhood of
  the trapped set. Moreover, we can calculate that on the
  characteristic set $\PrinSymb=0$, we have 
  \begin{equation*}
    2 H_{\RescaledPrinSymb}\MorawetzSym' =
    \left(\frac{1}{\abs*{q}^2} - \frac{2(r-\rTrapping)\partial_r\abs*{q}}{\abs*{q}^3}\right)(H_{\RescaledPrinSymb} r)^2
    + \frac{r-\rTrapping(\sigma,\FreqPhi)}{\abs*{q}^2}H_{\RescaledPrinSymb}^2r.
  \end{equation*}
  We can compute that for
  $\PrinSymb=0$, we have that
  \begin{equation*}
    H_{\RescaledPrinSymb}^2r
    = 2\Delta\partial_r\left(\frac{(1+\gamma)^2}{\Delta}\left((r^2+a^2)\sigma +a\FreqPhi\right)\right).
  \end{equation*}
  Since $\rTrapping$ is the unique minimum of
  $\Delta^{-1}\left((r^2+a^2)\sigma
    +a\FreqPhi\right)$, and we are in a $\delta_{\operatorname{trap}}$ neighborhood of
  $r=2M$, there exist positive homogeneous symbols
  $\alpha,\beta\in \Psi^0_{\hom}(r,\sigma,\eta_{\varphi})$ such that
  on $\PrinSymb=0$, near $r=2M$,
  \begin{equation}
    \label{eq:ILED-near:sum-of-squares:KdS-ILED-sym-characteristic}
    H_{\RescaledPrinSymb}\MorawetzSym' =
    \alpha^2(r,\sigma, \FreqPhi) (r-\rTrapping)^2
    + \beta^2(r,\sigma, \FreqPhi)\xi^2.
  \end{equation}  
  Unfortunately, the problem with $\MorawetzSym'$ is that it is not a
  polynomial in $\sigma$ unless $a=0$. Thus, for $a\neq 0$,
  $\MorawetzSym'$ cannot be directly used in conjunction with our
  integration-by-parts or divergence theorem method to produce a
  Morawetz estimate. We will overcome this difficulty with the aid of
  the Malgrange preparation theorem.
  Observe that we defined $\MorawetzSym$ so that it is smooth in $a$,
  and so that
  \begin{equation*}
    \MorawetzSym'-\MorawetzSym_{\Mor} \in a\MixedSymClass{2}{0}(T^{*}\Manifold).
  \end{equation*}
  Thus the Malgrange preparation theorem gives us the existence of
  homogeneous
  $\tilde{\MorawetzSym}_i\in \TanSymClass{i}(T^{*}\mathcal{M})$, $i=0,1$
  and homogeneous  $\rAux \in S^{-1}(T^{*}\mathcal{M})$
  such that
  \begin{equation*}
    \frac{1}{\ImagUnit}\left(\MorawetzSym' - \MorawetzSym_{\Mor}\right)
    = a\left(\tilde{\MorawetzSym}_1
      + \tilde{\MorawetzSym}_0\sigma\right)
    + a \rAux\PrinSymb.
  \end{equation*}
  Now, we define
  \begin{equation*}
    \frac{1}{\ImagUnit}\tilde{\MorawetzSym} = \tilde{\MorawetzSym}_1+\tilde{\MorawetzSym}_0\sigma,
  \end{equation*}
  so that on $\PrinSymb=0$,
  \begin{equation*}
    \MorawetzSym = \MorawetzSym_{\Mor} + a\tilde{\MorawetzSym} = \MorawetzSym'.
  \end{equation*}
  Thus $\MorawetzSym$ is a symbol which is a polynomial in $\sigma$
  and moreover vanishes at
  $\{r = \rTrapping(\sigma_i, \FreqPhi), \sigma=\sigma_i\}$.
  
  \textit{A priori}, $H_{\RescaledPrinSymb} \tilde{\MorawetzSym}$ is a
  third degree polynomial in $\sigma$. Applying the Malgrange
  preparation theorem again yields that there exist some
  $\gamma_1\in \TanSymClass{1}(T^{*}\mathcal{M}), \gamma_2 \in
  \TanSymClass{2}(T^{*}\mathcal{M})$,
  $f_0\in \TanSymClass{0}(T^{*}\mathcal{M}), f_{-1}\in \sigma
  \TanSymClass{-1}(T^{*}\mathcal{M})$ such that
  \begin{equation*}
    \frac{1}{2\ImagUnit\abs*{q}^2} H_{\RescaledPrinSymb}(\MorawetzSym_{\Mor}+a\tilde{\MorawetzSym})
    + \MorawetzLagrangeCorrSym_{\Mor}' (\RescaledPrinSymb)
    = \gamma_2+\gamma_1\sigma  + \left(
      e_{\Mor} + a(f_0+f_{-1}\sigma)\right)(\sigma-\sigma_1)(\sigma-\sigma_2),
  \end{equation*}
  observing that
  \begin{equation*}
    e_{\Mor} \vcentcolon= c_1\alpha_{\Mor}^2
  \end{equation*}
  is the coefficient for $\sigma^2$ in the expression for
  $\frac{1}{2\ImagUnit}H_{\PrinSymb}\MorawetzSym_{\Mor} +
  \MorawetzLagrangeCorrSym_{\Mor}\PrinSymb$ (see \zcref{eq:Mor-axially-sym:multiplier-choices:square:expansion}).                 
  It now remains to demonstrate that $\gamma_2+\gamma_1\sigma
  +e_{\Mor}(\sigma-\sigma_1)(\sigma-\sigma_2)$ can be expressed as a
  sum of squares up to some error in
  $a\PrinSymb\MixedSymClass{0}{1}(T^{*}\mathcal{M})$. If this were true, we could write
  \begin{equation}
    \label{eq:ILED-near:sum-of-squares:aux-1}
    \gamma_2+\gamma_1\sigma + e_{\Mor}(\sigma-\sigma_1)(\sigma-\sigma_2)
    = \sum \SquareDecomp_j^2 + a(g_0+g_{-1}\sigma)(\sigma-\sigma_1)(\sigma-\sigma_2). 
  \end{equation}
  We could then define $\tilde{\MorawetzLagrangeCorrSym}$ such that
  \begin{equation*}
    \tilde{\MorawetzLagrangeCorrSym}' = -2\left(f_0+g_0+(f_{-1}+g_{-1})\sigma\right),
  \end{equation*}
  so that the
  $a\PrinSymb\MixedSymClass{0}{1}(T^{*}\mathcal{M})$ terms are all
  canceled.

  We now return to showing
  \zcref{eq:ILED-near:sum-of-squares:aux-1}. Recall that on
  $\PrinSymb=0$,
  \begin{equation*}
    H_{\RescaledPrinSymb}(\MorawetzSym_{\Mor} + a\tilde{\MorawetzSym}) = H_{\RescaledPrinSymb}\MorawetzSym'.
  \end{equation*}
  As a result of
  \zcref{eq:ILED-near:sum-of-squares:KdS-ILED-sym-characteristic}, we
  now have that if $\sigma=\sigma_i$, which in particular implies that for $\PrinSymb=0$,
  \begin{equation*}
    \gamma_2+\gamma_1\sigma =\alpha^2(r, \sigma, \FreqPhi)(r-\rTrapping)^2 + \beta^2(r,\sigma,\FreqPhi)\xi^2.
  \end{equation*}
  We can solve for $\gamma_2, \gamma_1$ explicitly now by considering
  the two-dimensional system of equations
  \begin{equation}
    \label{eq:ILED:top-order-trapping:final-2d-system}
    \begin{split}
      \gamma_2+\gamma_1\sigma_i ={}& \frac{1}{4}\alpha_i^2(\sigma_1-\sigma_2)^2 + \beta_i^2\xi^2,\\
      \alpha_i ={}& \frac{2|\sigma_i|}{\sigma_1-\sigma_2}\alpha(r,
                    \sigma_i, \FreqPhi)(r-\rTrapping(\sigma_i, \FreqPhi))\in \TanSymClass{0}(T^{*}\mathcal{M}),\\
      \beta_i ={}& \beta(r, \sigma_i, \FreqPhi) \in \TanSymClass{0}(T^{*}\mathcal{M}).
    \end{split}
  \end{equation}
  Solving the system yields 
  \begin{equation}
    \begin{split}
      \label{eq:ILED-near:sum-of-squares:gamma-def}
      \gamma_2 ={}& \frac{1}{4}(\sigma_1-\sigma_2)(\alpha_2^2\sigma_1 - \alpha_1^2\sigma_2) + \frac{\sigma_1\beta_2^2 - \sigma_2\beta_1^2}{\sigma_1-\sigma_2}\xi^2,  \\
      \gamma_1 ={}& \frac{1}{4}(\sigma_1-\sigma_2)(\alpha_1^2-\alpha_2^2)+\frac{\beta_1^2 - \beta_2^2}{\sigma_1-\sigma_2}\xi^2. 
    \end{split} 
  \end{equation}
  We first add together the first two terms in $\gamma_i$ to see that
  \begin{equation}
    \label{eq:ILED-near:sum-of-squares:gamma-sum-first-terms}
    \begin{split}
      (\sigma_1-\sigma_2)\left(\alpha_2^2\sigma_1 -\alpha_1^2\sigma_2 + \sigma(\alpha_1^2-\alpha_2^2)\right)
      ={}&(1-{\delta_1})(\alpha_1(\sigma-\sigma_2)-\alpha_2(\sigma-\sigma_1))^2\\
         &+ {\delta_1}\left(\alpha_1(\sigma-\sigma_2)+\alpha_2(\sigma-\sigma_1)\right)^2\\
         &- 4\mathfrak{e}(\sigma-\sigma_1)(\sigma-\sigma_2),
    \end{split}
  \end{equation}
  where %
  \begin{equation*}
    \mathfrak{e} = \frac{(\alpha_1-\alpha_2)^2}{4}+ {\delta_1}\alpha_1\alpha_2. 
  \end{equation*}
  Recall\footnote{In particular, recall the definitions of
    $\AxiSquareSumCoeff_i$ from
    \zcref{prop:Mor-axially-sym:main-bulk}.} that when
  $a\FreqPhi = 0$, $\alpha_1=\alpha_2=\AxiSquareSumCoeff_1$,
  $\sigma_2=-\sigma_1$, $\beta_1=\beta_2=\AxiSquareSumCoeff_2$, and
  that $\MorawetzLagrangeCorrSym_{\Mor} = \delta_1\alpha_{\Mor}^2$.
  This implies that
  \begin{equation*}
    \mathfrak{e}-e_{\Mor}\in a\MixedSymClass{0}{1}(T^{*}\mathcal{M})
  \end{equation*}
  as desired.  We now add together the second terms in the $\gamma_i$
  given in \zcref{eq:ILED-near:sum-of-squares:gamma-def} to see that
  \begin{equation}
    \label{eq:ILED-near:sum-of-squares:gamma-sum-second-terms}
    \begin{split}
      \frac{\sigma_1\beta_2^2-\sigma_2\beta_1^2}{\sigma_1-\sigma_2}
      + \sigma\frac{\beta_1^2-\beta_2^2}{\sigma_1-\sigma_2}
      ={}&
           \frac{1}{2}\left(\beta_1^2+\beta_2^2 -Ca\right) +
           \frac{(Ca-\beta_2^2+\beta_1^2)(\sigma-\sigma_2)^2}{2(\sigma_1-\sigma_2)^2}\\
         &+
           \frac{(Ca-\beta_1^2+\beta_2^2)(\sigma-\sigma_1)^2}{2(\sigma_1-\sigma_2)^2}
           + O(a)\PrinSymb.
    \end{split}
  \end{equation}
  Summing \zcref{eq:ILED-near:sum-of-squares:gamma-sum-first-terms}
  and \zcref{eq:ILED-near:sum-of-squares:gamma-sum-second-terms}
  together, we have that
  \begin{equation*}
    \begin{split}
      &\frac{1}{2\ImagUnit}H_{\RescaledPrinSymb}(
        \MorawetzSym_{\Mor}+a\tilde{\MorawetzSym}) + \RescaledPrinSymb\MorawetzLagrangeCorrSym'\\
      ={}& \frac{1-{\delta_1}}{4}\left(
           \alpha_1(\sigma-\sigma_2)-\alpha_2(\sigma-\sigma_1)
           \right)^2
           + \frac{{\delta_1}}{4}\left(\alpha_1(\sigma-\sigma_2) + \alpha_2(\sigma-\sigma_1)\right)^2\\
      &+ \frac{1}{2}\left(\beta_1^2+\beta_2^2 - Ca\right)\xi^2
        + \frac{(Ca-\beta_2^2+\beta_1^2)(\sigma-\sigma_2)^2}{2(\sigma_1-\sigma_2)^2}\xi^2\\
      &+ \frac{(Ca-\beta_1^2+\beta_2^2)(\sigma-\sigma_1)^2}{2(\sigma_1-\sigma_2)^2}\xi^2
        +a(\sigma-\sigma_1)(\sigma-\sigma_2)\MixedSymClass{0}{1}(T^{*}\mathcal{M}).
    \end{split}
  \end{equation*}
  We then pick
  \begin{align*}
    \SquareDecomp_1^2 ={}& \frac{\delta_1}{4}\left(\alpha_1(\sigma-\sigma_2)+\alpha_2(\sigma-\sigma_1)\right)^2,\\
    \SquareDecomp_2^2 ={}& \frac{1}{2}\left(\beta_1^2 + \beta_2^2 - Ca\right)\xi^2,\\
    \SquareDecomp_{3}^2 ={}& \frac{\FreqTheta^2}{|\FreqAngular|^2 + \Delta\xi^2}\frac{(1-\delta_1)}{4} \left(\alpha_1(\sigma-\sigma_2)-\alpha_2(\sigma-\sigma_1)\right)^2, \\ 
    \SquareDecomp_{4}^2 ={}& \frac{\FreqPhi^2}{|\FreqAngular|^2 + \Delta\xi^2}\frac{(1-\delta_1)}{4} \left(\alpha_1(\sigma-\sigma_2)-\alpha_2(\sigma-\sigma_1)\right)^2,\\
    \SquareDecomp_{5}^2 ={}& \frac{\Delta\xi^2}{|\FreqAngular|^2 + \Delta\xi^2}\frac{(1-\delta_1)}{4}\left(\alpha_1(\sigma-\sigma_2)-\alpha_2(\sigma-\sigma_1)\right)^2,\\
    \SquareDecomp_6^2 ={}& \frac{\left(Ca-\beta_2^2 + \beta_1^2\right)\left(\sigma-\sigma_2\right)^2}{2\left(\sigma_1-\sigma_2\right)^2}\xi^2,\\
    \SquareDecomp_7^2 ={}& \frac{\left(Ca-\beta_1^2 + \beta_2^2\right)\left(\sigma-\sigma_1\right)^2}{2\left(\sigma_1-\sigma_2\right)^2}\xi^2,
  \end{align*}
  concluding the proof of \zcref{lemma:ILED-KdS:Bulk}.

  We observe that in particular, when $a = 0$, $\sigma_1 = -\sigma_2$,
  $\alpha_1 = \alpha_2$, $\beta_1 = \beta_2$, and
  $\SquareDecomp_4 = \SquareDecomp_5 = \SquareDecomp_6 = \SquareDecomp_7 = 0$. 
\end{proof}

\begin{corollary}
  \label{coro:Morawetz:elliptic-equivalence}
  The family of symbols $\{\SquareDecomp_j\}_{j=1,7}$ is elliptically
  equivalent to the family of symbols given by
  $\curlyBrace*{\ell_1(\sigma-\sigma_2), \ell_2(\sigma-\sigma_1),
    \xi^2}$ where $\ell_i$, $\sigma_j$, are as defined in
  \zcref{eq:trapping-norm:elli-def} and
  \zcref{eq:trapping-norm:sigmai-def} respectively, in the
  sense that there is a representation of the form
  \begin{equation*}
    \SquareDecomp = \mathfrak{M} \mathfrak{b},\qquad
    \mathfrak{b} =
    \begin{pmatrix*}
      \ell_1(\sigma-\sigma_2)\\
      \ell_2(\sigma-\sigma_1)\\
      \xi
    \end{pmatrix*},
  \end{equation*}
  where the symbol valued matrix $\mathfrak{M}$ has maximum rank 3 everywhere. 
\end{corollary}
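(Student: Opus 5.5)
The plan is to write each $\SquareDecomp_j$ explicitly as a linear combination of the three entries of $\mathfrak{b}$, and then check that the coefficient matrix has rank $3$ everywhere.

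\emph{The $\alpha$-terms.} The key identity is the explicit form of $\alpha_i$ in \zcref{eq:ILED:top-order-trapping:final-2d-system}:
\[
\alpha_i=\frac{2|\sigma_i|}{\sigma_1-\sigma_2}\,\alpha(r,\sigma_i,\FreqPhi)\,\ell_i .
\]
From this,
\[
\alpha_1(\sigma-\sigma_2)\pm\alpha_2(\sigma-\sigma_1)=c_1\,\ell_1(\sigma-\sigma_2)\pm c_2\,\ell_2(\sigma-\sigma_1),
\qquad c_i\vcentcolon=\frac{2|\sigma_i|\,\alpha(r,\sigma_i,\FreqPhi)}{\sigma_1-\sigma_2}.
\]
Each $c_i$ is a zero-homogeneous symbol and is positive, since $\alpha>0$, $\sigma_1\neq\sigma_2$ and $\sigma_i\neq0$ near trapping.

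\emph{The $\xi$-terms.} The remaining symbols $\SquareDecomp_2,\SquareDecomp_6,\SquareDecomp_7$ are each $\xi$ times a zero-order symbol (taking square roots of the nonnegative coefficients). Their coefficients are $\big(\tfrac12(\beta_1^2+\beta_2^2-Ca)\big)^{1/2}$ and $\big(\tfrac{Ca\mp(\beta_2^2-\beta_1^2)}{2}\big)^{1/2}\frac{\sigma-\sigma_{2,1}}{\sigma_1-\sigma_2}$. I choose $C$ large relative to $(\beta_1^2-\beta_2^2)/a=O(1)$ and $a$ small, so that all three coefficients are real and $\SquareDecomp_2$ has coefficient bounded below.

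\emph{The matrix.} I now assemble $\mathfrak M$ with the following rows:
\begin{itemize}
\item row 1: $\tfrac{\sqrt{\delta_1}}{2}(c_1,c_2,0)$;
\item rows 3 to 5: $\theta_k\tfrac{\sqrt{1-\delta_1}}{2}(c_1,-c_2,0)$, where $\theta_k^2$ are the three angular and $\Delta\xi^2$ weights, with $\sum_k\theta_k^2=1$;
\item rows 2, 6, 7: $(0,0,\ast)$, with a nonvanishing entry in row 2.
\end{itemize}
The weights $\theta_k$ need not be smooth individually. To avoid this, I replace rows 3 to 5 by the single combined square $\SquareDecomp_3^2+\SquareDecomp_4^2+\SquareDecomp_5^2$, which is what actually enters the bulk. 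The $2\times2$ block built from $(c_1,c_2)$ and $(c_1,-c_2)$ has determinant $-2c_1c_2\neq0$, and row 2 supplies the third direction. Hence $\mathfrak M$ has rank $3$ wherever $c_1,c_2$ and the $\SquareDecomp_2$ coefficient are nonzero.

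\emph{The main obstacle: uniformity near trapping.} I must check that $c_i$ does not degenerate on the relevant conic region, i.e.\ that $\alpha(r,\sigma_i,\FreqPhi)$ is bounded below and $\sigma_1-\sigma_2$ stays nonzero. I would get the first from the nondegenerate minimum of $\Delta^{-1}((r^2+a^2)\sigma+a\FreqPhi)$ in the $\delta_{\trap}$-neighborhood of $r=2M$, which was used to produce \zcref{eq:ILED-near:sum-of-squares:KdS-ILED-sym-characteristic}, together with the unstable-orbit property of \zcref{lemma:trapped-geodesics}. The second follows from $\Metric^{-1}(dt,dt)<0$ away from the ergoregion, which is separated from trapping.

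Finally, for $a=0$ I compare with the $\AxiSquareSumCoeff_i$ to confirm the constants are uniform as $a\to0$, using a perturbative continuity argument in $a$.
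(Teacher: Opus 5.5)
This is essentially the paper's proof. The paper also substitutes $\alpha_i=\frac{2|\sigma_i|}{\sigma_1-\sigma_2}\alpha(r,\sigma_i,\FreqPhi)\ell_i$ and writes down the explicit coefficient matrix, whose rows 1 and 3--5 span the first two columns and whose row 2 spans the third. Your extra checks fill in details the paper leaves implicit: that $c_1c_2\neq0$, that $C$ can be chosen so the entries of rows 2, 6, 7 are real, and that rows 4--5 carry the entries $-\theta_k c_2$ (where the paper's display has a typo).

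Drop the merging of rows 3--5, though. It is unnecessary, because the signed weights $\theta_3=\FreqTheta/\sqrt{|\FreqAngular|^2+\Delta\xi^2}$, $\theta_4=\FreqPhi/\sqrt{|\FreqAngular|^2+\Delta\xi^2}$ and $\theta_5=\sqrt{\Delta}\,\xi/\sqrt{|\FreqAngular|^2+\Delta\xi^2}$, which are the paper's choice, are smooth and homogeneous of degree zero near trapping, where $\Delta>0$. Merging would also replace the seven-symbol family in the statement by a different one.
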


\begin{proof}
  Recall from \zcref{eq:ILED:top-order-trapping:final-2d-system, eq:trapping-norm:elli-def} that 
  \begin{equation*}
    \frac{2\abs*{\sigma_i}}{\sigma_1-\sigma_2}\alpha(r, \sigma_i, \eta_{\varphi})\ell_i
    = \alpha_i.
  \end{equation*}
  As a result, it suffices to show that there exists some
  $\widetilde{\mathfrak{M}}$ such that
  \begin{equation*}
    \mathfrak{a}=\widetilde{\mathfrak{M}}
    \begin{pmatrix*}
      \alpha_1(\sigma-\sigma_2)\\
      \alpha_2(\sigma-\sigma_1)\\
      \xi
    \end{pmatrix*}.
  \end{equation*}
  To this end, consider
  \begin{equation*}
    \widetilde{\mathfrak{M}}
    =
    \begin{pmatrix}
      \frac{\sqrt{\delta_1}}{2} & \frac{\sqrt{\delta_1}}{2}& 0\\
      0& 0 & \frac{1}{\sqrt{2}}\sqrt{\beta_1^2+\beta_2^2-Ca}    \\
      \frac{\FreqTheta}{\sqrt{|\FreqAngular|^2 + \Delta\xi^2}}\frac{\sqrt{1-\delta_1}}{2} & -\frac{\FreqTheta}{\sqrt{|\FreqAngular|^2 + \Delta\xi^2}}\frac{\sqrt{1-\delta_1}}{2} & 0\\
      \frac{\FreqPhi}{\sqrt{|\FreqAngular|^2 + \Delta\xi^2}}\frac{\sqrt{1-\delta_1}}{2} & -\frac{\FreqTheta}{\sqrt{|\FreqAngular|^2 + \Delta\xi^2}}\frac{\sqrt{1-\delta_1}}{2} & 0\\
      \frac{\sqrt{\Delta}\xi}{\sqrt{|\FreqAngular|^2 + \Delta\xi^2}}\frac{\sqrt{1-\delta_1}}{2} & -\frac{\FreqTheta}{\sqrt{|\FreqAngular|^2 + \Delta\xi^2}}\frac{\sqrt{1-\delta_1}}{2} & 0\\
      0 & 0 & \frac{\sqrt{Ca-\beta_2^2 + \beta_1^2}(\sigma-\sigma_2)}{\sqrt{2}(\sigma_1-\sigma_2)}\\
      0 & 0 & \frac{\sqrt{Ca-\beta_1^2 + \beta_2^2}(\sigma-\sigma_1)}{\sqrt{2}(\sigma_1-\sigma_2)},
    \end{pmatrix}
  \end{equation*}
  which is of rank 3 everywhere. 
\end{proof}

\subsubsection{Combining trapped and untrapped estimates}

Here we combine the bulk estimates that we obtained outside trapping in \zcref{prop:Mor-axially-sym:main-bulk} with the ones at trapping in
\zcref{lemma:ILED-KdS:Bulk}.

As constructed in \zcref[cap]{lemma:ILED-KdS:Bulk}, both
$\widetilde{\MorawetzSym}$ and $\widetilde{\MorawetzLagrangeCorrSym}$
are homogeneous symbols and can be made smooth simply by a standard
truncation away from low frequencies. Since they are also only defined
near $r=2M$, we will also truncate the symbols in physical
space. Define
\begin{equation*}
  \mathring{\chi} =
  \begin{cases}
    1 & \abs*{r-2M} < \delta_{\operatorname{trap}}\\
    0 & \abs*{r-2M} > 2\delta_{\operatorname{trap}},
  \end{cases}
\end{equation*}
and define $\breve{\chi}$ so that
\begin{equation*}
  1 = \mathring{\chi}^2 + \breve{\chi}^2. 
\end{equation*}
Now, we truncate our pseudo-differential multipliers, so that
\begin{equation}
  \label{eq:Morawetz:morawetz-VF-LagrangeCorr-with-cutoff:def}
  \widetilde{\MorawetzVF} = \mathring{\chi}\WeylQ{\widetilde{\MorawetzSym}}\mathring{\chi}, \qquad
  \widetilde{\MorawetzLagrangeCorr} = \mathring{\chi}\WeylQ{\widetilde{\MorawetzLagrangeCorr}}\mathring{\chi}.
\end{equation}
From the Weyl calculus, we can only say that
there exist
$K^W_i\in \TanOpClass{i}(\Manifold)$ 
such that
\begin{equation*}
  \left(\left[\Box_{\Metric}, \widetilde{\MorawetzVF}\right]
    + \Box_{\Metric}\widetilde{\MorawetzLagrangeCorr}
    + \widetilde{\MorawetzLagrangeCorr}\Box_{\Metric}
  \right)
  = K^W_2 + 2K_1^W\partial_t + K_0^W\partial_t^2 + K^W_{-1}\partial_t^3. 
\end{equation*}

In order to eliminate the $K^W_{-1}\partial_t^3$ term, we
slightly alter the choice of pseudo-differential Lagrangian corrector
so that
\begin{equation*}
  \widetilde{\MorawetzLagrangeCorr}
  = \mathring{\chi}\widetilde{\MorawetzLagrangeCorrSym}\mathring{\chi}
  - \widetilde{\MorawetzLagrangeCorrSym}_{\Aux}\partial_t,
\end{equation*}
where the operator $\widetilde{\MorawetzLagrangeCorrSym}_{\Aux}$ is chosen so that
\begin{equation*}
  \Metric^{-1}(dt, dt)\WeylQ{\widetilde{\MorawetzLagrangeCorrSym}_{\Aux}}
  + \WeylQ{\widetilde{\MorawetzLagrangeCorrSym}_{\Aux}}\Metric^{-1}(dt, dt)
  = K^W_{-1}.
\end{equation*}
With this choice of $\widetilde{\MorawetzLagrangeCorr}$, we can
enforce that actually $K^W_{-1}=0$, so that
\begin{equation*}
  \left(\left[\widetilde{\MorawetzVF}, \Box_{\Metric}\right]
    + \Box_{\Metric}\widetilde{\MorawetzLagrangeCorr}
    + \widetilde{\MorawetzLagrangeCorr}\Box_{\Metric}
  \right)
  = K^W_2 + 2K_1^W\partial_t + K_0^W\partial_t^2. 
\end{equation*}
We now have that the principal symbol of the bilinear product
$\KCurrentPert{\widetilde{\MorawetzVF},
  \widetilde{\MorawetzLagrangeCorr}}[\psi]$ in \zcref{eq:KCurrentPert-def}
will be equal to
\begin{equation*}
  \KCurrentPertSym{\widetilde{\MorawetzVF}, \widetilde{\MorawetzLagrangeCorr}} \vcentcolon=
  \mathring{\chi}^2\left(\frac{1}{2\ImagUnit}\PoissonB*{\PrinSymb, \widetilde{\MorawetzSym}}
    + \PrinSymb\widetilde{\MorawetzLagrangeCorrSym} \right)
  + \frac{1}{\ImagUnit}\mathring{\chi}\PoissonB{\PrinSymb, \mathring{\chi}}.
\end{equation*}
We now split
\begin{equation}
  \label{eq:KCurrent-princ-aux:decomposition}
  \KCurrentPert{\widetilde{\MorawetzVF},\widetilde{\MorawetzLagrangeCorr}}[\psi]
  = \KCurrentPert{\widetilde{\MorawetzVF},\widetilde{\MorawetzLagrangeCorr}}_{\Main}[\psi]
  + \KCurrentPert{\widetilde{\MorawetzVF},\widetilde{\MorawetzLagrangeCorr}}_{\Aux}[\psi],
\end{equation}
where
\begin{equation*}
  \begin{split}
    \KCurrentPert{\widetilde{\MorawetzVF},\widetilde{\MorawetzLagrangeCorr}}_{\Main}[\psi]
    \vcentcolon={}& \bangle*{\widetilde{K}^W_{2,\Main}\psi, \psi}_{L^2(\Manifold(\tau_1,\tau_2))}
                    + 2\Re\bangle*{\widetilde{K}^W_{1,\Main}\psi, \partial_t\psi}_{L^2(\Manifold(\tau_1,\tau_2))}\\
                  &+ \bangle*{\widetilde{K}^W_{0,\Main}\partial_t\psi, \partial_t\psi}_{L^2(\Manifold(\tau_1,\tau_2))},    
  \end{split}
\end{equation*}
where $\widetilde{K}^W_{2,\Main}$, $i=0,1,2$ are defined so that
\begin{equation*}
  \sum_{i=0}^2
  \begin{pmatrix}
    2\\ i
  \end{pmatrix}
  \widetilde{K}^W_{i,\Main}\partial_t^i
  = \mathring{\chi}\WeylQ{\frac{1}{2\ImagUnit}\PoissonB*{\PrinSymb,\widetilde{\MorawetzSym}} + \PrinSymb\widetilde{\MorawetzLagrangeCorrSym}}\mathring{\chi}.
\end{equation*}
We similarly define $K^W_{i,\Aux}, i=0,1,2$ so that
\begin{equation*}
  \begin{split}
    \KCurrentPert{\widetilde{\MorawetzVF},\widetilde{\MorawetzLagrangeCorr}}_{\Aux}[\psi](\tau_1,\tau_2)
    ={}& \bangle*{K_{2,\Aux}^W\psi,\psi}_{L^2(\Manifold(\tau_1,\tau_2))}
         + 2\Re \bangle*{K_{1,\Aux}^W\partial_t\psi,\psi}_{L^2(\Manifold(\tau_1,\tau_2))}\\
       & + \bangle*{K_{0,\Aux}^W\partial_t\psi,\partial_t\psi}_{L^2(\Manifold(\tau_1,\tau_2))}
         - \bangle*{\WeylQ{\frac{1}{\ImagUnit}\mathring{\chi}\widetilde{\MorawetzVF}H_{\PrinSymb}\mathring{\chi}}\psi,\psi}_{L^2(\Manifold(\tau_1,\tau_2))}.
  \end{split}  
\end{equation*}
Then observe from \zcref{cor:control-outside-trapping}
that we already control the
$\bSobolevHI{1, \frac{\delta_1}{2},-\frac{5}{2}}(\Manifold(\tau_1,\tau_2))$
norm outside an $O(a)$ neighborhood of $\{r=2M\}$, and that for $a$
sufficiently small,
$\KCurrentPert{\widetilde{\MorawetzVF},\widetilde{\MorawetzLagrangeCorr}}_{\Aux,\DomainOfIntegration}[\psi]$
has principal symbols with support away from
$\Manifold_{\operatorname{trap}}, \Manifold_{\EventHorizon},
\Manifold_{\NullInfinity}$. As a result, we can write that
\begin{equation}
  \label{eq:Morawetz:K-ext-bound}
  \abs*{\KCurrentPert{\widetilde{\MorawetzVF},\widetilde{\MorawetzLagrangeCorr}}_{\Aux}[\psi](\tau_1,\tau_2)}
  \les \norm*{\psi}_{\bSobolevHITrap{1,\frac{\delta_1}{2},-\frac{5}{2}}(\Manifold(\tau_1,\tau_2))}^2
  + \norm*{\partial_t\psi}_{H^{-1}_c(\Manifold(\tau_1,\tau_2))}^2.
\end{equation}

We now show how to control the norm $\norm*{\partial_t\psi}_{H^{-1}_c(\Manifold(\tau_1,\tau_2))}$ above.
\begin{lemma}
  \label{lemma:ILED-near:LoT-control:zero-order-mixed-term}
  We have that
  \begin{equation*}
    \norm*{\partial_t\psi}_{H^{-1}_{c}(\Manifold(\tau_1,\tau_2))}
    \les \norm*{\psi}_{L^2_{c}(\Manifold(\tau_1,\tau_2))}
    + \norm*{F}_{L^2(\Manifold(\tau_1,\tau_2))}
    + \sup_{[\tau_1,\tau_2]}\norm*{\psi}_{\bSobolevHI{1, 0, -1}(\Sigma(\tau))}.
  \end{equation*}
\end{lemma}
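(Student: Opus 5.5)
The plan is to use the wave equation itself to trade the time derivative for spatial derivatives, by duality in the $H^{-1}_c$ norm. We will work on a compactly supported region away from $\EventHorizon$ and $\NullInfinity$, which is where the cutoff $\chi_{\operatorname{comp}}$ is supported. Test $\chi_{\operatorname{comp}}\partial_t\psi$ against a function $\varphi\in H^1_0$ supported in $\Manifold(\tau_1,\tau_2)$, with $\norm*{\varphi}_{H^1}\le 1$. We integrate by parts in $t$. The key observation is that $\partial_t\psi$ is only one derivative of $\psi$, so
\begin{equation*}
  \bangle*{\chi_{\operatorname{comp}}\partial_t\psi,\varphi} = -\bangle*{\psi,\partial_t(\chi_{\operatorname{comp}}\varphi)} + \text{boundary terms on }\Sigma(\tau_1),\Sigma(\tau_2).
\end{equation*}
The bulk term is then bounded directly by $\norm*{\psi}_{L^2_c}\norm*{\varphi}_{H^1}$.

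The only genuine issue is the boundary contribution, since $\varphi$ need not vanish on $\Sigma(\tau_i)$. Since $\partial_t$ is transversal to $\Sigma(\tau)$ in the compact region, we get terms of the form $\int_{\Sigma(\tau_i)}\chi_{\operatorname{comp}}\psi\varphi$. We bound these by trace estimates, $\norm*{\varphi}_{L^2(\Sigma)}\les\norm*{\varphi}_{H^1}$ (or in a half-derivative sense), together with $\norm*{\psi}_{L^2(\Sigma(\tau_i))}\les\sup_\tau\norm*{\psi}_{\bSobolevHI{1,0,-1}(\Sigma(\tau))}$. Away from the boundaries, the $b$-norm is equivalent to the standard $H^1$ norm, so this is immediate.

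If the trace argument is deemed too rough (for instance because of regularity issues with $\varphi$), the alternative is to take $\varphi$ compactly supported in the interior of $\Manifold(\tau_1,\tau_2)$, by defining $H^{-1}_c$ as the dual of $H^1_0$. A second alternative is to split time: decompose $[\tau_1,\tau_2]$ into unit intervals. On each unit interval, use the wave equation $\Box_\Metric\psi=F$ to express $\partial_t^2\psi$ as spatial second derivatives of $\psi$ plus $F$. Then apply the elliptic-in-time interpolation
\begin{equation*}
  \norm*{\partial_t\psi}_{H^{-1}}^2\les\norm*{\psi}_{L^2}\norm*{\partial_t^2\psi}_{H^{-2}}+\text{boundary terms}.
\end{equation*}
Here $\norm*{\partial_t^2\psi}_{H^{-2}}\les\norm*{\psi}_{L^2}+\norm*{F}_{H^{-2}}$, using that $\Metric^{-1}(dt,dt)\ne0$ in the compact region. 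This is where the $\norm*{F}_{L^2}$ term enters. The boundary terms in time are once again controlled by the sup of the energies on $\Sigma(\tau)$.

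The main obstacle is the bookkeeping of these time-boundary terms, and making sure the constants do not depend on $\tau_2-\tau_1$. This is the reason to prefer the first, direct duality argument, where the boundary terms appear only at $\tau_1$ and $\tau_2$ and are bounded by the sup norm.
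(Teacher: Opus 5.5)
The duality argument you prefer proves the wrong estimate. Here $\norm*{\partial_t\psi}_{H^{-1}_c}$ is a \emph{tangential} norm. The paper measures it as $\norm*{Q\partial_t\psi}_{L^2}$ with $Q\in\TanOpClass{-1}(\Manifold)$ compactly supported and elliptic, so it is a localized $L^2_tH^{-1}_x$ norm. That is what \eqref{eq:Morawetz:K-ext-bound} needs, since there factors of $\partial_t\psi$ are paired with Weyl remainders that smooth only in the spatial variables. Test functions dual to $L^2_tH^{-1}_x$ have no regularity in $t$, so $\partial_t$ cannot be moved onto $\varphi$. Your integration by parts only gives the spacetime bound $\norm*{\partial_t\psi}_{H^{-1}_{t,x}}\le\norm*{\psi}_{L^2}$. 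That bound is trivial, never uses $\Box_{\Metric}\psi=F$, and is too weak for the application. For the tangential norm the equation is indispensable. Take $\psi=\eta(t/T)\cos(\lambda t)\chi(r,\theta,\phi)$ with $\chi$ supported in the compact region and $T\ge\lambda^2\gg1$. Then $\norm*{\psi}_{L^2_c}\sim T^{1/2}$ and the slice energies are $\les\lambda$, while $\norm*{\partial_t\psi}_{L^2_tH^{-1}_x}\sim\lambda T^{1/2}$. So no bound without the $F$ term can hold uniformly in $\tau_2-\tau_1$.

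Your second alternative has the right mechanism and is essentially the paper's proof:
\begin{itemize}
\item pair $\Box_{\Metric}\psi$ with $\frac{1}{\Metric^{-1}(dt,dt)}Q^2\psi$;
\item integrate by parts in $t$ to produce $\norm*{Q\partial_t\psi}_{L^2}^2$;
\item use the equation so that the remaining second-order terms are spatial, where $Q^2$ (order $-2$ in $x$) reduces them to $O(\norm*{\psi}_{L^2_c}^2)$, and $F$ contributes $\norm*{F}_{L^2}\norm*{\psi}_{L^2_c}$.
\end{itemize}
Two points must be fixed.

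First, do not split into unit intervals. Integrate over the whole slab, so the only time-boundary terms are $\bangle*{Q^2\partial_t\psi,\psi}$ on $\Sigma(\tau_1)$ and $\Sigma(\tau_2)$. These are bounded by $\sup_{[\tau_1,\tau_2]}\norm*{\psi}_{\bSobolevHI{1,0,-1}(\Sigma(\tau))}^2$. This removes the $\tau_2-\tau_1$ dependence you were worried about, and it is why the supremum appears in the statement.

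Second, $\norm*{\partial_t^2\psi}_{H^{-2}}\les\norm*{\psi}_{L^2}+\norm*{F}_{H^{-2}}$ does not follow from $\Metric^{-1}(dt,dt)\neq0$ alone. Since $\Metric^{-1}(dt,d\phi)\neq0$ for $a\neq0$, solving for $\partial_t^2\psi$ leaves a mixed term $\partial_t\partial_\phi\psi$. It contributes $O(\norm*{Q\partial_t\psi}_{L^2}\norm*{\psi}_{L^2_c})$. This term, together with the commutator errors from the cutoffs and from $Q$, must be absorbed into $\norm*{Q\partial_t\psi}_{L^2}^2$ by Cauchy--Schwarz, as the paper does.
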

\begin{proof}
  Consider some compactly supported self-adjoint elliptic operator
  $Q \in \TanOpClass{-1}(\Manifold)$. We
  use $Q^2$ as a multiplier to see via integration by parts that
  \begin{equation*}
    \begin{split}
      &2\Re\bangle*{\Box_{\Metric}\psi - V\psi, \frac{1}{\Metric^{-1}(dt,dt)}Q^2\psi}_{L^2(\Manifold(\tau_1,\tau_2))}\\
      ={}& \norm*{Q\partial_t\psi}_{L^2(\Manifold(\tau_1,\tau_2))}^2
           + O\left(\norm*{Q\partial_t\psi}_{L^2(\Manifold(\tau_1,\tau_2))}\norm*{\psi}_{L^2_c(\Manifold(\tau_1,\tau_2))}\right)\\
      & + O\left(\norm*{\psi}^2_{L^2_c(\Manifold(\tau_1,\tau_2))}
        + \sup_{[\tau_1,\tau_2]}\norm*{\psi}_{\bSobolevHI{1, 0, -1}(\Sigma(\tau))}^2
        \right).
    \end{split}    
  \end{equation*}
  It then follows that
  \begin{equation*}
    \norm*{Q\partial_t\psi}_{L(\Manifold(\tau_1,\tau_2))}
    \les \norm*{\psi}_{L_c(\Manifold(\tau_1,\tau_2))}
    + \varepsilon \norm*{\partial_t\psi}_{H^{-1}_{\operatorname{c}}(\Manifold(\tau_1,\tau_2))}
    + \norm*{\Box_{\Metric}\psi}_{L(\Manifold(\tau_1,\tau_2))}
    + \sup_{\tau\in[\tau_1,\tau_2]}\norm*{\psi}_{\bSobolevHI{1, 0, -1}(\Sigma(\tau))}.
  \end{equation*}
  Since $Q$ is arbitrary, we can absorb the
  $\varepsilon
  \norm*{\partial_{t}\psi}_{H^{-1}_c(\Manifold(\tau_1,\tau_2))}^2$
  term from the right-hand side onto the left-hand side for
  $\varepsilon$ sufficiently small, and prove the statement.
\end{proof}

We are finally ready to state and prove \zcref{lemma:Morawetz:cutoff-PsiDO:bulk}.
\begin{lemma}
  \label{lemma:Morawetz:cutoff-PsiDO:bulk}  
  Let
  $(\MorawetzVF_{\Mor}^\diff, \MorawetzLagrangeCorr_{\Mor}^\diff, \MorawetzOneForm_{\Mor}^\diff)$ be as in
  \zcref{prop:Mor-axially-sym:main-bulk},
  and let $\widetilde{\MorawetzVF}$ and
  $\widetilde{\MorawetzLagrangeCorr}$ be as in
  \zcref{eq:Morawetz:morawetz-VF-LagrangeCorr-with-cutoff:def}, where
  $\widetilde{\MorawetzSym}$ and
  $\widetilde{\MorawetzLagrangeCorrSym}$ are as in 
  \zcref[cap]{lemma:ILED-KdS:Bulk}. Then,
  \begin{equation*}
    \begin{split}
      & a \KCurrentPert{\widetilde{\MorawetzVF}, \widetilde{\MorawetzLagrangeCorr}}[\psi](\tau_1,\tau_2)
        + \int_{\Manifold(\tau_1,\tau_2)}\KCurrent{\MorawetzVF_{\Mor}^\diff, \MorawetzLagrangeCorr_{\Mor}^\diff, J_{\Mor}^\diff}[\psi]\ges{}  \norm*{\psi}_{\bSobolevHITrap{1,\frac{\delta_1}{2},-\frac{5}{2}}(\Manifold(\tau_1,\tau_2))}^2\\
      &
        - O(a)\left(\norm*{\psi}_{\bSobolevHITrap{1,\frac{\delta_1}{2},-\frac{5}{2}}(\Manifold(\tau_1,\tau_2))}^2+
        \norm*{\psi}_{H^0_c(\Manifold(\tau_1,\tau_2))}^2
        + \norm*{F }_{L^2(\Manifold(\tau_1,\tau_2))}^2
        + \sup_{[\tau_1,\tau_2]}\norm*{\psi}_{\bSobolevHI{1, 0, -1}(\Sigma(\tau))}^2
        \right).
    \end{split}    
  \end{equation*}
\end{lemma}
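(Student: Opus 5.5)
We split the left-hand side into the region away from trapping and the trapping region, and then treat the trapping region with a sharp G\aa rding argument.

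\emph{Away from trapping.} Let $\mathring{\chi}$ and $\breve{\chi}$ be the cutoffs from \zcref{eq:Morawetz:morawetz-VF-LagrangeCorr-with-cutoff:def}, so that $1=\mathring{\chi}^2+\breve{\chi}^2$. On $\supp\breve{\chi}$ the pseudodifferential currents are supported only through the $\Aux$ part. There, \zcref{cor:control-outside-trapping} gives the lower bound
\begin{equation*}
  \int \KCurrent{\MorawetzVF_{\Mor}^\diff, \MorawetzLagrangeCorr_{\Mor}^\diff, J_{\Mor}^\diff}[\psi]\gtrsim \norm*{\psi}^2_{\bSobolevHI{1,\frac{\delta_1}{2},-\frac52}}.
\end{equation*}
This uses that the mixed term $\That\psi\,\pr_\phi\psi$ is absorbed for $|a|\ll M$, with the degenerate redshift piece controlling it near $\EventHorizon$.

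\emph{Near trapping.} Using the decomposition \zcref{eq:KCurrent-princ-aux:decomposition}, we write
\begin{equation*}
  a\KCurrentPert{}=a\KCurrentPert{}_{\Main}+a\KCurrentPert{}_{\Aux}.
\end{equation*}
The differential current $\int\mathring{\chi}^2\KCurrent{\MorawetzVF_{\Mor}^\diff,\dots}$ is the Weyl quantization of $\mathring{\chi}^2$ times the symbol $\frac{1}{2i}H_p\MorawetzSym_{\Mor}+p\MorawetzLagrangeCorrSym_{\Mor}$, up to lower-order terms. Adding $a\KCurrentPert{}_{\Main}$ turns the principal symbol into $\mathring{\chi}^2|q|^{-2}\sum_{j=1}^7\SquareDecomp_j^2$, by \zcref{lemma:ILED-KdS:Bulk}.

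We then write $\sum_j\SquareDecomp_j^2=\sum_j\Op_W(\SquareDecomp_j)^*\Op_W(\SquareDecomp_j)+R$. The symbols $\SquareDecomp_j\in\MixedSymClass{1}{1}$ are principally scalar, so the remainder $R$ is one order lower, in the mixed calculus with at most two $\partial_t$ factors. Pairing with $\psi$ then gives
\begin{equation*}
  \sum_j\norm*{\Op_W(\SquareDecomp_j)\mathring{\chi}\psi}^2_{L^2}
\end{equation*}
minus lower-order errors. By \zcref{coro:Morawetz:elliptic-equivalence}, the matrix $\mathfrak{M}$ has rank $3$. Using a left parametrix of $\mathfrak{M}$, this sum bounds the trapped quantities $\norm*{\Op(\ell_i)(D_t-\sigma_j)\mathring{\chi}\psi}^2$ and $\norm*{\partial_r\mathring{\chi}\psi}^2$, modulo $O(\norm{\psi}^2_{L^2_c}+\norm{\partial_t\psi}^2_{H^{-1}_c})$. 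These quantities are precisely the $\bSobolevTrap{1}$ norm.

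\emph{Errors.} The errors are handled in three groups.
\begin{itemize}
\item The terms $\frac1i\mathring{\chi}\PoissonB{p,\mathring{\chi}}$ and $\KCurrentPert{}_{\Aux}$ are supported where $\mathring{\chi}$ is non-constant, i.e.\ in the region already controlled non-degenerately. Each carries a factor $a$, so by \zcref{eq:Morawetz:K-ext-bound} they are bounded by $O(a)\big(\norm{\psi}^2_{\bSobolevHITrap{1,\dots}}+\norm{\partial_t\psi}^2_{H^{-1}_c}\big)$.
\item The commutator and quantization remainders of order $0$ in the mixed sense are bounded by the same quantities. Their coefficients are $O(a)$, or they come from the $a=0$ differential current, whose lower-order part is already included in \zcref{cor:control-outside-trapping}.
\item Finally, $\norm{\partial_t\psi}_{H^{-1}_c}$ is replaced using \zcref{lemma:ILED-near:LoT-control:zero-order-mixed-term}. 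This produces the $L^2_c$, $F$ and $\sup_\tau\bSobolevHI{1,0,-1}$ terms on the right-hand side.
\end{itemize}

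\emph{Main obstacle.} The delicate step is the positive-commutator (sharp G\aa rding-type) lower bound near trapping. We must make sure that the differential current's principal symbol and $a\KCurrentPert{}_{\Main}$ combine exactly to $\sum\SquareDecomp_j^2$, with remainders of order one lower than the degenerate trapped norm. Only then can they be absorbed or bounded by $O(a)$ times the norm. The first attempt will be to track the $a$-dependence via the smoothness in $a$ of the constructed symbols. At $a=0$, all $\widetilde{\cdot}$ terms vanish and the decomposition reduces to the differential one, which guarantees that the non-principal discrepancies carry a factor $a$.
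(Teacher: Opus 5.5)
Your proposal follows essentially the same route as the paper: split with $\mathring{\chi}^2+\breve{\chi}^2=1$, apply Corollary~\ref{cor:control-outside-trapping} away from trapping, use the square-sum identity of Lemma~\ref{lemma:ILED-KdS:Bulk} and the elliptic equivalence of Corollary~\ref{coro:Morawetz:elliptic-equivalence} near trapping, and handle the auxiliary and $\partial_t\psi\in H^{-1}_c$ errors with \eqref{eq:Morawetz:K-ext-bound} and Lemma~\ref{lemma:ILED-near:LoT-control:zero-order-mixed-term}. The paper resolves the obstacle you flag as you anticipate, by using $\mathfrak{A}_k=\mathring{\chi}\,\mathfrak{a}_k|_{a=0}(x,D)+\operatorname{Op}_W\bigl(\mathfrak{a}_k-\mathfrak{a}_k|_{a=0}\bigr)\mathring{\chi}$ instead of $\operatorname{Op}_W(\mathfrak{a}_j)\mathring{\chi}$, so that the $a=0$ differential part reproduces $\mathring{\chi}^2$ times the differential current exactly and every Weyl remainder carries a factor $a$.
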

\begin{proof}
  We first decompose
  \begin{equation*}
    \begin{split}
      \int_{\Manifold(\tau_1,\tau_2)}\KCurrent{\MorawetzVF_{\Mor}^\diff, \MorawetzLagrangeCorr_{\Mor}^\diff, J_{\Mor}^\diff}[\psi]
      ={}& \int_{\Manifold(\tau_1,\tau_2)}\mathring{\chi}^2\KCurrent{\MorawetzVF_{\Mor}^\diff, \MorawetzLagrangeCorr_{\Mor}^\diff, J_{\Mor}^\diff}[\psi]\\
         & + \int_{\Manifold(\tau_1,\tau_2)}\breve{\chi}^2\KCurrent{\MorawetzVF_{\Mor}^\diff, \MorawetzLagrangeCorr_{\Mor}^\diff, J_{\Mor}^\diff}[\psi].
    \end{split}    
  \end{equation*}
  Then, since
  $\KCurrent{\MorawetzVF_{\Mor},
    \MorawetzLagrangeCorr_{\Mor},
    \MorawetzOneForm_{\Mor}}[\psi]$ is pointwise positive
  outside of a neighborhood of $r=2M$, then from \zcref{cor:control-outside-trapping}
  \begin{equation}
    \label{eq:Morawetz:PsiDO-bulk:outer:representation}
    \begin{split}
      \int_{\Manifold(\tau_1,\tau_2)}\breve{\chi}^2\KCurrent{\MorawetzVF_{\Mor}^\diff, \MorawetzLagrangeCorr_{\Mor}^\diff, J_{\Mor}^\diff}[\psi]
      &\ges \norm*{\psi}_{\bSobolevHI{1,\frac{\delta_1}{2},-\frac{5}{2}}(\Manifold_{\nontrap}(\tau_1,\tau_2))}^2.
    \end{split}
  \end{equation}
  On the other hand, we see that
  \begin{align*}
    &a\KCurrentPert{\widetilde{\MorawetzVF},\widetilde{\MorawetzLagrangeCorr}}_{\Main}[\psi](\tau_1,\tau_2)
      + \int_{\Manifold(\tau_1,\tau_2)}\mathring{\chi}^2\KCurrent{\MorawetzVF_{\Mor}^\diff, \MorawetzLagrangeCorr_{\Mor}^\diff, \MorawetzOneForm_{\Mor}^\diff}[\psi]\\
    ={}& \Re\int_{\Manifold(\tau_1,\tau_2)}\mathring{\chi}^2\left(\KCurrentSym{\MorawetzVF_{\Mor}^\diff, \MorawetzLagrangeCorr_{\Mor}^\diff, \MorawetzOneForm_{\Mor}^\diff}_{(2)}\right)^{\alpha\beta}\partial_{\alpha}\psi\cdot \overline{\partial_{\beta}\psi}
         + \mathring{\chi}^2\KCurrentSym{\MorawetzVF_{\Mor}^\diff, \MorawetzLagrangeCorr_{\Mor}^\diff, \MorawetzOneForm_{\Mor}^\diff}_{(0)}\abs*{\psi}^2\\
    &+ a\Re \int_{\Manifold(\tau_1,\tau_2)}K^W_{2,\Main}\psi\cdot \overline{\psi} + 2 K^W_{1,\Main}\partial_t\psi\cdot \overline{\psi} + K^W_{0,\Main}\partial_t\psi\cdot\overline{\partial_t\psi},
  \end{align*}
  where for $a$ sufficiently small, we have that
  \begin{equation*}
    \left(\KCurrentSym{\MorawetzVF_{\Mor}^\diff, \MorawetzLagrangeCorr_{\Mor}^\diff, \MorawetzOneForm_{\Mor}^\diff}_{(2)}\right)^{\alpha\beta}\eta_{\alpha}\eta_{\beta} = \frac{1}{2\ImagUnit}H_{\PrinSymb}\MorawetzVF_{\Mor} + \PrinSymb \MorawetzLagrangeCorr_{\Mor}, \qquad
    \KCurrentSym{\MorawetzVF_{\Mor}^\diff, \MorawetzLagrangeCorr_{\Mor}^\diff, \MorawetzOneForm_{\Mor}^\diff}_{(0)} > 0.
  \end{equation*}
  Moreover, recall that by construction,
  \begin{equation*}
    K^W_{2,\Main} + 2K^W_{1,\Main}\partial_t + K^W_{0,\Main}\partial_t^2
    = \mathring{\chi}\WeylQ{\frac{1}{2\ImagUnit}H_{\PrinSymb}\widetilde{\MorawetzSym} + \PrinSymb\widetilde{\MorawetzLagrangeCorrSym}}\mathring{\chi}.
  \end{equation*}
  Observe that by \zcref{lemma:ILED-KdS:Bulk}, we have that
  $\SquareDecomp_k(a)$ are in general symbols of pseudodifferential
  operators. However, $\evalAt*{\SquareDecomp_k}_{a=0}$ are symbols of
  differential operators. As a result, we can decompose
  \begin{equation*}
    \SquareDecomp_k =
    \evalAt*{\SquareDecomp_k}_{a=0}
    + \left( \SquareDecomp_k
      - \evalAt*{\SquareDecomp_k}_{a=0} \right),
  \end{equation*}
  where $\evalAt*{\SquareDecomp_k}_{a=0}$ is the symbol of
  a differential operator, and
  \begin{equation*}
    \SquareDecomp_k - \evalAt*{\SquareDecomp_k}_{a=0}
    \in a \MixedSymClass{2}{2}\left(T^{*}\Manifold\right).
  \end{equation*}
  Then, define the operators
  \begin{equation*}
    \SquareDecompOp_k
    = \mathring{\chi}\evalAt*{\SquareDecomp_k}_{a=0}(x, D)
    + \WeylQ{\SquareDecomp_k(a) - \evalAt*{\SquareDecomp_k}_{a=0}}\mathring{\chi}.
  \end{equation*}
  Then from the Weyl calculus, we have that
  \begin{equation}
    \label{eq:Morawetz:PsiDO-bulk:inner:representation}
    \begin{split}
      &a\KCurrentPert{\widetilde{\MorawetzVF},\widetilde{\MorawetzLagrangeCorr}}_{\Main}[\psi](\tau_1,\tau_2)
        + \int_{\Manifold(\tau_1,\tau_2)}\mathring{\chi}^2\KCurrent{\MorawetzVF_{\Mor}^\diff, \MorawetzLagrangeCorr_{\Mor}^\diff, \MorawetzOneForm_{\Mor}^\diff}[\psi] \\
      ={}& \int_{\Manifold(\tau_1,\tau_2)}\sum_k\abs*{\SquareDecompOp_k\psi}^2
           + \KCurrentSym{\MorawetzVF_{\Mor}^\diff, \MorawetzLagrangeCorr_{\Mor}^\diff, \MorawetzOneForm_{\Mor}^\diff}_{(0)}\mathring{\chi}^2\abs*{\psi}^2\\
      & + \Re \int_{\Manifold(\tau_1,\tau_2)}R^W_2\psi\cdot \overline{\psi} + 2R^W_1\partial_t\psi\cdot\overline{\psi} + R^W_0\partial_t\psi\cdot\overline{\partial_t\psi},      
    \end{split}
  \end{equation}
  where the remainder terms $R^W_i$ have symbols
  $r_j\in a \TanSymClass{j-2}(\Manifold)$ from the Weyl calculus.
  Combining
  \zcref{eq:Morawetz:PsiDO-bulk:inner:representation,eq:Morawetz:PsiDO-bulk:outer:representation}
  and using \zcref{coro:Morawetz:elliptic-equivalence}, and then using
  \zcref{lemma:ILED-near:LoT-control:zero-order-mixed-term} to control
  the remainder terms we deduce
  \begin{equation*}
    \begin{split}
      & a \KCurrentPert{\widetilde{\MorawetzVF}, \widetilde{\MorawetzLagrangeCorr}}_{\Main}[\psi](\tau_1,\tau_2)
        + \int_{\Manifold(\tau_1,\tau_2)}\KCurrent{\MorawetzVF_{\Mor}^\diff, \MorawetzLagrangeCorr_{\Mor}^\diff, J_{\Mor}^\diff}[\psi]\\
      \ges{}&  \norm*{\psi}_{\bSobolevHITrap{1,\frac{\delta_1}{2},-\frac{5}{2}}(\Manifold(\tau_1,\tau_2))}^2
              - O(a)\left(
              \norm*{\psi}_{H^0_c(\Manifold(\tau_1,\tau_2))}^2
              + \norm*{F }_{L^2(\Manifold(\tau_1,\tau_2))}^2
              + \sup_{[\tau_1,\tau_2]}\norm*{\psi}_{\bSobolevHI{1, 0, -1}(\Sigma(\tau))}^2
              \right).
    \end{split}    
  \end{equation*}
  We can add the control of
  $\KCurrentPert{\widetilde{\MorawetzVF},\widetilde{\MorawetzLagrangeCorr}}_{\Aux}[\psi]$
  by using \zcref{eq:Morawetz:K-ext-bound} and
  \zcref{lemma:ILED-near:LoT-control:zero-order-mixed-term} to
  conclude the proof of \zcref{lemma:Morawetz:cutoff-PsiDO:bulk}.
\end{proof}

\subsection{Energy estimates}\label{subs:energy}

In this section we derive the energy estimates and show that the
boundary terms from Morawetz are controlled by the positive energy
boundary terms. By combining the energy estimates with the positivity
for the Morawetz bulk obtained in
\zcref{lemma:Morawetz:cutoff-PsiDO:bulk} we obtain the proof of
\zcref{prop:energy-Morawetz} in \zcref{sec:proof-e-mor}.

\subsubsection{The \texorpdfstring{$\That_\chi$}{T} energy}

Recall that due to the presence of an ergoregion, $T$ is not timelike
on the entire domain of exterior communication and thus cannot be used
to immediately deduce an energy inequality via a multiplier argument.
We will nonetheless be able to prove a suitable energy inequality by
using a modified version of $T$.
\begin{definition}
  \label{def:That-chi:def}
  Define the vectorfield 
  \begin{equation*}
    \That_\chi\vcentcolon= T + \omega_{+} \chi_{\That} \Phi, \qquad
    \omega_{+}\vcentcolon=\frac{a}{r_+^2+a^2},
  \end{equation*}
  where $\chi_{\That}(r)$ is a smooth decreasing cut-off function
  which is equal to $1$ for $\rhoH \leq A_1$ and $0$ for
  $\rhoH \geq \frac{3}{2}A_1$ with
  $\frac 32A_1 \leq \frac{r_{+}}{80} <\rho_0$.
\end{definition}
We note that by our construction, $\That_\chi$ is timelike for all
$r>r_{+}$, Killing everywhere except for
$\rhoH \in ( A_1, \frac{3}{2}A_1)$, equal to $\That_{\HH}$ on
$\rhoH \leq A_1$, and equal to $T$ on $\rhoH \geq \frac{3}{2}A_1$, in
particular on the trapping region $\MM_{\trap}$.

\begin{lemma}
  \label{lemma:That-chi-deformation-tensor}
  For $\That_{\chi}$ as defined in \zcref{def:That-chi:def}, we have that
  \begin{align*}
    \int_{\Manifold(\tau_1, \tau_2)} \big| \QQ^{(\That_\chi, 0, 0)}[\psi]\big|
    \les  {}&  O(a) \norm*{\psi}_{\bSobolevH{1, -\frac 1 2}(\Manifold_{\rhoH \leq \frac 3 2 A_1}(\tau_1, \tau_2))}^2.
  \end{align*}
\end{lemma}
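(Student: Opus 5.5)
The plan is to compute the deformation tensor of $\That_\chi$ explicitly. Since $T$ and $\Phi$ are Killing and $\chi_{\That}=\chi_{\That}(r)$, we have
\[
{}^{(\That_\chi)}\pi_{\mu\nu}=\omega_+\,\D_{(\mu}\big(\chi_{\That}\Phi_{\nu)}\big)=\omega_+\,\chi_{\That}'\,\partial_{(\mu}r\,\Phi_{\nu)},
\]
because $\D_{(\mu}\Phi_{\nu)}=0$. With $X=\That_\chi$, $w=0$, $J=0$, formula \eqref{le:divergPP-gen} gives
\[
\QQ^{(\That_\chi,0,0)}[\psi]=\EMTensor[\psi]\cdot{}^{(\That_\chi)}\pi=\omega_+\chi_{\That}'\,\EMTensor(\D r,\Phi)[\psi].
\]
Since $\Phi$ is tangent to the level sets of $r$, we have $\g(\D r,\Phi)=\Phi(r)=0$. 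The trace part of $\EMTensor$ therefore drops out, leaving
\[
\QQ^{(\That_\chi,0,0)}[\psi]=\omega_+\chi_{\That}'\,(\D r)(\psi)\,\Phi\psi .
\]
Note that $\D r=\g^{r\alpha}\partial_\alpha$ and $\Phi$ contracts to zero in the metric.

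Next I would express $\D r$ in \iEF{} coordinates using \zcref{eq:inverse-metric:ingoing-EF}. This gives $|q|^2\D r=\HawkingHorizon+\Delta\partial_{\rhoH}$, so that
\[
|q|^2\QQ^{(\That_\chi,0,0)}[\psi]=\omega_+\chi_{\That}'\,(\HawkingHorizon\psi+\rhoH^2\partial_{\rhoH}\psi)\,\Phi\psi .
\]
The key point is that $\omega_+=O(a)$, while $\chi_{\That}'$ is bounded (depending on $A_1$) and supported in $\{A_1\le\rhoH\le\frac32A_1\}$. On that set $\rhoH$ is bounded above and below. There $\HawkingHorizon\psi$, $\rhoH^2\partial_{\rhoH}\psi=\rhoH\cdot\rhoH\partial_{\rhoH}\psi$ and $\Phi\psi$ are all bounded by the $b$-derivatives $(\partial_v,\rhoH\partial_{\rhoH},\NablaAngular)\psi$. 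Here $\Phi\psi$ is controlled by the $\renormpphi$ and $\partial_v$ combinations up to $O(a)$ terms.

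Applying Cauchy--Schwarz pointwise then gives
\[
|\QQ^{(\That_\chi,0,0)}[\psi]|\lesssim |a|\,\mathds{1}_{\{A_1\le\rhoH\le\frac32A_1\}}\big(|\partial_v\psi|^2+|\rhoH\partial_{\rhoH}\psi|^2+|\NablaAngular\psi|^2\big).
\]
Integrating over $\Manifold(\tau_1,\tau_2)$, I would insert the harmless weight $\rhoH^{1}\sim A_1$ on the support. This bounds the result by $O(a)\norm*{\psi}^2_{\bSobolevH{1,-\frac12}(\Manifold_{\rhoH\le\frac32A_1}(\tau_1,\tau_2))}$, since the weight $\rhoH^{-2\gamma}$ with $\gamma=-\frac12$ equals $\rhoH$. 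The constants depend on $A_1,M$, which are fixed.

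There is no serious obstacle. The only step needing care is checking that the trace term vanishes and that $\D r$ has no $\partial_{\rhoH}$ component of order one: its $\partial_{\rhoH}$ coefficient is $\Delta=\rhoH^2$. Without this, the bound would not be expressible in $b$-norms. Both facts follow directly from the inverse metric.
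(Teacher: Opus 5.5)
Your proposal is correct and follows essentially the paper's argument. The paper computes the same deformation tensor, supported where $\chi_{\That}'\neq 0$, writes $\QQ^{(\That_\chi,0,0)}[\psi]$ as an $O(a)\,\chi_{\That}'\,|q|^{-2}$ multiple of $\Phi\psi\,\rhoH^2\partial^{\mathrm{BL}}_r\psi=\Phi\psi\,(\HawkingHorizon\psi+\rhoH^2\partial_{\rhoH}\psi)$ (up to an immaterial constant factor), and closes by Cauchy--Schwarz on $\{A_1\le\rhoH\le\frac32A_1\}$; the only difference is that the paper starts in Boyer--Lindquist coordinates and converts, while you work in ingoing Eddington--Finkelstein coordinates from the start. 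One small remark: neither the vanishing of the trace term nor the $\Delta=\rhoH^2$ coefficient of $\partial_{\rhoH}$ is actually needed, because $\rhoH\simeq A_1$ on the support of $\chi_{\That}'$, where $b$-derivatives and standard derivatives are comparable.
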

\begin{proof}
  The deformation tensor of $\That_{\chi}$ is given by
  \begin{align*}
    |q|^2\, ^{(\That_\chi)} \pi^{\a\b}={}&\frac{2a \rhoH^2}{M^2+a^2}\chi_{\That}'  \,   \pr_\phi^{(\a} {\pr^{\mathrm{BL}}_r}^{\b)},
  \end{align*}
  and therefore from \zcref{le:divergPP-gen} we have 
  \begin{align*}
    \QQ^{(\That_\chi, 0, 0)}[\psi]={}&  \frac{2a\rhoH^2}{|q|^2(M^2+a^2)}\chi_{\That}'  \pr_\phi\psi \pr^{\mathrm{BL}}_r \psi,
  \end{align*}
  where recall that $\chi_{\That}'$ is supported in $A_1\leq \rhoH\leq \frac{3}{2}A_1$. In particular, 
  \begin{align*}
    \big| \QQ^{(\That_\chi, 0, 0)}[\psi]\big|\les {}&  O(a) \mathds{1}_{\{A_1 \leq \rhoH \leq  \frac{3}{2}A_1\}} \rhoH^2 | \pr_\phi\psi| |\pr^{\mathrm{BL}}_r \psi|\\
    \les {}&  O(a) \mathds{1}_{\{A_1 \leq \rhoH \leq  \frac{3}{2}A_1\}}\Big( \rhoH|\pr_\phi\psi||\rhoH\pr_{\rhoH}
             \psi |+|\pr_\phi\psi|| \HawkingHorizon\psi|\Big)\\
    \les  {}&  O(a) \mathds{1}_{\{A_1 \leq \rhoH \leq  \frac{3}{2}A_1\}}\rhoH \abs*{A^{\Horizon}_1\psi}^2,
  \end{align*}
  and therefore, upon integrating on $\Manifold(\tau_1, \tau_2)$, gives the stated bound.
\end{proof}

We now compute the non-negative boundary terms of the current $\JCurrent{(\That_\chi, 0, 0)}[\psi]$.

\begin{lemma}\label{lemma:boundary-terms-energy}
  For $|a|/M\ll 1$ sufficiently small, we have 
  \begin{align*}
    \int_{\Sigma(\tau)}  \JCurrent{(\That_\chi, 0, 0)}[\psi]\cdot N_{\Sigma(\tau)}
    &\ges  \norm*{\psi}_{\bSobolevHI{1, 0, -1}(\Sigma(\tau))}^2.
  \end{align*}
\end{lemma}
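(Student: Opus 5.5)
The plan is to split $\Sigma(\tau)$ into the three regions of \zcref{def:tau}: near the horizon $\rhoH\le\rho_\star$, the intermediate region where $\tau=t$, and near null infinity $\rhoI\le r_+^{-2}\rho_\star$. In each region I would bound the flux density $\EMTensor(\That_\chi,N_{\Sigma(\tau)})[\psi]$ from below pointwise. The key geometric input is that $\That_\chi$ is future-directed and timelike for $r>r_+$, and $N_{\Sigma(\tau)}$ is future timelike by \zcref{lemma:normal-vector-tau}. The dominant energy condition then gives $\EMTensor(\That_\chi,N_{\Sigma(\tau)})\ge0$ everywhere. The real work is to make this quantitatively coercive, with the degenerations matching the $\bSobolevHI{1,0,-1}$ norm.

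\textbf{Intermediate region.} Here $r$ ranges over a compact interval away from $r_+$. Both $\That_\chi$ and $N_{\Sigma(\tau)}$ are uniformly timelike, so the flux is uniformly equivalent to $|\partial\psi|^2$. Slowness of rotation is used only to keep $T$ uniformly timelike where $\chi_{\That}=0$, that is for $\rhoH\ge A_1$, which holds since the ergoregion lies in $\{\rhoH<A_1\}$. The zeroth-order term $|\psi|^2$ is not seen by the energy-momentum tensor. I would recover it by a Hardy inequality along $\Sigma(\tau)$ (\zcref{lemma:hardy-dejan}), integrated from the outer end where $r\psi^2\to0$, as noted after the well-posedness discussion. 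This gives $\int r^{-2}|\psi|^2\lesssim\int|\partial_r\psi|^2$ on the slice. Near the horizon the boundary value $\psi|_{\rhoH=0}$ must be handled separately, either through the $\rhoH$-weighted version or by absorbing the endpoint term using the $\rho$-direction control.

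\textbf{Near the horizon.} There $\That_\chi=\That_\HH=r_+^{-2}\ldots$, a constant-coefficient combination, which I would write as $(r_+^2+a^2)^{-1}\HawkingHorizon+O(\rhoH)\Phi$. I would then use the pairings \zcref{eq:EMtensor-rhoH-H} together with the expression of $|q|^2N_{\Sigma(\tau)}$ from \zcref{lemma:normal-vector-tau}. Note that $\EMTensor(\HawkingHorizon,|q|^2N)$ was already computed in \zcref{lemma:boundary-terms-ieF} with $h=1$. It contains $\tfrac12(r^2+a^2)|\rhoH\partial_{\rhoH}\psi|^2+\tfrac12(r^2+a^2)|\NablaAngular\psi|^2+h_\HH'(\HawkingHorizon\psi+\tfrac12\rhoH^2\partial_{\rhoH}\psi)^2-a\sin\theta\,\HawkingHorizon\psi\,\renormpphi\psi$.

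Because $h_\HH'=O(a^2)$, controlling $|\HawkingHorizon\psi|^2$, equivalently $|\partial_v\psi|^2$, with an $O(1)$ coefficient is the delicate point. The cross term $a\,\HawkingHorizon\psi\,\renormpphi\psi$ cannot be absorbed by $h_\HH'|\HawkingHorizon\psi|^2\sim a^2|\HawkingHorizon\psi|^2$ alone. This is the \textbf{main obstacle}, and I would address it in the following steps.

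1. Use the true difference between $\That_\HH$ and $r_+^{-2}\HawkingHorizon$, i.e. the correction $(\omega_+-\frac{a}{r^2+a^2})\Phi=O(a\rhoH)\Phi$. Its pairing $\EMTensor(\Phi,N)$ contributes an $O(a\rhoH)$ error that is absorbable.
2. Recall that $\That_\HH$ is timelike near the horizon with $-\g(\That_\HH,\That_\HH)\gtrsim\rhoH^2$ (by the computation after its definition, for $|a|\ll M$). Combine this with the uniform timelikeness of $N_{\Sigma(\tau)}$.
3. Apply the standard estimate $\EMTensor(V,N)\gtrsim$ (sum of squares of derivatives in a frame adapted to $V,N$) for timelike $V,N$. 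This yields control of $|\partial_v\psi|^2,|\NablaAngular\psi|^2$ and $\rhoH^2|\partial_{\rhoH}\psi|^2$, with constants uniform as $\rhoH\to0$ thanks to the $N$-component along $\partial_{\rhoH}$ of size $r^2+a^2$.

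This is exactly the $\bSobolevH{1,0}$ density. The degeneracy in $\partial_{\rhoH}$ matches the $b$-norm.

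\textbf{Near null infinity.} I would use $\That_\chi=T=\InfinityHawking-\frac{a}{r^2}\Phi$ and the pairings \zcref{eq:T-outgoing}, with $|q|^2N=-\partial_{\rhoI}+\frac{a^2}{\Upsilon}\InfinityHawking-a\sin\theta\renormpphi$. This gives $\tfrac12(\Upsilon|\rhoI\partial_{\rhoI}\psi|^2+|\NablaAngular\psi|^2)+a^2|\InfinityHawking\psi|^2+O(a)$ cross terms, all relative to the measure $|q|^2dr$. Converting to $\rhoI$ and the norm weight $\rhoI^{2}$ (the $\gamma_2=-1$ weight), I would check that the measure factors $r^2\,dr=\rhoI^{-4}d\rhoI$ and $|q|^{-2}$ produce exactly the weights of $\bSobolevI{1,-1}$. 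The only weakly controlled component is $\partial_u\psi$, which carries the small coefficient $a^2$. I would recover it via $\partial_u=\InfinityHawking+O(\rhoI^2)$ combined with the Hardy inequality for $|\psi|^2$ with weight $\rhoI^{2}$ from \zcref{lemma:hardy-pointwise-rhoH}. If that is insufficient, I would accept the weaker $|a|$-weighted control, as \zcref{lemma:boundary-terms-oEF} does, noting that the implicit constant may depend on $a$.
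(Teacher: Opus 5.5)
Your overall route is the paper's: nonnegativity from the dominant energy condition, the splittings $\That_\chi=\frac{1}{r^2+a^2}\HawkingHorizon+O(a\rhoH)\Phi$ near $\EventHorizon$ and $T=\frac{r^2}{r^2+a^2}\InfinityHawking-\frac{a}{r^2+a^2}\Phi$ near $\NullInfinity$ paired with $N_{\Sigma(\tau)}$ through \eqref{eq:EMtensor-rhoH-H} and \eqref{eq:T-outgoing}, and the zeroth-order term from Lemma~\ref{lemma:hardy-dejan} with $\rho=r-M$, $\gamma=0$ over the whole slice. There the weight $\rho^{\gamma+1}$ kills the horizon endpoint and $r\psi^2\to0$ kills the other, so your ``$\rhoH$-weighted version'' is the right option and $\psi|_{\EventHorizon}$ needs no separate treatment.

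What does not work is your treatment of the step you call the main obstacle. First, an $O(1)$ coefficient on $|\HawkingHorizon\psi|^2$ is impossible. For $a=0$ one has $h_{\HH}'=0$, so $\Sigma(\tau)$ is the null hypersurface $\{v=\mathrm{const}\}$ near $\EventHorizon$. The flux is then exactly $\frac{1}{2r^2}\big(|\rhoH\partial_{\rhoH}\psi|^2+|\NablaAngular\psi|^2\big)$, with no $\partial_v\psi$ at all; the same happens on $\{u=\mathrm{const}\}$ at $\NullInfinity$. Hence the lemma can only hold with a constant that degenerates as $a\to0$; the paper's one-line assertion leaves this implicit as well. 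Second, your steps 2--3 do not produce the claimed control. The usual bound $\EMTensor(V,N)\ge c\,|\partial\psi|^2$ for two timelike vectors has $c$ of size $-\Metric(\That_{\HH},\That_{\HH})\sim\rhoH^2$, which degenerates in every direction. In a null frame, the $\partial_{\rhoH}$-component of $N_{\Sigma(\tau)}$ only gives uniform control of $|\NablaAngular\psi|^2$ and $|\rhoH\partial_{\rhoH}\psi|^2$. The $\HawkingHorizon\psi$ direction is seen only through the $h_{\HH}'\HawkingHorizon$-component of $N_{\Sigma(\tau)}$, which is exactly where your cross term competes.

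The repair is to complete the square in the formula you already wrote. The block
$h_{\HH}'|\HawkingHorizon\psi|^2-a\sin\theta\,\HawkingHorizon\psi\,\renormpphi\psi+\frac12(r^2+a^2)|\renormpphi\psi|^2$
is positive definite iff $a^2\sin^2\theta<2(r^2+a^2)h_{\HH}'$. At $\rhoH=0$ this is precisely the timelikeness of $N_{\Sigma(\tau)}$, and it holds on all of $\{\rhoH\le\rho_\star\}$ since $h_{\HH}'\ge\frac{a^2}{r^2+a^2}$ by \eqref{eq:condition-h-II}. Its determinant is then at least $a^2/4$, which gives $a^2|\HawkingHorizon\psi|^2+|\NablaAngular\psi|^2+|\rhoH\partial_{\rhoH}\psi|^2$ uniformly in $\rhoH$. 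After that, two terms are absorbed for $|a|/M$ small:
\begin{itemize}
\item the $O(a\rhoH)\Phi$ correction, which contributes $O(a)\,|\Phi\psi|\,|\rhoH\partial_{\rhoH}\psi|$ rather than an $O(a\rhoH)$ error;
\item the term $h_{\HH}'\rhoH^2\,\HawkingHorizon\psi\,\partial_{\rhoH}\psi$.
\end{itemize}
Near $\NullInfinity$ the identical computation with $\lambda=a^2/\Upsilon$ in place of $h_{\HH}'$ gives $\rhoI^2\big(a^2|\InfinityHawking\psi|^2+|\rhoI\partial_{\rhoI}\psi|^2+|\NablaAngular\psi|^2\big)$. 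Your suggestion to recover $\partial_u\psi$ there through a Hardy inequality cannot work: Hardy only trades $\rhoI\partial_{\rhoI}\psi$ for $\psi$ and never produces a derivative transversal to the slice. The $a$-dependent constant of your fallback is therefore not a concession but the best possible, and it is all the lemma requires.
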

\begin{proof}
  Since $\That_\chi$ and $N_{\Sigma(\tau)}$ are globally timelike vectorfields, by the dominant energy condition of the energy momentum tensor we deduce that 
  \begin{align*}
    \JCurrent{(\That_\chi, 0, 0)}[\psi]\cdot N_{\Sigma(\tau)}={}& \EMTensor( \That_\chi, N_{\Sigma}(\tau))[\psi] \geq 0,
  \end{align*}
  and control all derivatives.  In the region where $\rhoH \leq A_1$, we have $\That_\chi={}\frac{1}{r^2+a^2}\HawkingHorizon-\rhoH \frac{\omega_{+}(r+r_{+})}{r^2+a^2} \,\partial_{\phiIEF}$, so using \zcref{eq:EMtensor-rhoH-H} we deduce
  \begin{align*}
    \JCurrent{(\That_\chi, 0, 0)}[\psi]\cdot N_{\Sigma_{\EventHorizon}(\tau)}={}& \EMTensor( \That_\chi, N_{\Sigma_{\EventHorizon}(\tau)})[\psi] \\
    ={}& \EMTensor(\conormalSpaceH{0}\InfinityHawking+\conormalSpaceH{1}\partial_{\phiIEF}, \conormalSpaceH{0}\partial_{\rhoI}+\conormalSpaceH{0}\HawkingHorizon+\conormalSpaceH{0}\renormpphi)[\psi] \\
    \ges{}&  \abs*{\rhoH\partial_{\rhoH}\psi}^2+  |\pr_v \psi|^2 +|\NablaAngular \psi|^2.
  \end{align*}
  In the region where $\rhoI \leq r_{+}^{-2}\rho_0$, we have $\That_\chi=\frac{r^2}{r^2+a^2}\InfinityHawking-\frac{a}{r^2+a^2}\partial_{\phiIEF}$, 
  so using \eqref{eq:T-outgoing} we deduce
  \begin{align*}
    \JCurrent{(\That_\chi, 0, 0)}[\psi]\cdot N_{\Sigma_{\NullInfinity}(\tau)}={}& \EMTensor( \That_\chi, N_{\Sigma_{\NullInfinity}(\tau)})[\psi] \\
    ={}& \EMTensor(\conormalSpaceI{0}\InfinityHawking+\conormalSpaceI{2}\partial_{\phiOEF}, \conormalSpaceI{2}\partial_{\rhoI}+\conormalSpaceI{2}\InfinityHawking+\conormalSpaceI{2}\renormpphi)[\psi] \\
    \ges{} & \rhoI^{2} \left( \abs*{\rhoI\partial_{\rhoI}\psi}^2+ |\pr_u \psi|^2 +|\NablaAngular \psi|^2\right).
  \end{align*}
  Combining the above control close to the event horizon and close to null infinity, we obtain the stated control of the first order derivatives of $\psi$. 
  We are now left to add control of the zero-th order term. Applying \zcref{lemma:hardy-dejan} with $\rho=\rhoH$, $a=0$, $b=\infty$ and $\gamma=0$
  \begin{align*}
    \int_0^\infty  |\psi|^2 d\rhoH
    & \leq 4\int_0^\infty \rhoH^2\big| \partial_{\rhoH} \psi\big|^2 d\rhoH +2 \lim_{r \to \infty} (r|\psi|^2) \leq 4\int_0^\infty \big|\rhoH \partial_{\rhoH} \psi\big|^2 d\rhoH
  \end{align*}
  where we used that by regularity assumption $\lim_{r\to \infty} r \psi^2=0$.
  In particular,  rearranging the volume form, we have that
  \begin{equation*}
    \int_M^{\infty}\int_{S^2}\frac{1}{\abs*{q}^2}\abs*{\psi}^2\,\abs*{q}^2drd\mathring{\gamma}
    \le 4\int_M^{\infty}\int_{S^2}\frac{1}{\abs*{q}^2}\frac{(r-M)^2}{r^2}\abs*{r\partial_r\psi}^2\,\abs*{q}^2drd\mathring{\gamma}.
  \end{equation*}
  This then implies that
  \begin{equation*}
    \norm*{\psi}_{\bSobolevHI{0, 0, -1}(\Sigma(\tau))}^2
    \les \int_{\Sigma}\JCurrent{(\That_\chi, 0, 0)}[\psi]\cdot N_{\Sigma(\tau)}.
  \end{equation*}
  Combining with the above we obtain \zcref{lemma:boundary-terms-energy}. 
\end{proof}

\subsubsection{Boundary terms of Morawetz}\label{sec:boundary-terms-Morawetz}

We now compute the boundary terms of the Morawetz multiplier triplet.
Recall that the Morawetz multiplier triplets are
\begin{align*}
  (X,w,J)
  ={}&
       \bigl(X_{\Mor}^\diff+a\widetilde X,\; w_{\Mor}^\diff+a\widetilde w,\; J_{\Mor}^\diff\bigr) \\
  ={}&
       (X_{ax},w_{ax},J_{ax})
       +c_{red}(X_{red},0,J_{red})
       +c_{\That}(0,w_{\That},0)
       +a(\widetilde X,\widetilde w,0),
\end{align*}
where the quantities are defined in \zcref{sec:ILED}. 
By \zcref{definition-of-P}, for any boundary normal $N$,
\begin{equation}\label{eq:boundary-identity-general}
  \JCurrent{(X,w,J)}[\psi]\cdot N
  =
  \EMTensor(X,N)[\psi]
  + w\,\psi\,N(\psi)
  -\bigl(\frac 1 2 N(w)-J\cdot N\bigr)|\psi|^2.
\end{equation}
We now compute the boundary terms separately on each boundary component. All boundary fluxes at \(\EventHorizon\) and \(\NullInfinity\) are understood
with respect to the compactified boundary densities.

\paragraph*{Boundary terms through $\EventHorizon$}

\begin{lemma}\label{lem:boundary-H} 
  For the three triplets $(X_{ax},w_{ax},J_{ax})$, $(X_{red},0,J_{red})$, and $(0,w_{\That},0)$, we have
  \begin{align*}
    \JCurrent{(X_{ax},w_{ax},J_{ax})}[\psi]\cdot (-N_{\EventHorizon})
    \ges{}&-|\HawkingHorizon\psi|^2-\rhoH^2\Bigl(|\rhoH\partial_{\rhoH}\psi|^2+|\NablaAngular\psi|^2+|\psi|^2\Bigr) ,\\
    \JCurrent{(X_{red},0,J_{red})}[\psi]\cdot (-N_{\EventHorizon})
    \ges{}&
            \rhoH^{1-\delta_1}\Bigl(|\rhoH\partial_{\rhoH}\psi|^2+ |\HawkingHorizon\psi|^2+|\NablaAngular\psi|^2+|\psi|^2\Bigr),\\
    \JCurrent{(0,w_{\That},0)}[\psi]\cdot (-N_{\EventHorizon})
    \ges{}& -\rhoH^2|\psi||\HawkingHorizon\psi|.
  \end{align*}
  Moreover the energy multiplier $\That_\chi$ has flux through $\HH^+$:
  \begin{equation*}
    \JCurrent{(\That_\chi,0,0)}[\psi]\cdot (-N_{\EventHorizon})
    \ges{} |\HawkingHorizon\psi|^2+\rhoH^2\Bigl(|\rhoH\partial_{\rhoH}\psi|^2+|\NablaAngular\psi|^2\Bigr).
  \end{equation*}
\end{lemma}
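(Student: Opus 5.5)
The plan is to evaluate each flux using the identity \zcref{eq:boundary-identity-general} with $N=-N_{\EventHorizon}=|q|^{-2}\HawkingHorizon$ from \zcref{eq:NHH-NII}, and the stress-energy pairings \zcref{eq:EMtensor-rhoH-H}. Near $\EventHorizon$ only the horizon-localized pieces of each triplet matter, so the conormal expansions \zcref{eq:axi-multipliers:boundary-behavior:horizon} together with the explicit near-horizon forms from the remark after \zcref{prop:ax-symm-choice} are sufficient. The statements are pointwise on the compactified boundary, so I will freely use that $\rhoH$ is small there and write every vector field in the frame $(\rhoH\partial_{\rhoH},\HawkingHorizon,\renormpphi)$.

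\emph{Axial triplet.} Near the horizon $w_{ax}=0$ and $X_{ax}=f_1\,\rhoH\partial_{\rhoH}+f_0\HawkingHorizon$ with $f_1\in\conormalSpaceH{1}$ and $f_0\in\conormalSpaceH{0}$; concretely $f_0=\rhoH^2 z\cdot(-2M/\varepsilon_0)\cdot\rhoH^{-2}$ by \zcref{eq:relation-BL-iEF}. From \zcref{eq:EMtensor-rhoH-H}, $\EMTensor(\rhoH\partial_{\rhoH},\HawkingHorizon)=-\tfrac12\rhoH(|\rhoH\partial_{\rhoH}\psi|^2+|\NablaAngular\psi|^2)$, so that term contributes $O(\rhoH^2)$. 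The term $\EMTensor(\HawkingHorizon,\HawkingHorizon)=|\HawkingHorizon\psi|^2+\rhoH^2(\dots)$ contributes $|\HawkingHorizon\psi|^2$ plus an error $\rhoH^2$ times b-derivatives, where the cross term $\rhoH^2\HawkingHorizon\psi\,\partial_{\rhoH}\psi$ is absorbed by Cauchy--Schwarz. The $J_{ax}$ piece is $J_{ax}\cdot N|\psi|^2$ with $J_{ax}\in\conormalSpaceH{2}\rhoH\partial_{\rhoH}+\conormalSpaceH1\HawkingHorizon$; pairing with $\HawkingHorizon$ gives $\Metric(\rhoH\partial_{\rhoH},\HawkingHorizon)=\rhoH|q|^2$ and $\Metric(\HawkingHorizon,\HawkingHorizon)=O(\rhoH^2)$, so this is $O(\rhoH^3)|\psi|^2$. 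Altogether the flux is bounded below by $-|\HawkingHorizon\psi|^2-\rhoH^2(\cdots)$, regardless of the sign of $f_0$.

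\emph{Redshift triplet.} On the horizon side $\chi_{red}=1$, so the bound follows directly from \zcref{eq-boundary-terms-hierarchy-H} with $\alpha=-\delta_1$, combined with the $J$-flux computation \zcref{eq:boundary-J-HH}, which gives $+\tfrac12c_J(1-\delta_1)\rhoH^{1-\delta_1}|q|^{-2}|\psi|^2$.

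\emph{Lagrangian and energy multipliers.} For $(0,w_{\That},0)$, the identity gives $w\psi N\psi-\tfrac12N(w)|\psi|^2$. Here $N(w_{\That})=0$ because $w_{\That}$ depends only on $r$ and $\HawkingHorizon r=0$, and $w_{\That}=O(\rhoH^2)$, which yields the stated lower bound. For $\That_\chi$ with $\rhoH\le A_1$, write $\That_\chi=(r^2+a^2)^{-1}\HawkingHorizon-\rhoH\,c(r)\,\partial_{\phiIEF}$ and pair with $\HawkingHorizon$. The main part gives $(r^2+a^2)^{-1}\EMTensor(\HawkingHorizon,\HawkingHorizon)\gtrsim|\HawkingHorizon\psi|^2+\rhoH^2(|\rhoH\partial_{\rhoH}\psi|^2+|\NablaAngular\psi|^2)$ after handling the cross term: complete the square using $(\HawkingHorizon\psi+\rhoH^2\partial_{\rhoH}\psi)^2$ as in \zcref{lemma:boundary-terms-ieF}. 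The correction gives $\rhoH\,\EMTensor(\partial_{\phiIEF},\HawkingHorizon)=\rhoH\,\partial_\phi\psi\,\HawkingHorizon\psi$ (since $\Metric(\partial_\phi,\HawkingHorizon)=0$), which needs to be absorbed. I expect this to be the main obstacle. My approach is to apply Cauchy--Schwarz with a small parameter, bounding $\rhoH|\partial_\phi\psi||\HawkingHorizon\psi|\le\epsilon^{-1}\rhoH^2|\NablaAngular\psi|^2+\epsilon|\HawkingHorizon\psi|^2$. The coefficient $\epsilon^{-1}$ is too big for $\rhoH^2|\NablaAngular\psi|^2$, so I will instead use the factor $a\omega_+=O(a)$ in $c(r)$, take $\epsilon\sim|a|$, and use $|a|/M\ll1$ to close.
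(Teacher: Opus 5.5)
Your proposal is correct. For the three triplets it is essentially the paper's argument:
\begin{itemize}
\item pair with $-N_{\EventHorizon}=|q|^{-2}\HawkingHorizon$;
\item use \eqref{eq:EMtensor-rhoH-H} together with the near-horizon conormal form of $X_{ax},J_{ax}$;
\item reuse the hierarchy boundary computation for $X_{red}$;
\item use $w_{\That}=O(\rhoH^2)$ and $\HawkingHorizon r=0$.
\end{itemize}

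Where you diverge is the $\That_\chi$ flux, and there the ``main obstacle'' is not actually present. The flux is evaluated on $\EventHorizon=\{\rhoH=0\}$, not merely where $\rhoH$ is small. On $\EventHorizon$ one has $\chi_{\That}=1$ and $\That_\chi=T+\omega_+\Phi=(r_+^2+a^2)^{-1}\HawkingHorizon$ exactly. The paper uses precisely this, so the flux is $(r_+^2+a^2)^{-1}|q|^{-2}\EMTensor(\HawkingHorizon,\HawkingHorizon)[\psi]=(r_+^2+a^2)^{-1}|q|^{-2}|\HawkingHorizon\psi|^2$. There is no correction term and no smallness of $a$ is needed. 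By the same token, every term carrying a positive power of $\rhoH$ vanishes on $\EventHorizon$: the $X_{red}$ and $w_{\That}$ fluxes are in fact zero there, and the $\rhoH$-weighted right-hand sides hold trivially.

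Your Cauchy--Schwarz absorption of $\rhoH c(r)\,\Phi\psi\,\HawkingHorizon\psi$ does close under $|a|/M\ll1$, so nothing is wrong. However, it buys nothing on $\EventHorizon$. It also would not by itself give the bound on a nearby hypersurface $\{\rhoH=\text{const}>0\}$. There the normal is $|q|^{-2}(\HawkingHorizon+\rhoH^2\partial_{\rhoH})$ and $\Metric(\Phi,\HawkingHorizon)=a\rhoH^2\sin^2\theta\neq0$, and you replaced both by their horizon values. The resulting corrections are $O(\rhoH^2)$-weighted and absorbable, but they would have to be tracked.

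If you keep that step, two details need fixing:
\begin{itemize}
\item Passing from $|\Phi\psi|$ to $|\NablaAngular\psi|$ requires $\Phi=\sin\theta\,\renormpphi-a\sin^2\theta\,T$, which gives $|\Phi\psi|\lesssim|\NablaAngular\psi|+|a||\HawkingHorizon\psi|$.
\item The Cauchy--Schwarz weight must lie in a window of the form $|a|/M\lesssim\epsilon\lesssim M/|a|$. Taking $\epsilon=1$ gives $O(|a|)$ coefficients on both terms. Your choice $\epsilon\sim|a|$ leaves an $O(1)$ coefficient on $\rhoH^2|\NablaAngular\psi|^2$, and only works with a large enough implicit constant.
\end{itemize}
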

\begin{proof}
  Using \zcref{eq:EMtensor-rhoH-H} and \zcref{eq:axi-multipliers:boundary-behavior:horizon} we have the lower bound
  \begin{align*}
    \JCurrent{(X_{ax},w_{ax},J_{ax})}[\psi]\cdot (-N_{\EventHorizon})
    ={}& |q|^{-2}\JCurrent{(X_{ax}, w_{ax}, J_{ax})}[\psi] \c \HawkingHorizon \\
    ={}&|q|^{-2}\EMTensor(X_{ax},\HawkingHorizon)[\psi]
         + |q|^{-2}J_{ax}\cdot \HawkingHorizon|\psi|^2\\
    \ges{}&-|\HawkingHorizon\psi|^2-\rhoH^2\Bigl(|\rhoH\partial_{\rhoH}\psi|^2+|\NablaAngular\psi|^2+|\psi|^2\Bigr)
  \end{align*}
  as stated.
  Using \zcref{eq:EMtensor-rhoH-H} and \eqref{eq:def-XHH-JHH},
  \begin{align*}
    \JCurrent{(X_{red},0,J_{red})}[\psi]\cdot (-N_{\EventHorizon})
    ={}&
         |q|^{-2}\EMTensor( X_{red},\HawkingHorizon)[\psi]
         +|q|^{-2}c_{red} J_{red}\cdot \HawkingHorizon\,|\psi|^2\\
    ={}&
         \frac{1}{2}|q|^{-2}\rhoH^{1-\delta_1}
         \Bigl(
         |\rhoH\partial_{\rhoH}\psi|^2+|\NablaAngular\psi|^2
         \Bigr)
         +800|q|^{-2}r_+^{-2}\rhoH^{1-\delta_1}|\HawkingHorizon\psi|^2\\
       &+\frac12|q|^{-2}(1-\de_1)\rhoH^{1-\delta_1}|\psi|^2
         -\rhoH^{2-\delta_1}\bigl(\bDiffH\psi\bigr)^2\\
    \ges{}&
            \rhoH^{1-\delta_1} \Bigl(|\rhoH\partial_{\rhoH}\psi|^2+ |\HawkingHorizon\psi|^2 + |\NablaAngular\psi|^2 + |\psi|^2 \Bigr),
  \end{align*}
  as stated.  From \eqref{eq:def-wThat}, we have
  \begin{align*}
    \JCurrent{(0,w_{\That},0)}[\psi]\cdot (-N_{\EventHorizon})
    ={}&  w_{\That}\psi |q|^{-2}\HawkingHorizon(\psi)
         -\frac12 |q|^{-2}\HawkingHorizon(w_{\That})|\psi|^2 \ges{} -\rhoH^2|\psi||\HawkingHorizon\psi|,
  \end{align*}
  as stated.
  Finally, for $\That_\chi=\frac{1} {r_+^2+a^2}\HawkingHorizon$ along $\HH$, using \zcref{eq:EMtensor-rhoH-H}
  \begin{align*}
    \JCurrent{(\That_\chi,0,0)}[\psi]\cdot (-N_{\EventHorizon})
    ={}&\EMTensor(\frac{1}{r_+^2+a^2}\HawkingHorizon, \frac{1}{|q|^2}\HawkingHorizon)\ges |\HawkingHorizon\psi|^2+\rhoH^2\Bigl(|\rhoH\partial_{\rhoH}\psi|^2+|\NablaAngular\psi|^2\Bigr),
  \end{align*}
  as stated.
\end{proof}

As a direct consequence of \zcref{lem:boundary-H}, for a sufficiently large $C_{\That}$, we have
\begin{align}\label{eq:bound-PP-horizon}
  \begin{split}
    &\int_{\EventHorizon(\tau_1,\tau_2)}    \JCurrent{(C_{\That} \That_{\chi} + X_{\Mor}^\diff, w_{\Mor}^\diff, J_{\Mor}^\diff)}[\psi]
      \cdot \left( -N_{\EventHorizon} \right)\\
    \gtrsim{}&   \int_{\EventHorizon(\tau_1,\tau_2)} (C_{\That}-1)\abs*{\HawkingHorizon\psi}^2+ \rhoH^{1-\delta_1}\Bigl(|\rhoH\partial_{\rhoH}\psi|^2+ |\HawkingHorizon\psi|^2+|\NablaAngular\psi|^2+|\psi|^2\Bigr)  -\rhoH^2\bigl(\bDiffH\psi\bigr)^2\\
    \gtrsim{}&
               \int_{\EventHorizon(\tau_1,\tau_2)}\abs*{\HawkingHorizon\psi}^2
               +  \rhoH^{1-\delta_1}\Bigl(|\rhoH\partial_{\rhoH}\psi|^2+|\NablaAngular\psi|^2+|\psi|^2\Bigr).
  \end{split}
\end{align}

\paragraph*{Boundary terms through $\NullInfinity$}

\begin{lemma}\label{lem:boundary-I}
  For the three triplets $(X_{ax},w_{ax},J_{ax})$, $(X_{red},0,J_{red})$, and $(0,w_{\That},0)$, we have
  \begin{align*}
    \JCurrent{(X_{ax},w_{ax},J_{ax})}[\psi]\cdot (-N_{\NullInfinity})
    \gtrsim{}& -\abs*{\InfinityHawking\psi}^2- \rhoI^2\left( \abs*{\rhoI\partial_{\rhoI}\psi}^2 + \abs*{\NablaAngular\psi}^2+|\psi|^2 \right),  
    \\
    \JCurrent{(X_{red},0,J_{red})}[\psi]\cdot (-N_{\NullInfinity})
    ={}&0,\\
    \JCurrent{(0,w_{\That},0)}[\psi]\cdot (-N_{\NullInfinity})
    \ges{}& -\rhoI^3|\psi||\InfinityHawking\psi|.
  \end{align*}
  Moreover the energy multiplier $\That_\chi$ has flux through $\II^+$:
  \begin{equation*}
    \JCurrent{(\That_{\chi}, 0, 0)}[\psi]\cdot(-N_{\NullInfinity})
    \gtrsim{} \abs*{\InfinityHawking\psi}^2+ \rhoI^2\left(  |\rhoI\partial_{\rhoI}\psi|^2
      + \abs*{\NablaAngular\psi}^2
    \right).
  \end{equation*}
\end{lemma}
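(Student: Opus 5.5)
The plan mirrors the proof of \zcref{lem:boundary-H}, now in \oEF{} coordinates near $\NullInfinity$. By \zcref{eq:NHH-NII} we have $-N_{\NullInfinity}=\InfinityHawking$. By \zcref{eq:boundary-identity-general}, each flux is therefore
\[
\EMTensor(X,\InfinityHawking)[\psi] + w\,\psi\,\InfinityHawking\psi - \Big(\tfrac12\InfinityHawking(w) + J\cdot \InfinityHawking\Big)|\psi|^2 .
\]
Since $w$ depends only on $r$ and $\InfinityHawking$ is a combination of the Killing fields $T,\Phi$, we get $\InfinityHawking(w)=0$. All pairings will be evaluated using the table \zcref{eq:T-outgoing}, which gives $\EMTensor(\partial_{\rhoI},\InfinityHawking)$, $\EMTensor(\InfinityHawking,\InfinityHawking)$ and $\EMTensor(\InfinityHawking,\renormpphi)$.

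\emph{Axially symmetric triplet.} By \zcref{eq:axi-multipliers:boundary-behavior:null-infinity}, near $\NullInfinity$ we can write $X_{ax}=f_1\rhoI\partial_{\rhoI}+f_0\InfinityHawking$ with $f_1\in\conormalSpaceI{1}$ and $f_0\in\conormalSpaceI{0}$. Also $w_{ax}\in\conormalSpaceI{1}$, and $J_{ax}=0$ there.
\begin{itemize}
\item The term $f_1\rhoI\EMTensor(\partial_{\rhoI},\InfinityHawking)$ is $O(\rhoI^2)(\Upsilon|\rhoI\partial_{\rhoI}\psi|^2+|\NablaAngular\psi|^2)$.
\item The term $f_0\EMTensor(\InfinityHawking,\InfinityHawking)$ is $\gtrsim -|\InfinityHawking\psi|^2 - O(\rhoI^2)(|\rhoI\partial_{\rhoI}\psi|^2+|\NablaAngular\psi|^2)$. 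Here the cross term $\rhoI^2\InfinityHawking\psi\,\partial_{\rhoI}\psi=\rhoI\,\InfinityHawking\psi\,\rhoI\partial_{\rhoI}\psi$ is handled by Cauchy--Schwarz.
\item The term $w_{ax}\psi\InfinityHawking\psi$ is bounded by $|\InfinityHawking\psi|^2+\rhoI^2|\psi|^2$.
\end{itemize}
These three bounds together give the stated lower bound.

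\emph{Redshift triplet.} $X_{red}$ and $J_{red}$ are supported in $\{\rhoH\le r_+/40\}$, which is disjoint from a neighbourhood of $\NullInfinity$. Hence this flux vanishes identically.

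\emph{Lagrangian triplet $(0,w_{\That},0)$.} Here $X=0$ and $J=0$, so the flux reduces to $w_{\That}\psi\,\InfinityHawking\psi$. From \zcref{eq:def-wThat}, $w_{\That}=O(r^{-3})=O(\rhoI^3)$, which gives $\ge -\rhoI^3|\psi||\InfinityHawking\psi|$.

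\emph{Energy multiplier.} Near $\NullInfinity$ we have $\That_\chi=T=\frac{r^2}{r^2+a^2}\InfinityHawking-\frac{a}{r^2+a^2}\Phi$. The main term is
\[
\frac{r^2}{r^2+a^2}\EMTensor(\InfinityHawking,\InfinityHawking)=\frac{r^2}{r^2+a^2}\Big(|\InfinityHawking\psi|^2+\tfrac12\rhoI^2\Upsilon\big(2\InfinityHawking\psi\,\partial_{\rhoI}\psi+\Upsilon|\rhoI\partial_{\rhoI}\psi|^2+|\NablaAngular\psi|^2\big)\Big).
\]
For the $\Phi$ part, write $\Phi$ in terms of $\renormpphi$ and $\InfinityHawking$:
\[
\Phi=\frac{(r^2+a^2)\sin\th}{|q|^2}\renormpphi+O(a)\InfinityHawking .
\]
This is done exactly as in the proof of \zcref{lemma:normal-vector-tau}. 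The $\Phi$ part then contributes $O(a\rhoI^2)\big(|\InfinityHawking\psi\,\renormpphi\psi|+|\InfinityHawking\psi|^2\big)$.

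\emph{Main obstacle.} The hard step is controlling the cross term $\rhoI^2\InfinityHawking\psi\,\partial_{\rhoI}\psi$ with a constant. We will rewrite it as $\rhoI\,\InfinityHawking\psi\cdot\rhoI\partial_{\rhoI}\psi$ and use Cauchy--Schwarz with a weight $\epsilon$. Because of the extra factor $\rhoI$, this gives
\[
|\rhoI^2\InfinityHawking\psi\,\partial_{\rhoI}\psi| \le \tfrac12|\InfinityHawking\psi|^2+\tfrac12\rhoI^2|\rhoI\partial_{\rhoI}\psi|^2 ,
\]
but that bound is only borderline. So instead we split $\rho_1$-small: take
\[
|\rhoI^2\InfinityHawking\psi\,\partial_{\rhoI}\psi| \le \epsilon^{-1}\rhoI^2|\InfinityHawking\psi|^2+\epsilon\rhoI^2|\rhoI\partial_{\rhoI}\psi|^2 .
\]
For $\rhoI\le\rho_1\ll\epsilon^{1/2}$, the first term is absorbed into $|\InfinityHawking\psi|^2$. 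The second term is absorbed into $\tfrac12\rhoI^2\Upsilon^2|\rhoI\partial_{\rhoI}\psi|^2$, using $\Upsilon\to1$. The $O(a)$ terms are absorbed in the same way for $|a|/M\ll1$. Since the fluxes are evaluated on $\NullInfinity$ itself, where $\rhoI=0$, the inequalities are ultimately understood with respect to the compactified boundary densities, and all the $\rhoI$-weighted error terms are consistent with the claimed bounds.
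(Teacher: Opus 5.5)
Your route is the paper's own. You pair with $-N_{\NullInfinity}=\InfinityHawking$, apply the boundary identity, read the pairings off \eqref{eq:T-outgoing}, use the conormal form of $(X_{ax},w_{ax})$ near $\NullInfinity$, the support of $(X_{red},J_{red})$, and $w_{\That}=O(\rhoI^3)$. You also write $T$ as a multiple of $\InfinityHawking$ plus an $O(a\rhoI^2)$ Killing remainder; the paper uses $T=\InfinityHawking-a\rhoI^2(aT+\Phi)$. The first three bounds go through as you describe. Your version of \eqref{eq:boundary-identity-general} has the sign of the $J$ term flipped, but this is harmless because $J$ vanishes near $\NullInfinity$ for all three triplets.

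The step that fails is your fix of the ``main obstacle''. The inequality
\[
\rhoI^2|\InfinityHawking\psi|\,|\partial_{\rhoI}\psi|\le \epsilon^{-1}\rhoI^2|\InfinityHawking\psi|^2+\epsilon\rhoI^2|\rhoI\partial_{\rhoI}\psi|^2
\]
is false for small $\rhoI$. The left side equals $\rhoI\,|\InfinityHawking\psi|\,|\rhoI\partial_{\rhoI}\psi|$, so you cannot put an extra factor $\rhoI^2$ on both terms. Take $|\InfinityHawking\psi|=|\rhoI\partial_{\rhoI}\psi|$: the inequality then requires $1\le\rhoI(\epsilon+\epsilon^{-1})$, which fails as $\rhoI\to0$.

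The cross term needs no smallness at all. Set $A=\InfinityHawking\psi$ and $B=\Upsilon\rhoI\,\rhoI\partial_{\rhoI}\psi$. The relevant part of $\EMTensor(\InfinityHawking,\InfinityHawking)$ is
\[
A^2+AB+\tfrac12B^2=(A+\tfrac12B)^2+\tfrac14B^2 ,
\]
which is uniformly positive definite. For instance, $|AB|\le\frac34A^2+\frac13B^2$ gives $A^2+AB+\frac12B^2\ge\frac14A^2+\frac16B^2$. Equivalently, this is the dominant energy condition for $\InfinityHawking$, which is uniformly timelike near $\NullInfinity$ since $\Metric(\InfinityHawking,\InfinityHawking)=-\Delta|q|^2/r^4\to-1$.

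Your symmetric split fails only because it leaves the coefficient $\frac12\Upsilon(\Upsilon-1)<0$ on $\rhoI^2|\rhoI\partial_{\rhoI}\psi|^2$. The cure is an asymmetric split, not an extra power of $\rhoI$. With $\Upsilon\to1$ this gives
\[
\EMTensor(\InfinityHawking,\InfinityHawking)\gtrsim|\InfinityHawking\psi|^2+\rhoI^2\big(|\rhoI\partial_{\rhoI}\psi|^2+|\NablaAngular\psi|^2\big)
\]
uniformly for $\rhoI\le\rho_1$. Your $O(a\rhoI^2)$ terms from $\Phi$ are then absorbed as you say, and the closing appeal to compactified densities is unnecessary.
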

\begin{proof}
  Using \zcref{eq:T-outgoing} and \zcref{eq:axi-multipliers:boundary-behavior:null-infinity}, we have
  \begin{align*}
    \JCurrent{(X_{ax},w_{ax},J_{ax})}[\psi]\cdot (-N_{\NullInfinity})
    ={}& \JCurrent{(X_{ax}, w_{ax}, J_{ax})}[\psi]\cdot \InfinityHawking \\
    ={}& \EMTensor(X_{ax},\InfinityHawking)[\psi]+ w_{ax}\psi \InfinityHawking\psi\\
    \gtrsim{}& -\abs*{\InfinityHawking\psi}^2- \rhoI^2\left( \abs*{\rhoI\partial_{\rhoI}\psi}^2 + \abs*{\NablaAngular\psi}^2+|\psi|^2 \right)
  \end{align*}
  as stated.
  Since $(X_{red},0,J_{red})$ is supported away from $\II^+$, the second identity is trivially satisfied.
  From  \eqref{eq:def-wThat}, we have
  \begin{align*}
    \JCurrent{(0,c_{\That}w_{\That},0)}[\psi]\cdot (-N_{\NullInfinity})
    ={}&  c_{\That}w_{\That}\psi \InfinityHawking(\psi)
         -\frac12 c_{\That}\InfinityHawking(w_{\That})|\psi|^2 \ges{} -\rhoI^3|\psi||\InfinityHawking\psi|,
  \end{align*}
  as stated.
  Finally, for $\That_{\chi} = \partial_u$ along $\NullInfinity$,
  \begin{align*}
    \JCurrent{(\That_{\chi}, 0, 0)}[\psi]\cdot(-N_{\NullInfinity})
    ={}& \EMTensor\left(\partial_u, \InfinityHawking  \right)[\psi]= \EMTensor\left( \InfinityHawking, \InfinityHawking \right)
         - a\rhoI^2\EMTensor\left(aT + \Phi,  \InfinityHawking\right)\\
    \gtrsim{}& \abs*{\InfinityHawking\psi}^2+ \rhoI^2\left(  |\rhoI\partial_{\rhoI}\psi|^2
               + \abs*{\NablaAngular\psi}^2
               \right),
  \end{align*}
  as stated.
\end{proof}

As a direct consequence of \zcref{lem:boundary-I}, for a sufficiently large $C_{\That}$, we have
\begin{align}\label{eq:bound-P-II}
  \begin{split}
    & \int_{\NullInfinity(\tau_1,\tau_2)}    \JCurrent{(C_{\That} \That_{\chi} + X_{\Mor}^\diff, w_{\Mor}^\diff, J_{\Mor}^\diff)}[\psi]
      \cdot \left( -N_{\NullInfinity} \right)\\
    \gtrsim{}& \int_{\NullInfinity(\tau_1,\tau_2)} (C_{\That}-1)\abs*{\InfinityHawking\psi}^2+(C_{\That}-1) \rhoI^2\left(  |\rhoI\partial_{\rhoI}\psi|^2
               + \abs*{\NablaAngular\psi}^2
               \right)  - \conormalSpaceI{2}(\Manifold)\abs*{\psi}^2\\
    \gtrsim{}&
               \int_{\NullInfinity(\tau_1,\tau_2)}\abs*{\InfinityHawking\psi}^2
               + \rhoI^{2}\left(\abs*{\rhoI\partial_{\rhoI}\psi}^2 + \abs*{\NablaAngular\psi}^2 \right),
  \end{split}
\end{align}
where in the last step, we used the fact that
\begin{align*}
  \int_{\NullInfinity(\tau_1,\tau_2)}\conormalSpaceI{2}(\Manifold)\abs*{\psi}^2
  \les{}& \int_{\tau_1}^{\tau_2}\int_{\Sphere^2}\abs*{\psi}^2(\tau, \rhoI=0, \omega) d\tau d\omega= 0,
\end{align*}
where we used that by regularity assumption $\lim_{r\to \infty} r \psi^2=0$.

\paragraph*{Boundary terms through $\Sigma(\tau)$}

\begin{lemma}\label{lem:boundary-Sigma}
  For the three triplets $(X_{ax},w_{ax},J_{ax})$,
  $(X_{red},0,J_{red})$, and $(0,w_{\That},0)$, we
  have
  \begin{align*}
    \Big| \JCurrent{(X_{ax},w_{ax},J_{ax})}[\psi]\cdot N_{\Sigma(\tau)}\Big|
    \les{}& \JCurrent{(\That_\chi)}[\psi]\cdot N_{\Sigma(\tau)},\\
    \JCurrent{(X_{red},0,J_{red})}[\psi]\cdot N_{\Sigma(\tau)}
    \ges{}& \mathds{1}_{\{\rhoH < \frac{r_{+}}{80}\}} \rhoH^{1-\de_1}\Big(
            \abs*{\partial_{\rhoH}\psi}^2
            + \abs*{\NablaAngular\psi}^2
            + |\HawkingHorizon\psi|^2 +|\psi|^2\Big)
            ,\\
    \JCurrent{(0,w_{\That},0)}[\psi]\cdot N_{\Sigma(\tau)}
    \in{}&\conormalSpaceH{1}(\Manifold)\left(\abs*{\rhoH\partial_{\rhoH}\psi}\abs*{\psi} + \abs*{\psi}^2\right)
           \cap{}\conormalSpaceI{2}(\Manifold)\left( \abs*{\psi}\abs*{\rhoI\partial_{\rhoI}\psi} + \abs*{\psi}^2 \right).
  \end{align*}
\end{lemma}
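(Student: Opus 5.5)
The three bounds are pointwise statements on $\Sigma(\tau)$. I would prove each one separately from the identity \zcref{eq:boundary-identity-general}, $\JCurrent{(X,w,J)}[\psi]\cdot N=\EMTensor(X,N)[\psi]+w\psi N\psi-(\tfrac12N(w)-J\cdot N)|\psi|^2$. I would use three inputs: the formulas for $N_{\Sigma(\tau)}$ in the three regions from \zcref{lemma:normal-vector-tau}; the stress--energy tables \zcref{eq:EMtensor-rhoH-H} and \zcref{eq:T-outgoing}; and the boundary behaviour \zcref{eq:axi-multipliers:boundary-behavior:horizon,eq:axi-multipliers:boundary-behavior:null-infinity} of the axisymmetric triplet. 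In the compact intermediate region, $\That_\chi$ and $N_{\Sigma(\tau)}$ are both uniformly timelike and all coefficients are smooth and bounded. There every bound reduces to Cauchy--Schwarz against $\EMTensor(\That_\chi,N_{\Sigma(\tau)})$, which is coercive in all first derivatives. The real work is therefore near $\EventHorizon$ and near $\NullInfinity$.

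\emph{First bound.} Near $\EventHorizon$ I write $X_{ax}=\conormalSpaceH{1}\rhoH\partial_{\rhoH}+\conormalSpaceH{0}\HawkingHorizon$ and $|q|^2N_{\Sigma}=\conormalSpaceH{0}\partial_{\rhoH}+\conormalSpaceH{0}\HawkingHorizon+\conormalSpaceH{0}\renormpphi$. Expanding bilinearly with \zcref{eq:EMtensor-rhoH-H}, every term of $\EMTensor(X_{ax},N_\Sigma)$ is a product of two of the quantities $\rhoH\partial_{\rhoH}\psi$, $\HawkingHorizon\psi$, $\renormpphi\psi$, $\NablaAngular\psi$ with bounded coefficients. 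The $\partial_{\rhoH}$--$\partial_{\rhoH}$ pairing carries the factor $\rhoH$ coming from $X_{ax}$, so it too is of this form. In the proof of \zcref{lemma:boundary-terms-energy} these quantities are shown to be controlled by $\JCurrent{(\That_\chi)}[\psi]\cdot N_{\Sigma(\tau)}$. Near $\NullInfinity$ the same argument works with $X_{ax}=\conormalSpaceI{1}\rhoI\partial_{\rhoI}+\conormalSpaceI{0}\InfinityHawking$ and \zcref{eq:T-outgoing}. The weight $\rhoI^2$ that $N_\Sigma$ carries in the energy density is matched by the same weight appearing in $\EMTensor(X_{ax},N_\Sigma)$.

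The zeroth-order contributions are $w_{ax}\psi N\psi$ and $(\tfrac12 N(w_{ax})-J_{ax}\cdot N)|\psi|^2$. Near $\EventHorizon$ we have $w_{ax}=0$ and $J_{ax}\cdot N\in\conormalSpaceH{1}$. Near $\NullInfinity$ we have $w_{ax}\in\conormalSpaceI{1}$, and after the $|q|^{-2}$ normalisation of $N$ these terms carry extra decay. \textbf{This is the step I expect to be the main obstacle.} A genuinely pointwise bound of $|\psi|^2$ by the derivative density of the $\That_\chi$ energy is false. I would therefore first try to show that these coefficients vanish, or can be written as a total $\tau$-tangential divergence; otherwise I would read the inequality with $\JCurrent{(\That_\chi)}[\psi]\cdot N_{\Sigma(\tau)}$ understood, as in \zcref{lemma:boundary-terms-energy}, together with its Hardy-controlled zeroth-order part.

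\emph{Second bound.} On $\{\rhoH<\frac{r_+}{80}\}$ the cutoff satisfies $\chi_{red}=1$. The flux is then exactly that of $X_{-\de_1,800}$ plus that of $J_{-\de_1}$. The first is bounded below by \zcref{eq-boundary-terms-hierarchy-H} with $\alpha=-\de_1$, giving $\rhoH^{1-\de_1}(|\partial_{\rhoH}\psi|^2+|\NablaAngular\psi|^2+a^2|\HawkingHorizon\psi|^2)$. The second is computed in \zcref{eq:boundary-J-HH} and equals $\tfrac12c_J(1-\de_1)\rhoH^{1-\de_1}h_{\EventHorizon}'|q|^{-2}|\psi|^2\ge0$. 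Since $h_{\EventHorizon}'\simeq a^2$, both the $|\HawkingHorizon\psi|^2$ and $|\psi|^2$ terms appear with an $a$-dependent constant. The bound thus holds with implicit constant depending on $a$, which I would state explicitly.

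\emph{Third bound.} Here $2w_{\That}=-(r-M)^2(r-r_{\trap})^2r^{-7}\in\conormalSpaceH{2}\cap\conormalSpaceI{3}$, there is no $X$ and no $J$, and $N(w_{\That})\in\conormalSpaceH{1}\cap\conormalSpaceI{2}$. Near $\EventHorizon$, $w_{\That}\psi N\psi$ contains $\conormalSpaceH{2}\psi\partial_{\rhoH}\psi=\conormalSpaceH{1}\psi\,\rhoH\partial_{\rhoH}\psi$, plus tangential terms with coefficient in $\conormalSpaceH{2}$, which I absorb into the same conormal class. Near $\NullInfinity$, the component $-\partial_{\rhoI}$ of $|q|^2N_\Sigma$, together with $|q|^{-2}\sim\rhoI^2$, gives $\conormalSpaceI{2}\psi\,\rhoI\partial_{\rhoI}\psi$. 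The $|\psi|^2$ coefficients come from $N(w_{\That})$ and have the stated orders.
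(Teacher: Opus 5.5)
Your proposal is correct and is essentially the paper's proof. The second and third bounds are obtained exactly as you describe: from \eqref{eq-boundary-terms-hierarchy-H} and \eqref{eq:boundary-J-HH} with $\alpha=-\delta_1$ (the paper likewise absorbs the $a$-dependence coming from $h_{\EventHorizon}'=O(a^2)$ into $\gtrsim$), and from the conormal expansion of $w_{\That}$ inserted into \eqref{eq:boundary-identity-general}. For the first bound, the paper also only shows that the density is bounded by $|\rhoH\partial_{\rhoH}\psi|^2+|\HawkingHorizon\psi|^2+|\NablaAngular\psi|^2+|\psi|^2$ near $\EventHorizon$ (and by the analogous $b$-quantities near $\NullInfinity$), and then invokes \zcref{lemma:boundary-terms-energy}; this is precisely your fallback reading, in which the inequality holds after integration over $\Sigma(\tau)$ with the zeroth-order part controlled by the Hardy step, and that fallback is genuinely needed because the coefficients do not vanish ($J_{ax}\cdot N_{\Sigma(\tau)}$ is of exact size $\rhoH$ near the horizon and $w_{ax}\simeq r^{-1}$ near infinity).
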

\begin{proof}
  We first consider $\JCurrent{(X_{ax},w_{ax},J_{ax})}[\psi]\cdot N_{\Sigma(\tau)}$. Near $\EventHorizon$,
  from
  \zcref{lemma:boundary-terms-ieF} and \eqref{eq:relation-BL-iEF}, we obtain
  \begin{align*}
    | \PP^{(X_{ax}, w_{ax}, J_{ax})}[\psi] \c N_{\Sigma_{\EventHorizon}(\tau)}|
    &\les |\rhoH \pr_{\rhoH} \psi|^2+ |\HawkingHorizon\psi|^2+|\NablaAngular \psi|^2 +|\psi|^2.     
  \end{align*}
  Near $\NullInfinity$, using that $\abs*{q}^2N_{\Sigma_\NullInfinity}
  = - \partial_{\rhoI} + \conormalSpaceI{0}\bDiffI$,
  and using
  \zcref{eq:axi-multipliers:boundary-behavior:null-infinity} and
  \zcref{eq:T-outgoing}, we have
  \begin{align}
    \EMTensor(X_{ax}, \abs*{q}^2N_{\Sigma_{\NullInfinity}})
    ={}& \conormalSpaceI{0}\EMTensor(\rhoI\partial_{\rhoI}, \rhoI\partial_{\rhoI})
         +\conormalSpaceI{0}\EMTensor(\InfinityHawking, \partial_{\rhoI})
         + \conormalSpaceI{1}\abs*{\bDiffI\psi}^2
         \notag\\
    \les{}& \abs*{\rhoI\partial_{\rhoI}\psi}^2 + \abs*{\NablaAngular\psi}^2 + \conormalSpaceI{1}\abs*{\bDiffI\psi}^2.
            \label{eq:axi-multipliers:boundary-behavior:sigma:infinity:X}
  \end{align}
  We also have that
  \begin{align}
    \JCurrent{(0, w_{ax}, J_{ax})}[\psi]\cdot \abs*{q}^2N_{\Sigma_{\NullInfinity}}
    ={}& w_{ax}\psi \abs*{q}^2N_{\Sigma_{\NullInfinity}}\psi
         - \abs*{q}^2(\frac 1 2 N_{\Sigma_{\NullInfinity}}w_{ax} - J\cdot N_{\Sigma_{\NullInfinity}})\abs*{\psi}^2
         \notag\\
    \les{}& \abs*{\psi}\abs*{\rhoI\partial_{\rhoI}\psi}
            + \abs*{\psi}^2.
            \label{eq:axi-multipliers:boundary-behavior:sigma:infinity:w-J}
  \end{align}
  Summing together
  \zcref{eq:axi-multipliers:boundary-behavior:sigma:infinity:X,eq:axi-multipliers:boundary-behavior:sigma:infinity:w-J},
  we have that
  \begin{equation*}
    \big|  \JCurrent{(X_{ax}, w_{ax}, J_{ax})}[\psi]\cdot \abs*{q}^2N_{\Sigma_{\NullInfinity}}\big|
    \les (\bDiffI^{\le 1}\psi)^2.
  \end{equation*}
  Dividing both sides by $\abs*{q}^2$ and combining with the bound near $\EventHorizon$ and the trivial bound in the compact region, we deduce the stated bound from \zcref{lemma:boundary-terms-energy}.
  From \zcref{eq-boundary-terms-hierarchy-H} and \eqref{eq:boundary-J-HH} we deduce 
  \begin{align*}
    \JCurrent{( X_{red})}[\psi]\cdot N_{\Sigma_{\rhoH\le \rho_0}(\tau)}
    \ges{} & \mathds{1}_{\{\rhoH < \frac{r_{+}}{80}\}}  \rhoH^{1-\de_1}\Big(
             \abs*{\partial_{\rhoH}\psi}^2
             + \abs*{\NablaAngular\psi}^2
             + |\HawkingHorizon\psi|^2 +\abs*{\psi}^2\Big).
  \end{align*}
  From \zcref{eq:def-wThat}, since
  \begin{equation*}
    2w_{\That}= - \frac{(r_+ - r_{\trap})^2}{r_+^7}\rhoH^2 + \conormalSpaceH{3}(\Manifold),
    \text{ near } \EventHorizon,
    \qquad
    2 w_{\That}= - \frac{1}{r^3} + \conormalSpaceI{4}(\Manifold),
    \text{ near } \NullInfinity.
  \end{equation*}
  applying \eqref{eq:boundary-identity-general} with $N=N_{\Sigma(\tau)}$, we obtain
  \begin{align*}
    \JCurrent{(0,c_{\That}w_{\That},0)}[\psi]\cdot N_{\Sigma}
    \in{}\conormalSpaceH{1}(\Manifold)\left(\abs*{\rhoH\partial_{\rhoH}\psi}\abs*{\psi} + \abs*{\psi}^2\right)
    \cap{}\conormalSpaceI{2}(\Manifold)\left( \abs*{\psi}\abs*{\rhoI\partial_{\rhoI}\psi} + \abs*{\psi}^2 \right),
  \end{align*}
  as stated.
\end{proof}

As a direct consequence of
\zcref{lem:boundary-Sigma, lemma:boundary-terms-energy}, for a
sufficiently large $C_{\That}$ and for $\frac{|a|}{M}\ll 1$, we have
\begin{align}\label{eq:bound-Sigma-f}
  \begin{split}
    & \int_{\Sigma(\tau)}  \JCurrent{(C_{\That} \That_{\chi} + X_{\Mor}^\diff, w_{\Mor}^\diff, J_{\Mor}^\diff)}[\psi]\cdot N_{\Sigma(\tau)}\\
    \gtrsim{}& \int_{\Sigma(\tau)}(C_{\That}-1)\JCurrent{(\That_\chi)}[\psi]\cdot N_{\Sigma(\tau)}+\int_{\Sigma(\tau)}\mathds{1}_{\{\rhoH < \frac{r_{+}}{80}\}} \rhoH^{1-\de_1}\Big(
               \abs*{\partial_{\rhoH}\psi}^2
               + \abs*{\NablaAngular\psi}^2
               + |\HawkingHorizon\psi|^2 +|\psi|^2\Big)\\
    &-\int_{\Sigma(\tau)}\conormalSpaceH{1}(\Manifold)\left(\abs*{\rhoH\partial_{\rhoH}\psi}\abs*{\psi} + \abs*{\psi}^2\right)
      \cap{}\conormalSpaceI{2}(\Manifold)\left( \abs*{\psi}\abs*{\rhoI\partial_{\rhoI}\psi} + \abs*{\psi}^2 \right)\\
    \gtrsim{}& \norm*{\psi}_{\bSobolevHI{1, 0, -1}(\Sigma(\tau))}^2+\norm*{\psi}_{H_{\EventHorizon}^{1, -\frac{1-\de_1}{2}}(\Sigma(\tau))}^2\\
    \gtrsim{}&  \norm*{\psi}_{H_{\EventHorizon, b\NullInfinity}^{1, -\frac{1-\de_1}{2}, -1}(\Sigma(\tau))}^2.
  \end{split}
\end{align}

\subsubsection{Proof of \texorpdfstring{\zcref{prop:energy-Morawetz}}{}}\label{sec:proof-e-mor}

We are now ready to prove \zcref{prop:energy-Morawetz}. 
Consider the  multiplier
\[
  X = C_{\That}\That_{\chi}+X_{\Mor}^\diff+a\widetilde X,
  \qquad
  w = w_{\Mor}^\diff+a\widetilde w,
  \qquad
  J = J_{\Mor}^\diff ,
\]
where $(\widetilde X,\widetilde w)$ are the pseudodifferential correction terms constructed in
\zcref{lemma:ILED-KdS:Bulk}, after the cutoff procedure of
\zcref{eq:Morawetz:morawetz-VF-LagrangeCorr-with-cutoff:def}.
Applying the divergence identity for the differential part together with
the pseudodifferential divergence theorem \zcref{eq:PsiDO-divergence-theorem}, we obtain
\begin{align*}
  & \bangle*{\Box_{\Metric}\psi, (X+w)\psi}_{L^2(\Manifold(\tau_1,\tau_2))}\\
  ={}& \bangle*{\Box_{\Metric}\psi, (C \That_{\chi} + X_{\Mor}^\diff +w_{\Mor}^\diff)\psi}_{L^2(\Manifold(\tau_1,\tau_2))}+\bangle*{\Box_{\Metric}\psi, a (\widetilde{X}+ \widetilde{w})\psi}_{L^2(\Manifold(\tau_1,\tau_2))}\\
  ={}& \int_{\mathcal{M}(\tau_1,\tau_2)}\D\cdot\JCurrent{(C \That_{\chi} + X_{\Mor}^\diff, w_{\Mor}^\diff, J_{\Mor}^\diff)}[\psi]
       - \int_{\mathcal{M}(\tau_1,\tau_2)}\QQ^{(C \That_{\chi} + X_{\Mor}^\diff, w_{\Mor}^\diff, J_{\Mor}^\diff)}[\psi]\\
  &+a\KCurrentPert{\widetilde{\MorawetzVF}, \widetilde{\MorawetzLagrangeCorr}}[\psi](\tau_1,\tau_2)
    - a\JCurrentPert{\widetilde{\MorawetzVF}, \widetilde{\MorawetzLagrangeCorr}}[\psi](\tau_2)
    + a\JCurrentPert{\widetilde{\MorawetzVF}, \widetilde{\MorawetzLagrangeCorr}}[\psi](\tau_1).
\end{align*}
Using the divergence theorem in \zcref{eq:general-divergence-theorem}
we then obtain
\begin{equation*}
  \begin{split}
    & P_{\Mor}(\tau_1,\tau_2)
      +\int_{\mathcal{M}(\tau_1,\tau_2)}\QQ^{(C \That_{\chi} + X_{\Mor}^\diff, w_{\Mor}^\diff, J_{\Mor}^\diff)}[\psi]
      +a\KCurrentPert{\widetilde{X}, \widetilde{w}}[\psi](\tau_1,\tau_2)\\
    ={}& -\bangle*{\Box_{\Metric}\psi, (X+w)\psi}_{L^2(\Manifold(\tau_1,\tau_2))}, 
  \end{split}
\end{equation*}
where
\begin{equation*}
  \begin{split}
    P_{\Mor}(\tau_1, \tau_2)
    \vcentcolon={}& P_{\Mor, \Sigma}(\tau_2)
                    - P_{\Mor, \Sigma}(\tau_1)
                    + P_{\Mor, \NullInfinity}(\tau_1,\tau_2)
                    + P_{\Mor, \EventHorizon}(\tau_1,\tau_2),\\
    P_{\Mor,\Sigma}(\tau)
    \vcentcolon={}&  \int_{\Sigma(\tau)}\JCurrent{(C_{\That} \That_{\chi} + X_{\Mor}^\diff, w_{\Mor}^\diff, J_{\Mor}^\diff)}[\psi]\cdot N_{\Sigma}
                    + a \JCurrentPert{\widetilde{X}, \widetilde{w}}[\psi](\tau),\\
    P_{\Mor, \NullInfinity}(\tau_1,\tau_2)
    \vcentcolon={}& \int_{\NullInfinity(\tau_1,\tau_2)}\JCurrent{(C_{\That} \That_{\chi} + X_{\Mor}^\diff, w_{\Mor}^\diff, J_{\Mor}^\diff)}[\psi]\cdot (-N_{\NullInfinity}),\\
    P_{\Mor, \EventHorizon}(\tau_1,\tau_2)
    \vcentcolon={}& \int_{\EventHorizon(\tau_1,\tau_2)}\JCurrent{(C_{\That} \That_{\chi} + X_{\Mor}^\diff, w_{\Mor}^\diff, J_{\Mor}^\diff)}[\psi]\cdot (-N_{\EventHorizon}).
  \end{split}    
\end{equation*}
We first control the boundary terms. Observe that from
\zcref{eq:bound-PP-horizon}, \eqref{eq:bound-P-II}, \eqref{eq:bound-Sigma-f} and \eqref{eq:JCurrentPert-def} we have that for $\frac{\abs*{a}}{M}\ll1 $, 
\begin{align}
  \label{eq:energy-morawetz:proof:boundary-terms}
  \begin{split}
    P_{\Mor,\EventHorizon}(\tau_1,\tau_2) &\gtrsim \int_{\EventHorizon(\tau_1,\tau_2)}\abs*{\HawkingHorizon\psi}^2
                                            +  \rhoH^{1-\delta_1}\Bigl(|\rhoH\partial_{\rhoH}\psi|^2+|\NablaAngular\psi|^2+|\psi|^2\Bigr)\\
    P_{\Mor, \NullInfinity}(\tau_1,\tau_2) &\gtrsim \int_{\NullInfinity(\tau_1,\tau_2)}\abs*{\InfinityHawking\psi}^2
                                             + \rhoI^{2}\left(\abs*{\rhoI\partial_{\rhoI}\psi}^2 + \abs*{\NablaAngular\psi}^2 \right)\\
    P_{\Mor, \Sigma}(\tau_2)
                                          &\gtrsim \norm*{\psi}_{H_{\EventHorizon, b\NullInfinity}^{1, -\frac{1-\de_1}{2}, -1}(\Sigma(\tau_2))}^2. 
  \end{split}
\end{align}

Observe that we also have the trivial control 
\begin{equation}
  \label{eq:energy-morawetz:proof:boundary-terms-Sigma1}
  P_{\Mor,\Sigma}(\tau_1) \les \norm*{\psi}_{H_{\EventHorizon, b\NullInfinity}^{1, -\frac{1-\de_1}{2}, -1}(\Sigma(\tau_1))}^2.
\end{equation}

With regards to the bulk terms, we have from \zcref{eq:KCurrent-princ-aux:decomposition} that
\begin{equation*}
  \begin{split}
    &\int_{\mathcal{M}(\tau_1,\tau_2)}\KCurrent{(C_{\That}\That_{\chi}+X_{\Mor}^\diff, w_{\Mor}^\diff, J_{\Mor}^\diff)}[\psi]
      +a\KCurrentPert{\widetilde{X}, \widetilde{w}}[\psi](\tau_1,\tau_2)\\
    ={}&  \int_{\mathcal{M}(\tau_1,\tau_2)}\KCurrent{(C_{\That}\That_{\chi}+X_{\Mor}^\diff, w_{\Mor}^\diff, J_{\Mor}^\diff)}[\psi]
         +a\KCurrentPert{\widetilde{\MorawetzVF},\widetilde{\MorawetzLagrangeCorr}}_{\Main}[\psi]
         +a\KCurrentPert{\widetilde{\MorawetzVF},\widetilde{\MorawetzLagrangeCorr}}_{\Aux}[\psi].
  \end{split}    
\end{equation*}
Combining 
\zcref{lemma:Morawetz:cutoff-PsiDO:bulk, lemma:That-chi-deformation-tensor}  we have that for some $\delta_{*}>0$
\begin{equation*}
  \begin{split}
    &\int_{\mathcal{M}(\tau_1,\tau_2)}\KCurrent{(C_{\That}\That_{\chi}+X_{\Mor}^\diff, w_{\Mor}^\diff, J_{\Mor}^\diff)}[\psi]
      +a\KCurrentPert{\widetilde{X}, \widetilde{w}}[\psi](\tau_1,\tau_2)\\
    \ge{}& \delta_{*} \norm*{\psi}_{\bSobolevHITrap{1,\frac{\delta_1}{2},-\frac 52}(\Manifold(\tau_1,\tau_2))}^2-O(a) \norm*{\psi}_{\bSobolevH{1, -\frac 1 2}(\Manifold_{\rhoH \leq \frac 3 2 A_1}(\tau_1, \tau_2))}^2
    \\
    &- O(a)\left(\norm*{\psi}_{\bSobolevHITrap{1,\frac{\delta_1}{2},-\frac{5}{2}}(\Manifold(\tau_1,\tau_2))}^2+
      \norm*{\psi}_{H^0_c(\Manifold(\tau_1,\tau_2))}^2
      + \norm*{F }_{L^2(\Manifold(\tau_1,\tau_2))}^2
      + \sup_{[\tau_1,\tau_2]}\norm*{\psi}_{\bSobolevHI{1, 0, -1}(\Sigma(\tau))}^2
      \right)
  \end{split}
\end{equation*}
For $\frac{|a|}{M} \ll \de_*$ sufficiently small, we have that
\begin{equation}
  \label{eq:energy-morawetz:proof:bulk-terms}
  \begin{split}
    &\int_{\mathcal{M}(\tau_1,\tau_2)}\KCurrent{(C_{\That}\That_{\chi}+X_{\Mor}^\diff, w_{\Mor}^\diff, J_{\Mor}^\diff)}[\psi]
      +a\KCurrentPert{\widetilde{X}, \widetilde{w}}[\psi](\tau_1,\tau_2)\\
    \ge{}& \delta_{*} \norm*{\psi}_{\bSobolevHITrap{1,\frac{\delta_1}{2},-\frac 52}(\Manifold(\tau_1,\tau_2))}^2
           - O(a)\left(
           \norm*{F }_{L^2(\Manifold(\tau_1,\tau_2))}^2
           + \sup_{[\tau_1,\tau_2]}\norm*{\psi}_{\bSobolevHI{1, 0, -1}(\Sigma(\tau))}^2
           \right).
  \end{split}     
\end{equation}
Finally, we control 
\begin{equation}
  \label{eq:energy-morawetz:proof:inhomogeneity-terms}
  \bangle*{\Box_{\Metric}\psi, (X+w)\psi}_{L^2(\Manifold(\tau_1,\tau_2))}
  \les \varepsilon \norm*{\psi}_{\bSobolevHITrap{1,\frac{\delta_1}{2}, -\frac{5}{2}}(\Manifold(\tau_1,\tau_2))}^2
  + \frac{1}{\varepsilon}\norm*{F}_{\bSobolevHI{0,\frac{\delta_1}{2}, -\frac{1}{2}}(\Manifold(\tau_1,\tau_2))}^2, 
\end{equation}
using the fact that
\begin{equation*}
  X = -\rhoH^{-\delta_1}\rhoH\partial_{\rhoH} + \conormalSpaceH{0}\bDiffH \text{ near } \EventHorizon,\qquad
  X = - \rhoI^{2}\partial_{\rhoI} + \conormalSpaceI{3}\bDiffI \text{ near }\NullInfinity.
\end{equation*}
Combining
\zcref{eq:energy-morawetz:proof:inhomogeneity-terms,eq:energy-morawetz:proof:bulk-terms,eq:energy-morawetz:proof:boundary-terms-Sigma1,eq:energy-morawetz:proof:boundary-terms}
concludes the proof of \zcref{prop:energy-Morawetz}.

\section{Proof of the main theorem}\label{sec:proof-main-theorem}

In this section we prove \zcref{thm:main-thm}, first for $s=1$ in
\zcref{proof:main-them-s1} and then for $s \geq 2$ in
\zcref{proof:main-them-s2}. Finally, in \zcref{proof:corollary-decay}
we prove \zcref{cor:pointwise-decay}.

\subsection{Proof of Theorem \ref{thm:main-thm} for \texorpdfstring{$s=1$}{s=1}}\label{proof:main-them-s1}

Here we prove \zcref{eq:main-theorem-s=1}, which implies
\zcref{thm:main-thm} for $s=1$, see \zcref{rem:main-theorem-s=1}. The proof is obtained by combining the
Energy-Morawetz estimates of \zcref{prop:energy-Morawetz} with the
$\EventHorizon$ and $\NullInfinity$-weighted hierarchies of
\zcref{sec:hierarchy}. Since the Morawetz bulk of
\zcref{prop:energy-Morawetz} has weight $\frac{\de_1}{2}$ at the
horizon, the only norms from the $\EventHorizon$-weighted hierarchies
which are stronger than the Morawetz bulk are the ones with weight
$-\frac{\a}{2}$ for $\alpha \in (-1+4\mathbf{a}^2, -\de_1)$,
restricting therefore the range of the hierarchy roughly in half.

Sum \zcref{eq:final-theorem-energy-morawetz},
multiplied by a large constant $C_{Mor}$, with the
$\EventHorizon$-weighted hierarchy
\zcref{eq:horizon-weighted-estimate} for $s=1$ and 
$\alpha \in (-1+4\mathbf{a}^2, -\de_1)$, and the $\NullInfinity$-weighted hierarchy
\zcref{eq:infinity-weighted-estimate-psic} for $s=1$ and $\beta \in (1,3)$. Therefore for an extremal Kerr--Newman with $\mathbf{a}\ll 1$,
$\delta_1\ll 1$, $-1+4\mathbf{a}^2<\alpha< -\de_1$,
$1<\b<3$ and any $\tau_1<\tau_2$ we have
\begin{align*}
  &  C_{Mor} \norm*{\psi}_{H_{\EventHorizon, \operatorname{b}\NullInfinity}^{1, -\frac{1-\de_1}{2}, -1}(\Sigma(\tau_2))}+ \norm*{\psi}_{H_{\EventHorizon}^{1, -\frac{\alpha+1}{2}}(\Sigma_{\EventHorizon}(\tau_2))}+\norm*{\widecheck{\psi}}_{H_{\NullInfinity}^{1, -\frac{\b+3}{2}}(\Sigma_{\NullInfinity}(\tau_2))}\\
  &
    +C_{Mor}\norm*{\psi}_{\bSobolevHITrap{1, \frac{\de_1}{2}, -\frac 52}(\Manifold(\tau_1, \tau_2))}+\norm*{\psi}_{\bSobolevH{1,\,-\frac{\alpha}{2}}(\Manifold_{\EventHorizon}(\tau_1,\tau_2))}+   \norm*{\widecheck{\psi}}_{\bSobolevI{1,-\frac{\b+2}{2}}(\Manifold_{\NullInfinity}(\tau_1,\tau_2))}\\
  &+  \norm*{\psi}_{\bSobolevH{1, - \frac{\alpha+1}{2}}(\EventHorizon(\tau_1,\tau_2))} +\norm*{\widecheck{\psi}}_{\bSobolevI{1, -\frac{\b+1}{2}}(\NullInfinity(\tau_1, \tau_2))}\\
  \les{}&  C_{Mor} \norm*{\psi}_{H_{\EventHorizon, \operatorname{b}\NullInfinity}^{1, -\frac{1-\de_1}{2}, -1}(\Sigma(\tau_1))}+\norm*{\psi}_{H_{\EventHorizon}^{1, -\frac{\alpha+1}{2}}(\Sigma_{\rhoH \leq 2 \rho_0}(\tau_1))}+\norm*{\widecheck{\psi}}_{H_{\NullInfinity}^{1, -\frac{\b+3}{2}}(\Sigma_{\rhoI \leq 2\rho_1}(\tau_1))}\\
  &+C_{Mor}\norm*{F}_{\bSobolevHI{0,\frac{\delta_1}{2}, -\frac{1}{2}}(\Manifold(\tau_1,\tau_2))} + \norm*{F}_{\bSobolevH{0,-\frac{\alpha}{2}}(\Manifold_{\rhoH \leq 2\rho_0}(\tau_1,\tau_2))}+ \norm*{F}_{\bSobolevI{0,-\frac{\b-4}{2}}(\Manifold_{\rhoI \leq 2 \rho_1}(\tau_1,\tau_2))}\\
  &+ \rho_0^{-\frac 12 }\norm*{\psi}_{\compactSobolev{1}(\Manifold_{ \rhoH \leq 2\rho_0}(\tau_1,\tau_2))}+ \rho_1^{-\frac12}\norm*{\widecheck{\psi}}_{\compactSobolev{1}(\Manifold_{\rhoI \leq 2\rho_1}(\tau_1,\tau_2))}.
\end{align*}         
For $C_{Mor} \gg 1$ sufficiently large, the trapped Morawetz bulk on the second line on the left hand side can absorb the last line on the right hand side, which is supported away from trapping. For $-1+4\mathbf{a}^2<\alpha< -\de_1$,  $1<\b<3$, we can bound the bulk norms:
\begin{align*}
  & C_{Mor}\norm*{\psi}_{\bSobolevHITrap{1, \frac{\de_1}{2}, -\frac 52}(\Manifold(\tau_1, \tau_2))}+\norm*{\psi}_{\bSobolevH{1,\,-\frac{\alpha}{2}}(\Manifold_{\EventHorizon}(\tau_1,\tau_2))}+ \norm*{\widecheck{\psi}}_{\bSobolevI{1,-\frac{\b+2}{2}}(\Manifold_{\NullInfinity}(\tau_1,\tau_2))} \\
  \gtrsim{}& \norm*{\psi}_{\bSobolevTrap{1}(\Manifold(\tau_1, \tau_2))}+\norm*{\psi}_{\bSobolevH{1, -\frac{\a}{2}}(\Manifold(\tau_1, \tau_2))}+ \norm*{\psi}_{\bSobolevI{1,-\frac{\b}{2}}(\Manifold_{\NullInfinity}(\tau_1,\tau_2))}=\norm*{\psi}_{\bSobolevHITrap{1, -\frac{\a}{2}, -\frac \b 2}(\Manifold(\tau_1, \tau_2))}.
\end{align*}
For the energy norms on $\Sigma(\tau)$ we have for $-1+4\mathbf{a}^2<\alpha< -\de_1$, $1<\b<3$, 
\begin{align*}
  &   C_{Mor} \norm*{\psi}_{H_{\EventHorizon, \operatorname{b}\NullInfinity}^{1, -\frac{1-\de_1}{2}, -1}(\Sigma(\tau))}+ \norm*{\psi}_{H_{\EventHorizon}^{1, -\frac{\alpha+1}{2}}(\Sigma_{\EventHorizon}(\tau))}+\norm*{\widecheck{\psi}}_{H_{\NullInfinity}^{1, -\frac{\b+3}{2}}(\Sigma_{\NullInfinity}(\tau))} \\
  \gtrsim{}& \norm*{\psi}_{H_{\EventHorizon}^{1, -\frac{1-\de_1}{2}}(\Sigma_{\EventHorizon}(\tau))}+\norm*{\psi}_{H_{\EventHorizon}^{1, -\frac{\alpha+1}{2}}(\Sigma_{\EventHorizon}(\tau))}+\norm*{\psi}_{\bSobolevI{1, -1}(\Sigma_{\NullInfinity}(\tau))}+\norm*{\widecheck{\psi}}_{H_{\NullInfinity}^{1, -\frac{\b+3}{2}}(\Sigma_{\NullInfinity}(\tau))}+\norm*{\psi}_{\compactSobolev{1}(\Sigma_{\NullInfinity}(\tau))}\\
  \gtrsim{}& \norm*{\psi}_{H_{\EventHorizon}^{1, -\frac{\alpha+1}{2}}(\Sigma_{\EventHorizon}(\tau))}+\norm*{\widecheck{\psi}}_{H_{\NullInfinity}^{1, -\frac{\b+3}{2}}(\Sigma_{\NullInfinity}(\tau))}+\norm*{\psi}_{\compactSobolev{1}(\Sigma_{\NullInfinity}(\tau))}.
\end{align*}
Similarly,
\begin{align*}
  &C_{Mor}\norm*{F}_{\bSobolevHI{0,\frac{\delta_1}{2}, -\frac{1}{2}}(\Manifold(\tau_1,\tau_2))} + \norm*{F}_{\bSobolevH{0,-\frac{\alpha}{2}}(\Manifold_{\rhoH \leq 2\rho_0}(\tau_1,\tau_2))}+ \norm*{F}_{\bSobolevI{0,-\frac{\b-4}{2}}(\Manifold_{\rhoI \leq 2 \rho_1}(\tau_1,\tau_2))} \\
  \les{}& \norm*{F}_{\bSobolevHI{0,-\frac \a 2,-\frac{\b-4}{2}}(\Manifold(\tau_1,\tau_2))}.
\end{align*}
Combining the above and recalling the definition of
$\norm*{\psi}_{\SobolevfinHI{s, -\frac{\alpha+1}{2},
    -\frac{\beta+1}{2}}(\Sigma(\tau))}$, we finally obtain
\zcref{eq:main-theorem-s=1}.

\subsection{Proof of \zcref{thm:main-thm} for \texorpdfstring{$s\geq 2$}{s>1}}\label{proof:main-them-s2}

In this section, we prove \zcref{thm:main-thm} for $s\geq 2$. 
We first recover the higher-order bulk quantities in the following lemma. 
\begin{lemma}
  \label{lemma:higher-order:deg-morawetz-bulk}
  For $s\in \mathbb{N}$, $s\ge 1$, and $\tau_1<\tau_2$, we have the
  following inequality:
  \begin{equation}
    \label{eq:lemma:higher-order:deg-morawetz-bulk}
    \begin{split}
      \norm*{\psi}_{\SobolevDeg{s}(\Manifold(\tau_1,\tau_2))}^2
      \lesssim{}& \norm*{\psi}_{\SobolevfinHI{s+1, -\frac{\alpha+1}{2}, -\frac{\beta+1}{2}}(\Sigma(\tau_1))}^2
                  + \norm*{F}_{\bSobolevHI{s, -\frac{\alpha}{2}, -\frac{\beta-4}{2}}(\Manifold(\tau_1,\tau_2))}^2\\
                & +  O\left(\norm*{\psi}_{\bSobolevH{s,-1}(\EventHorizon(\tau_1,\tau_2))}\norm*{\psi}_{\bSobolevH{s-1,-1}(\EventHorizon(\tau_1,\tau_2))}\right)\\
                & + O\left(\norm*{\psi}_{\bSobolevI{s,-\frac{5}{2}}(\NullInfinity(\tau_1,\tau_2))}\norm*{\psi}_{\bSobolevI{s-1,-\frac{5}{2}}(\NullInfinity(\tau_1,\tau_2))}\right)\\
                & + O\left(
                  \norm*{\psi}_{H_c^s(\Manifold(\tau_1,\tau_2))}\norm*{\psi}_{H_c^{s-1}(\Manifold(\tau_1,\tau_2))}
                  \right).
    \end{split}
  \end{equation}
\end{lemma}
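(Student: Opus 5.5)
The plan is to prove the lemma by induction on $s$, using the degenerate bulk norm $\SobolevDeg{s}$ as an ``elliptic-type'' estimate for the stationary part of the wave operator. The point is that $\SobolevDeg{1}$ contains only quantities with the degenerate weights $\rhoH^2$ near $\EventHorizon$ and $\rhoI^3,\rhoI^5$ near $\NullInfinity$. These are weaker than the weights already controlled by the $s=1$ estimate \eqref{eq:main-theorem-s=1}. For $s=1$ the statement then follows directly from \zcref{thm:main-thm} with $s=1$, using $\norm*{\psi}_{\SobolevDeg{1}}\les \norm*{\psi}_{\bSobolevHITrap{1,-\frac\a2,-\frac\b2}}$ away from trapping. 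Near trapping we lose a derivative, which is why $s+1$ appears on the data and $s$ on $F$; there the commuted estimate with $T$ supplies the missing control.

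For the inductive step we apply the $s=1$ bound of \zcref{thm:main-thm} to $T^k\Phi^j\psi$, since $T,\Phi$ are Killing. This controls all Killing derivatives in the bulk with one loss at trapping. Next we must control the non-Killing derivatives $\rhoH\partial_{\rhoH}$ and $\NablaAngular$ (resp.\ $\rhoI\partial_{\rhoI}$). For this we use the wave equation itself in the form \eqref{wave-iEF} and \eqref{eq:wave-oEF}. Rewrite
\[
(\rhoH\partial_{\rhoH})^2\psi+\lapp_{\SSS^2}\psi=|q|^2F-2\HawkingHorizon\partial_{\rhoH}\psi-\rhoH\partial_{\rhoH}\psi-2r\partial_v\psi-a^2\sin^2\theta\partial_v^2\psi-2a\partial_v\partial_{\phiIEF}\psi .
\]
The left side is a $b$-elliptic operator in the spatial variables. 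We multiply by $\rhoH^{2}\,A\psi$ with $A\in\bDiffH^{s-1}$ spatial and integrate by parts in $(\rhoH,\omega)$ over $\Manifold(\tau_1,\tau_2)$. This gives $\rhoH^2(|\rhoH\partial_{\rhoH}A\psi|^2+|\NablaAngular A\psi|^2)$ in terms of already controlled Killing-commuted quantities and $F$, plus commutator terms of lower order. The degenerate weight $\rhoH^2$ is exactly what makes the cross term $\HawkingHorizon\partial_{\rhoH}$ tractable. The same holds with $\rhoI^5$ at infinity and the cross term $\InfinityHawking\partial_{\rhoI}$, the latter being handled through \zcref{lemma:wave-radiation-field}. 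The procedure is analogous in a compact region via ordinary elliptic estimates on the spatial part away from the ergoregion, where $T$ is timelike.

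The boundary terms of these integrations by parts are the source of the $O(\cdot)$ products in the statement. On $\EventHorizon$ and $\NullInfinity$ (in $\rhoH$, $\rhoI$) and at the cutoff interfaces, we pick up bilinear expressions pairing $s$ derivatives with $s-1$ derivatives. We leave them as $\norm*{\psi}_{\bSobolevH{s,-1}}\norm*{\psi}_{\bSobolevH{s-1,-1}}$-type products rather than absorbing them, since they are lower order in one factor and will later be closed by interpolation. The compact-region commutator and cutoff errors produce the $H^s_c\times H^{s-1}_c$ product.

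I expect the main obstacle to be the horizon. There $\HawkingHorizon\partial_{\rhoH}$ is not lower order relative to $(\rhoH\partial_{\rhoH})^2$, and the $2a\partial_v\partial_{\phiIEF}$-type rotation terms break the $b$-structure, as discussed in the introduction. I would try to handle $\HawkingHorizon\partial_{\rhoH}\psi\cdot\rhoH^2 A\psi$ by writing $\rhoH^2\partial_{\rhoH}=\rhoH(\rhoH\partial_{\rhoH})$ and integrating $\HawkingHorizon$ by parts along its (Killing-up-to-weight) flow. The resulting term is then a total derivative plus terms controlled by $|\HawkingHorizon A\psi|^2$, which is already in $\SobolevDeg{s}$ from the Killing step.
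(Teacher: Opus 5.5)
Your overall strategy matches the paper's. You commute with the Killing fields $T,\Phi$ using the $s=1$ theorem. You then recover the remaining $b$-derivatives from a $\rhoH^2$-weighted integration by parts against the stationary elliptic part $(\rhoH\partial_{\rhoH})^2+\lapp_{\SSS^2}$ of \eqref{wave-iEF} (and its analogue at $\NullInfinity$). The terms $\HawkingHorizon\partial_{\rhoH}$, $\partial_v^2$, $\partial_v\partial_{\phi}$ are moved to the right-hand side and controlled by the Killing-commuted bounds, and the boundary and cutoff terms produce the bilinear $O(\cdot)$ errors. The only implementation difference is at $s\ge2$. There the paper squares the operator, $\int\rhoH^2|((\rhoH\partial_{\rhoH})^2+\lapp_{\SSS^2})\psi|^2$, and uses a polarization identity for $2\rhoH^2\HawkingHorizon\partial_{\rhoH}$. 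You instead commute $A\in\bDiffH^{s-1}$ and use the first-order identity. Either works.

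The step that fails is your base case $s=1$. Near $\EventHorizon$ and $\NullInfinity$ you are right that $\SobolevDeg{1}$ is dominated by the bulk of \eqref{eq:main-theorem-s=1}: $\rhoH^{\alpha}\ge\rhoH^{2}$ for $\alpha<0$, and $\rhoI^{\beta}\ge\rhoI^{3}$ for $\beta<3$. That is a slightly more economical observation than the paper's. On $\Manifold_{\operatorname{trap}}$, however, $\rhoH\simeq M$, so the $\SobolevDeg{1}$ density is a full non-degenerate $H^1$ density and contains $|\partial_\theta\psi|^2$.
\begin{itemize}
\item The trapped norm gives only $\partial^{\mathrm{BL}}_r\psi$, $\psi$, and the combinations $(1-r_i/r)(D_t-\sigma_j)\psi$. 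These degenerate where $r=r_1=r_2$, which happens for instance when $\eta_{\varphi}=0$.
\item Applying the $s=1$ theorem to $T\psi,\Phi\psi$ adds only $\partial_t\psi,\partial_\phi\psi\in L^2$.
\end{itemize}
So nothing you cite controls $\partial_\theta\psi$ at trapping; it must be recovered from the equation. The fix is to run your elliptic step already at $s=1$, with $A=\mathrm{Id}$, using that the spatial part of the operator is elliptic on the trapping region. This is exactly what the paper does: its integration by parts of $\rhoH^2\psi\,((\rhoH\partial_{\rhoH})^2+\lapp_{\SSS^2})\psi$ runs over all of $\{r\le 4M\}$, trapping included. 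This is also what accounts for the $O(\norm*{\psi}_{\compactSobolev{1}}\norm*{\psi}_{\compactSobolev{0}})$ term in the statement.
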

\begin{proof}
  Since $T$ and $\Phi$ are Killing vectorfields, we can apply
  \zcref{thm:main-thm} for $s=1$ to $T\psi$ and $\Phi\psi$ and obtain
  \begin{equation}
    \label{eq:higher-order-morawetz:T-Phi-s1}
    \begin{split}
      \norm*{(T, \Phi)\psi}_{\bSobolevHI{0, -\frac{\a}{2}, -\frac \b 2}(\Manifold(\tau_1, \tau_2))}
      &\les \norm*{\psi}_{\SobolevfinHI{2, -\frac{\alpha+1}{2}, -\frac{\beta+1}{2}}(\Sigma(\tau_1))}
        + \norm*{F}_{\bSobolevHI{1, -\frac{\alpha}{2}, -\frac{\beta-4}{2}}(\Manifold(\tau_1,\tau_2))}.
    \end{split}
  \end{equation}
  To estimate the remaining derivatives, we will use elliptic theory,
  taking advantage of the fact that the stationary portion of
  $\abs*{q}^2\Box_{\Metric}$ is an elliptic $b$-operator with respect
  to both the event horizon and null infinity.

  We first show how to recover the derivatives in a neighborhood of
  the event horizon. Observe that, by integration by parts,
  \begin{equation}
    \label{eq:higher-order-morawetz:horizon:elliptic}
    \begin{split}
      & -\int_{\Manifold_{r\le 4M}(\tau_1,\tau_2)}
        \rhoH^2\psi\left( \left( \rhoH\partial_{\rhoH} \right)^2 + \lapp_{\SSS^2} \right)\psi
      \\
      ={}& \norm*{\rhoH(\rhoH\partial_{\rhoH})\psi}^2_{L^2(\Manifold_{r\le 4M}(\tau_1,\tau_2))}
           + \norm*{\rhoH \NablaAngular\psi}^2_{L^2(\Manifold_{r\le 4M}(\tau_1,\tau_2))}\\
      & + O\left(\norm*{\psi}_{\bSobolevH{1,-1}(\EventHorizon(\tau_1,\tau_2))}\norm*{\psi}_{\bSobolevH{0,-1}(\EventHorizon(\tau_1,\tau_2))}\right)
        + O\left(
        \norm*{\psi}_{\bSobolevH{1,-1}(\Manifold_{r\le 4M}(\tau_1,\tau_2))}\norm*{\psi}_{\bSobolevH{0,-1}(\Manifold_{r\le 4M}(\tau_1,\tau_2))}
        \right)\\
      & + O\left(
        \norm*{\psi}_{H^1_c(\Manifold(\tau_1,\tau_2))}\norm*{\psi}_{H^0_c(\Manifold(\tau_1,\tau_2))}
        \right).
    \end{split}    
  \end{equation}
  We also have that, again by integration by parts,
  \begin{align*}
    \begin{split}
      &\int_{\Manifold_{r\le 4M}(\tau_1,\tau_2)}\rhoH^2\psi\left(2\HawkingHorizon\partial_{\rhoH}+2r\pr_v  + a^{2}\sin^{2}\theta \pr_v^2 + 2a\pr_v\pr_{\phiIEF}\right)\psi
      \\
      \lesssim{}& \int_{\Manifold_{r\le 4M}(\tau_1,\tau_2)}\left( \varepsilon^{-1}\rhoH^2\left(\abs*{T\psi}^2 + \abs*{\Phi\psi}^2+|\psi|^2\right)+\varepsilon \rhoH |\rhoH \partial_{\rhoH}\psi|^2 \right)
                  +\sup_{\tau\in [\tau_1,\tau_2]}\norm*{\psi}_{H_{\EventHorizon}^{1, -\frac{1}{2}}(\Sigma_{r\le 4M}(\tau))}^2.  
    \end{split}
  \end{align*}
  Applying the $s=1$ of \zcref{thm:main-thm}, we see that
  \begin{align}
    \label{eq:higher-order-morawetz:horizon:dv-dphi-terms-s1}
    \begin{split}
      &\int_{\Manifold_{r\le 4M}(\tau_1,\tau_2)}\rhoH^2\psi\left(2\HawkingHorizon\partial_{\rhoH}+2r\pr_v  + a^{2}\sin^{2}\theta \pr_v^2 + 2a\pr_v\pr_{\phiIEF}\right)\psi
      \\
      \lesssim{}& \int_{\Manifold_{r\le 4M}(\tau_1,\tau_2)}\left( \varepsilon^{-1}\rhoH^2\left(\abs*{T\psi}^2 + \abs*{\Phi\psi}^2+|\psi|^2\right)+\varepsilon \rhoH |\rhoH \partial_{\rhoH}\psi|^2 \right)\\
      & + \norm*{\psi}_{\SobolevfinHI{2, -\frac{\alpha+1}{2}, -\frac{\beta+1}{2}}(\Sigma(\tau_1))}^2
        + \norm*{F}_{\bSobolevHI{1, -\frac{\alpha}{2}, -\frac{\beta-4}{2}}(\Manifold(\tau_1,\tau_2))}^2.
    \end{split}
  \end{align}

  We now use the fact that from the form of the wave operator in
  \zcref{wave-iEF} we know that
  \begin{equation}
    \label{eq:higher-order-morawetz:horizon-elliptic-decomposition-s1} 
    \begin{split}
      & \abs*{q}^2\Box_{\Metric}
        - \left(2\HawkingHorizon\partial_{\rhoH}+2r\pr_v  + a^{2}\sin^{2}\theta \pr_v^2 + 2a\pr_v\pr_{\phiIEF}\right)\\
      ={}& \left( \rhoH\partial_{\rhoH} \right)^2 + \rhoH\partial_{\rhoH}
           +\lapp_{\SSS^2},
    \end{split}    
  \end{equation}
  to write that
  \begin{equation}
    \label{eq:higher-order-morawetz:horizon:integrated-wave-decomp-s1}
    \begin{split}
      & -\int_{\Manifold_{r\le 4M}(\tau_1,\tau_2)}
        \rhoH^2\psi\left( \left( \rhoH\partial_{\rhoH} \right)^2 + \rhoH\partial_{\rhoH}+\lapp_{\SSS^2} \right)\psi\\
      ={}& \int_{\Manifold_{r\le 4M}(\tau_1,\tau_2)}
           \rhoH^2\psi\abs*{q}^2\Box_{\Metric}\psi\\
      & -\int_{\Manifold_{r\le 4M}(\tau_1,\tau_2)}
        \rhoH^2\psi\left(2\HawkingHorizon\partial_{\rhoH}+2r\pr_v  + a^2\sin^{2}\theta \pr_v^2 + 2a\pr_v\pr_{\phiIEF}\right)\psi.
    \end{split}   
  \end{equation}
  By combining
  \zcref{eq:higher-order-morawetz:T-Phi-s1, eq:higher-order-morawetz:horizon:integrated-wave-decomp-s1,eq:higher-order-morawetz:horizon:dv-dphi-terms-s1,eq:higher-order-morawetz:horizon:elliptic}, 
  we deduce
  \begin{equation}
    \label{eq:higher-order-morawetz:s=1:preliminary}
    \begin{split}
      \norm*{\rhoH(\rhoH\partial_{\rhoH}, \NablaAngular)\psi}_{L^2(\Manifold_{r\le 4M}(\tau_1,\tau_2))}^2
      \lesssim{}& \norm*{\psi}_{\SobolevfinHI{2, -\frac{\alpha+1}{2}, -\frac{\beta+1}{2}}(\Sigma(\tau_1))}^2
                  + \norm*{F}_{\bSobolevHI{1, -\frac{\alpha}{2}, -\frac{\beta-4}{2}}(\Manifold(\tau_1,\tau_2))}^2
      \\
                & + O\left(\norm*{\psi}_{\bSobolevH{1,-1}(\EventHorizon(\tau_1,\tau_2))}\norm*{\psi}_{\bSobolevH{0,-1}(\EventHorizon(\tau_1,\tau_2))}\right)\\
                & + O\left(
                  \norm*{\psi}_{\bSobolevH{1, -1}(\Manifold(\tau_1,\tau_2))}\norm*{\psi}_{\bSobolevH{0, -1}(\Manifold(\tau_1,\tau_2))}
                  \right)\\
                & + O\left(
                  \norm*{\psi}_{H^1_c(\Manifold(\tau_1,\tau_2))}\norm*{\psi}_{H^0_c(\Manifold(\tau_1,\tau_2))}
                  \right)
                  .
    \end{split}    
  \end{equation}
  This proves \zcref{eq:lemma:higher-order:deg-morawetz-bulk} for
  $s=1$.  We now show how to prove
  \zcref{eq:lemma:higher-order:deg-morawetz-bulk} for $s=2$. As above,
  by first commuting with $T$ and $\Phi$, we deduce
  \begin{equation}
    \label{eq:higher-order-morawetz:T-Phi-s2}
    \begin{split}
      \norm*{(T, \Phi)^{\leq 2}\psi}_{\bSobolevHITrap{1, -\frac{\a}{2}, -\frac \b 2}(\Manifold(\tau_1, \tau_2))}
      &\les \norm*{\psi}_{\SobolevfinHI{3, -\frac{\alpha+1}{2}, -\frac{\beta+1}{2}}(\Sigma(\tau_1))}
        + \norm*{F}_{\bSobolevHI{2, -\frac{\alpha}{2}, -\frac{\beta-4}{2}}(\Manifold(\tau_1,\tau_2))}.
    \end{split}    
  \end{equation}
  To control the remaining derivatives we write
  \begin{equation}
    \label{eq:higher-order-morawetz:horizon:elliptic-s2}
    \begin{split}
      & \int_{\Manifold_{r\le 4M}(\tau_1,\tau_2)}
        \rhoH^2\left( \left( \rhoH\partial_{\rhoH} \right)^2 + \lapp_{\SSS^2} \right)\psi
        \left(\left( \rhoH\partial_{\rhoH} \right)^2 + \lapp_{\SSS^2} \right)\psi\\
      ={}& \norm*{\rhoH(\rhoH\partial_{\rhoH})^2\psi}^2_{L^2(\Manifold_{r\le 4M}(\tau_1,\tau_2))}
           + \norm*{\rhoH \NablaAngular^2\psi}^2_{L^2(\Manifold_{r\le 4M}(\tau_1,\tau_2))}
           + \norm*{\rhoH\rhoH\partial_{\rhoH}\NablaAngular\psi}^2_{L^2(\Manifold_{r\le 4M}(\tau_1,\tau_2))}\\
      & + O\left(\norm*{\psi}_{\bSobolevH{2,-1}(\EventHorizon(\tau_1,\tau_2))}\norm*{\psi}_{\bSobolevH{1,-1}(\EventHorizon(\tau_1,\tau_2))}\right)
        + O\left(
        \norm*{\psi}_{\bSobolevH{2,-1}(\Manifold_{r\le 4M}(\tau_1,\tau_2))}\norm*{\psi}_{\bSobolevH{1, -1}(\Manifold_{r\le 4M}(\tau_1,\tau_2))}
        \right)\\
      & + O\left(
        \norm*{\psi}_{H^2_c(\Manifold(\tau_1,\tau_2))}\norm*{\psi}_{H^1_c(\Manifold(\tau_1,\tau_2))}
        \right).
    \end{split}    
  \end{equation}
  We also have that
  \begin{align*}
    2\rhoH^2\HawkingHorizon\partial_{\rhoH}
    ={}& \left(\HawkingHorizon + \rhoH\rhoH\partial_{\rhoH}\right)^2
         - \HawkingHorizon^2
         - \rhoH^2\left( \rhoH\partial_{\rhoH} \right)^2
         ,
  \end{align*}
  and
  \begin{align*}
    &\rhoH^2\left( 2r\pr_v  + a^2\sin^2\theta \pr_v^2 + 2a\pr_v\pr_{\phiIEF}\right)\psi
      \left(\left( \rhoH\partial_{\rhoH} \right)^2 + \rhoH\partial_{\rhoH}+\lapp_{\SSS^2} \right)\psi\\
    \lesssim{}& \rhoH\left(\abs*{\bDiffH^{\le 1}(T\psi)}^2 + \abs*{\bDiffH^{\le 1}(\Phi\psi)}^2 \right)
                + \rhoH^3
                \left(
                \abs*{\left( \rhoH\partial_{\rhoH} \right)^2\psi}^2
                + \abs*{\NablaAngular\psi}^2
                + \abs*{\rhoH\partial_{\rhoH}\psi}^2
                \right),
  \end{align*}
  so we in fact have that for any $\varepsilon>0$
  \begin{equation}
    \label{eq:higher-order-morawetz:horizon:dv-dphi-terms-s2}
    \begin{split}
      &\rhoH^2\left(2\HawkingHorizon\partial_{\rhoH}+2r\pr_v  + a^{2}\sin^{2}\theta \pr_v^2 + 2a\pr_v\pr_{\phiIEF}\right)\psi
        \left(\left( \rhoH\partial_{\rhoH} \right)^2 + \rhoH\partial_{\rhoH}+\lapp_{\SSS^2} \right)\psi
      \\
      \lesssim{}& \varepsilon^{-1}\left(\abs*{\bDiffH^{\le 1}(T\psi)}^2 + \abs*{\bDiffH^{\le 1}(\Phi\psi)}^2 \right)
                  + \varepsilon
                  \left(
                  \abs*{\left( \rhoH\partial_{\rhoH} \right)^2\psi}^2
                  + \abs*{\NablaAngular\psi}^2
                  + \abs*{\rhoH\partial_{\rhoH}\psi}^2
                  \right).
    \end{split}    
  \end{equation}
  Using again \zcref{eq:higher-order-morawetz:horizon-elliptic-decomposition-s1} we write
  \begin{equation}
    \label{eq:higher-order-morawetz:horizon:integrated-wave-decomp-s2}
    \begin{split}
      & \int_{\Manifold_{r\le 4M}(\tau_1,\tau_2)}
        \rhoH^2\abs*{\left( \left( \rhoH\partial_{\rhoH} \right)^2 + \rhoH\partial_{\rhoH}+\lapp_{\SSS^2} \right)\psi}^2\\
      ={}& \int_{\Manifold_{r\le 4M}(\tau_1,\tau_2)}
           \rhoH^2\abs*{q}^2\Box_{\Metric}\psi\left( \left( \rhoH\partial_{\rhoH} \right)^2 + \rhoH\partial_{\rhoH}+\lapp_{\SSS^2} \right)\psi\\
      & -\int_{\Manifold_{r\le 4M}(\tau_1,\tau_2)}
        \rhoH^2\left(2\HawkingHorizon\partial_{\rhoH}+2r\pr_v  + a^{2}\sin^{2}\theta \pr_v^2 + 2a\pr_v\pr_{\phiIEF}\right)\psi
        \left( \left( \rhoH\partial_{\rhoH} \right)^2 + \rhoH\partial_{\rhoH}+\lapp_{\SSS^2} \right)\psi.
    \end{split}    
  \end{equation}
  Using
  \zcref{eq:higher-order-morawetz:horizon:elliptic-s2,eq:higher-order-morawetz:horizon:dv-dphi-terms-s2,eq:higher-order-morawetz:horizon:integrated-wave-decomp-s2,eq:higher-order-morawetz:s=1:preliminary,eq:higher-order-morawetz:T-Phi-s2}, we then have that
  \begin{equation}
    \label{eq:higher-order-morawetz:horizon-s2}
    \begin{split}
      \norm*{\psi}_{\SobolevDeg{2}(\Manifold_{r\le 4M}(\tau_1,\tau_2))}^2
      \lesssim{}& \norm*{\psi}_{\SobolevfinHI{3, -\frac{\alpha+1}{2}, -\frac{\beta+1}{2}}(\Sigma(\tau_1))}^2
                  + \norm*{F}_{\bSobolevHI{2, -\frac{\alpha}{2}, -\frac{\beta-4}{2}}(\Manifold(\tau_1,\tau_2))}^2\\
                & + O\left(\norm*{\psi}_{\bSobolevH{2,-1}(\EventHorizon(\tau_1,\tau_2))}\norm*{\psi}_{\bSobolevH{1,-1}(\EventHorizon(\tau_1,\tau_2))}\right)\\
                & + O\left(
                  \norm*{\psi}_{H^1_c(\Manifold(\tau_1,\tau_2))}\norm*{\psi}_{H^0_c(\Manifold(\tau_1,\tau_2))}
                  \right)\\
                & + O\left(
                  \norm*{\psi}_{H^2_c(\Manifold(\tau_1,\tau_2))}\norm*{\psi}_{H^1_c(\Manifold(\tau_1,\tau_2))}
                  \right)
                  .
    \end{split}    
  \end{equation}
  Similarly, we have that
  \begin{equation}
    \label{eq:higher-order-morawetz:infinity-s2}
    \begin{split}
      \norm*{\psi}_{\SobolevDeg{2}(\Manifold_{r\ge 4M}(\tau_1,\tau_2))}^2
      \lesssim{}& \norm*{\psi}_{\SobolevfinHI{3, -\frac{\alpha+1}{2}, -\frac{\beta+1}{2}}(\Sigma(\tau_1))}^2
                  + \norm*{F}_{\bSobolevHI{2, -\frac{\alpha}{2}, -\frac{\beta-4}{2}}(\Manifold(\tau_1,\tau_2))}^2\\
                & + O\left(\norm*{\psi}_{\bSobolevI{2,-\frac{5}{2}}(\NullInfinity(\tau_1,\tau_2))}\norm*{\psi}_{\bSobolevI{1,-\frac{5}{2}}(\NullInfinity(\tau_1,\tau_2))}\right)\\
                & + O\left(
                  \norm*{\psi}_{H^1_c(\Manifold(\tau_1,\tau_2))}\norm*{\psi}_{H^0_c(\Manifold(\tau_1,\tau_2))}
                  \right)\\
                & + O\left(
                  \norm*{\psi}_{H^2_c(\Manifold(\tau_1,\tau_2))}\norm*{\psi}_{H^1_c(\Manifold(\tau_1,\tau_2))}
                  \right)
                  .
    \end{split}    
  \end{equation}
  Combining
  \zcref{eq:higher-order-morawetz:horizon-s2,eq:higher-order-morawetz:infinity-s2}
  then concludes the proof for $s=2$. The
  higher-order inequalities can be proven with an induction argument.
\end{proof}

We now show how to recover the higher-order energy estimates in the
following lemma.
\begin{lemma}
  \label{lemma:higher-order-morawetz:energy}
  For $s \in \mathbb{N}$,  $s\ge 2$, and $\tau_1<\tau_2$, we have the following inequality:
  \begin{equation*}
    \begin{split}
      \norm*{\psi}_{\bSobolevHI{s, -1, -\frac{5}{2}}(\Sigma(\tau_2))}^2
      \lesssim{}& \norm*{\psi}_{\SobolevfinHI{s, -\frac{\alpha+1}{2}, -\frac{\beta+1}{2}}(\Sigma(\tau_1))}^2
                  + \norm*{F}_{\bSobolevHI{s-1, -\frac{\alpha}{2}, -\frac{\beta-4}{2}}(\Manifold(\tau_1,\tau_2))}^2\\
                & +  O\left(\norm*{\psi}_{\bSobolevH{s,-1}(\EventHorizon(\tau_1,\tau_2))}\norm*{\psi}_{\bSobolevH{s-1,-1}(\EventHorizon(\tau_1,\tau_2))}\right)\\
                & + O\left(\norm*{\psi}_{\bSobolevI{s,-\frac{5}{2}}(\NullInfinity(\tau_1,\tau_2))}\norm*{\psi}_{\bSobolevI{s-1,-\frac{5}{2}}(\NullInfinity(\tau_1,\tau_2))}\right).
    \end{split}    
  \end{equation*}
\end{lemma}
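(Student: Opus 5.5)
We prove \zcref{lemma:higher-order-morawetz:energy} by induction on $s$. The lower-order energy is supplied by \zcref{thm:main-thm} at order $s-1$ (already established for $s=1$ in \zcref{proof:main-them-s1}). The top-order derivatives on $\Sigma(\tau_2)$ are recovered in two ways. Derivatives along the Killing fields $T$ and $\Phi$ come from commuting the equation. All remaining $b$-derivatives ($\rhoH\partial_{\rhoH}$ and $\NablaAngular$ near $\EventHorizon$, $\rhoI\partial_{\rhoI}$ and $\NablaAngular$ near $\NullInfinity$) come from elliptic estimates on the slice, using the elliptic $b$-structure of the stationary part of $\abs*{q}^2\Box_{\Metric}$.

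\emph{Step 1: Killing commutation.} Since $T$ and $\Phi$ commute with $\Box_{\Metric}$, we apply \zcref{thm:main-thm} at order $s-1$ to $T\psi$ and $\Phi\psi$. This controls $\norm*{(T,\Phi)\psi}_{\SobolevfinHI{s-1,\cdot,\cdot}(\Sigma(\tau_2))}$ by the data norm at order $s$ and by $\norm*{F}_{\bSobolevHI{s-1,-\frac{\alpha}{2},-\frac{\beta-4}{2}}}$. Together with the $s-1$ estimate for $\psi$ itself, this bounds every component of $\norm*{\psi}_{\bSobolevHI{s,-1,-\frac52}(\Sigma(\tau_2))}$ with at least one Killing derivative. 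The weights $-1$ and $-\frac52$ are weaker than those of the final norm for $\alpha<-\de_1$ and $\beta\in(1,3)$, so the comparison between norms is immediate.

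\emph{Step 2: elliptic recovery on $\Sigma(\tau_2)$.} Near the horizon we use the decomposition \zcref{eq:higher-order-morawetz:horizon-elliptic-decomposition-s1}. Restricted to $\Sigma(\tau_2)$ and written in coordinates adapted to the slice ($\partial_v$ and the slice-tangential $\rhoH$-derivative differ by $h_\HH'\partial_v=O(a^2)\partial_v$), it reads
\[
\big((\rhoH\partial_{\rhoH})^2+\rhoH\partial_{\rhoH}+\lapp_{\SSS^2}\big)\psi=\abs*{q}^2F-\big(2\HawkingHorizon\partial_{\rhoH}+2r\partial_v+a^2\sin^2\theta\,\partial_v^2+2a\partial_v\partial_{\phiIEF}\big)\psi .
\]
The operator on the left is an elliptic $b$-operator on the slice. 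On the right, $\HawkingHorizon\partial_{\rhoH}\psi=\partial_{\rhoH}\HawkingHorizon\psi+\rhoH^{-1}\conormalSpaceH{1}\partial_v\psi$ contains only terms already controlled in Step 1. We then repeat the integration by parts of \zcref{eq:higher-order-morawetz:horizon:elliptic,eq:higher-order-morawetz:horizon:elliptic-s2}, now on the three-dimensional slice rather than in the spacetime region. Multiplying by $\rhoH^{2}(\cdot)$ with the weight matching $\bSobolevH{s,-1}$ and integrating by parts in $\rhoH$ and over the sphere produces $\norm*{\rhoH(\rhoH\partial_{\rhoH},\NablaAngular)^{\le 2}\bDiffH^{s-2}\psi}^2_{L^2(\Sigma)}$. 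The boundary terms at $\rhoH=0$ are exactly the cross terms $\norm*{\psi}_{\bSobolevH{s,-1}(\EventHorizon)}\norm*{\psi}_{\bSobolevH{s-1,-1}(\EventHorizon)}$ in the statement, evaluated at the corner $\EventHorizon\cap\Sigma(\tau_2)$. We control them in integrated form: applying the fundamental theorem of calculus in $\tau$ along $\EventHorizon(\tau_1,\tau_2)$ gives the product of horizon norms. Null infinity is handled identically using \zcref{eq:wave-oEF} and the radiation field, which produces the $\NullInfinity$ cross term. In the compact region, standard elliptic regularity on the slice for the spatial part of $\Box_{\Metric}$ applies: $T$ is timelike there, or $\That_\chi$ is in the ergoregion, and this region lies inside $\rhoH<A_1$, which the horizon analysis already covers.

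\emph{Step 3: induction.} We commute with $\bDiffH^{s-2}$ and $\bDiffI^{s-2}$ and absorb commutators using \zcref{lem:weighted-hierarchy-commutators}, which shows they are of lower $b$-order or carry Killing derivatives. Inducting on the number of non-Killing derivatives then closes the estimate.

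\textbf{Main obstacle.} We expect the hardest part to be the corner and boundary terms in Step 2: making the slice integration by parts produce precisely the stated product of horizon norms at levels $s$ and $s-1$, rather than an uncontrolled trace of top-order derivatives. We will first try writing the trace at $\Sigma(\tau_2)\cap\EventHorizon$ as the integral over $\EventHorizon(\tau_1,\tau_2)$ of a $\partial_v$-derivative plus the corresponding trace at $\tau_1$, which is bounded by the data norm.
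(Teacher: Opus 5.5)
Your route is sound but genuinely different from the paper's. Step 1 coincides: the paper also applies the $s=1$ case of \zcref{thm:main-thm} to $T\psi$ and $\Phi\psi$.

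For the remaining $b$-derivatives, the paper does not work on a single slice. It integrates the spacetime elliptic identities from the proof of \zcref{lemma:higher-order:deg-morawetz-bulk} over unit slabs $\Manifold(\tau_0,\tau_0+1)$, bounds the Killing-derivative terms by the $s=1$ bulk and energy estimates, and obtains a $\tau$-integrated bound for the weighted second $b$-derivatives. It then concludes only for $\inf_{\tau\in[\tau_0,\tau_0+1]}$, i.e.\ on one good slice per unit interval; that is all it uses later, through $\tau_*$ in the proof of the theorem for $s\ge2$. The horizon and null-infinity cross terms in the statement are the boundary terms of that spacetime integration by parts.

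Your slice-wise $b$-elliptic estimate treats $\HawkingHorizon\partial_{\rhoH}\psi$, $\partial_v^2\psi$, $\partial_v\partial_{\phiIEF}\psi$ (and $\rhoI^{-1}\partial_u\psi$, $\InfinityHawking\partial_{\rhoI}\psi$ near $\NullInfinity$) as known from Step 1. It therefore gives the bound on every slice $\Sigma(\tau_2)$, which is what the lemma literally asserts. The price is that you need $F$ restricted to one slice, whereas the slab version only ever sees spacetime norms of $F$.

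Two points need repair. First, your slice identity contains $\abs*{q}^2F|_{\Sigma(\tau_2)}$ together with $s-2$ of its $b$-derivatives, and this is not on the right-hand side. You can recover it from the one-dimensional Sobolev inequality in $\tau$ on $\Manifold(\tau_2-1,\tau_2)$, namely $\abs*{G(\tau_2)}^2\lesssim\int_{\tau_2-1}^{\tau_2}(\abs*{G}^2+\abs*{TG}^2)\,d\tau$ at fixed $(r,\omega)$. This works for three reasons:
\begin{itemize}
\item $\partial_\tau=T$ in all three regions of the foliation, and $T$ is a $b$-vector field;
\item $s-1\ge1$;
\item the weights are compatible ($\rhoH^{2}\lesssim\rhoH^{\alpha}$ near $\EventHorizon$, and $\rhoI^{5}\abs*{q}^{4}\sim\rhoI\lesssim\rhoI^{\beta-4}$ near $\NullInfinity$).
\end{itemize}
It requires $\tau_2-\tau_1\ge1$, the same normalization the paper makes.

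Second, your ``main obstacle'' is not an obstacle, and the proposed cure would create one. A slice integration by parts produces terms only on the sphere $\Sigma(\tau_2)\cap\EventHorizon$, not on $\EventHorizon(\tau_1,\tau_2)$. Since the slice measure is $\abs*{q}^2dr\,d\mathring{\gamma}$, moving $\rhoH\partial_{\rhoH}$ across the weight $\rhoH^{2}$ leaves $\rhoH^{3}(\cdots)|_{\rhoH=0}=0$ for $\psi$ regular up to $\EventHorizon$. The same holds at $\NullInfinity$, using $\psi=\rhoI\widecheck{\psi}$. These terms therefore vanish, and your estimate does not need the cross terms at all. Rewriting the corner trace via the fundamental theorem of calculus along $\EventHorizon(\tau_1,\tau_2)$ would instead produce a top-order trace on $\Sigma(\tau_1)\cap\EventHorizon$, which the data norm does not control.
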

\begin{proof}
  As before, we show how to prove the estimate when $s=2$. The
  higher-order inequalities can be proven with an induction argument.
  The proof follows closely the proof of
  \zcref{lemma:higher-order:deg-morawetz-bulk}.  Let
  $\tau_{*}\in [\tau_0,\tau_0+1]$ for arbitrary
  $\tau_0\in (\tau_1,\tau_2-1)$, assuming without loss of generality
  that $\tau_2-\tau_1>1$.

  Then we first observe that since $T, \Phi$ are Killing vectorfields, from \zcref{thm:main-thm} applied to $T\psi$ and $\Phi \psi$ we
  immediately have
  \begin{align*}
    \norm*{(T, \Phi)\psi}_{\bSobolevHI{1, 0, -\frac 5 2 }(\Sigma(\tau_2))} &\lesssim\norm*{(T, \Phi)\psi}_{\SobolevfinHI{1, -\frac{\alpha+1}{2}, -\frac{\beta+1}{2}}(\Sigma(\tau_2))}\\
                                                                           &\les  \norm*{\psi}_{\SobolevfinHI{2, -\frac{\alpha+1}{2}, -\frac{\beta+1}{2}}(\Sigma(\tau_1))}+\norm*{F}_{\bSobolevHI{1,-\frac \a 2,-\frac{\b-4}{2}}(\Manifold(\tau_1,\tau_2))}
                                                                             .
  \end{align*}

  Next, integrating
  \zcref{eq:higher-order-morawetz:horizon:integrated-wave-decomp-s2,eq:higher-order-morawetz:horizon:dv-dphi-terms-s2,eq:higher-order-morawetz:horizon:elliptic-s2}
  across $\Manifold_{r\le 4M}(\tau_0,\tau_0+1)$ and
  $\Manifold_{r\ge 4M}(\tau_0,\tau_0+1)$ instead of
  $\Manifold_{r\le 4M}(\tau_1,\tau_2)$ and
  $\Manifold_{r\ge 4M}(\tau_1,\tau_2)$ respectively and combining
  them with \zcref{thm:main-thm} for $s=1$ to control lower-order terms, we see that
  \begin{equation*}
    \begin{split}
      & \int_{\Manifold_{\le 4M}(\tau_0,\tau_0+1)}\rhoH^2\left(
        \abs*{\left((\rhoH\partial_{\rhoH})^2\psi\right)}^2
        + \abs*{\rhoH\partial_{\rhoH}\NablaAngular\psi}^2
        + \abs*{\NablaAngular^2\psi}^2
        \right)\\
      & + \int_{\Manifold_{\ge 4M}(\tau_0,\tau_0+1)}\rhoI^5\left(
        \abs*{\left((\rhoI\partial_{\rhoI})^2\psi\right)}^2
        + \abs*{\rhoI\partial_{\rhoI}\NablaAngular\psi}^2
        + \abs*{\NablaAngular^2\psi}^2
        \right)\\
      \lesssim {}& \norm*{\psi}_{\SobolevfinHI{2, -\frac{\alpha+1}{2}, -\frac{\beta+1}{2}}(\Sigma(\tau_1))}^2
                   + \norm*{F}_{\bSobolevHI{1, -\frac{\alpha}{2}, -\frac{\beta-4}{2}}(\Manifold(\tau_0,\tau_0+1))}^2\\
      & + O\left(\norm*{\psi}_{\bSobolevH{2,-1}(\EventHorizon(\tau_0,\tau_0+1))}\norm*{\psi}_{\bSobolevH{1,-1}(\EventHorizon(\tau_0,\tau_0+1))}\right)\\
    \end{split}
  \end{equation*}
  We thus in particular have that
  \begin{equation*}
    \begin{split}
      \inf_{\tau\in [\tau_0,\tau_0+1]}\norm*{\psi}^2_{\bSobolevHI{2,-1,-\frac{5}{2}}(\Sigma(\tau))}
      \lesssim {}& \norm*{\psi}_{\SobolevfinHI{2, -\frac{\alpha+1}{2}, -\frac{\beta+1}{2}}(\Sigma(\tau_1))}^2
                   + \norm*{F}_{\bSobolevHI{1, -\frac{\alpha}{2}, -\frac{\beta-4}{2}}(\Manifold(\tau_0,\tau_0+1))}^2\\
                 & +  O\left(\norm*{\psi}_{\bSobolevH{2,-1}(\EventHorizon(\tau_0,\tau_0+1))}\norm*{\psi}_{\bSobolevH{1,-1}(\EventHorizon(\tau_0,\tau_0+1))}\right)\\
                 & + O\left(\norm*{\psi}_{\bSobolevI{2,-\frac{5}{2}}(\NullInfinity(\tau_0,\tau_0+1))}\norm*{\psi}_{\bSobolevI{1,-\frac{5}{2}}(\NullInfinity(\tau_0,\tau_0+1))}\right).
    \end{split}
  \end{equation*}
  We then conclude recalling that $\tau_0$ was arbitrary. 
\end{proof}

We are finally ready to prove \zcref{thm:main-thm} for $s\geq 2$.
\begin{proof}[Proof of Theorem \ref{thm:main-thm} for $s\geq 2$]
  As before, we will show how to prove \zcref{thm:main-thm} with $s=2$
  using the fact that we have already proven \zcref{thm:main-thm} with
  $s=1$. Higher-order estimates are proven inductively.

  Let $\tau_{*}\in [\tau_0,\tau_0+1]$ be such that
  \begin{equation*}
    \norm*{\psi}_{\SobolevfinHI{s, -\frac{\alpha+1}{2}, -\frac{\beta+1}{2}}(\Sigma(\tau_*))}
    = \int_{\tau\in [\tau_0,\tau_0+1]}\norm*{\psi}_{\SobolevfinHI{s, -\frac{\alpha+1}{2}, -\frac{\beta+1}{2}}(\Sigma(\tau))},
  \end{equation*}
  where $\tau_0\in [\tau_1,\tau_2-1]$, where we assume without loss of
  generality that $\tau_2-\tau_1>1$.

  Combining \zcref{lemma:higher-order:deg-morawetz-bulk,lemma:higher-order-morawetz:energy}, we have that
  \begin{equation*}
    \begin{split}
      \norm*{\psi}_{\SobolevDeg{2}(\Manifold(\tau_0,\tau_*))}^2
      +   \norm*{\psi}_{\bSobolevHI{2, -1, -\frac{5}{2}}(\Sigma(\tau_*))}^2
      \lesssim{}& \norm*{\psi}_{\SobolevfinHI{2, -\frac{\alpha+1}{2}, -\frac{\beta+1}{2}}(\Sigma(\tau_0))}^2
                  + \norm*{F}_{\bSobolevHI{1, -\frac{\alpha}{2}, -\frac{\beta-4}{2}}(\Manifold(\tau_0,\tau_*))}^2\\
                & +  O\left(\norm*{\psi}_{\bSobolevH{2,-1}(\EventHorizon(\tau_0,\tau_*))}\norm*{\psi}_{\bSobolevH{1,-1}(\EventHorizon(\tau_0,\tau_*))}\right)\\
                & + O\left(\norm*{\psi}_{\bSobolevI{2,-\frac{5}{2}}(\NullInfinity(\tau_0,\tau_*))}\norm*{\psi}_{\bSobolevI{1,-\frac{5}{2}}(\NullInfinity(\tau_0,\tau_*))}\right).
    \end{split}
  \end{equation*}
  Then combining with the $s=2$ statement of the weighted hierarchies, i.e. \zcref{eq:horizon-weighted-estimate, eq:infinity-weighted-estimate-psic}, and adding a
  large amount of \zcref{thm:main-thm} with $s=1$ to
  control any lower order terms, concludes the proof of
  \zcref{thm:main-thm} with $s=2$. 
\end{proof}
\subsection{Proof of \zcref{cor:pointwise-decay}}\label{proof:corollary-decay}

Here we assume $F=0$.
By definition of the norms, we observe that
\begin{align}\label{eq:bound-blk-integral-boundary}
  \norm*{\psi}_{\bSobolevHITrap{s, -\frac{\a}{2}, -\frac{\b}{2} }(\Manifold(\tau_1, \tau_2))}^2 \gtrsim \int_{\tau_1}^{\tau_2}  \norm*{\psi}_{\SobolevfinHI{s-1, -\frac{\alpha+2}{2}, -\frac{\beta+2}{2}}(\Sigma(\tau))}^2 d\tau.
\end{align}

Combining \zcref{thm:main-thm} and
\zcref{eq:bound-blk-integral-boundary}, we deduce for
$\a \in (-1+4\mathbf{a}^2, -\de_1)$ and $\b \in (1,3)$
\begin{align}\label{bound-in-boundary-terms-flux}
  \norm*{\psi}_{\SobolevfinHI{s, -\frac{\alpha+1}{2}, -\frac{\beta+1}{2}}(\Sigma(\tau_2))}^2+
  \int_{\tau_1}^{\tau_2}  \norm*{\psi}_{\SobolevfinHI{s-1, -\frac{\alpha+2}{2}, -\frac{\beta+2}{2}}(\Sigma(\tau))}^2 d\tau\les  \norm*{\psi}_{\SobolevfinHI{s, -\frac{\alpha+1}{2}, -\frac{\beta+1}{2}}(\Sigma(\tau_1))}^2.
\end{align} 

From now on we consider $\beta=\alpha+2\in (1+4\mathbf{a}^2,2-\de_1)$.
We collect the following lemma to perform interpolation of the norms. 
\begin{lemma}\label{lemma:interpolation}
  Let $\a_- <\gamma < \a_+$ with 
  \[
    \gamma=\theta\a_+ +(1-\theta)\a_-,
    \qquad
    0<\theta<1.
  \]
  Then
  \[
    \norm*{\psi}_{\SobolevfinHI{s, -\frac{\gamma+1}{2}, -\frac{\gamma+3}{2}}(\Sigma(\tau))}^2
    \le
    \Big( \norm*{\psi}_{\SobolevfinHI{s, -\frac{\a_{+}+1}{2}, -\frac{\a_{+}+3}{2}}(\Sigma(\tau))}^2\Big)^\theta
    \Big( \norm*{\psi}_{\SobolevfinHI{s, -\frac{a_{-}+1}{2}, -\frac{a_{-}+3}{2}}(\Sigma(\tau))}^2\Big)^{1-\theta}.
  \]
\end{lemma}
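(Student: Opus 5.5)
The plan is to reduce the statement to Hölder's inequality applied separately to each piece of the final weighted norm. By definition,
\[
\norm*{\psi}_{\SobolevfinHI{s, -\frac{\gamma+1}{2}, -\frac{\gamma+3}{2}}(\Sigma(\tau))}
\]
is the sum of three pieces. The first is the horizon piece $\norm*{\psi}_{H_{\EventHorizon}^{s,-\frac{\gamma+1}{2}}(\Sigma_{\EventHorizon}(\tau))}$, whose square is an integral over $\Sigma_{\EventHorizon}(\tau)$ of $\rhoH^{\gamma+1}$ times a sum of squares of derivatives $A^{\Horizon}_i\psi$. The second is the infinity piece for $\widecheck{\psi}$, with weight $\rhoI^{\gamma+3+\cdot}$; here the $\beta$ slot equals $\gamma+2$, so $-\frac{\beta+3}{2}=-\frac{\gamma+5}{2}$ and the weight is $\rhoI^{\gamma+5}$ times the density. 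The third is the compact piece $\norm*{\psi}_{\compactSobolev{s}}$, which is independent of $\gamma$. In each piece the density $d$ (the sum of squared derivatives) is fixed, and only the power of the boundary defining function changes, linearly in $\gamma$.

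The key step is the pointwise splitting
\[
\rhoH^{\gamma+1}d=(\rhoH^{\a_++1}d)^{\theta}(\rhoH^{\a_-+1}d)^{1-\theta},
\]
which holds because $\gamma+1=\theta(\a_++1)+(1-\theta)(\a_-+1)$. Hölder's inequality with exponents $1/\theta$ and $1/(1-\theta)$ then gives
\[
\int\rhoH^{\gamma+1}d\le\Big(\int\rhoH^{\a_++1}d\Big)^{\theta}\Big(\int\rhoH^{\a_-+1}d\Big)^{1-\theta}.
\]
The same argument applies at $\NullInfinity$, since the exponent $\gamma+5$ is also affine in $\gamma$ with the same $\theta$. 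For the compact piece the inequality is trivial: its value $c$ satisfies $c=c^\theta c^{1-\theta}$. If the higher-order norm is defined as a sum over $i$, I would apply Hölder to the full sum of integrals, treating the sum over $i$ as an integral against counting measure, so that each squared piece interpolates directly.

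The main obstacle is that the norm is a sum of three pieces, not the square root of a sum of squares, so the three Hölder bounds do not combine exactly. I would handle this by working throughout with the equivalent squared quantity $N_\gamma^2 = H_\gamma+I_\gamma+C$, where $H$, $I$, $C$ are the squared pieces. Summing the three pointwise Hölder bounds and applying discrete Hölder,
\[
\sum_j x_j^\theta y_j^{1-\theta}\le\Big(\sum_j x_j\Big)^\theta\Big(\sum_j y_j\Big)^{1-\theta},
\]
gives the inequality exactly for $N^2$. Since $N^2$ and the square of the sum-norm are equivalent up to the constant $3$, I would state the conclusion with $\le$ understood up to this harmless constant, which is all the decay argument needs. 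Alternatively, one can redefine the norm squared as the sum of squares, in which case the constant is exactly one.
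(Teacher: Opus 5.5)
Your approach is the paper's: write $\rhoH^{\gamma+1}$ and $\rhoI^{\gamma+5}$ as geometric means of the $\alpha_\pm$ weights, apply H\"older with exponents $1/\theta$ and $1/(1-\theta)$ to each piece, and combine the pieces with the discrete H\"older inequality.

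One correction: the ``main obstacle'' you identify is not real. You need neither the constant $3$ nor a redefinition of the norm. Write $h_\gamma$, $i_\gamma$, $c$ for the \emph{unsquared} horizon, infinity and compact pieces, so that the norm is $N_\gamma=h_\gamma+i_\gamma+c$. Taking square roots of your integral H\"older bounds gives
\[
h_\gamma\le h_+^\theta h_-^{1-\theta},\qquad i_\gamma\le i_+^\theta i_-^{1-\theta},\qquad c=c^\theta c^{1-\theta}.
\]
Now apply discrete H\"older to the norms themselves rather than to their squares:
\[
N_\gamma\le h_+^\theta h_-^{1-\theta}+i_+^\theta i_-^{1-\theta}+c^\theta c^{1-\theta}\le N_+^\theta N_-^{1-\theta}.
\]
Squaring yields the lemma with constant exactly one. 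The same device handles the higher-order norms, which are sums over $i$ of $\|A_i\psi\|$: interpolate each $\|A_i\psi\|$ separately, then apply discrete H\"older to the unsquared sum. So the statement holds verbatim for the norms as defined, and your argument then coincides with the paper's.
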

\begin{proof}
  By definition, we have 
  \begin{align*}
    \norm*{\psi}_{\SobolevfinHI{s, -\frac{\gamma+1}{2}, -\frac{\gamma+3}{2}}(\Sigma(\tau))}\vcentcolon =\norm*{\psi}_{H_{\EventHorizon}^{s, -\frac{\gamma+1}{2}}(\Sigma_{\EventHorizon}(\tau))}+\norm*{\widecheck{\psi}}_{H_{\NullInfinity}^{s, -\frac{\gamma+5}{2}}(\Sigma_{\NullInfinity}(\tau))}+\norm*{\psi}_{\compactSobolev{s}(\Sigma_{\NullInfinity}(\tau))}
  \end{align*}
  and in particular, by writing 
  \begin{align*}
    \rhoH^{\gamma+1}
    =
    (\rhoH^{\a_++1})^\theta(\rhoH^{\a_-+1})^{1-\theta}, \qquad \rhoI^{\gamma+5}
    =
    (\rhoI^{\a_++5})^\theta
    (\rhoI^{\a_-+5})^{1-\theta},
  \end{align*}
  applying Hölder gives
  \begin{align*}
    \norm*{\psi}_{H_{\EventHorizon}^{1, -\frac{\gamma+1}{2}}(\Sigma_{\EventHorizon}(\tau))}^2 ={}&  \int_{\Sigma(\tau)\cap \{\rhoH\leq \rho_0\}}\left(\rhoH^{\a_++1}\Big(\abs*{\partial_{\rhoH}\psi}^2+\abs*{\psi}^2+\abs*{\partial_v\psi}^2+\abs*{\NablaAngular\psi}^2 \Big)\right)^\theta
    \\ 
       &\times  \left(\rhoH^{\a_-+1}\Big(\abs*{\partial_{\rhoH}\psi}^2+\abs*{\psi}^2+\abs*{\partial_v\psi}^2+\abs*{\NablaAngular\psi}^2 \Big)\right)^{1-\theta} \\
    \leq{}&  \Big( \norm*{\psi}_{H_{\EventHorizon}^{1, -\frac{\alpha_{+}+1}{2}}(\Sigma_{\EventHorizon}(\tau))}^2\Big)^{\theta} \Big(\norm*{\psi}_{H_{\EventHorizon}^{s, -\frac{\alpha_{-}+1}{2}}(\Sigma_{\EventHorizon}(\tau))}^2\Big)^{1-\theta} , 
    \end{align*}
    and similarly for $\norm*{\widecheck{\psi}}_{H_{\NullInfinity}^{1, -\frac{\gamma+5}{2}}(\Sigma_{\NullInfinity}(\tau))}^2$
    and for higher derivatives norms.
  Adding the two norms together and using the
  discrete Hölder inequality
  \[
    \sum_j a_j^\theta b_j^{1-\theta}
    \le
    \big(\sum_j a_j\big)^\theta
    \big(\sum_j b_j\big)^{1-\theta},
  \]
  gives the result.
\end{proof}

\subsubsection{Decay of the energy flux}

We are ready to prove the decay in time of the energy flux. Here we take $\delta>4\mathbf{a}^2$ with $\de+\de_1 <1$.
Applying \zcref{bound-in-boundary-terms-flux} to $\alpha=-1+\de$, we obtain
\begin{align*}
  \norm*{\psi}_{\SobolevfinHI{s, -\frac{\delta}{2}, -\frac{2+\delta}{2}}(\Sigma(\tau_2))}^2+
  \int_{\tau_1}^{\tau_2}  \norm*{\psi}_{\SobolevfinHI{s-1, -\frac{1+\delta}{2}, -\frac{3+\delta}{2}}(\Sigma(\tau))}^2 d\tau\les  \norm*{\psi}_{\SobolevfinHI{s, -\frac{\delta}{2}, -\frac{2+\delta}{2}}(\Sigma(\tau_1))}^2.
\end{align*}
The above implies that
\begin{itemize}
\item the energy $\norm*{\psi}_{\SobolevfinHI{s-1, -\frac{\delta}{2}, -\frac{2+\delta}{2}}(\Sigma(\tau))}^2$ is bounded by initial data 
  , i.e.
  \begin{align}
    \norm*{\psi}_{\SobolevfinHI{s-1, -\frac{\delta}{2}, -\frac{2+\delta}{2}}(\Sigma(\tau_2))}^2 \les   \norm*{\psi}_{\SobolevfinHI{s, -\frac{\delta}{2}, -\frac{2+\delta}{2}}(\Sigma(\tau_2))}^2 \les   \norm*{\psi}_{\SobolevfinHI{s, -\frac{\delta}{2}, -\frac{2+\delta}{2}}(\Sigma(\tau_1))}^2, \label{eq:energy-bounded-initial-data}
  \end{align}
\item the energy norm $ \norm*{\psi}_{\SobolevfinHI{s-1, -\frac{1+\delta}{2}, -\frac{3+\delta}{2}}(\Sigma(\tau))}^2$ is integrable in time, i.e.
  \begin{align}\label{eq:integrable-in-time-energy}
    \int_{\tau_1}^{\tau_2}  \norm*{\psi}_{\SobolevfinHI{s-1, -\frac{1+\delta}{2}, -\frac{3+\delta}{2}}(\Sigma(\tau))}^2 d\tau\les  \norm*{\psi}_{\SobolevfinHI{s, -\frac{\delta}{2}, -\frac{2+\delta}{2}}(\Sigma(\tau_1))}^2.
  \end{align} 
\end{itemize}

Applying \zcref{lemma:interpolation} with
\[
  \a_-=-1+\de,
  \qquad
  \a_+=\de,
  \qquad
  \gamma=-\de_1,
\]
and using the fact that 
\[
  -\de_1
  =
  (1-\de-\de_1)\de
  +
  (\de+\de_1)(-1+\de),
\]
we get
\[
  \norm*{\psi}_{\SobolevfinHI{s-1, -\frac{1-\de_1}{2}, -\frac{3-\de_1}{2}}(\Sigma(\tau))}^2
  \le
  \left( \norm*{\psi}_{\SobolevfinHI{s-1, -\frac{1+\de}{2}, -\frac{3+\de}{2}}(\Sigma(\tau))}^2\right)^{1-\de-\de_1}
  \left( \norm*{\psi}_{\SobolevfinHI{s-1, -\frac{\de}{2}, -\frac{2+\de}{2}}(\Sigma(\tau))}^2\right)^{\de+\de_1}.
\]
Using \zcref{eq:energy-bounded-initial-data} to bound the last term we get
\begin{align*}
  \left( \norm*{\psi}_{\SobolevfinHI{s-1, -\frac{1-\de_1}{2}, -\frac{3-\de_1}{2}}(\Sigma(\tau))}^2\right)^{\frac{1}{1-\de-\de_1}}
  \les
  \norm*{\psi}_{\SobolevfinHI{s-1, -\frac{1+\de}{2}, -\frac{3+\de}{2}}(\Sigma(\tau))}^2
  \left(\norm*{\psi}_{\SobolevfinHI{s, -\frac{\delta}{2}, -\frac{2+\delta}{2}}(\Sigma(\tau_1))}^2\right)^{\frac{\de+\de_1}{1-\de-\de_1}}.
\end{align*}
By integrating the above in time from $\tau_1$ to $\infty$ and using \zcref{eq:integrable-in-time-energy}, we obtain
\begin{align}\label{eq:bound-interm-inte}
  \int_{\tau_1}^{\infty}\left( \norm*{\psi}_{\SobolevfinHI{s-1, -\frac{1-\de_1}{2}, -\frac{3-\de_1}{2}}(\Sigma(\tau))}^2\right)^{\frac{1}{1-\de-\de_1}} d\tau \les
  \left(\norm*{\psi}_{\SobolevfinHI{s, -\frac{\delta}{2}, -\frac{2+\delta}{2}}(\Sigma(\tau_1))}^2\right)^{\frac{1}{1-\de-\de_1}}.
\end{align}
Then, using \zcref{thm:main-thm} applied to $\a=-\de_1$ and $\b=2-\de_1$ we have
\begin{align*}
  \norm*{\psi}_{\SobolevfinHI{s-1, -\frac{1-\de_1}{2}, -\frac{3-\de_1}{2}}(\Sigma(\tau_2))}^2\les  \norm*{\psi}_{\SobolevfinHI{s-1, -\frac{1-\de_1}{2}, -\frac{3-\de_1}{2}}(\Sigma(\tau_1))}^2. 
\end{align*}
Therefore for any $\tau \geq 2\tau_1$ and any $t \in [\frac{\tau}{2}, \tau]$, we have 
\begin{align*}
  \norm*{\psi}_{\SobolevfinHI{s-1, -\frac{1-\de_1}{2}, -\frac{3-\de_1}{2}}(\Sigma(\tau))}^2\les  \norm*{\psi}_{\SobolevfinHI{s-1, -\frac{1-\de_1}{2}, -\frac{3-\de_1}{2}}(\Sigma(t))}^2. 
\end{align*}
Raising both sides to the power $\frac{1}{1-\de-\de_1}$ and
integrating over $t \in [\frac{\tau}{2}, \tau]$ we get, using
\zcref{eq:bound-interm-inte},
\begin{align*}
  \frac{\tau}{2} \left( \norm*{\psi}_{\SobolevfinHI{s-1, -\frac{1-\de_1}{2}, -\frac{3-\de_1}{2}}(\Sigma(\tau))}^2\right)^{\frac{1}{1-\de-\de_1}}&\les \int_{\frac \tau 2}^\tau  \left(\norm*{\psi}_{\SobolevfinHI{s-1, -\frac{1-\de_1}{2}, -\frac{3-\de_1}{2}}(\Sigma(t))}^2\right)^{\frac{1}{1-\de-\de_1}} dt\\
                                                                                                                                                &\les
                                                                                                                                                  \left(\norm*{\psi}_{\SobolevfinHI{s-1, -\frac{\delta}{2}, -\frac{2+\delta}{2}}(\Sigma(\tau_1))}^2\right)^{\frac{1}{1-\de-\de_1}}.
\end{align*}
Hence, we obtain
\begin{align}\label{eq:decay-energy-flux-energy-fin}
  \norm*{\psi}_{\SobolevfinHI{s-1, -\frac{1-\de_1}{2}, -\frac{3-\de_1}{2}}(\Sigma(\tau))}^2
  &\les \tau^{-(1-\de-\de_1)}E_{\operatorname{init}}^s
    ,
\end{align}
where
\begin{align*}
  E_{\operatorname{init}}^s=\norm*{\psi}_{\SobolevfinHI{s, -\frac{\delta}{2}, -\frac{2+\delta}{2}}(\Sigma(\tau_1))}^2.
\end{align*}
From the definition of $ \norm*{\psi}_{\SobolevfinHI{s-1, -\frac{1-\de_1}{2}, -\frac{3-\de_1}{2}}(\Sigma(\tau))}^2$, we deduce
\begin{align}
  \norm*{\psi}_{H_{\EventHorizon}^{s-1, -\frac{1-\de_1}{2}}(\Sigma_{\EventHorizon}(\tau))}^2 \les \tau^{-(1-\de-\de_1)}E_{\operatorname{init}}^s \label{eq:energy-flux-HH}\\
  \norm*{\widecheck{\psi}}_{H_{\NullInfinity}^{s-1, -\frac{5-\de_1}{2}}(\Sigma_{\NullInfinity}(\tau))}^2 \les \tau^{-(1-\de-\de_1)}E_{\operatorname{init}}^s.\label{eq:energy0flux-decay}
\end{align}

\subsubsection{Pointwise decay of the solution}

We now deduce pointwise decay in $\rhoH$ and $\rhoI$ for the solution.  Since the proof is symmetric in the two ends, we collect here the following general one-dimensional computations:
\begin{enumerate}
\item Applying \zcref{corollary-hardy-dejan-cutoff} to $\gamma=-1+\de_1$ gives
  \begin{align}\label{eq:bound-1-decay}
    \begin{split}
      \frac{\de_1^2}{4}
      \int_0^{\frac{c}{2}}\rho^{-1+\de_1}
      \abs*{\psi}^2\,d\rho &\leq   \int_0^{c}\rho^{-1+\de_1}
 \abs*{\rho\partial_{\rho}\psi}^2\,d\rho+O(c^{-2})
                             \int_{\frac12 c}^{c}
                             \rho^{1+\de_1}|\psi|^2\,d\rho\\
                           &\leq   \int_0^{c}\rhoH^{1+\de_1}
                             \abs*{\partial_{\rho}\psi}^2\,d\rho+O(c^{-2})
                             \int_{\frac12 c}^{c}
                             \rho^{1+\de_1}|\psi|^2\,d\rho.
    \end{split}
  \end{align}
\item By fundamental theorem of calculus and Cauchy-Schwarz, we write for any $\rho \in (0, \rho_0)$
  \begin{align*}
    |\psi(\tau, 0, \omega)|^2 &\les |\psi(\tau, \rho, \omega)|^2 + \Big(\int_0^{\rho} |\partial_{\rho'}\psi(\tau, \rho', \omega)|d\rho' \Big)^2\\
                              &\les |\psi(\tau, \rho, \omega)|^2 + \big(\int_0^{\rho} \rho'^{-1+\de_1}d\rho'\big)\big( \int_0^{\rho}\rho'^{1-\de_1}|\partial_{\rho'}\psi(\tau, \rho', \omega)|^2d\rho' \big)\\
                              &\les |\psi(\tau, \rho, \omega)|^2 + \frac{\rho^{\de_1}}{\de_1}\Big( \int_0^{\rho_0}\rho'^{1-\de_1}|\partial_{\rho'}\psi(\tau, \rho', \omega)|^2d\rho' \Big)
  \end{align*}
  Multiplying both sides by $\rho^{-1+\de_1}$ and integrating over $\rho \in (0, \rho_0)$ we obtain
  \begin{align}\label{eq:bound-on-rho=0}
    \begin{split}
      |\psi(\tau, 0, \omega)|^2 \int_0^{\rho_0} \rho^{-1+\de_1}  d \rho
      &\les \int_0^{\rho_0}\rho^{-1+\de_1}|\psi(\tau, \rho, \omega)|^2 d\rho \\
      &+ \big(\int_0^{\rho_0}\frac{1}{\de_1}\rho^{-1+2\de_1}d\rhoH\big)\Big( \int_0^{\rho_0}\rho'^{1-\de_1}|\partial_{\rho'}\psi(\tau, \rho', \omega)|^2d\rho' \Big)\\
      &\les \int_0^{\rho_0}\big(\rho^{1-\de_1}|\partial_{\rho}\psi(\tau, \rho, \omega)|^2+\rho^{-1+\de_1}|\psi(\tau, \rho, \omega)|^2 \big)d\rho.   
    \end{split}
  \end{align}
\end{enumerate}

\paragraph*{At the event horizon}
First, we focus on the solution close to $\EventHorizon$. Observe that \zcref{eq:energy-flux-HH} explicitly gives 
\begin{align}\label{eq:control-horizon}
  \int_{\Sigma_{\EventHorizon}(\tau)}\rhoH^{1-\de_1}\big(|\partial_{\rhoH} \psi|^2+|\psi|^2\big) \leq \tau^{-(1-\de-\de_1)}E_{\operatorname{init}}^2.
\end{align}
We can improve the control of the zero-th order term using \zcref{eq:bound-1-decay} for $\rho=\rhoH$. Indeed, since on $\Sigma_{\EventHorizon}$, we have $\rhoH^{1+\de_1}\leq \rhoH^{1-\de_1}$, from \zcref{eq:control-horizon} we obtain
\begin{align*}
  \int_{\Sigma_{\EventHorizon}(\tau)}\rhoH^{-1+\de_1} |\psi|^2 \leq \tau^{-(1-\de-\de_1)}E_{\operatorname{init}}^2
\end{align*}
where we have bounded the second integral in \zcref{eq:bound-1-decay} using \zcref{eq:decay-energy-flux-energy-fin}. Summing with \zcref{eq:control-horizon} we deduce 
\begin{align}\label{eq:control-horizon-1}
  \int_{\Sigma_{\EventHorizon}(\tau)}\rhoH^{1-\de_1}|\partial_{\rhoH} \psi|^2+\rhoH^{-1+\de_1}|\psi|^2 \leq \tau^{-(1-\de-\de_1)}E_{\operatorname{init}}^2.
\end{align}
By fundamental theorem of calculus
\begin{align*}
  \int_{\Sphere^2}  |\psi(\tau, \rhoH, \omega)|^2 d \mathring{\gamma}-\int_{\Sphere^2}|\psi(\tau, 0, \omega)|^2d \mathring{\gamma}&=-\int_{\Sphere^2}\int^0_{\rhoH} \partial_{\rho'}\big(|\psi|^2\big)d \rho' d\mathring{\gamma} \les \int_{\Sigma_{\EventHorizon}(\tau)}  |\psi||\partial_{\rhoH} \psi| \\
                                                                                                                      &\les \int_{\Sigma_{\EventHorizon}(\tau)} \big(\rhoH^{1-\de_1}|\partial_{\rhoH} \psi|^2 + \rhoH^{-1+\de_1}|\psi|^2\big) .
\end{align*}
From \eqref{eq:control-horizon-1}, we obtain
\begin{align}\label{eq:boundedness-psi}
  \int_{\Sphere^2}   |\psi(\tau, \rhoH, \omega)|^2d \mathring{\gamma}-\int_{\Sphere^2}|\psi(\tau, 0, \omega)|^2d \mathring{\gamma}
  &\les \tau^{-(1-\de-\de_1)}E_{\operatorname{init}}^2. 
\end{align}
We now bound the function at the horizon. Integrating on the spheres \zcref{eq:bound-on-rho=0} gives
\begin{align*}
  \int_{\Sphere^2}|\psi(\tau, 0, \omega)|^2d \mathring{\gamma} 
  &\les \int_{\Sphere^2}\int_0^{\rho_0}\big(\rhoH^{1-\de_1}|\partial_{\rhoH}\psi(\tau, \rhoH, \omega)|^2+\rhoH^{-1+\de_1}|\psi(\tau, \rhoH, \omega)|^2\big) d\rhoH d \mathring{\gamma}\\
  &\leq \tau^{-(1-\de-\de_1)}E_{\operatorname{init}}^2
\end{align*}
where we used \zcref{eq:control-horizon-1}.
Therefore from \zcref{eq:boundedness-psi} we deduce for $\rhoH \geq 0$
\begin{align*}
  \int_{\Sphere^2}   |\psi(\tau, \rhoH, \omega)|^2d \mathring{\gamma}
  &\les \tau^{-(1-\de-\de_1)}E_{\operatorname{init}}^2,
\end{align*}
and 
by Sobolev embedding on $\Sphere^2$ 
\begin{align*}
  |\psi(\tau, \rhoH, \omega)| \les
  \frac{\sqrt{E_{\operatorname{init}}^{4}}}
  {\tau^{\frac{1-\de-\de_1}{2}}},
\end{align*}
as stated.

\paragraph*{At null infinity}
We now focus on the solution close to $\NullInfinity$. Observe that \zcref{eq:energy0flux-decay} explicitly gives
\begin{align}\label{eq:control-near-infinity}
  \int_{\Sigma(\tau)}\rhoI^{5-\de_1}\big(| \partial_{\rhoI}\widecheck{\psi}|^2+|\widecheck{\psi}|^2 \big) \les \tau^{-(1-\de-\de_1)}E_{\operatorname{init}}^2. 
\end{align}
We can improve the control of the zero-th order term using \zcref{eq:bound-1-decay} for $\widecheck{\psi}$ and for $\rho=\rhoI$. Indeed, since on $\Sigma_{\NullInfinity}$ we have $\rhoI^{1+\de_1}\leq \rhoI^{1-\de_1}$ and 
using that $|q|^2 dr d\mathring{\gamma}=-(\rhoI^{-2}+a^2\cos^2\th)\rhoI^{-2}d\rhoI d\mathring{\gamma}$,
we obtain
\begin{align*}
  \int_{\Sigma_{\NullInfinity}(\tau)}\rhoI^{3+\de_1}
  \abs*{\widecheck{\psi}}^2\,d\rhoI 
  &\les \tau^{-(1-\de-\de_1)}E_{\operatorname{init}}^2,
\end{align*}
where we have bounded the second integral in \zcref{eq:bound-1-decay} using \zcref{eq:decay-energy-flux-energy-fin}. Summing with \zcref{eq:control-near-infinity} we deduce
\begin{align}\label{eq:control-near-infinity-1}
  \int_{\Sigma_\NullInfinity(\tau)}\rhoI^{5-\de_1}| \partial_{\rhoI}\widecheck{\psi}|^2+\rhoI^{3+\de_1}|\widecheck{\psi}|^2 \les \tau^{-(1-\de-\de_1)}E_{\operatorname{init}}^2. 
\end{align}
By fundamental theorem of calculus
\begin{align*}
  \int_{\Sphere^2}  |\widecheck{\psi}(\tau, \rhoI, \omega)|^2 d \mathring{\gamma}-\int_{\Sphere^2}|\widecheck{\psi}_{\NullInfinity}(\tau, \omega)|^2d \mathring{\gamma}&=-\int_{\Sphere^2}\int^0_{\rhoI} \partial_{\rho'}\big(|\widecheck{\psi}|^2\big)d \rho' d\mathring{\gamma}\\
                                                                                                                                                           & \les \int_{\Sigma_{\NullInfinity}(\tau)} \rhoI^4 |\widecheck{\psi}||\partial_{\rhoI} \widecheck{\psi}| \\
                                                                                                                                                           &\les \int_{\Sigma_{\NullInfinity}(\tau)} \big(\rhoI^{5-\de_1}|\partial_{\rhoI} \widecheck{\psi}|^2 + \rhoI^{3+\de_1}|\widecheck{\psi}|^2\big) ,
\end{align*}
where $\widecheck{\psi}_{\NullInfinity}(\tau, \omega)\vcentcolon =\lim_{\rhoI\to 0}\widecheck{\psi}(\tau, \rhoI, \omega)$ is the radiation field at null infinity.
From \zcref{eq:control-near-infinity-1}, we obtain
\begin{align}\label{eq:boundedness-psi-II}
  \int_{\Sphere^2}   |\widecheck{\psi}(\tau, \rhoI, \omega)|^2d \mathring{\gamma}-\int_{\Sphere^2}|\widecheck{\psi}_{\NullInfinity}(\tau, \omega)|^2d \mathring{\gamma}
  &\les \tau^{-(1-\de-\de_1)}E_{\operatorname{init}}^2. 
\end{align}
We now bound $\widecheck{\psi}_{\NullInfinity}(\tau, \omega)$. Integrating on the spheres \zcref{eq:bound-on-rho=0} and using again that $|q|^2 dr d\mathring{\gamma}=-(\rhoI^{-2}+a^2\cos^2\th)\rhoI^{-2}d\rhoI d\mathring{\gamma}$, 
\begin{align*}
  \int_{\Sphere^2}|\widecheck{\psi}_{\NullInfinity}(\tau, \omega)|^2d \mathring{\gamma} 
  &\les \int_{\Sphere^2}\int_0^{\rho_0}\big(\rhoI^{1-\de_1}|\partial_{\rhoI}\widecheck{\psi}(\tau, \rhoI, \omega)|^2+\rhoI^{-1+\de_1}|\widecheck{\psi}(\tau, \rhoI, \omega)|^2\big) d\rhoH d \mathring{\gamma} \\
  &\les \int_{\Sigma_{\NullInfinity}(\tau)}\rhoI^{3+\de_1}|\widecheck{\psi}(\tau, \rhoI, \omega)|^2 +\rhoI^{5-\de_1}|\partial_{\rhoI}\widecheck{\psi}(\tau, \rhoI, \omega)|^2 \\
  &\leq \tau^{-(1-\de-\de_1)}E_{\operatorname{init}}^2
\end{align*}
where we used \zcref{eq:control-near-infinity-1}. Therefore from \zcref{eq:boundedness-psi-II} we deduce for $\rhoI \geq 0$
\begin{align*}
  \int_{\Sphere^2}   |\widecheck{\psi}(\tau, \rhoI, \omega)|^2d \mathring{\gamma}
  &\les \tau^{-(1-\de-\de_1)}E_{\operatorname{init}}^2,
\end{align*}
and 
by Sobolev embedding on $\Sphere^2$ 
\begin{align*}
  |\widecheck{\psi}(\tau, \rhoI, \omega)| \les
  \frac{\sqrt{E_{\operatorname{init}}^{4}}}
  {\tau^{\frac{1-\de-\de_1}{2}}}.
\end{align*}
Writing that $\widecheck{\psi}=r\psi$, we obtain the stated.

\appendix
\section{Choice of multiplier in axial symmetry}\label{sec:proof-Lemma-ax-symm}

In this section we prove \zcref{prop:ax-symm-choice}.  We follow the
physical-space construction of Stogin, with a regularization near the
horizon and a final Hardy correction.  We do not require any smallness
assumption on \(a\), beyond the subextremal/extremal range \(a^2+Q^2\le M^2\).
For the analogous estimate on extremal Kerr, see also
\cite{giorgiPhysicalspaceEstimatesAxisymmetric2024}.

Throughout the section, for any radial function \(u\), we set
\begin{equation*}
  2 w \vcentcolon= \frac{(r-M)^2}{(r^2+a^2)^2}\,\partial_r u .
\end{equation*}
With this notation the potential term in \zcref{eq:definition-VV} takes
the form
\begin{equation}\label{eq:V-in-terms-of-w-axial}
  \mathcal V[w]
  =-\frac12\partial_r\big((r-M)^2\partial_r w\big).
\end{equation}

\paragraph*{Step 1: Stogin's singular multiplier $u_S$}

We first construct a multiplier which has the desired positivity
properties away from the horizon, but which is singular at \(r=M\).  Given
an auxiliary positive function \(w_S\), define
\begin{equation}\label{eq:def-u}
  u_S(r)
  = \int_{r_{\mathrm{trap}}}^r
  \frac{(s^2+a^2)^2}{(s-M)^2}2w_S(s)\,ds,
  \qquad
  r_{\mathrm{trap}}\vcentcolon=M+\sqrt{M^2+a^2}.
\end{equation}
Here \(r_{\mathrm{trap}}\) is the largest root of the trapping polynomial
\(\mathcal T\), and the only one larger than \(M\).  Since \(w_S>0\), we
have
\begin{equation*}
  \partial_r u_S>0,
  \qquad
  u_S(r_{\mathrm{trap}})=0.
\end{equation*}
Thus \(u_S\mathcal T\ge0\) on \(r\ge M\).  

We next choose \(w_S\) so that \(\mathcal A[u_S]\ge0\).  Set
\begin{equation*}
  \widetilde{\mathcal A}[u_S]
  \vcentcolon=
  \frac{(r^2+a^2)^2}{2r}
  \partial_r\left(\frac{u_S}{r^2+a^2}\right).
\end{equation*}
Using \zcref{eq:def-u}, we compute
\begin{equation*}
  \partial_r\widetilde{\mathcal A}[u_S]
  =(r^2+a^2)\partial_r\left(
    \frac{w_S(r^2+a^2)^2}{r(r-M)^2}
  \right).
\end{equation*}
Let \(r_*\) be the point at which $\frac{r(r-M)^2}{(r^2+a^2)^2}$
attains its maximum.  Equivalently, \(r_*\) is the largest root of
\begin{equation*}
  r^3-3Mr^2-3a^2r+Ma^2=0,
\end{equation*}
and satisfies \(r_*>r_{\mathrm{trap}}\).  We define
\begin{equation}\label{eq:def-w}
  w_S(r)=
  \begin{cases}
    \displaystyle
    \frac{r_*(r_*-M)^2}{(r_*^2+a^2)^2}=:\widetilde C,
    & r\le r_*,\\[0.8em]
    \displaystyle
    \frac{r(r-M)^2}{(r^2+a^2)^2},
    & r>r_*.
  \end{cases}
\end{equation}
Then \(w_S\) is positive and \(C^1\).  Since \(r_*\) is the minimum point
of $\frac{(r^2+a^2)^2}{r(r-M)^2}$, 
we have
\begin{equation*}
  \partial_r\left(
    \frac{w_S(r^2+a^2)^2}{r(r-M)^2}
  \right)
  \begin{cases}
    <0, & r<r_*,\\
    =0, & r\ge r_*.
  \end{cases}
\end{equation*}
Hence \(\widetilde{\mathcal A}[u_S]\) is decreasing on \([M,r_*]\) and
constant on \([r_*,\infty)\).  It remains only to check that this constant
value is positive.  Evaluating at \(r_*\) and using \zcref{eq:def-u},
\begin{align*}
  \widetilde{\mathcal A}[u_S](r_*)
  & =
    \frac{(r_*^2+a^2)}{2r_*}\partial_r u_S(r_*)-u_S(r_*)                                      \\
  & =
    \frac{(r_*^2+a^2)^3}{r_*(r_*-M)^2}w_S(r_*)
    -2w_S(r_*)\int_{r_{\mathrm{trap}}}^{r_*}
    \frac{(r^2+a^2)^2}{(r-M)^2}\,dr .
\end{align*}
The function \((r^2+a^2)^2/(r-M)^2\) is increasing on
\([r_{\mathrm{trap}},r_*]\).  Therefore
\begin{align*}
  \widetilde{\mathcal A}[u_S](r_*)
  &\ge
    \frac{(r_*^2+a^2)^2}{r_*(r_*-M)^2}w_S(r_*)
    \big(r_*^2+a^2-2r_*(r_*-r_{\mathrm{trap}})\big)                       \\
  &=
    \widetilde C\big(a^2+r_*(2r_{\mathrm{trap}}-r_*)\big)
    \ge c(M,a)>0.
\end{align*}
Consequently there exists a constant \(c_A=c_A(M,a)>0\) such that
\begin{equation}\label{eq:bounds-AA}
  \mathcal A[u_S](r)
  \ge c_A\frac{r}{(r^2+a^2)^2},
  \qquad r\ge M.
\end{equation}

The same choice of \(w_S\) gives positivity of the potential for
\(r\ge r_*\).  Indeed,
\begin{equation}\label{eq:ddw-uS}
  \partial_r\big((r-M)^2\partial_r w_S\big)
  =
  \begin{cases}
    0, & r\le r_*,\\[0.4em]
    \displaystyle
    -\frac{6(r-M)^2}{(r^2+a^2)^4}
    \Big(Mr^4-2(M^2-a^2)r^3-6a^2Mr^2
    +2a^2(M^2-a^2)r+a^4M\Big),
       & r>r_*.
  \end{cases}
\end{equation}
It remains to verify that the quartic in parentheses is positive on
\(r\ge r_*\).  Set \(\alpha=(a/M)^2\in[0,1]\) and \(x=r/M\).  The quartic
is \(M^5p(x)\), where
\begin{equation*}
  p(x)=x^4-2(1-\alpha)x^3-6\alpha x^2
  +2\alpha(1-\alpha)x+\alpha^2.
\end{equation*}
For \(x\ge3\),
\begin{align*}
  p(3)&=27+6\alpha-5\alpha^2\ge22,\\
  p'(3)&=54+20\alpha-2\alpha^2\ge52,\\
  p''(3)&=72+24\alpha\ge72,\\
  p'''(3)&=60+12\alpha\ge60,
\end{align*}
and \(p''''=24>0\).  Hence \(p(x)>0\) for all \(x\ge3\).  Since
\(r_*\ge3M\), \zcref{eq:V-in-terms-of-w-axial, eq:ddw-uS} imply that, for some \(c_V=c_V(M,a)>0\),
\begin{equation}\label{eq:V-uS-lower}
  \mathcal V[u_S](r)
  \ge 1_{\{r\ge r_*\}}\frac{c_V}{r^2} .
\end{equation}

Integrating the relation $u_S'=\frac{(r^2+M^2)^2}{(r-M)^2}2w_S$ from \zcref{eq:def-u} and using \zcref{eq:def-w} we deduce 
\begin{align}\label{eq:def-u-explicit}
  u_S=\begin{cases}
    -\frac{\widetilde C(M^2+a^2)^2}{r-M}+O(|\log(r-M)|)
    \qquad\text{as }r\downarrow M \\
    r^2+C\ \qquad \qquad r\geq r_*,
  \end{cases}
\end{align}
for some suitable constant $C$ such that $u$ is continuous at $r_{*}$. With this choice, we also deduce that
$\frac{u_{S} \TT}{(r^2+a^2)^3} \ges \frac{(r-M)}{r^2}\big( 1-\frac{r_{trap}}{r}\big)^2$, which is \zcref{eq:condition-one-u-daxi}.
Notice that \(u_S\) is singular at the horizon.

\paragraph*{Step 2: Regularization near the event horizon}

Fix \(\daxi>0\).  Let \(F:\mathbb R\to\mathbb R\) be a fixed smooth
nondecreasing function such that
\begin{equation*}
  F(x)=x \quad\text{for }x\le1,
  \qquad
  F(x)=2 \quad\text{for }x\ge3.
\end{equation*}
Define
\begin{equation*}
  u_{\daxi}
  \vcentcolon=
  -\frac{M^2}{\daxi}
  F\left(-\frac{\daxi}{M^2}u_S\right).
\end{equation*}
Then
\begin{equation*}
  u_{\daxi}=u_S
  \quad\text{where }u_S\ge-\frac{M^2}{\daxi},
  \qquad
  u_{\daxi}=-\frac{2M^2}{\daxi}
  \quad\text{where }u_S\le-\frac{3M^2}{\daxi}.
\end{equation*}
Since \(u_S\) is increasing, the transition radii
\begin{equation*}
  r_{1,\daxi}
  \vcentcolon=u_S^{-1}\left(-\frac{3M^2}{\daxi}\right),
  \qquad
  r_{2,\daxi}
  \vcentcolon=u_S^{-1}\left(-\frac{M^2}{\daxi}\right)
\end{equation*}
are well defined for \(\daxi\) sufficiently small.  By
\zcref{eq:def-u-explicit},
\begin{equation}\label{eq:rminusM-scale}
  r_{1,\daxi}-M\simeq_{M,a}\daxi,
  \qquad
  r_{2,\daxi}-M\simeq_{M,a}\daxi,
  \qquad
  r_{2,\daxi}-r_{1,\daxi}\simeq_{M,a}\daxi.
\end{equation}
As before, define
\begin{equation*}
  2 w_{\daxi}
  \vcentcolon=
  \frac{(r-M)^2}{(r^2+a^2)^2}\partial_r u_{\daxi}.
\end{equation*}
We estimate \(\mathcal A[u_{\daxi}]\) and \(\mathcal V[u_{\daxi}]\) in the
three natural regions.

First, on \(r\le r_{1,\daxi}\), the function \(u_{\daxi}\) is constant and
\(w_{\daxi}=0\).  Hence
\begin{equation*}
  \mathcal A[u_{\daxi}]
  =\partial_r\left(\frac{u_{\daxi}}{r^2+a^2}\right)
  =\frac{4M^2r}{\daxi(r^2+a^2)^2},
  \qquad
  \mathcal V[u_{\daxi}]=0.
\end{equation*}

Second, on \(r_{1,\daxi}\le r\le r_{2,\daxi}\), the cut-off is active.  We
have
\begin{equation*}
  \partial_r u_{\daxi}
  =\partial_r u_S
  F'\left(-\frac{\daxi}{M^2}u_S\right),
  \qquad
  2 w_{\daxi}
  =\widetilde C
  F'\left(-\frac{\daxi}{M^2}u_S\right).
\end{equation*}
Differentiating twice, using \(\partial_r u_S\lesssim_{M,a}(r-M)^{-2}\),
and using that \(F''\) and \(F'''\) are supported in the transition
region, gives
\begin{equation*}
  \left|
    \partial_r\big((r-M)^2\partial_r w_{\daxi}\big)
  \right|
  \lesssim_{M,a}
  \daxi\left(1+\frac{\daxi}{(r-M)^2}\right)
  1_{\{r_{1,\daxi}\le r\le r_{2,\daxi}\}}.
\end{equation*}
Combining this with \zcref{eq:rminusM-scale}, and defining
\begin{equation*}
  \overline{\mathcal V}_{\daxi}
  \vcentcolon=
  \frac14\left|
    \partial_r\big((r-M)^2\partial_r w_{\daxi}\big)
  \right|,
\end{equation*}
we obtain
\begin{equation}\label{eq:Vbar-size}
  0\le\overline{\mathcal V}_{\daxi}\lesssim_{M,a}1,
  \qquad
  \operatorname{supp}\overline{\mathcal V}_{\daxi}
  \subset [r_{1,\daxi},r_{2,\daxi}],
  \qquad
  r_{2,\daxi}-r_{1,\daxi}\simeq_{M,a}\daxi.
\end{equation}
For \(\mathcal A[u_{\daxi}]\), we compute
\begin{align*}
  \partial_r\left(\frac{u_{\daxi}}{r^2+a^2}\right)
  &=-\frac{2r}{(r^2+a^2)^2}u_{\daxi}
    +\frac{1}{r^2+a^2}
    F'\left(-\frac{\daxi}{M^2}u_S\right)\partial_r u_S.
\end{align*}
Since \(F'\ge0\), \(\partial_r u_S>0\), and
\(u_{\daxi}\ge-2M^2/\daxi\), we have
\begin{equation*}
  \mathcal A[u_{\daxi}]
  \ge
  \frac{2M^2r}{\daxi(r^2+a^2)^2}
  \qquad
  \text{on }r_{1,\daxi}\le r\le r_{2,\daxi}.
\end{equation*}

Third, on \(r\ge r_{2,\daxi}\), we have \(u_{\daxi}=u_S\) and
\(w_{\daxi}=w_S\).  Therefore \zcref{eq:bounds-AA, eq:V-uS-lower} apply.

Combining the three regions, and taking \(\daxi\) sufficiently small if
necessary, there exists \(c_A=c_A(M,a)>0\) such that
\begin{equation}\label{eq:AA-udelta-global}
  \mathcal A[u_{\daxi}](r)
  \ge c_A\frac{r}{(r^2+a^2)^2},
  \qquad r\ge M,
\end{equation}
which is \eqref{eq:condition-A-u-daxi}.
Moreover,
\begin{equation}\label{eq:bound-VV-ude}
  \mathcal V[u_{\daxi}]
  \ge
  \frac{c_V}{r^2} 1_{\{r\ge r_*\}}
  -\overline{\mathcal V}_{\daxi}
  1_{\{r_{1,\daxi}\le r\le r_{2,\daxi}\}},
\end{equation}
where \(\overline{\mathcal V}_{\daxi}\) satisfies
\zcref{eq:Vbar-size}.

It remains to check that the lower bound in Condition 1 is preserved by the
regularization.  For \(r\ge r_{2,\daxi}\), this follows from
\(u_\daxi=u_S\).  For \(M\le r\le r_{2,\daxi}\), choosing
\(\daxi_0\) sufficiently small ensures \(r_{2,\daxi}<r_{\mathrm{trap}}\).
Hence \(u_\daxi\le0\) and \(\TT\le0\) on this interval.  Since
\(|u_\daxi|\gtrsim M^2/\daxi\) and \(-\TT\sim_{M,a} r-M\) near
\(r=M\), the desired bound follows, after decreasing \(\daxi\) if
necessary.

\begin{figure}[t]
  \begin{tikzpicture}[scale=0.85, transform shape, x=1.2cm,y=0.9cm,>=Stealth]

    \pgfmathsetmacro{\rH}{0.0}      
    \pgfmathsetmacro{\rone}{1.3}    
    \pgfmathsetmacro{\rtwo}{2.5}    
    \pgfmathsetmacro{\rtrap}{4.5}   
    \pgfmathsetmacro{\rstar}{6}   
    \pgfmathsetmacro{\ydelta}{-2.4} 

    \pgfmathdeclarefunction{uu}{1}{%
      \pgfmathparse{ln(#1 - \rH + 0.09) + 0.2*(#1-1)^3 - 3.1}%
    }

    \pgfmathdeclarefunction{sigma}{1}{%
      \pgfmathparse{3*#1*#1 - 2*#1*#1*#1}%
    }

    \pgfmathdeclarefunction{ud}{1}{%
      \pgfmathparse{
        (#1 <= \rone) *
        (\ydelta)
        +
        (#1 > \rone && #1 < \rtwo) *
        ( \ydelta + sigma((#1-\rone)/(\rtwo-\rone)) * (uu(#1)-\ydelta) )
        +
        (#1 >= \rtwo) *
        ( uu(#1) )
      }%
    }

    \draw[->] (-1.5,0) -- (9,0) node[below] {$r$};
    \draw[->] (-1,-5.1) -- (-1,5.1) node[left] {$u,\ u_\daxi$};

    \draw[blue!70, very thick] (0,-1.2) -- (8.6,-1.2);
    \node[blue!70,anchor=west] at (8.1,-1.5) {\tiny $-\dfrac{M^2}{\daxi}$};

    \draw[blue!70, very thick] (0,-3.4) -- (8.6,-3.4);
    \node[blue!70,anchor=west] at (8.1,-3.0) {\tiny $-\dfrac{3M^2}{\daxi}$};

    \foreach \x/\lab in {%
      -1/0,
      \rH/$r_{\Horizon}$,
      \rone/$r_{1, \daxi}$,
      \rtwo/$r_{2, \daxi}$,
      \rtrap/$r_{\mathrm{trap}}$,
      \rstar/$r_*$}
    {
      \draw[dashed,blue!45] (\x,-5) -- (\x,5);
      \draw (\x,0) -- (\x,0.08);
      \node[below] at (\x,-0.05) {\lab};
    }

    \draw[red, very thick, smooth, samples=250,
    domain=\rH+0.03:4.5]
    plot(\x,{uu(\x)});

    \node[red,anchor=west] at (4,3.6)
    {\small $u\sim\frac{1}{r-r_{\Horizon}}$ near $r_{\Horizon}$, $u\sim r^2+C_2$ for large $r$};

    \draw[green!50!black, very thick, smooth, samples=300,
    domain=\rH:4.5]
    plot(\x,{ud(\x)});

    \node[green!50!black,anchor=south]
    at ({0.5*(\rH+\rone)},\ydelta+0.25)
    {\tiny $u_\daxi=-\dfrac{2M^2}{\daxi}$};

    \node[green!50!black,anchor=west]
    at (5.4,2.0) {\small $u_\daxi = u\ \text{for } r \geq r_{2, \daxi}$};

    \node[blue!70,anchor=west] at (6.1,-0.6)
    {\tiny $-\dfrac{\daxi}{M^2}u \leq 1$};
    \draw[->,blue!70] (6.1,-0.6) -- (5.4,0.7);

    \node[blue!70,anchor=west] at (6.1,-3.8)
    {\tiny $-\dfrac{\daxi}{M^2}u  \geq 3$};
    \draw[->,blue!70] (6.1,-3.8) -- (5.5,-4.5);

    \node[black!70,anchor=west] at (1.2,0.6) {\small transition};

  \end{tikzpicture}
  \centering
\end{figure}

\paragraph*{Step 3: Hardy correction.}

We now add a lower-order current in order to recover positivity of the
potential in \(M\le r\le r_*\).  Since \zcref{eq:bound-VV-ude} already
provides positivity for \(r\ge r_*\), we choose \(v\) supported in
\(\{r\le r_*+M/2\}\).

Consider \zcref{eq:I_uv-axisym} with \(u=u_{\daxi}\) and initially
\(\varepsilon=0\).  Using \zcref{eq:bound-VV-ude} and completing the square,
we obtain
\begin{align}\label{eq:Iuv-after-square}
  I_{u_{\daxi},v}[\psi]
  &\ge
    |q|^2
    \left(
    \partial_r v+\frac{2r}{|q|^2}v
    -\frac{|q|^2v^2}{4\frac{(r-M)^4}{r^2+a^2}\mathcal A[u_{\daxi}]}
    \right)|\psi|^2                                   
    +\left(
    \frac{c_V}{r^2}1_{\{r\ge r_*\}}
    -\overline{\mathcal V}_{\daxi}
    1_{\{r_{1,\daxi}\le r\le r_{2,\daxi}\}}
    \right)|\psi|^2 .
\end{align}
On any region where \(v\ge0\) and
\(\partial_r((r^2+a^2)v)\ge0\), we have
\begin{equation}\label{eq:v-derivative-lower}
  |q|^2\left(\partial_r v+\frac{2r}{|q|^2}v\right)
  \ge
  \frac12\partial_r\big((r^2+a^2)v\big),
\end{equation}
using \(|q|^2\ge r^2\) and \(r^2+a^2\le2r^2\).

By \zcref{eq:AA-udelta-global}, on the compact interval
\([M,r_*+M/2]\) there exists \(C=C(M,a)>0\) such that
\begin{equation*}
  \frac{1}{r^2+a^2}\mathcal A[u_{\daxi}]
  \ge C.
\end{equation*}
Thus \zcref{eq:Iuv-after-square, eq:v-derivative-lower} reduce
the problem, on \([M,r_*]\), to choosing \(v\) so that
\begin{equation*}
  \frac12\partial_r\left((r^2+a^2)\frac{v}{C}\right)
  -\frac{|q|^4}{4(r-M)^4}\left(\frac{v}{C}\right)^2
\end{equation*}
is positive.  We set
\begin{equation*}
  (r^2+a^2)\frac{v}{C}
  =\delta_1(r-M)^3
  \qquad\text{for }M\le r\le r_*,
\end{equation*}
where \(\delta_1>0\) will be chosen sufficiently small.  Then
\begin{align*}
  &\frac12\partial_r\left((r^2+a^2)\frac{v}{C}\right)
    -\frac{|q|^4}{4(r-M)^4}\left(\frac{v}{C}\right)^2  
  \\
  &\qquad=
    \frac14\delta_1(r-M)^2
    \left(6-\frac{|q|^4}{(r^2+a^2)^2}\delta_1\right)
    \ge
    \frac54\delta_1(r-M)^2,
\end{align*}
provided \(\delta_1\) is chosen small enough.  We extend \(v\) smoothly to
zero on \([r_*,r_*+M/2]\), with
\begin{equation*}
  |v|+|\partial_rv|\lesssim_{M,a}\delta_1,
  \qquad
  v=0\quad\text{for }r\ge r_*+M/2.
\end{equation*}
The cut-off region produces an error of size \(O(\delta_1)\), which is
absorbed by the positive term \(c_Vr^{-2}1_{\{r\ge r_*\}}\) in
\zcref{eq:Iuv-after-square}, after choosing \(\delta_1\) sufficiently
small.  Consequently, for some \(c=c(M,a)>0\) independent of
\(\daxi\),
\begin{equation*}
  I_{u_{\daxi},v}[\psi]
  \ge
  \left(
    c\frac{(r-M)^2}{r^4}
    -\overline{\mathcal V}_{\daxi}
    1_{\{r_{1,\daxi}\le r\le r_{2,\daxi}\}}
  \right)|\psi|^2,
\end{equation*}
which is \zcref{eq:I-u-daxi-v} for $\varepsilon=0$.
Finally, the same conclusion holds after splitting
\begin{equation*}
  \mathcal A[u_{\daxi}]
  =(1-2\varepsilon)\mathcal A[u_{\daxi}]
  +2\varepsilon\mathcal A[u_{\daxi}],
\end{equation*}
and applying the above argument only to the first term, provided
\(\varepsilon>0\) is sufficiently small.  Defining
\begin{equation*}
  \rho_{1,\daxi}\vcentcolon=r_{1,\daxi}-M,
  \qquad
  \rho_{2,\daxi}\vcentcolon=r_{2,\daxi}-M,
\end{equation*}
we obtain the asserted statement of \zcref{prop:ax-symm-choice}.

\printbibliography

\end{document}